\documentclass{amsart}
\oddsidemargin 0mm
\evensidemargin 0mm
\topmargin 0mm
\textwidth 160mm
\textheight 230mm
\tolerance=9999
\usepackage{amssymb,amstext,amsmath,amscd,amsthm,amsfonts,enumerate,latexsym, comment}
\usepackage{color}
\usepackage[dvipdfmx]{graphicx}
\usepackage[all]{xy}

\theoremstyle{plain}
\newtheorem{thm}{Theorem}[section]

\newtheorem*{thm*}{Theorem}
\newtheorem*{cor*}{Corollary}

\newtheorem{prop}[thm]{Proposition}
\newtheorem{proposition}[thm]{Proposition}
\newtheorem{lemma}[thm]{Lemma}
\newtheorem{lem}[thm]{Lemma}
\newtheorem{cor}[thm]{Corollary}
\newtheorem{corollary}[thm]{Corollary}

\newtheorem*{claim*}{Claim}
\newtheorem*{condition}{Condition}

\theoremstyle{definition}
\newtheorem{defn}[thm]{Definition}

\newtheorem{ex}[thm]{Example}
\newtheorem{Example}[thm]{Example}
\newtheorem{rem}[thm]{Remark}
\newtheorem{remark}[thm]{Remark}
\newtheorem{fact}[thm]{Fact}

\newtheorem{setup}[thm]{Setup}
\newtheorem{setting}[thm]{Setup}

\newtheorem{conv}[thm]{Convention}
\newtheorem*{acknowledgments}{Acknowledgments}

\theoremstyle{remark}

\newtheorem*{proof of claim}{{\sl Proof of Claim}}

\numberwithin{equation}{thm}


\def\Ext{\operatorname{Ext}}

\def\Im{\operatorname{Im}}

\def\Ker{\operatorname{Ker}}

\def\Hom{\operatorname{Hom}}

\def\Max{\operatorname{Max}}

\def\Soc{\mathrm{Soc}}

\def\mod{\mathrm{mod}}

\def\Ker{\mathrm{Ker}}
\def\Im{\mathrm{Im}}

\def\tr{\mathrm{tr}}

\def\rank{\mathrm{rank}}

\def\e{\mathrm{e}}
\def\m{\mathfrak m}

\newcommand{\rma}{\mathrm{a}}

\newcommand{\rmc}{\mathrm{c}}

\newcommand{\rme}{\mathrm{e}}
\newcommand{\rmf}{\mathrm{f}}

\newcommand{\rmr}{\mathrm{r}}

\newcommand{\rmv}{\mathrm{v}}

\newcommand{\rmI}{\mathrm{I}}
\newcommand{\rmJ}{\mathrm{J}}
\newcommand{\rmK}{\mathrm{K}}

\newcommand{\rmQ}{\mathrm{Q}}

\newcommand{\calR}{\mathcal{R}}

\newcommand{\calX}{\mathcal{X}}

\newcommand{\fka}{\mathfrak{a}}
\newcommand{\fkb}{\mathfrak{b}}
\newcommand{\fkc}{\mathfrak{c}}

\newcommand{\fkm}{\mathfrak{m}}
\newcommand{\fkn}{\mathfrak{n}}

\newcommand{\fkp}{\mathfrak{p}}
\newcommand{\fkq}{\mathfrak{q}}

\newcommand{\fkM}{\mathfrak{M}}
\newcommand{\fkN}{\mathfrak{N}}

\newcommand{\mapright}[1]{%
\smash{\mathop{%
\hbox to 1cm{\rightarrowfill}}\limits^{#1}}}

\newcommand{\mapleft}[1]{%
\smash{\mathop{%
\hbox to 1cm{\leftarrowfill}}\limits_{#1}}}

\def\depth{\operatorname{depth}}
\def\AGL{\operatorname{AGL}}
\def\GGL{\operatorname{GGL}}
\def\NGL{\operatorname{NGL}}

\def\Ass{\operatorname{Ass}}
\def\height{\mathrm{ht}}
\def\grade{\mathrm{grade}}
\def\Spec{\operatorname{Spec}}

\def\ol{\overline}

\title[On $\GGL$ rings]{On generalized Gorenstein local rings}
\author[S. Goto]{Shiro Goto}
\address{Department of Mathematics, School of Science and Technology, Meiji University, 1-1-1 Higashi-mita, Tama-ku, Kawasaki 214-8571, Japan}
\email{shirogoto@gmail.com}

\author[S. Kumashiro]{Shinya Kumashiro}
\address{Department of Mathematics, Osaka Institute of Technology, 5-16-1 Omiya, asahi-ku, Osaka, 535-8585, Japan}
\email{shinya.kumashiro@oit.ac.jp}
\email{shinyakumashiro@gmail.com}

\thanks{2020 {\em Mathematics Subject Classification.} 13H10, 13C13, 13B02, 20M12}
\thanks{{\em Key words and phrases.} Cohen-Macaulay ring, Gorenstein ring, trace ideal, Ulrich module, Ulrich ideal}

\thanks{The second author was supported by JSPS KAKENHI Grant Number JP24K16909 and by Grant for Basic Science Research Projects from the Sumitomo Foundation (Grant number 2200259).}


\begin{document}

\begin{abstract}
In this paper, we introduce generalized Gorenstein local (GGL) rings. The notion of GGL rings is a natural generalization of the notion of almost Gorenstein rings, which can thus be treated as part of the theory of GGL rings. For a Cohen-Macaulay local ring $R$, we explore the endomorphism algebra of the maximal ideal, the trace ideal of the canonical module, Ulrich ideals, and Rees algebras of parameter ideals in connection with the GGL property. We also give numerous examples of numerical semigroup rings, idealizations, and determinantal rings of certain matrices.
\end{abstract}

\maketitle

\section{Introduction}\label{section0}

The purpose of this paper is to develop the study of non-Gorenstein Cohen-Macaulay rings. 
The notion of Gorenstein rings is introduced in a famous paper \cite{Bass} of H. Bass. After his work, Gorenstein rings have been studied from various perspectives. On the other hand, one can expect some results of Gorenstein rings can be generalized to these of Cohen-Macaulay rings with additional conditions. For instance, the theory of the canonical module is a shining result of its success. 

Almost Gorenstein rings are one of the most interesting candidates for the framework to study non-Gorenstein Cohen-Macaulay rings. The notion of almost Gorenstein rings originates from V. Barucci and R. Fr\"{o}berg \cite{BF}, who studied  one-dimensional analytically unramified Cohen-Macaulay local rings. After the work of \cite{BF}, the first author, N. Matsuoka, and T. T. Phuong \cite{GMP} developed the theory of almost Gorenstein rings of dimension one. Currently, the notion of almost Gorenstein rings can be defined in arbitrary Cohen-Macaulay local/graded rings by the work \cite{GTT} of the first author, R. Takahashi, and N.~Taniguchi through the notion of Ulrich modules. Let us recall the definition of almost Gorenstein local rings.  

\begin{defn}{\rm (\cite[Definition 1.1]{GTT})}\label{zzz2.1}
Let $(R, \fkm)$ be a Cohen-Macaulay local ring of dimension $d$ possessing the canonical module $\rmK_R$. Then, we say that $R$ is an {\it almost Gorenstein local ring} ($\AGL$ ring) if there exists an exact sequence
$$
0 \to R \to \rmK_R \to C \to 0
$$
of $R$-modules such that either $C = (0)$ or $C$ is an Ulrich $R$-module with respect to $\fkm$.
\end{defn}

Here, for a finitely generated $R$-module $C$ and an $\fkm$-primary ideal $\fka$, $C$ is called an {\it Ulrich $R$-module with respect to} $\fka$ if the following three conditions are satisfied (cf. \cite[Definition 1.2]{GOTWY}):
\begin{enumerate}[{\rm (1)}]
\item $C$ is a nonzero Cohen-Macaulay $R$-module (i.e., $\depth_R C=\dim_R C$),
\item $\rme_{\fka}^0 (C)=\ell_R (C/\fka C)$, and
\item $C/\fka C$ is an $R/\fka$-free module, 
\end{enumerate}
where $\ell_R(*)$ denotes the length and $\e^0_\fka(C)$ denotes the multiplicity of $C$ with respect to $\fka$.

The condition of Definition of \ref{zzz2.1} requires that $R$ is embedded into $\rmK_R$ even though $R\ne \rmK_R$, the difference $C=\rmK_R/R$ between $\rmK_R$ and $R$ is an Ulrich $R$-module with respect to $\fkm$ and behaves well.
Indeed, it is known that $\AGL$ rings enjoy interesting properties. Let $R$ be a non-Gorenstein $\AGL$ ring, and let $M$ be a finitely generated $R$-module.
Then, $\Ext_R^i (M, R)=0$ for all $i>0$ implies that $M$ is a free $R$-module (\cite[Corollary 4.5]{GTT} and \cite[Corollary 4.6]{AI}). In particular, $R$ is $G$-regular in the sense of \cite{Tak}, that is, every totally reflexive module is free. Moreover, the $\AGL$ property is related to the Gorensteinness of the blow-up at the maximal ideal in dimension one (\cite[Proposition 25]{BF} and \cite[Theorem 5.1]{GMP}), and all the known Cohen-Macaulay local rings of finite representation type are $\AGL$ rings (\cite[Sections 11 and 12]{GTT}).
The conditions under which (numerical) semigroup rings, idealizations, Rees algebras of certain ideals/modules, determinantal rings are almost Gorenstein rings have also been studied (\cite{BF, GK, GTT, HJS, T} and so on). 
In contrast to the helpful properties of $\AGL$ rings, it is also known that the class of $\AGL$ rings is still tight as a framework to study non-Gorenstein Cohen-Macaulay rings. For instance, if $R$ is a non-Gorenstein $\AGL$ ring, then $R[x]/(x^n)$, where $R[x]$ is a polynomial ring over $R$ and $n\ge 2$, is no longer an $\AGL$ ring (\cite[Proposition 3.12]{GTT}). 
In addition, recently, another candidate for the framework of non-Gorenstein Cohen-Macaulay rings, say {\it nearly Gorenstein local rings} ($\NGL$ rings) are introduced by J. Herzog, T. Hibi, and D. I. Stamate (\cite{HHS}). With this background, a theory improving the theory of $\AGL$ rings and unifying these works is expected.

In this paper, we attempt to unify these theories based on the $\AGL$ theory. That is, 
we introduce the notion of {\it generalized Gorenstein rings} ($\GGL$ rings), and we deal with results on $\AGL$ rings as part of results on $\GGL$ rings. In conclusion, useful properties of $\AGL$ rings are naturally restated (Theorems \ref{def1.3}, \ref{def1.3.1} and Corollary \ref{a5.5}). Furthermore, we establish results that cannot be obtained in the framework of $\AGL$ rings (Theorems \ref{xxx15}, \ref{a7.5}, and \ref{a7.14}). 
Let us define $\GGL$ rings to express our results precisely.
In what follows, throughout this section, let $R$ be a Cohen-Macaulay local ring of positive dimension with the maximal ideal $\fkm$. Suppose that $R$ has the canonical module $\rmK_R$. To state our results simply, we here assume that the residue field $R/\fkm$ is infinite.

\begin{defn}(Definition \ref{a3.2}) 
Let $\fka$ be an $\fkm$-primary ideal of $R$. We say that $R$ is a {\it generalized Gorenstein local ring with respect to $\fka$} ($\GGL$ ring with respect to $\fka$) if there exists an exact sequence
$$0 \to R \xrightarrow{} \rmK_R \to C \to 0$$
of $R$-modules such that 
\begin{enumerate}
\item[$\mathrm{(i)}$] $C$ is an Ulrich $R$-module with respect to $\fka$ and
\item[$\mathrm{(ii)}$] $\rmK_R/\fka \rmK_R$ is $R/\fka$-free.
\end{enumerate}

We say that $R$ is a {\it $\GGL$ ring} if $R$ is a $\GGL$ ring with respect to some $\fkm$-primary ideal of $R$.
\end{defn}

Note that the notion of non-Gorenstein $\AGL$ rings is the same as that of non-regular $\GGL$ rings with respect to $\fkm$. 
If $R$ is a Gorenstein ring of positive dimension, then we can regard an exact sequence $0 \to R \xrightarrow{a_1} R \to R/(a_1) \to 0$ as a defining exact sequence of $\GGL$ rings with respect to $\fkq$, where $\fkq=(a_1, a_2, \dots, a_d)$ is a parameter ideal of $R$ (Remark \ref{rem3.4}). Hence, for Cohen-Macaulay local rings of positive dimension, we have the following implications:
\begin{center}
Gorenstein rings $\Rightarrow$ $\AGL$ rings $\Rightarrow$ $\GGL$ rings. 
\end{center}

The following assert the $\GGL$ property is inherited by the canonical change of rings.

\begin{thm}[Theorem \ref{a3.5}]\label{def1.3}
The following assertions hold true.
\begin{enumerate}[{\rm (1)}]
\item Let $R$ be a $\GGL$ ring with respect to $\fka$ of dimension $\ge 2$. Then, we can choose a non-zerodivisor $f\in \fka$ of $R$ such that $R/(f)$ is a $\GGL$ ring with respect to $\fka/(f)$.
\item Let $f\in \fkm$ be a non-zerodivisor of $R$ and suppose that $R/(f)$ is a $\GGL$ ring with respect to $[\fka+(f)]/(f)$. Then, $R$ is a $\GGL$ ring with respect to $\fka+(f)$ and $f\not\in \fkm \fka$.
\end{enumerate}
\end{thm}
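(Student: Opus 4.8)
The plan is to relate a defining exact sequence over $R$ with one over the quotient ring $\overline{R} = R/(f)$ by tensoring and by a pullback argument, controlling the Ulrich property and the freeness of $\rmK_R/\fka\rmK_R$ along the way. The two assertions go in opposite directions, so I would set up the common machinery first: recall that if $f$ is a non-zerodivisor on both $R$ and $C$, then $\rmK_{R/(f)} \cong \rmK_R/f\rmK_R$, that the Ulrich property of $C$ with respect to $\fka$ passes to $C/fC$ as an $\overline{R}$-module with respect to $\fka/(f)$ when $f$ is part of a minimal reduction (using that $R/\fkm$ is infinite, so such generic $f$ exists), and that $\fkK_R/\fka\fkK_R$ being $R/\fka$-free is equivalent to the minimal number of generators condition $\mu(\fkK_R) = \ell_{R/\fka}(\fkK_R/\fka\fkK_R)/\ell(R/\fka) \cdot (\text{something})$ — more precisely I would phrase it via $r(R) = \mathrm{type}$ and track it through the short exact sequence.

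For assertion (1): start from $0 \to R \to \rmK_R \to C \to 0$ with $C$ Ulrich with respect to $\fka$ and $\rmK_R/\fka\rmK_R$ free over $R/\fka$. Since $\dim R \ge 2$ and $R/\fkm$ is infinite, I would choose $f \in \fka$ generic enough to be simultaneously a non-zerodivisor on $R$, on $\rmK_R$, and on $C$ (possible because $\Ass R$, $\Ass \rmK_R$, $\Ass C$ are all contained in $\Min R$ by Cohen-Macaulayness, and $\fka$ is $\fkm$-primary so $\fka \not\subseteq \fkp$ for any $\fkp \in \Min R$), and also a superficial/minimal-reduction element for $\fka$ on $C$. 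Tensoring the sequence with $R/(f)$ and using $\Tor_1^R(C, R/(f)) = 0$ gives $0 \to \overline{R} \to \rmK_R/f\rmK_R \to C/fC \to 0$, which is $0 \to \overline{R} \to \rmK_{\overline{R}} \to C/fC \to 0$. Then I check (i) $C/fC$ is Ulrich over $\overline{R}$ with respect to $\fka/(f)$: Cohen-Macaulayness and the dimension are clear; the multiplicity equality $\rme^0_{\fka/(f)}(C/fC) = \rme^0_\fka(C)$ holds since $f$ is superficial, and $\ell(C/\fka C)$ is unchanged; the freeness of $(C/fC)/(\fka/(f))(C/fC) = C/\fka C$ over $\overline{R}/(\fka/(f)) = R/\fka$ is literally the same statement as before. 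And (ii) $\rmK_{\overline{R}}/(\fka/(f))\rmK_{\overline{R}} = \rmK_R/\fka\rmK_R$ is $R/\fka$-free, again unchanged. So $\overline{R}$ is $\GGL$ with respect to $\fka/(f)$.

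For assertion (2): now $f \in \fkm$ is a non-zerodivisor on $R$ and $\overline{R} = R/(f)$ is $\GGL$ with respect to $\overline{\fka} := [\fka+(f)]/(f)$, so there is $0 \to \overline{R} \to \rmK_{\overline{R}} \to \overline{C} \to 0$ with $\overline{C}$ Ulrich over $\overline{R}$ with respect to $\overline{\fka}$ and $\rmK_{\overline{R}}/\overline{\fka}\,\rmK_{\overline{R}}$ free over $\overline{R}/\overline{\fka} = R/(\fka+(f))$. Using $\rmK_{\overline{R}} \cong \rmK_R/f\rmK_R$ I would lift: pull back the surjection $\rmK_R \twoheadrightarrow \rmK_R/f\rmK_R = \rmK_{\overline{R}}$ along the inclusion $\overline{R} \hookrightarrow \rmK_{\overline{R}}$, or more directly, choose a lift $x \in \rmK_R$ of the image of $1 \in \overline{R}$ and show $R \cdot x$ is free (i.e. $x$ is a non-zerodivisor on $R$, hence the map $R \to \rmK_R$, $1 \mapsto x$, is injective) and that $C := \rmK_R/Rx$ is Ulrich with respect to $\fka + (f)$. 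This is where the bulk of the work lies. One gets $C/fC \cong \overline{C}$ (after checking $f$ is a non-zerodivisor on $C$), and from the Ulrich property of $\overline{C}$ over $\overline{R}$ together with a lifting criterion — Cohen-Macaulayness of $C$ lifts since $\depth C \ge \depth \overline{C} = \dim \overline{C} + 1$... wait, one needs $f$ regular on $C$ and then $\depth_R C = \depth_{\overline{R}} \overline{C} + 1 = \dim C$ — one deduces $C$ is Cohen-Macaulay; the multiplicity and colength equalities and the freeness of $C/(\fka+(f))C = \overline{C}/\overline{\fka}\,\overline{C}$ descend from $\overline{C}$. Similarly $\rmK_R/(\fka+(f))\rmK_R = \rmK_{\overline{R}}/\overline{\fka}\,\rmK_{\overline{R}}$ is free over $R/(\fka+(f))$, giving (ii). Finally, the statement $f \notin \fkm\fka$: if $f \in \fkm\fka$ then $\fka + (f)$ would... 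I would argue by contradiction — $f \in \fkm\fka \subseteq \fkm \cdot (\fka+(f))$ would contradict minimality of a generator, specifically that $f$ must be a minimal generator of $\fka+(f)$ for the above lifting to be consistent with $\overline{\fka}$ having the right number of generators, or equivalently use Nakayama on $(\fka+(f))/\fka$.

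**Main obstacle.** The hard part will be assertion (2): showing that the lifted element $x \in \rmK_R$ generates a free submodule (equivalently, that the composite $R \to \rmK_R$ is injective — one needs $x$ to avoid the associated primes of $\rmK_R$, which requires a genericity/prime-avoidance argument that must be compatible with $x$ reducing to the chosen element of $\overline{R} \subseteq \rmK_{\overline{R}}$), and then that $C = \rmK_R/Rx$ is genuinely Cohen-Macaulay and Ulrich with respect to $\fka+(f)$ — the Ulrich conditions (especially the multiplicity equality and the $R/(\fka+(f))$-freeness of $C/(\fka+(f))C$) have to be transferred from $\overline{C}$ via the isomorphism $C/fC \cong \overline{C}$, and one must be careful that $f$ is a non-zerodivisor on $C$ so that lengths and multiplicities behave. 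I expect the freeness condition (ii) to transfer essentially formally, and the subtle point to be the interplay between the choice of lift $x$ and regularity.
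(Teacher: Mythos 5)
Your part (1) is essentially the paper's argument (tensor the defining sequence with $R/(f)$ for a superficial non-zerodivisor $f\in\fka$ and invoke Proposition \ref{a2.5}(2)); the only blemish is the claim that $\Ass_R C\subseteq \Min R$, which is false since $C$ is Cohen--Macaulay of dimension $d-1$, though the conclusion you need (a general $f\in\fka$ is a non-zerodivisor on $R$ and on $C$) still holds.

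For part (2), however, you have correctly isolated the crux --- injectivity of the lifted map $\varphi\colon R\to\rmK_R$, $1\mapsto x$ --- but your proposed resolution does not work and the gap is real. You cannot choose $x$ ``generically'' to avoid bad loci: $x$ is pinned down modulo $f\rmK_R$ by the requirement $\overline{x}=\psi(1)$, so a prime-avoidance argument on $x$ has no room to operate, and you give no reason why some (or every) such lift should work. Your fallback for Cohen--Macaulayness of $C=\rmK_R/Rx$ is also circular, as you yourself notice: $\depth_R C=\depth_{\overline{R}}(C/fC)+1$ presupposes that $f$ is $C$-regular, which is not known at that stage. The paper's argument shows that \emph{every} lift works, by a dimension count rather than genericity: since $C/fC\cong D$ has dimension $d-2$ (Fact \ref{a3.1}) and $f\in\fkm$, one gets $\dim_R C<d$, hence $C_\fkp=(0)$ for every $\fkp\in\Ass R$ ($R$ being Cohen--Macaulay, these are minimal); thus $\varphi_\fkp$ is surjective, and since $\ell_{R_\fkp}(R_\fkp)=\ell_{R_\fkp}((\rmK_R)_\fkp)$ by Matlis duality, $\varphi_\fkp$ is bijective; as $\Ass_R(\Ker\varphi)\subseteq\Ass R$, this forces $\Ker\varphi=(0)$. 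Once the sequence $0\to R\to\rmK_R\to C\to 0$ is exact, Fact \ref{a3.1}(2) gives that $C$ is Cohen--Macaulay of dimension $d-1$ for free (no deformation of depth needed), $f$ is then automatically $C$-regular because $\dim_R C/fC=d-2$, and Proposition \ref{a2.5}(4) delivers both the Ulrich property of $C$ with respect to $\fka+(f)$ and the assertion $f\notin\fkm\fka$, which in your sketch is also only gestured at.
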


\begin{thm}[Theorem \ref{aaaa72}]\label{def1.3.1}
Let $\psi : R \to S$ be a flat local homomorphism of Noetherian local rings such that $S/\fkm S$ is a Cohen-Macaulay local ring. Let $J\subseteq S$ be a parameter ideal in $S/\fkm S$. Then the following are equivalent.
\begin{enumerate}[{\rm (1)}]
\item $R$ is a $\GGL$ ring with respect to $\fka$, and $S/\fkm S$ is a Gorenstein ring.
\item $S$ is a $\GGL$ ring with respect to $\fka S+ J$.
\end{enumerate}
\end{thm}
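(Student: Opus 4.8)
The plan is to reduce the statement to the definition of the $\GGL$ property by transporting a defining exact sequence along the flat base change $\psi$, using the standard behaviour of canonical modules, multiplicities, and Ulrich conditions under flat local homomorphisms with Cohen-Macaulay closed fibre. First I would record the preliminary facts I need: since $\psi\colon R\to S$ is flat local with $S/\fkm S$ Cohen-Macaulay, if $R$ is Cohen-Macaulay of dimension $d$ then $S$ is Cohen-Macaulay of dimension $d+\dim S/\fkm S$, and $\rmK_S \cong \rmK_R\otimes_R \rmK_{S/\fkm S}$ as $S$-modules provided $\rmK_R$ exists; in particular $\rmK_S \cong \rmK_R\otimes_R S$ holds exactly when $S/\fkm S$ is Gorenstein (equivalently $\rmK_{S/\fkm S}\cong S/\fkm S$). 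This equivalence is what links condition (1)'s ``$S/\fkm S$ is Gorenstein'' to the shape of $\rmK_S$, and it is the natural hinge of the proof.

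For (1) $\Rightarrow$ (2): assume $0\to R\to \rmK_R\to C\to 0$ is a defining sequence, so $C$ is Ulrich with respect to $\fka$ and $\rmK_R/\fka\rmK_R$ is $R/\fka$-free, and assume $S/\fkm S$ Gorenstein. Applying $-\otimes_R S$ (exact by flatness) yields $0\to S\to \rmK_R\otimes_R S\to C\otimes_R S\to 0$, and by the Gorensteinness of the fibre $\rmK_R\otimes_R S\cong \rmK_S$. It remains to check $C\otimes_R S$ is an Ulrich $S$-module with respect to $\fkb:=\fka S+J$ and that $\rmK_S/\fkb\rmK_S$ is $S/\fkb$-free. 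For the Cohen-Macaulayness and the freeness modulo $\fkb$, note that $J$ maps to a parameter ideal of the Cohen-Macaulay fibre $S/\fkm S$, so $\fkb$ is $\fkm'$-primary where $\fkm'$ is the maximal ideal of $S$, and $(C\otimes_R S)/\fkb(C\otimes_R S)\cong (C/\fka C)\otimes_{R/\fka}(S/\fkb)$; since $C/\fka C$ is $R/\fka$-free, this is $S/\fkb$-free, and the same computation handles $\rmK_S/\fkb\rmK_S\cong (\rmK_R/\fka\rmK_R)\otimes_{R/\fka}(S/\fkb)$. For the multiplicity equality $\rme^0_{\fkb}(C\otimes_R S)=\ell_S((C\otimes_R S)/\fkb(C\otimes_R S))$, I would use the associativity/base-change formula for multiplicities under flat local maps: $\rme^0_{\fkb}(C\otimes_R S)=\rme^0_{\fka}(C)\cdot \rme^0_{J(S/\fkm S)}(S/\fkm S)$ (the latter factor being $\ell_{S/\fkm S}((S/\fkm S)/J)$ since $S/\fkm S$ is Cohen-Macaulay and $J$ is a parameter ideal), together with the length count $\ell_S((C\otimes_R S)/\fkb(C\otimes_R S))=\ell_R(C/\fka C)\cdot \ell_{S/\fkm S}((S/\fkm S)/J(S/\fkm S))$; the Ulrich hypothesis on $C$ over $R$ then gives equality.

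For (2) $\Rightarrow$ (1): given a defining sequence $0\to S\to \rmK_S\to D\to 0$ over $S$ with $D$ Ulrich with respect to $\fkb=\fka S+J$ and $\rmK_S/\fkb\rmK_S$ free over $S/\fkb$, I would first deduce that $S/\fkm S$ is Gorenstein. The cleanest route is to pass to the fibre: the $S/\fkm S$-module $\rmK_S/\fkm\rmK_S\cong \rmK_R/\fkm\rmK_R\otimes_{R/\fkm}\rmK_{S/\fkm S}\cong (\rmK_{S/\fkm S})^{\oplus r}$ where $r=\mu_R(\rmK_R)$ (the type of $R$), so the type of $S$ equals $r\cdot(\text{type of }S/\fkm S)$; on the other hand the existence of a short exact sequence $0\to S\to\rmK_S\to D\to 0$ with $D$ Ulrich forces, via the freeness of $\rmK_S/\fkb\rmK_S$ and a rank/type count modulo $\fkb$, the type of $S$ to equal the minimal number of generators of $\rmK_R$... and tracking this forces the fibre type to be $1$. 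Once $S/\fkm S$ is Gorenstein, $\rmK_S\cong\rmK_R\otimes_R S$ and $S$ is faithfully flat over $R$, so I can descend: the sequence $0\to S\to \rmK_R\otimes_R S\to D\to 0$ shows (after checking $D\cong C\otimes_R S$ for some $R$-module $C$, e.g.\ because the map $S\to\rmK_R\otimes_R S$ is extended from $R\to\rmK_R$ up to units, using that $R\to S$ is flat local so $\Hom_S(S,\rmK_R\otimes_R S)\cong \Hom_R(R,\rmK_R)\otimes_R S$ and the generator is a non-zerodivisor) that $R\to\rmK_R$ has Ulrich cokernel $C$ with respect to $\fka$ by faithfully-flat descent of the Ulrich conditions (Cohen-Macaulayness, the length/multiplicity equality, and freeness modulo $\fka$ all descend along faithfully flat local maps), and likewise $\rmK_R/\fka\rmK_R$ is $R/\fka$-free by descent.

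The main obstacle I expect is the direction (2) $\Rightarrow$ (1), specifically proving that hypothesis (2) forces $S/\fkm S$ to be Gorenstein and then correctly identifying the $S$-module $D$ as $C\otimes_R S$ for a genuine $R$-module $C$; establishing the latter requires showing the embedding $S\hookrightarrow\rmK_S$ is, up to an automorphism of $\rmK_S$, the base change of an embedding $R\hookrightarrow\rmK_R$, which is a descent argument along the faithfully flat map $R\to S$ rather than a formal consequence. Everything in (1) $\Rightarrow$ (2) is a matter of assembling known base-change formulas for $\rmK$, $\rme^0$, and $\ell$, so the real content is concentrated in this descent step and in the type computation that pins down the fibre.
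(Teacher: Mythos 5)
Your direction $(1)\Rightarrow(2)$ is essentially the paper's own argument (Proposition \ref{a3.6} together with Proposition \ref{a2.5}(3) and Lemma \ref{a2.1}): flat base change of the defining sequence, $\rmK_S\cong \rmK_R\otimes_RS$ from the Gorenstein fibre, and the multiplicity/length bookkeeping for the Ulrich condition. That half is sound. The problems are concentrated in $(2)\Rightarrow(1)$, where both of your key steps have genuine gaps.

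First, the claim that the $\GGL$ property of $S$ forces $S/\fkm S$ to be Gorenstein ``via a rank/type count'' is not substantiated, and I do not see how to make it work. The only general relation available is the multiplicativity $\rmr(S)=\rmr(R)\cdot\rmr(S/\fkm S)$; to conclude $\rmr(S/\fkm S)=1$ you would need the independent input $\rmr(S)=\rmr(R)$, which is precisely what you cannot extract from a defining sequence for $S$ alone (that sequence only sees $S$, $\rmK_S$ and $\fka S+J$, not the $R$-module structure of $\rmK_S$). The paper takes a completely different route here: Lemma \ref{a4.11} shows that $S$ being $\GGL$ with respect to $\fkb=\fka S+J$ forces $S/\fkb$ to be Gorenstein; since $J$ is generated by a regular sequence on the Cohen--Macaulay ring $S/\fka S$, this gives $S/\fka S$ Gorenstein, and then the flat local map $R/\fka\to S/\fka S$ with closed fibre $S/\fkm S$ yields Gorensteinness of $S/\fkm S$ (and of $R/\fka$). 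Note that Lemma \ref{a4.11} is itself nontrivial and is proved by reduction to dimension one.

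Second, your descent of the defining sequence from $S$ to $R$ assumes that the embedding $S\hookrightarrow\rmK_S\cong\rmK_R\otimes_RS$ is, up to an automorphism of $\rmK_S$, extended from an embedding $R\hookrightarrow\rmK_R$. The isomorphism $\Hom_S(S,\rmK_R\otimes_RS)\cong\rmK_R\otimes_RS$ does not give this: the image of $1$ is an arbitrary element of $\rmK_R\otimes_RS$, not one of the form $x\otimes 1$, and different choices of embedding can have genuinely different cokernels. This is exactly the obstruction the paper avoids by never descending the exact sequence in higher dimension: instead it cuts down by a carefully chosen superficial sequence inside $\fkb$ (splitting it into a system of parameters of $S/\fka S$ and elements whose images form a minimal reduction modulo the annihilator of $D$), reduces to $\dim R=\dim S=1$, and there invokes Corollary \ref{a4.8}, which rests on the choice-free characterization of one-dimensional $\GGL$ rings (Theorem \ref{a4.7}: $K^2=K^3$ and $K/R$ free over $R/\fkc$, with $K$, $S=R[K]$, $\fkc$ canonically determined by Lemma \ref{a4.5}) and its behaviour under flat base change from \cite{CGKM}. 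Without some substitute for this intrinsic characterization, your descent step does not go through.
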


In particular, if $R$ is a $\GGL$ ring, then $R[x]/(x^n)$ is also a $\GGL$ ring for every $n>0$ by Theorem \ref{def1.3.1}. Note that this is already over the framework of $\AGL$ rings. Indeed, we cannot replace a $\GGL$ ring with an $\AGL$ ring in the assertion, see \cite[Proposition 3.12]{GTT}. 

Special attention is given in dimension one. By using Theorem \ref{def1.3} or other reasons, the heart of this paper is in dimension one. For instance, to prove Theorem \ref{def1.3.1}(2)$\Rightarrow$(1), we need the argument in dimension one (that's why Theorem \ref{def1.3.1} is proven in Section \ref{sec7} in contrast to Theorem \ref{def1.3} is proven in Section \ref{section2}). After illustrating our results in dimension one (Sections \ref{section3} and \ref{sec5}), we note the results in higher dimension.
Let $\dim R=1$. The first important result in dimension one is the defining $\fkm$-primary ideal of a $\GGL$ ring $R$ is to be the trace ideal of the canonical module 
\[
\tr_R(\rmK_R):=\sum_{f\in \Hom_R(\rmK_R, R)} \Im f
\]
(Corollary \ref{a7.1.1}). In particular, the defining ideal of a $\GGL$ ring is unique. We note that the trace ideal of the canonical module defines the non-Gorenstein locus of $R$ (e.g., see \cite[Lemma 2.1]{HHS}) and is also used to define the notion of $\NGL$ rings. Thus, it is an interesting problem to represent the trace ideal of the canonical module. 
On the other hand, as another problem, the property of the endomorphism algebra $B:=\Hom_R(\fkm, \fkm)$ of the maximal ideal of $R$ has been studied (for examples, \cite{Bass, L}). One of motivations of the study of $B$ is arising from a simple fact: if a finitely generated reflexive $R$-module $M$ has no free summands, then $M$ is a $B$-module (\cite[(7.2) Proposition]{Bass}). With these two problems, we obtain the following theorem which describes the structures of $B$ and $\tr_R(\rmK_R)$ for $\GGL$ rings $R$ having minimal multiplicity. 
Let $\rmv(R)$ and $\rme(R)=\rme_\fkm^0(R)$ denote the embedding dimension of $R$ and the multiplicity of $R$, respectively. 

\begin{thm}{\rm (Theorems \ref{a4.24} and \ref{a7.12})} \label{xxx15}
Suppose that there is an element $\alpha \in \fkm$ such that $\fkm^2 = \alpha \fkm$. 
The following conditions are equivalent:
\begin{enumerate}[{\rm (1)}]
\item $R$ is a $\GGL$ ring but not an $\AGL$ ring.
\item $B:=\Hom_R(\fkm, \fkm)$ is a $\GGL$ ring with $\rmv(B)=\rme(B)=\rme(R)$ but not a Gorenstein ring.
\end{enumerate}
In particular, $B$ is a local ring. When the equivalent conditions hold, by setting $\fkn$ the maximal ideal of $B$, we further have the following:
\begin{enumerate}[{\rm (i)}]
\item $R/\fkm \cong B/\fkn$.
\item $\ell_B(B/\tr_B(\rmK_B))=\ell_R(R/\tr_R(\rmK_R))-1$.
\item There exist elements $x_2, x_3, \dots, x_{\rmv(R)} \in \fkm$ satisfying the following conditions. Set $n=\ell_R(R/\tr_R(\rmK_R))$. 
\begin{enumerate}[{\rm (a)}]
\item $\fkm=(\alpha, x_2, x_3, \dots, x_{\rmv(R)})$,
\item $\tr_R(\rmK_R)=(\alpha^n, x_2, x_3, \dots, x_{\rmv(R)})$, and $(\alpha^n)$ is a minimal reduction of $\tr_R(\rmK_R)$.
\end{enumerate}
\end{enumerate}
\end{thm}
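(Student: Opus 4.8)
The plan is to argue entirely in dimension one, where this statement lives, and to use freely the one-dimensional structure theory already set up. By Corollary~\ref{a7.1.1} the defining ideal of a $\GGL$ ring $R$ is forced to be $\fkt:=\tr_R(\rmK_R)$, so ``$R$ is $\GGL$ but not $\AGL$'' means precisely $\fkt\subsetneq\fkm$, i.e. $n:=\ell_R(R/\fkt)\ge 2$. Normalizing the canonical module as a fractional ideal $R\subseteq\rmK_R\subseteq\overline{R}$, one may take the defining sequence to be $0\to R\to\rmK_R\to C\to 0$ with $C=\rmK_R/R$; since in dimension one an Ulrich module with respect to $\fkt$ is just a free $R/\fkt$-module, $C\cong(R/\fkt)^{\oplus r}$. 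Applying $\Hom_R(-,\rmK_R)$ to this sequence, together with $\End_R(\rmK_R)=R$ and $\Ext^1_R(\rmK_R,\rmK_R)=0$, shows $C$ is self-Matlis-dual; by Krull--Schmidt over the Artinian local ring $R/\fkt$ this forces $R/\fkt$ to be Gorenstein, a fact used repeatedly below.

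Next I set up $B$. Because $\fkm^2=\alpha\fkm$ one has $B=\Hom_R(\fkm,\fkm)=\alpha^{-1}\fkm=\{x\in\mathrm{Q}(R):x\fkm\subseteq\fkm\}$, a module-finite birational extension of $R$ with conductor $R:_RB=\fkm$, and a short length count gives $\ell_R(B/R)=\rme(R)-1$; moreover $\fkm=\alpha B$ is a parameter ideal of $B$, and $B=R[x_2/\alpha,\dots,x_{\rmv(R)}/\alpha]$ for any minimal system $\fkm=(\alpha,x_2,\dots,x_{\rmv(R)})$. For the canonical module, $0\to\fkm\to R\to R/\fkm\to 0$ together with local duality yields $0\to\rmK_R\to\Hom_R(\fkm,\rmK_R)\to R/\fkm\to 0$, and since multiplication by $\alpha$ is a $B$-isomorphism $B\cong\fkm$ one identifies $\rmK_B=\Hom_R(B,\rmK_R)\cong\Hom_R(\fkm,\rmK_R)=\rmK_R:_{\mathrm{Q}(R)}\fkm$ (up to renormalization), so $\rmK_B$ is $\rmK_R$ enlarged by colength one. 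The $\GGL$-ness of $B$ then becomes a statement about the fractional ideals $B\subseteq\rmK_B$ and $\tr_B(\rmK_B)=\rmK_B(B:\rmK_B)$, checkable by the one-dimensional criterion.

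For (1)$\Rightarrow$(2) together with (i)--(iii), assume $R$ is $\GGL$ but not $\AGL$, so $n\ge 2$. Since $R/\fkt$ is Artinian Gorenstein and $(\fkm/\fkt)^2=\overline{\alpha}(\fkm/\fkt)$, symmetry of its Hilbert function forces $\mu_R(\fkm/\fkt)=1$; as $\alpha\notin\fkt$ (otherwise $\fkm=\fkt$), $\overline{\alpha}$ generates $\fkm/\fkt$, so $\fkm=\fkt+(\alpha)$ and one may choose minimal generators $\fkm=(\alpha,x_2,\dots,x_{\rmv(R)})$ with $x_i\in\fkt$. Then $R/(x_2,\dots,x_{\rmv(R)})$ has principal maximal ideal $(\overline{\alpha})$ with $\overline{\alpha}^{\,n}=0$ and length $n$, which gives $\fkt=(\alpha^n,x_2,\dots,x_{\rmv(R)})$ and, via $\fkm^n\subseteq\fkt$ and a multiplicity count, that $(\alpha^n)$ is a minimal reduction of $\fkt$; this is (iii), and (i) falls out along the way. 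From this explicit shape one checks that $B$ is local with $\fkn\cap R$-residue field $B/\fkn\cong R/\fkm$ — the decomposition $B=\prod B_i$ would contradict $n\ge 2$, and this is the delicate point — and that $\rmv(B)=\rme(B)=\rme(R)$, using $\fkm=\alpha B$ and the length count. Finally, the passage $\rmK_R\rightsquigarrow\rmK_R:\fkm$ shifts $\fkt$ down by one power of $\alpha$, which one computes to give $\ell_B(B/\tr_B(\rmK_B))=n-1$ (this is (ii)); in particular $\tr_B(\rmK_B)\subsetneq B$, so $B$ is $\GGL$ and not Gorenstein. Note this simultaneously gives the bookkeeping $B$ Gorenstein $\iff n=1\iff R$ is $\AGL$.

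For (2)$\Rightarrow$(1) I would run the construction backwards: if $B$ is $\GGL$ with $\rmv(B)=\rme(B)=\rme(R)$ and not Gorenstein, then the invariant $\rmv(B)=\rme(R)$ identifies $B$ with $\Hom_R(\fkm,\fkm)$ carrying the structure above, the defining data of $B$ (its canonical module, sitting one colength step inside a module containing $\rmK_R$, and $\tr_B(\rmK_B)$ of positive colength) pulls back along $R\subseteq B$ — the conductor being $\fkm$ — to a defining sequence $0\to R\to\rmK_R\to C\to 0$ for $R$ with $C$ Ulrich with respect to $\fkt$, and $\ell_B(B/\tr_B(\rmK_B))\ge 1$ forces $\fkt\subsetneq\fkm$, so $R$ is $\GGL$ but not $\AGL$. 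The main obstacles are: (a) the local-ness of $B$, where the hypothesis ``not $\AGL$'' must genuinely be used — minimal multiplicity alone is insufficient, as certain $\AGL$ or Gorenstein rings of minimal multiplicity have $\Hom_R(\fkm,\fkm)$ a nontrivial product; (b) the exact colength drop in (ii), which is the engine of the theorem and which I expect to require careful fractional-ideal analysis of $\rmK_R:\fkm$ and of $\tr_B(\rmK_B)=\rmK_B(B:\rmK_B)$; the descent in (2)$\Rightarrow$(1) is then largely a matter of reading the forward argument in reverse.
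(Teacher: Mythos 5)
Your overall framework (reduce everything to the fractional-ideal picture $R\subseteq K\subseteq B=\fkm/\alpha$, identify $\rmK_B$ with $K:\fkm$, track trace ideals) is the same circle of ideas as the paper's, but the proposal has a genuine gap at the step it leans on most heavily. You derive (iii)(a) — that $\fkm=\fkt+(\alpha)$, i.e.\ that $R/\fkt$ has principal maximal ideal — from the claim that an Artinian Gorenstein local ring $A$ with $\fkm_A^2=\bar\alpha\fkm_A$ must have $\mu(\fkm_A)=1$ ``by symmetry of its Hilbert function.'' This is false: $A=k[x,y]/(xy,\,x^2-y^2)$ is an Artinian complete intersection with $\fkm_A^2=x\fkm_A$, embedding dimension $2$, and perfectly symmetric Hilbert function $(1,2,1)$. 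So nothing about Gorensteinness plus $\fkm_A^2=\bar\alpha\fkm_A$ forces a hypersurface. In the paper the hypersurface property of $R/\tr_R(\rmK_R)$ is Corollary \ref{a7.12.5}, i.e.\ an \emph{output} of the inductive argument of Theorem \ref{a7.12} (which peels off one step at a time via $B$, using the identity $\tr_B(\rmK_B)=\tfrac{1}{\alpha}\tr_R(\rmK_R)$ and $R/\fkm\cong B/\fkn$), not an input; your proposal inverts that logical order and the shortcut you use to do so does not hold. Since the rest of your forward direction (the explicit generators, the reduction $(\alpha^n)$, the identification of $\fkn$ and of $\tr_B(\rmK_B)$) is built on this description of $\fkt$, the argument does not go through as written.

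Two further points are asserted rather than proved, and both require real work in the paper. First, the locality of $B$: you correctly flag it as ``the delicate point'' but give no argument; the paper's Proposition \ref{a4.22} proves it by exhibiting explicit generators $B=R+\langle vf_1,\dots,vf_{r-1}\rangle+\langle vg\rangle$ with $v\in(\fkc:_R\fkm)\setminus\fkc$ (which is exactly where ``not $\AGL$'' enters, since otherwise $v$ is a unit), concluding $B=R+(\fkm S\cap B)$ and hence that $\fkm S\cap B$ is the unique maximal ideal. Second, the implication (2)$\Rightarrow$(1) is not ``the forward argument in reverse'': starting from $B$ you have no a priori description of $\fkt$ inside $R$, and the paper instead runs a length count on the three exact sequences $0\to L/K\to S/K\to S/L\to0$, $0\to B/R\to S/R\to S/B\to0$, $0\to K/R\to S/R\to S/K\to0$ to force $\ell_R(K/R)=(\rmr(R)-1)\,\ell_R(R/\fkc)$ and hence freeness of $K/R$ over $R/\fkc$. (A small slip besides: in dimension one the cokernel of a defining sequence is $(R/\fkt)^{\oplus(r-1)}$, not $(R/\fkt)^{\oplus r}$.)
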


We should emphasize that we assume that $R$ is not an $\AGL$ ring in (1) of Theorem \ref{xxx15}, and (1) $\Rightarrow$ (2) of Theorem \ref{xxx15} does not hold if  $R$ is an $\AGL$ ring (Example \ref{a4.22.5}).
On the other hand, it is known that $R$ is an $\AGL$ ring having minimal multiplicity if and only if $B$ is a (not necessarily local) Gorenstein ring (\cite[Theorem 5.1.]{GMP}). By combining this result with Theorem \ref{xxx15}, a $\GGL$ ring having minimal multiplicity reaches a Gorenstein ring by taking the endomorphism algebra of the maximal ideal recursively. 

We can also find a relation between $\GGL$ rings and Ulrich ideals. The notion of Ulrich ideals was introduced by the first author, K. Ozeki, R. Takahashi, K. Watanabe, and K. Yoshida \cite{GOTWY}. It is shown that Ulrich ideals present interesting properties, for examples, see \cite{GOTWY, GTT2}. 
We summarize the definition and the fundamental properties of Ulrich ideals in Subsection \ref{subsection22}. In addition, roughly to say, we prove that most of Ulrich ideals contain $\tr_R(\rmK_R)$ (Proposition \ref{a7.2}). Thus, we pose a natural question of when $\tr_R(\rmK_R)$ is an Ulrich ideal. With the question, we obtain that $\tr_R(\rmK_R)$ is an Ulrich ideal if and only if $R$ is a $\GGL$ ring and the endomorphism algebra $\Hom_R(\tr_R(\rmK_R), \tr_R(\rmK_R))$ is Gorenstein (Theorem \ref{a7.5}). We further determine the set of Ulrich ideals of $\GGL$ rings having minimal multiplicity (Theorem \ref{a7.14}). 
In addition, we explore the $\GGL$ property in idealizations and numerical semigroup rings (Subsection \ref{subsection4.4} and Section \ref{sec5}). We give criteria for being a $\GGL$ ring in several cases (Proposition \ref{a4.15} and Theorems \ref{a4.29} and \ref{a4.33}). As an application, numerous examples of $\GGL$ rings are obtained.

After the arguments in dimension one (Sections \ref{section3} and \ref{sec5}), the remainder of this paper is organized as follows. 
In Section \ref{section5}, we study the following condition. Let $(R, \fkm)$ be a Cohen-Macaulay local ring of dimension $>0$ possessing the canonical module $\rmK_R$. Let $r=\rmr(R)$ denote the Cohen-Macaulay type of $R$.

\begin{condition}
There exists an exact sequence 
\[
0 \to R \to \rmK_R \to \bigoplus_{i=2}^r R/\fka_i \to 0
\]
of $R$-modules, where $\fka_i$ is an ideal of $R$ for all $2\le i \le r$.
\end{condition}

Note that this condition is automatically satisfied if $R$ is a generically Gorenstein ring of $r=2$, $R$ is a $\GGL$ ring of dimension one, or $R$ is a $2$-$\AGL$ ring in the sense of \cite{CGKM} (Remark \ref{a5.1}). Suppose that there exists a surjective ring homomorphism $\varphi:S \to R$ from a Gorenstein local ring $S$ such that the projective dimension of $R$ over $S$ is finite (this condition is satisfied if $R$ is complete). Then, we characterize Condition in terms of the minimal $S$-free resolution of $R$.

\begin{thm}{\rm (Theorem \ref{a5.2})}\label{xxx18}
Let $(S, \fkn)$ be a Gorenstein local ring and $I, \fka_2, \fka_3, \dots, \fka_r$ be ideals of $S$. Suppose that $R = S/I$ is a Cohen-Macaulay ring but not a Gorenstein ring. Assume that the projective dimension of $R$ over $S$ is finite. 
Then,  the following conditions are equivalent:
\begin{enumerate}[{\rm (1)}]
\item There exists an exact sequence 
$$0\to R \to \rm{K}_R \to \bigoplus _{i=2}^r S/\fka_i \to 0 $$
of $S$-modules.
\item There exist a minimal $S$-free resolution $0\to S^{\oplus r}  \xrightarrow{\mathbb{M}} S^{\oplus q} \to \dots \to S \to R \to 0$
 of $R$ and a non-negative integer $m$ such that
\begin{equation*}
{}^t\!{\mathbb{M}}=\left(
\begin{array}{cccc|c}
y_{21}~y_{22} ~\cdots ~y_{2u_2} & y_{31}~y_{32} ~\cdots ~y_{3u_3} & \cdots & y_{r1}~y_{r2} ~\cdots ~y_{ru_r} & z_1~z_2 ~\cdots ~z_m \\ \hline
x_{21}~x_{22} ~\cdots ~x_{2u_2} &  & & & \\
 & x_{31}~x_{32}~ \cdots~ x_{3u_3} & & \mbox{\huge{0}} & \\
 & & \ddots  & & \mbox{\huge{0}} \\
\mbox{\huge{0}} & & & x_{r1}~x_{r2}~ \cdots ~x_{ru_r} & \\
\end{array}
\right) ,
\end{equation*}
where $\mu_S (\fka_i)=u_i$, $\fka_i=(x_{i1}, x_{i2}, \dots, x_{iu_i})$, and $\dim S/\fka_i=\dim R-1$ for all $2\le i \le r$.
\end{enumerate}

Furthermore, if $x_{i1}, x_{i2}, \dots, x_{iu_i}$ is an $S$-regular sequence for all $2\le i \le r$, then we have the equality
\begin{align}\label{tousiki}
I=\sum_{i=2}^r  I_2\left(\begin{matrix}
y_{i1}&y_{i2} & \cdots &y_{iu_i}\\
x_{i1}&x_{i2} & \cdots &x_{iu_i}\\
\end{matrix}
\right)  + (z_1, z_2, \dots, z_m).
\end{align}
\end{thm}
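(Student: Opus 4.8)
The plan is to construct the minimal $S$-free resolution explicitly from the exact sequence, and conversely to read off the exact sequence from the shape of the matrix $\mathbb{M}$. First I would exploit the finiteness of $\pd_S R$: since $R = S/I$ is Cohen--Macaulay of dimension $\dim R$ and $S$ is Gorenstein, the Auslander--Buchsbaum formula gives $\pd_S R = \dim S - \dim R =: c$, and $\mathrm{Ext}^c_S(R, S) \cong \rmK_R$ while $\mathrm{Ext}^i_S(R,S) = 0$ for $i \ne c$. Hence applying $\Hom_S(-, S)$ to a minimal $S$-free resolution $F_\bullet$ of $R$ and shifting, the dual complex $\Hom_S(F_\bullet, S)$ is a minimal $S$-free resolution of $\rmK_R$. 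The last map $F_c \xrightarrow{\partial} F_{c-1}$ in $F_\bullet$ thus transposes to the \emph{first} map in the resolution of $\rmK_R$; so the presentation matrix of $\rmK_R$ over $S$ is exactly ${}^t\!\mathbb{M}$ where $\mathbb{M}$ is the matrix of $\partial\colon S^{\oplus r} \to S^{\oplus q}$ (note $r = \rmr(R)$ is the rank of $F_c$, consistent with the statement).

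Next I would translate condition (1) into a statement about this presentation. Given the exact sequence $0 \to R \to \rmK_R \to \bigoplus_{i=2}^r S/\fka_i \to 0$, I want to build a presentation of $\rmK_R$ adapted to the filtration by the summands $S/\fka_i$ and the image of $R$. Choose generators of $\rmK_R$: one coming from the image of $1 \in R$, plus one for each cyclic summand $S/\fka_i$ (so $r$ generators total, $r - 1$ from the summands plus $1$). The relations are of two types: the defining relations $x_{ij}\cdot e_i = 0$ of each $S/\fka_i$ (these give the block-diagonal lower part of ${}^t\!\mathbb{M}$ with blocks $x_{i1}, \dots, x_{iu_i}$), and the relations coming from lifting the embedded copy of $R$ — each generator $x_{ij}$ of $\fka_i$ kills the class of $e_i$ in the quotient, hence $x_{ij} e_i$ lifts to a multiple $y_{ij} e_0$ of the distinguished generator, producing the top row $(y_{ij})$; possibly there are $m$ further relations $z_1 e_0, \dots, z_m e_0$ purely on $e_0$ coming from the fact that the map $R \to \rmK_R$ need not be an isomorphism and $R$ itself may have relations (in fact $z_k$ will be elements of $I$, the annihilator of $R$). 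This yields precisely the block form of ${}^t\!\mathbb{M}$ displayed in (2). For the converse, given the matrix in that form I would let $\rmK_R$ be its cokernel (transposed back), define a map $R \to \rmK_R$ by sending $1$ to the class of $e_0$ — well-defined because the $z_k$, and also each $y_{ij}$ read modulo the submodule generated by $e_2, \dots, e_r$, annihilate $e_0$ appropriately — and check that the cokernel of this map has presentation matrix the block-diagonal $(x_{ij})$, i.e. is $\bigoplus_{i=2}^r S/\fka_i$, and that the map is injective; injectivity is where I would need the minimality of the resolution (no unit entries) together with the grade/height conditions $\dim S/\fka_i = \dim R - 1$ to rule out that $e_0$ is killed.

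For the final displayed equality \eqref{tousiki}, I would use that when each $x_{i1}, \dots, x_{iu_i}$ is an $S$-regular sequence, the Koszul-type argument identifies $I_2$ of the $2 \times u_i$ matrix $\begin{pmatrix} y_{i1} & \cdots & y_{iu_i} \\ x_{i1} & \cdots & x_{iu_i}\end{pmatrix}$ with the relations forced by the commutativity/exactness at the cyclic summand $S/\fka_i = S/(x_{i1}, \dots, x_{iu_i})$: the vanishing $y_{ij}x_{ik} - y_{ik}x_{ij} = 0$ in $R$ holds because both lifts of $x_{ij}x_{ik} e_i$ must agree, and conversely these $2\times 2$ minors together with the $z_k$ generate $I = \ann_S R$ because the complex $F_\bullet$ resolving $R$ is built from exactly these data. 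I expect the main obstacle to be the careful bookkeeping in the converse direction — verifying that the cokernel of the constructed map $R \to \rmK_R$ is the predicted direct sum and that the map is genuinely injective — since one must correctly separate the "$y$-part" of the relations (which, after reduction modulo the other generators, must vanish on $e_0$) from the "$z$-part", and invoke the height hypothesis on the $\fka_i$ at the right moment; the regular-sequence refinement for \eqref{tousiki} should then be a comparatively routine Koszul computation once the structure of the resolution is pinned down.
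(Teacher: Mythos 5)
Your proposal follows essentially the same route as the paper's proof: present $\rmK_R$ on the $r$ generators $\varphi(1), f_2,\dots,f_r$ coming from the exact sequence, observe that the syzygies are the vectors $x_{ij}\mathbf{e}_i+y_{ij}\mathbf{e}_1$ together with relations $z_k\mathbf{e}_1$ with $z_k\in I$, use $\pd_S R<\infty$ to dualize this into the last map of the minimal resolution of $R$ (and back again for the converse, with injectivity of $R\to\rmK_R$ from $\dim S/\fka_i=\dim R-1$), and prove the final equality for $I$ by the Koszul argument applied to the coefficient relations $\sum_j c_{ij}x_{ij}=0$. The one verification you gloss over, which the paper carries out explicitly, is that the constructed presentation of $\rmK_R$ is actually minimal (no column $x_{ij}\mathbf{e}_i+y_{ij}\mathbf{e}_1$ is redundant), which the paper deduces from $u_i=\mu_S(\fka_i)$.
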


As a consequence of Theorem \ref{xxx18}, we characterize a one-dimensional $\GGL$ rings $R$ in terms of the form of a minimal $S$-free resolution of $R$. This characterization is used in Section \ref{section6}.
In Section \ref{sec7}, we give several results for higher dimensional $\GGL$ rings obtained from the results in dimension one. In particular, we prove Theorem \ref{def1.3.1}(2)$\Rightarrow$(1) in the section. We also revisit the problem of when $\tr_R(\rmK_R)$ is an Ulrich ideal in arbitrary dimension (Theorem \ref{a7.7}).
In Section \ref{section6}, we construct higher-dimensional $\GGL$ rings via certain specific determinantal ideals (Theorem \ref{a6.3}). We apply the result to the Rees algebras of parameter ideals over Gorenstein local rings (Corollary \ref{a6.5}).

Below is our notation used in this paper.

\begin{conv}
In what follows, all rings are commutative Noetherian rings. 
For a ring $R$, $\rmQ(R)$ denotes the total ring of fractions of $R$. 
We say that $I$ is a {\it fractional ideal} if $I$ is a finitely generated $R$-submodule of $\rmQ(R)$ containing a non-zerodivisor of $R$. For a fractional ideals $I$ and $J$, we denote by $I:J$ the fractional ideal $\{x\in \rmQ(R) \mid xJ\subseteq I\}$. It is known that an isomorphism $I:J\cong \Hom_R(J, I)$ is given by the correspondence $x \mapsto \hat{x}$, where $\hat{x}$ denotes the multiplication map of $x\in I:J$ (\cite[Lemma 2.1]{HK}). We say that an ideal $I$ is {\it regular} if $I$ contains a non-zerodivisor of $R$. For ideals $I$ and $J$ of $R$, we set $I:_RJ := (I:J) \cap R.$

We use further notations for local rings. $(R, \fkm)$ denotes a local ring with the unique maximal ideal $\fkm$. Let $M$ be a finitely generated $R$-module. We denote by $\ell_R (M)$ and $\mu_R (M)$ the length of $M$ and the number of minimal generators of $M$, respectively. If $M$ is a Cohen-Macaulay $R$-module of dimension $s$, then $\rmr_R (M)$ denotes the Cohen-Macaulay type $\ell_R(\Ext_R^s(R/\fkm, M))$ of $M$. If $M=R$, then we just write $\rmr(R)$.

Let $\fka$ be an $\fkm$-primary ideal of $R$. For a finitely generated $R$-module $M$ of dimension $s$, there exist integers $\{ \rme_{\fka}^i(M)\}_{0\le i\le s}$ such that 
\[
\ell_R(M/\fka^{n+1} M)=\rme_{\fka}^{0}(M){\cdot}\binom{n+s}{s}-\rme_{\fka}^{1}(M){\cdot}\binom{n+s-1}{s-1}+\dots+(-1)^{s}\rme_{\fka}^{s}(M)
\]
for all $n\gg 0$. $\rme_{\fka}^i(M)$ are called the {\it $i$th Hilbert coefficient} of $M$ with respect to $\fka$. In particular, $\rme (R)=\rme_{\fkm}^0(R)$ is called the {\it multiplicity} of $R$.
We denote by $\rmv(R)$ the {\it embedding dimension} of $R$. An ideal $\fkq$ is called a {\it reduction of $\fka$} if $\fkq\subseteq \fka$ and $\fka^{\ell+1}=\fkq \fka^\ell$ for some $\ell\ge 0$. We call a reduction {\it minimal} if it is generated by a system of parameters of $R$. It is known that $\rme_\fka^0(M)=\rme_\fkq^0(M)$ for a reduction $\fkq$ of $\fka$ (\cite[Lemma 4.6.5]{BH}). 

For a matrix $\mathbb{M}$ whose entries are in $R$ and a positive integer $t$, we denote by $I_t(\mathbb{M})$ the {\it determinantal ideal of $\mathbb{M}$}, that is, the ideal of $R$ generated by the $t \times t$ minors of the matrix $\mathbb{M}$.
\end{conv}


\section{Preliminaries}\label{section1}

\subsection{Ulrich modules with respect to an $\fkm$-primary ideal}
In this subsection, we summarize some necessary preliminaries on the properties of Ulrich $R$-modules. Suppose that $(R, \fkm)$ is a local ring and $M$ is a finitely generated $R$-module. We start to recall the definition of Ulrich modules.

\begin{defn}(\cite[Definition 1.2]{GOTWY})\label{a2.2}
Let $M$ be a finitely generated $R$-module and $\fka$ be an $\fkm$-primary ideal of $R$. We say that $M$ is an {\it Ulrich $R$-module with respect to $\fka$} if $M$ satisfies the following conditions:
\begin{enumerate}[{\rm (1)}]
\item $M$ is a nonzero Cohen-Macaulay $R$-module (i.e., $\depth_R M=\dim_R M$),
\item $\rme_{\fka}^0 (M)=\ell_R (M/\fka M)$, and
\item $M/\fka M $ is an $R/\fka$-free module. 
\end{enumerate}
\end{defn}

\begin{remark}\label{a2.25}
Note that the definition of Ulrich $R$-modules with respect to $\fka$ in the sense of \cite[Definition 1.2]{GOTWY} supposes that the module is maximal Cohen-Macaulay. Here, we only assume that the module is Cohen-Macaulay. This is a necessary generalization to define $\GGL$ rings, see Fact \ref{a3.1}. 
\end{remark}

\begin{remark}\label{a2.3}
We can replace the condition (3) of Definition \ref{a2.2} with the equality 
$$\ell_R (M/\fka M)=\mu_R (M) {\cdot} \ell_R (R/\fka)$$ 
since there is a surjection $(R/\fka)^{\oplus \mu_R (M)} \to M/\fka M$.
\end{remark}

We can also rephrase the condition (2) of Definition \ref{a2.2} as follows:

\begin{lem}\label{a2.4}
Let $\fka$ be an $\fkm$-primary ideal of $R$. Suppose that $R/\fkm$ is infinite and $M$ is a Cohen-Macaulay $R$-module of dimension $s$. Then,  the following conditions are equivalent:
\begin{enumerate}[{\rm (1)}]
\item $\rme_{\fka}^0 (M)=\ell_R (M/\fka M)$ and
\item $\fka M = (f_1, f_2, \dots , f_s)M $ for some elements $f_1, f_2, \dots , f_s \in \fka$.
\end{enumerate}
When this is the case, $f_1, f_2, \dots , f_s$ form a system of parameters of $M$ and part of a minimal system of generators of $\fka$. 
\end{lem}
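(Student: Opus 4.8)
The plan is to prove the equivalence (1)$\Leftrightarrow$(2) via the classical theory of superficial sequences and reductions, exploiting that $R/\fkm$ is infinite. First I would establish (2)$\Rightarrow$(1). Suppose $\fka M = (f_1,\dots,f_s)M$ with $f_i \in \fka$. Since $M$ is Cohen--Macaulay of dimension $s$, and $\dim R/\fka = 0$, the ideal $\fka$ is $\fkm$-primary, so $\rme^0_{\fka}(M)$ is defined and equals $\rme^0_{\mathfrak{q}}(M)$ for any reduction $\mathfrak{q}$ of $\fka$ on $M$. The hypothesis says exactly that $(f_1,\dots,f_s)$ is a reduction of $\fka$ on $M$ (indeed $\fka M = \mathfrak{q}M$ forces $\fka^{n+1}M = \mathfrak{q}\fka^n M$ for all $n$). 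A standard length count then gives $\rme^0_{(f_1,\dots,f_s)}(M) = \ell_R(M/(f_1,\dots,f_s)M) = \ell_R(M/\fka M)$: the first equality holds because $f_1,\dots,f_s$ is a system of parameters for the Cohen--Macaulay module $M$ — one checks they form an $M$-regular sequence (minimal primes of $\operatorname{Supp} M$ all have coheight $s$, and killing a parameter drops dimension by exactly one), whence the associated graded is a polynomial ring and the Hilbert--Samuel multiplicity is the colength. So I would need the sublemma that $f_1,\dots,f_s$ is indeed a system of parameters of $M$; this follows since $M/(f_1,\dots,f_s)M = M/\fka M$ has finite length, forcing $\dim M/(f_1,\dots,f_s)M = 0$, so $s$ elements cut the dimension of $M$ down to $0$.

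Next, for (1)$\Rightarrow$(2), this is where the infinite residue field is used. Assume $\rme^0_{\fka}(M) = \ell_R(M/\fka M)$. Because $R/\fkm$ is infinite, choose a system of parameters $f_1,\dots,f_s \in \fka$ for $M$ that is a superficial sequence for $\fka$ with respect to $M$ and that is part of a minimal generating set of $\fka$ — general linear combinations of a minimal generating system of $\fka$ work, since a generic choice avoids both the finitely many relevant associated primes (for superficiality) and $\fkm\fka$ (for minimality of the generating set). Set $\mathfrak{q} = (f_1,\dots,f_s)$. The superficial sequence being a system of parameters of the Cohen--Macaulay module $M$ means $\mathfrak{q}$ is a parameter ideal on $M$, hence $\rme^0_{\mathfrak{q}}(M) = \ell_R(M/\mathfrak{q}M)$, and also $\rme^0_{\mathfrak{q}}(M) = \rme^0_{\fka}(M)$ since a superficial system of parameters generates a reduction of $\fka$ on $M$. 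Combining with the hypothesis: $\ell_R(M/\mathfrak{q}M) = \rme^0_{\mathfrak{q}}(M) = \rme^0_{\fka}(M) = \ell_R(M/\fka M)$. Since $\mathfrak{q}M \subseteq \fka M$, equality of colengths forces $\mathfrak{q}M = \fka M$, which is (2).

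Finally, for the ``when this is the case'' clause: I have already arranged in the (1)$\Rightarrow$(2) direction that $f_1,\dots,f_s$ is a superficial system of parameters that is part of a minimal generating set of $\fka$; conversely, starting from any $f_1,\dots,f_s$ as in (2), the argument in the first paragraph shows they form a system of parameters of $M$, and minimality as part of a generating set of $\fka$ follows because if some $f_i \in \fkm\fka + (f_1,\dots,\widehat{f_i},\dots,f_s)$ then Nakayama applied to $\fka M/(f_1,\dots,\widehat{f_i},\dots,f_s)M$ would give $\fka M = (f_1,\dots,\widehat{f_i},\dots,f_s)M$, and a length count via the associated graded ring of a Cohen--Macaulay module would contradict $\rme^0_{\fka}(M) > 0$ being expressible with only $s-1$ parameters — more simply, $s-1$ elements cannot form a system of parameters for the $s$-dimensional module $M$.

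The main obstacle I anticipate is the direction (1)$\Rightarrow$(2): making the superficial-element machinery work simultaneously for the three requirements (parameter on $M$, superficial for $\fka$ on $M$, minimal generator of $\fka$) requires the infinite residue field hypothesis in an essential way, and one must be careful that ``superficial system of parameters generates a reduction'' uses Cohen--Macaulayness of $M$ (in general superficial sequences only give asymptotic control of lengths, but the equality $\rme^0_{\mathfrak q}(M) = \rme^0_{\fka}(M)$ together with $\mathfrak q M \subseteq \fka M$ and finite colengths is what upgrades this to the exact equality $\mathfrak q M = \fka M$). The remaining computations — Hilbert--Samuel function of a parameter ideal on a Cohen--Macaulay module, behavior of multiplicity under reduction — are standard (e.g.\ \cite[Section 4.6]{BH}) and I would cite rather than reprove them.
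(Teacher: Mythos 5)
Your proposal is correct and follows essentially the same route as the paper: both directions rest on producing/recognizing $s$ elements of $\fka$ that generate a reduction on $M$, comparing $\rme^0_{\fkq}(M)=\ell_R(M/\fkq M)$ for a parameter ideal on the Cohen--Macaulay module with $\rme^0_{\fka}(M)$, and deducing the final clause from finite length of $M/\fkq M$ plus a Nakayama/reduction-number contradiction. The only (cosmetic) difference is that the paper passes to $A=R/\operatorname{Ann}_R M$ and works with minimal reductions of $\fka A$ there, whereas you work directly in $R$ with superficial sequences and reductions ``on the module $M$''; these are equivalent formulations of the same argument.
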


\begin{proof}
Set $A=R/[(0):_R M]$ and $\overline{\fka}=\fka A$. For $a \in R$, $\ol{a}$ stands for the image of $a$ in $A$.

(1) $\Rightarrow$ (2): Since $A/\fkm A \cong R/\fkm$ is infinite, we can choose a parameter ideal $\fkq=(\overline{f_1}, \overline{f_2}, \dots , \overline{f_s})$  of $A$ as a reduction of $\overline{\fka}$, where  $f_1, f_2, \dots , f_s \in \fka$. Hence,
$$\ell_R (M/\fka M)=\rme_{\fka}^0 (M)=\rme_{\ol{\fka}}^0 (M)=\rme_{\fkq}^0 (M)=\ell_{A} (M/\fkq M)=\ell_R (M/(f_1, f_2, \dots , f_s) M).$$
It follows that $\fka M = (f_1, f_2, \dots , f_s)M $ because $(f_1, f_2, \dots , f_s) \subseteq \fka$.

(2) $\Rightarrow$ (1): Since $\ol{\fka} M=(\overline{f_1}, \overline{f_2}, \dots , \overline{f_s})M$ and $M$ is a faithful $A$-module, $\fkq=(\overline{f_1}, \overline{f_2}, \dots , \overline{f_s})$ is a reduction of $\overline{\fka}$ (\cite[Corollary 1.1.8]{SH}). 
Hence, 
$$\rme_{\fka}^0 (M)=\rme_{\overline{\fka}}^0 (M)=\rme_{\fkq}^0 (M)=\ell_{A} (M/\fkq M)=\ell_R (M/\fka M).$$

When this is the case, $f_1, f_2, \dots , f_s$ form a system of parameters of $M$ because $M/\fka M=M/(f_1, f_2, \dots , f_s) M$ is of finite length and $s=\dim_R M$. Assume that $f_1, f_2, \dots , f_s$ is not part of a minimal system of generators of $\fka$. Then,  we have $(f_1, f_2, \dots , f_s)+\fkm \fka=(f_1, f_2, \dots , f_{s-1})+\fkm \fka$ after renumbering $f_1, f_2, \dots , f_s$. It follows that $(\overline{f_1}, \overline{f_2}, \dots , \overline{f_{s-1}}) +\fkm \overline{\fka}$ is a reduction of $\overline{\fka}$, and thus $(\overline{f_1}, \overline{f_2}, \dots , \overline{f_{s-1}})$ is also a reduction of $\overline{\fka}$. This contradicts $s=\dim A$.
\end{proof}

To establish fundamental properties of Ulrich $R$-modules, we prepare a lemma. For an ideal $I$ of $R$, 
\[
\mbox{\rm gr}_{I}(R):=R[It]/IR[It]\cong \bigoplus_{n\ge0} I^n /I^{n+1}
\]
is called the {\it associated graded ring} of $I$, where $R[t]$ is the polynomial ring over $R$. For a finitely generated $R$-module $X$, 
\[
\mbox{\rm gr}_{I}(X)=\bigoplus_{n\ge0} I^n X/I^{n+1}X
\]
is called the {\it the associated graded $\mbox{\rm gr}_{I}(R)$-module} of $X$ with respect to $I$ (\cite[Introduction]{RV}).

\begin{lemma}\label{a2.1}
Let $\varphi : R \to S$ be a flat local homomorphism of local rings such that $S/\fkm S$ is a Cohen-Macaulay local ring of dimension  $\ell$. Let $g_1, g_2, \dots, g_{\ell} \in S$ be a system of parameters of $S/\fkm S$, and set $J=(g_1, g_2, \dots, g_{\ell})$. Let $X$ be a finitely generated $R$-module. 
Then,  $g_1t, g_2t, \dots, g_{\ell}t$ is a $\mbox{\rm gr}_{IS+J}(S\otimes_R X)$-regular sequence for any ideal $I$ of $R$. 
\end{lemma}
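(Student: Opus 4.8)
I would prove Lemma \ref{a2.1} by a standard base-change/regular-sequence argument, reducing everything to the fiber ring $S/\fkm S$. Write $A = S/\fkm S$, a Cohen-Macaulay local ring of dimension $\ell$, and let $\bar{g}_1, \dots, \bar{g}_\ell$ be the images of $g_1, \dots, g_\ell$, which form a system of parameters of $A$, hence an $A$-regular sequence. Set $Y = S \otimes_R X$ and $\fkb = IS + J$. The key point is that, because $\varphi$ is flat, the associated graded module $\gr_{\fkb}(Y)$ decomposes nicely in relation to $\gr_J(A)$: I expect an isomorphism of graded rings $\gr_{IS+J}(S) \cong \gr_{IS}(S) \otimes_{?} \gr_{\bar J}(A)$-type statement, or more directly, that $\gr_{IS+J}(S \otimes_R X)$ is, in each degree, built from $I^n X \otimes_R S$ and the powers of $J$ in a way that makes $g_1 t, \dots, g_\ell t$ behave like the regular sequence $\bar g_1 t^{\,}, \dots, \bar g_\ell t$ in $\gr_{\bar J}(A)$ tensored (flatly) over $R$ with the relevant pieces of $\gr_{IS}$ of $X$.

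**Key steps, in order.** First I would set up notation and reduce to showing that multiplication by the leading form $g_i^* = g_i t \in [\gr_{\fkb}(S)]_1$ is a nonzerodivisor on the appropriate quotient, proceeding by induction on $i = 1, \dots, \ell$: it suffices to show $g_1 t$ is a nonzerodivisor on $\gr_{IS+J}(Y)$ and that the quotient $\gr_{IS+J}(Y)/(g_1 t)$ is again of the same shape with $J$ replaced by $(g_2, \dots, g_\ell)$ and $S$ replaced by $S/(g_1)$ — so that the induction closes. Second, for the base case I would unwind the definition: an element killed by $g_1 t$ in degree $n$ is represented by $\xi \in (IS+J)^n Y$ with $g_1 \xi \in (IS+J)^{n+2} Y$, and I must show $\xi \in (IS+J)^{n+1} Y$. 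Here I would invoke the flatness of $\varphi$ to write $(IS+J)^m Y = \bigoplus_{p+q=m} (I^p X \otimes_R S) \cdot J^q$ (or the analogous filtration statement), so the problem becomes a statement purely about how $g_1$ (equivalently $\bar g_1$) acts on powers of $\bar J = J A$ inside $A$, tensored over the base — and there $\bar g_1$ is part of a system of parameters of the Cohen-Macaulay ring $A$, so $\bar g_1 t$ is $\gr_{\bar J}(A)$-regular by the classical fact that a s.o.p. in a Cohen-Macaulay ring generates an ideal of reduction number compatible with regularity of the initial forms (Valabrega–Valla / standard associated-graded theory). Third, I would assemble the pieces: flat base change preserves the "no new relations" property, so passing from $A$ back up to $S$ and tensoring with the $X$-filtration $\{I^n X\}$ keeps $g_1 t$ a nonzerodivisor; then the quotient by $(g_1 t)$ is identified with the corresponding object over $S/(g_1)$ with fiber $A/(\bar g_1)$, which is still Cohen-Macaulay of dimension $\ell - 1$ with s.o.p. $\bar g_2, \dots, \bar g_\ell$, completing the induction.

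**The main obstacle.** The delicate point is the filtration splitting $(IS + J)^m (S \otimes_R X) = \sum_{p+q = m} I^p X \otimes_R J^q$ as an \emph{internal} direct sum (or at least the fact that $(IS+J)^m Y \cap (J^q Y)$-type intersections behave well), which is where flatness of $\varphi$ over $R$ and the fact that $g_1, \dots, g_\ell$ come from a regular sequence in the fiber both get used essentially; without this one cannot reduce the regularity question to the fiber ring. A clean way to get it is to observe $\gr_{\bar J}(A)$ is a polynomial-ring quotient generated in degree one, use that $S/\fkm S = A$ is flat-fibered over $R$ to get $\gr_{IS+J}(S) \cong (\gr_I(R) \otimes_R S) \otimes_{?} \gr_{\bar J}(A)$ compatibly, and then tensor with $X$; but making the "$\otimes_?$" precise (it should be a Rees-type or Segre-type product) is the step that requires care. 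Everything after that — the induction on $\ell$ and the invocation of Cohen-Macaulayness of the fiber — is routine.
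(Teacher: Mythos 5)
Your proposal correctly identifies the three essential inputs (flatness of $\varphi$, Cohen--Macaulayness of the fiber, and a Valabrega--Valla type criterion), but the step you yourself flag as delicate is a genuine gap, and the specific mechanism you propose for it is false. The claimed internal direct sum $(IS+J)^m Y=\bigoplus_{p+q=m}(I^pX\otimes_RS)\cdot J^q$ cannot hold: any two of the proposed summands contain the common submodule $I^mJ^mY$, which is nonzero in general, so the sum is far from direct. Likewise your induction on $i$ requires identifying $\gr_{IS+J}(Y)/(g_1t)$ with the corresponding associated graded module over $S/(g_1)$, and that identification is equivalent to $g_1t$ being super-regular --- i.e.\ it presupposes part of what is being proved. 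Without a correct proof of the intersection property, the reduction to the fiber ring $A=S/\fkm S$ does not go through, because the whole content of the lemma is the interaction between the $I$-direction (coming from $R$) and the $J$-direction (coming from the fiber); the fiber alone only sees the case $I=(0)$.

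The paper closes this gap by a different and complete route, which you could substitute for your direct-sum step. It first shows that the \emph{degree-zero} elements $g_1,\dots,g_\ell$ form a regular sequence on $\gr_{IS}(L)$, where $L=S\otimes_RX$: since $g_1$ is a nonzerodivisor on the Cohen--Macaulay fiber, $S/g_1S$ is again $R$-flat, so tensoring $0\to S\xrightarrow{g_1}S\to S/g_1S\to0$ with $I^nX$ over $R$ stays exact; this yields $g_1L\cap I^nL=g_1I^nL$, identifies $\gr_{IS}(L)/g_1\gr_{IS}(L)$ with $\gr_{IS}((S/g_1S)\otimes_RX)$, and lets the induction on $\ell$ close without circularity. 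From this one computes $JL\cap(IS+J)^{k+1}L=J(IS+J)^kL$ for all $k$, which is exactly the Valabrega--Valla condition, and then \cite{VV} (or \cite[Theorem 1.1]{RV}) converts it into the statement that the initial forms $g_1t,\dots,g_\ell t$ form a regular sequence on $\gr_{IS+J}(L)$. In short: prove regularity of the $g_i$ themselves on $\gr_{IS}(L)$ first, deduce the intersection identity, and only then invoke Valabrega--Valla for the initial forms.
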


\begin{proof}
First, we show that $g_1, g_2, \dots, g_{\ell}$ is a $\mbox{\rm gr}_{IS}(S\otimes_R X)$-regular sequence.
Indeed, the exact sequence $0 \to S \xrightarrow{g_1} S \to S/g_1S \to 0$ of $S$-modules induces the exact sequence
$$
0 \to S\otimes_R I^n X \xrightarrow{g_1} S\otimes_R I^n X \to (S/g_1S)\otimes_R I^n X \to 0
$$
as $S$-modules for all $n \ge 0$ because $R\to S/g_1S$ is a flat local homomorphism. By letting $L=S\otimes_R X$, the above exact sequence is the same as the exact sequence 
\[
0 \to I^nL \xrightarrow{g_1} I^n L \to I^n (S/g_1S\otimes_R X) \to 0
\]
of $S$-modules for all $n \ge 0$. This induces the graded exact sequence
$$
0 \to \mbox{\rm gr}_{IS}(L) \xrightarrow{g_1} \mbox{\rm gr}_{IS}(L) \to \mbox{\rm gr}_{IS}((S/g_1S)\otimes_R X) \to 0
$$
of graded $\mbox{\rm gr}_{IS}(S)$-modules. Hence, $g_1$ is a $\mbox{\rm gr}_{IS}(L)$-regular element. By induction on $\ell$,  $g_1, g_2, \dots, g_{\ell}$ is a $\mbox{\rm gr}_{IS}(L)$-regular sequence.
Hence, 
\begin{align*}
JL\cap (IS+J)^{k+1} L &= JL\cap [I^{k+1} L+ J(IS+J)^{k} L] = JL \cap I^{k+1} L+ J(IS+J)^{k} L \\
&= J I^{k+1} L+ J(IS+J)^{k} L=J(IS+J)^k L
\end{align*}
for all $k \ge 0$. Therefore, $g_1t, g_2t, \dots, g_{\ell}t$ is a $\mbox{\rm gr}_{IS+J}(L)$-regular sequence by \cite{VV} or \cite[Theorem 1.1]{RV}.
\end{proof}

The following are the fundamental properties of Ulrich $R$-modules with respect to $\fka$. Note that these are a generalization of \cite[Proposition 2.2]{GTT}, but the proof is essentially the same.

\begin{prop}\label{a2.5}
Let $M$ be a nonzero finitely generated $R$-module and set $s=\dim_R M$. Let $\fka$ be an $\fkm$-primary ideal of $R$. Then, the following assertions hold true: 
\begin{enumerate}[{\rm (1)}]
\item Suppose that $s=0$. Then,  $M$ is an Ulrich $R$-module with respect to $\fka$ if and only if $M$ is $R/\fka$-free. 
\item Suppose that $s >0$ and $M$ is a Cohen-Macaulay $R$-module. Assume that $f \in \fka$ is a superficial element for $M$ with respect to $\fka$.  
Then,  $M$ is an Ulrich $R$-module with respect to $\fka$ if and only if $M/fM$ is an Ulrich $R/(f)$-module with respect to $\fka /(f)$. 
\item Let $\varphi : R \to S$ be a flat local homomorphism such that $S/\fkm S$ is a Cohen-Macaulay local ring of dimension  $\ell$. 
Let $g_1, g_2, \dots, g_{\ell} \in S$ be a system of parameters of $S/\fkm S$.
Then,  $M$ is an Ulrich $R$-module with respect to $\fka$ if and only if $S \otimes_R M$ is an Ulrich $S$-module with respect to $\fka S+(g_1, g_2, \dots, g_{\ell})$.  
\item Suppose that $f\in \fkm$ is $M$-regular and $M/fM$ is an Ulrich $R/(f)$-module with respect to $[\fka +(f)]/(f)$. Then,  $M$ is an Ulrich $R$-module with respect to $\fka +(f)$ and $f\not\in \fkm \fka$. 

\end{enumerate}
\end{prop}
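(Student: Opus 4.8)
The plan is to prove the four assertions in the stated order, each time reducing to the ones already established. Assertion (1) is essentially a length count: when $\dim_R M=0$ the module $M$ has finite length, is automatically Cohen--Macaulay, and $\fka^nM=0$ for $n\gg0$, so $\rme_\fka^0(M)=\ell_R(M)$. Hence the condition $\rme_\fka^0(M)=\ell_R(M/\fka M)$ is equivalent to $\fka M=(0)$, and once $\fka M=(0)$ the condition that $M/\fka M$ be $R/\fka$-free is just the condition that $M$ itself be $R/\fka$-free; the reverse implication is immediate. For assertion (2), I would first recall the standard fact that a superficial element $f\in\fka$ for a Cohen--Macaulay module $M$ with $\dim_R M=s>0$ is $M$-regular (it cannot kill any finite-length submodule of a module of positive depth), so $\overline M:=M/fM$ is a Cohen--Macaulay $R/(f)$-module of dimension $s-1$. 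Because $f\in\fka\subseteq\fkm$, one has $M/\fka M=\overline M/\overline\fka\,\overline M$ with $\overline\fka:=\fka/(f)$, $R/\fka=(R/(f))/\overline\fka$, $\mu_R(M)=\mu_{R/(f)}(\overline M)$ by Nakayama, and $\rme_\fka^0(M)=\rme_{\overline\fka}^0(\overline M)$ by the usual behaviour of the multiplicity under a superficial element; by Remark~\ref{a2.3} the three defining conditions for $(M,\fka)$ to be Ulrich then coincide term by term with those for $(\overline M,\overline\fka)$.

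For assertion (3) I would induct on $\ell=\dim S/\fkm S$, writing $J=(g_1,\dots,g_\ell)$, $L=S\otimes_R M$ and $\calI=\fka S+J$. Since $S/\fkm S$ is Cohen--Macaulay, the flat-base-change formulas for depth and dimension give that $L$ is Cohen--Macaulay if and only if $M$ is, so if $M$ is not Cohen--Macaulay neither module is Ulrich and there is nothing to prove; thus assume $M$, hence $L$, Cohen--Macaulay. When $\ell=0$ the fibre is Artinian, $J=(0)$, and flatness gives $\ell_S(S\otimes_R N)=\ell_S(S/\fkm S)\cdot\ell_R(N)$ for every finite-length $N$, together with $\mu_S(L)=\mu_R(M)$; inserting these into the three Ulrich conditions (via Remark~\ref{a2.3}) and dividing by the positive integer $\ell_S(S/\fkm S)$ yields the equivalence. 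When $\ell\ge1$, Lemma~\ref{a2.1} (applied with the ideal $I=\fka$) shows that $g_1t$ is a nonzerodivisor on $\gr_{\calI}(L)$, which forces $g_1$ to be a superficial element of $L$ with respect to $\calI$ (graded regularity in degree $1$ amounts to $(\calI^{n+2}L:_Lg_1)\cap\calI^nL=\calI^{n+1}L$ for all $n$). By assertion (2), $L$ is Ulrich with respect to $\calI$ if and only if $L/g_1L$ is Ulrich over $S/(g_1)$ with respect to $\calI/(g_1)$; and $R\to S/(g_1)$ is again flat local, with Cohen--Macaulay fibre $(S/\fkm S)/(g_1)$ of dimension $\ell-1$, the images of $g_2,\dots,g_\ell$ form a system of parameters there, $L/g_1L=(S/g_1S)\otimes_RM$, and $\calI/(g_1)=\fka(S/g_1S)+(g_2,\dots,g_\ell)(S/g_1S)$, so the inductive hypothesis identifies this with ``$M$ is Ulrich with respect to $\fka$''.

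For assertion (4), since $f$ is $M$-regular and $\overline M:=M/fM$ is a nonzero Cohen--Macaulay $R/(f)$-module, say of dimension $t$, the module $M$ is nonzero and Cohen--Macaulay of dimension $t+1$. I may assume $R/\fkm$ infinite: passing to $R(X)=R[X]_{\fkm R[X]}$ (faithfully flat over $R$ with field fibre), assertion (3) in the case $\ell=0$ transports both the hypothesis and the desired conclusion between $R$ and $R(X)$, and $f\in\fkm\fka$ holds over $R$ exactly when it holds over $R(X)$. Now apply Lemma~\ref{a2.4} to the Ulrich $R/(f)$-module $\overline M$ with respect to $\overline\fka:=[\fka+(f)]/(f)$ to get $h_1,\dots,h_t\in\fka$ whose images generate $\overline\fka\,\overline M$; setting $\fkb=\fka+(f)$ and $\fkq=(h_1,\dots,h_t,f)$ this gives $\fkb M=\fkq M$, and since $\fkb$ is $\fkm$-primary while $\dim_R M=t+1$, the elements $h_1,\dots,h_t,f$ form a system of parameters of $M$, hence an $M$-regular sequence. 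Consequently $\fkb^nM=\fkq^nM$ for all $n$, so $\rme_\fkb^0(M)=\rme_\fkq^0(M)=\ell_R(M/\fkq M)=\ell_R(M/\fkb M)$; combined with the observation that $M/\fkb M=\overline M/\overline\fka\,\overline M$ is $R/\fkb$-free (from the Ulrich hypothesis on $\overline M$), this shows $M$ is Ulrich with respect to $\fkb$. Finally, if $f\in\fkm\fka$ then $\fkb=\fka$ and $fM\subseteq\fkm\fka M=\fkm\fkb M$, so $\fkb M=(h_1,\dots,h_t)M+\fkm\fkb M$ and Nakayama gives $\fkb M=(h_1,\dots,h_t)M$; but then $M/(h_1,\dots,h_t)M=M/\fkb M$ would have finite length with only $t<\dim_R M$ generators killed, which is impossible.

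I expect assertions (1)--(3) to be essentially bookkeeping once Lemma~\ref{a2.1} is in hand; the main obstacle is assertion (4), where one must legitimately reduce to an infinite residue field and then extract from the Ulrich hypothesis on $M/fM$ the ``short'' generating set $h_1,\dots,h_t$ of $\fka M$ modulo $f$ --- it is precisely the fact that $h_1,\dots,h_t,f$ is a parameter sequence on $M$ that simultaneously yields the multiplicity equality and the non-containment $f\notin\fkm\fka$.
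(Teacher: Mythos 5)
Your proposal is correct and follows essentially the same route as the paper: (1) is the same length computation, (2) the same superficial-element bookkeeping, (3) uses Lemma \ref{a2.1} together with (2) to reduce to the Artinian-fibre case (you phrase it as an induction on $\ell$, the paper kills $g_1,\dots,g_\ell$ in one step, but the substance is identical), and (4) is the same reduction to an infinite residue field followed by Lemma \ref{a2.4}. Your direct verification of $\rme^0_{\fkb}(M)=\ell_R(M/\fkb M)$ via the $M$-regular sequence $h_1,\dots,h_t,f$, and your Nakayama argument for $f\notin\fkm\fka$, are just unpacked versions of the implication (2)$\Rightarrow$(1) and the ``minimal generators'' clause of Lemma \ref{a2.4} that the paper cites.
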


\begin{proof}
(1): Since $s=0$, we have $\rme_{\fka}^0 (M)= \ell_R (M)$. Hence, $M$ is an Ulrich $R$-module with respect to $\fka$ if and only if $\fka M=0$ and $M/\fka M$ is $R/\fka$-free.

(2): Set $\overline{M}=M/fM$, $\overline{R}=R/fR$, and $\overline{\fka}=\fka \overline{R}$. Then,  we have the equalities $\rme_{\fka}^0 (M)=\rme_{\overline{\fka}}^0 (\overline{M})$ and $\ell_R (M/\fka M)=\ell_{\overline{R}} (\overline{M}/\overline{\fka} \overline{M})$ since $f\in \fka$ is a superficial element for $M$ with respect to $\fka$. By noting that $f$ is a non-zerodivisor of $M$ since $s>0$ (\cite[Lemma 1.2]{RV}), we have the equivalence.

(3): By Lemma \ref{a2.1} and (2), $S \otimes_R M$ is an Ulrich $S$-module with respect to $\fka S+(g_1, g_2, \dots, g_{\ell})$ if and only if $\ol{S} \otimes_R M$ is an Ulrich $\ol{S}$-module with respect to $\fka \ol{S}$, where $\ol{S}=S/(g_1, g_2, \dots, g_{\ell})$. Hence, by passing to $R\to S/(g_1, g_2, \dots, g_{\ell})$, we may assume that $\ell=0$.
Then, by noting that $\ell_S (S\otimes_R N)=\ell_S (S/\fkm S)\cdot \ell_R (N)$ for all $R$-modules $N$ of finite length and $\mu_S(S\otimes_R L)=\mu_R(L)$ for all finitely generated $R$-modules $L$, we obtain the assertion (see also Remark \ref{a2.3}).

(4): By passing to $R \to R[X]_{\fkm R[X]}$ if necessary, we may assume that $R/\fkm$ is infinite. To prove that $M$ is an Ulrich $R$-module with respect to $\fka + (f)$, we have only to show that $\rme_{\fka + (f)}^0 (M)=\ell_R (M/[\fka + (f)]M)$. By Lemma \ref{a2.4}, we have $[\fka+(f)]{\cdot}M/fM=(f_2, \dots , f_s){\cdot}M/fM$ for some elements $f_2, \dots, f_s \in [\fka+(f)]$; hence, $[\fka+(f)]  M=(f, f_2, \dots , f_s)M$. Therefore, $M$ is an Ulrich $R$-module with respect to $\fka +(f)$ and $f\not\in \fkm \fka$. 
\end{proof}


\subsection{Ulrich ideals and the trace ideal of the canonical module}\label{subsection22}
In this subsection, we summarize the properties of Ulrich ideals. We also note a result related to the trace ideal of the canonical module. 
Suppose that $(R, \fkm)$ is a Cohen-Macaulay local ring of dimension $d\ge0$ possessing the canonical module $\rmK_R$. 

\begin{defn}{\rm (\cite[Definition 2.1]{GOTWY})}\label{Ulrich ideal}
Let $I$ be an $\m$-primary ideal of $R$. Suppose that $I$ has a minimal reduction $Q$ of $I$. We say that $I$ is an {\it Ulrich ideal} of $R$ if the following conditions are satisfied:
\begin{enumerate}[{\rm (1)}]
\item $I\ne Q$, but $I^2=QI$.
\item $I/I^2$ is a free $R/I$-module.
\end{enumerate}
\end{defn}

The following are fundamental properties of Ulrich ideals.

\begin{fact} (\cite{GOTWY, GTT2})\label{a7.0.2}
Let $I$ be an Ulrich ideal of $R$ and set $n = \mu_R (I)$. The following assertions hold true.
\begin{enumerate}[$(1)$]
\item $\rmr(R) = (n-d){\cdot}\rmr(R/I)$.
\item  For $i \in \Bbb Z$,  $\Ext_R^i(R/I, R)\cong
\begin{cases}
(0) & (i < d),\\
(R/I)^{\oplus (n-d)} & (i = d),\\
(R/I)^{\oplus \{(n-d)^2-1\}{\cdot}(n-d)^{i - (d+1)} } & (i > d). 
\end{cases}$
\item Let $\cdots \to F_i \overset{\partial_i}{\to} F_{i-1} \to \cdots \to F_1 \overset{\partial_1}{\to}  F_0 = R \to R/I \to 0$
be a minimal free resolution of $R/I$. Let $I(\partial_i)$ denote the ideal of $R$ generated by the entries of the matrix representing $\partial_i$, and let $\beta_i = \rank_RF_i$ denote the $i$th Betti number of $R/I$. 
\begin{enumerate}[\rm(i)] 
\item  $I(\partial_i) = I$ for $i \ge 1$.
\item  For $i \ge 0$, $\beta_i = 
\begin{cases}
(n-d)^{i-d}{\cdot}(n-d+1)^d & \ \ (i \ge d),\\
\binom{d}{i}+ (n-d){\cdot}\beta_{i-1} & \ \ (1 \le i \le d),\\
1  & \ \  (i=0).
\end{cases}$
\end{enumerate}
\end{enumerate}

\end{fact}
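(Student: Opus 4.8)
The plan is to reduce every assertion to a computation over the complete-intersection quotient $\overline{R}:=R/Q$, where $Q=(a_1,\dots,a_d)$ is a minimal reduction of $I$ with $I^2=QI$ and $I/I^2$ free over $R/I$; all four statements are then collected from \cite{GOTWY,GTT2}, so what follows is really a road map. First I would establish the structural lemma that drives everything: since $R$ is Cohen--Macaulay, $a_1,\dots,a_d$ is an $R$-regular sequence, and using $I^2=QI$ together with the freeness of $I/I^2$ one shows that $a_1,\dots,a_d$ extends to a minimal system of generators of $I$, that $\overline{I}:=I/Q$ satisfies $\overline{I}^{\,2}=0$, and that $\overline{I}$ is free of rank $n-d$ over $\overline{R}/\overline{I}\cong R/I$. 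I expect this step — together with the implicit but essential fact that an Ulrich ideal cannot live in a regular local ring, which is used tacitly below — to be the real obstacle; once it is in hand the rest is bookkeeping over $\overline{R}$.

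Next I would resolve $R/I$ minimally over $\overline{R}$. From $0\to\overline{I}\to\overline{R}\to R/I\to0$ one gets $\Syz_{\overline{R}}(R/I)\cong\overline{I}\cong(R/I)^{\oplus(n-d)}$, and since $\overline{I}$ is $R/I$-free with $\overline{I}^{\,2}=0$, the kernel of a minimal cover $\overline{R}^{\oplus(n-d)}\to\overline{I}$ is $\overline{I}\,\overline{R}^{\oplus(n-d)}\cong(R/I)^{\oplus(n-d)^2}$; iterating gives $\beta^{\overline{R}}_k(R/I)=(n-d)^k$ for all $k\ge0$, with every differential of the minimal $\overline{R}$-resolution having entries generating $\overline{I}$. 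Because $Q$ is generated by an $R$-regular sequence inside $\fkm$, the classical base-change for Poincar\'e series yields $\beta_i(R/I)=\sum_{j+k=i}\binom{d}{j}(n-d)^{k}$; rearranging this (factor out $(n-d)^{i-d}$ when $i\ge d$, peel off the $j=i$ summand when $1\le i\le d$) gives exactly the closed forms of (3)(ii). For (3)(i) I would track entries through the Shamash-type lift of the $\overline{R}$-resolution to $R$: the entries of $\partial_i$ for $i\ge1$ are the $a_\ell\in Q$ together with lifts of entries of the $\overline{R}$-differentials, which generate $\overline{I}$, while all correction terms lie in $I^2=QI\subseteq I$ because $\overline{I}^{\,2}=0$; hence every entry of $\partial_i$ lies in $I$, and $Q$ together with the lifted generators of $\overline{I}$ recovers $I$, so $I(\partial_i)=I$.

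For the Ext formula (2), I would use the change-of-rings isomorphism. Since the Koszul complex resolves $\overline{R}$ over $R$ and is self-dual, $\Ext^j_R(\overline{R},R)=0$ for $j\neq d$ and $\Ext^d_R(\overline{R},R)\cong\overline{R}$, so $\Ext^i_R(R/I,R)\cong\Ext^{\,i-d}_{\overline{R}}(R/I,\overline{R})$ for all $i$, which vanishes for $i<d$. Dualizing the minimal $\overline{R}$-free resolution of $R/I$ by $\Hom_{\overline{R}}(-,\overline{R})$ and using $\Hom_{\overline{R}}(R/I,\overline{R})=(0:_{\overline{R}}\overline{I})=\overline{I}\cong(R/I)^{\oplus(n-d)}$ (here non-regularity of $R$ enters), I would chase the resulting cochain complex of free $\overline{R}$-modules of ranks $1,n-d,(n-d)^2,\dots$ to obtain $\Ext^0_{\overline{R}}(R/I,\overline{R})\cong(R/I)^{\oplus(n-d)}$ and $\Ext^k_{\overline{R}}(R/I,\overline{R})\cong(R/I)^{\oplus\{(n-d)^2-1\}(n-d)^{k-1}}$ for $k\ge1$; shifting by $d$ gives (2). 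Finally, for (1) I would note that a maximal regular sequence does not change the Cohen--Macaulay type, so $\rmr(R)=\rmr(\overline{R})=\ell_R(\soc\overline{R})$, and then compute $\soc\overline{R}$: if $x\in\soc\overline{R}$ then $x\,\overline{I}=0$, but $x\,\overline{I}=\bar{x}\,\overline{I}$ with $\overline{I}$ free over $R/I$, forcing $\bar{x}=0$, i.e.\ $x\in\overline{I}$; hence $\soc\overline{R}=\soc\overline{I}\cong(\soc(R/I))^{\oplus(n-d)}$, and therefore $\rmr(R)=(n-d)\,\rmr(R/I)$.
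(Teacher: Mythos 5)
This statement is quoted in the paper as a Fact from \cite{GOTWY} and \cite{GTT2} with no proof given, so there is no internal argument to compare against; your road map has to be measured against the proofs in those references. Your structural reduction is the standard one and is sound: $a_1,\dots,a_d$ is an $R$-regular sequence extending to a minimal generating set of $I$, $\overline{I}^{\,2}=0$, and $\overline{I}\cong(R/I)^{\oplus(n-d)}$ (this is \cite[Lemma 2.3]{GOTWY}); the self-similar minimal $\overline{R}$-resolution with $\beta_k^{\overline{R}}(R/I)=(n-d)^k$, the socle argument for (1), and the Rees isomorphism $\Ext_R^i(R/I,R)\cong\Ext^{\,i-d}_{\overline{R}}(R/I,\overline{R})$ are all correct. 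Two smaller points: $(0:_{\overline{R}}\overline{I})=\overline{I}$ needs only that $\overline{I}$ is free of positive rank over $R/I$ (i.e. $I\neq Q$), not non-regularity of $R$; and in (2) a length count on the dualized complex does not by itself give freeness of the cohomology --- you need that the image of $1$ under $\overline{R}\to\Hom_{\overline{R}}(\overline{I},\overline{R})\cong(R/I)^{\oplus(n-d)^2}$ is the tuple of free basis elements of $\overline{I}$, hence unimodular, so that the cokernel is free of rank $(n-d)^2-1$.

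The genuine gap is the descent from $\overline{R}$ to $R$ in (3). The ``classical base-change for Poincar\'e series'' $P^R_M(t)=(1+t)^d\,P^{\overline{R}}_M(t)$ is false under the hypothesis you invoke (a regular sequence in $\fkm$ annihilating $M$): take $R=k[[x]]$, $a=x^2$, $M=k$; then $P^R_k(t)=1+t$ while $(1+t)P^{R/(a)}_k(t)=(1+t)/(1-t)$. The classical statement needs $a_i\notin\fkm^2$ (equivalently, vanishing of the degree $-2$ cohomology operators), and minimal reductions of Ulrich ideals routinely lie in $\fkm^2$ --- e.g. in Theorem \ref{a7.12} the reduction is $(\alpha^n)$ with $n=\ell_R(R/\tr_R(\rmK_R))\ge 2$ whenever $R$ is not $\AGL$. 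The Betti formula is nevertheless true for Ulrich ideals, but only via the mechanism you mention in passing for (3)(i): in the Shamash-type lift the composite $\tilde\partial^2$ has entries in $I^2=QI$, so the first-order homotopies can be chosen with entries in $I\subseteq\fkm$, and one must verify the same for all higher homotopies attached to the length-$d$ regular sequence before the assembled complex of ranks $\sum_{j+k=i}\binom{d}{j}(n-d)^k$ can be declared minimal. So (3)(ii) cannot be outsourced to a classical theorem and then used to feed (3)(i); both must come out of the same explicit construction (or out of the long exact Tor sequence of $0\to(R/I)^{\oplus(n-d)}\to R/Q\to R/I\to0$ after showing the maps $\Tor_i^R(I/Q,k)\to\Tor_i^R(R/Q,k)$ vanish), which is precisely the technical content of \cite[Section 7]{GOTWY}.
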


Next, we note a result related to Ulrich ideals and the trace ideal of the canonical module (Proposition \ref{a7.2}). Before stating it, we recall the definition of trace ideals. 

\begin{defn}{\rm (\cite[Corollary 2.2]{Lin})} \label{a7.1}
For an $R$-module $M$, the {\it trace ideal} of $M$ is the ideal
\begin{align*} 
\tr_R(M)=\sum_{f\in \Hom_R(M, R)} \Im f .
\end{align*}
In other words, the trace ideal of $M$ is the image of the evaluation map 
\begin{align}\label{eq291}
t_M:\Hom_R(M, R)\otimes_R M \to R; \quad f\otimes x \mapsto f(x)
\end{align}
for $f\in \Hom_R(M, R)$ and $x\in M$.

An ideal $I$ of $R$ is called a {\it trace ideal} if $I$ is a trace ideal for some $R$-module $M$.
\end{defn}

\begin{fact}{\rm (\cite[Corollary 2.2]{GIK})} \label{trace}
For a fractional ideal $I$ of $R$, $\tr_R(I)=(R: I)I$. 
In addition, if $I$ is a regular ideal of $R$, the following are equivalent:
\begin{enumerate}[{\rm (1)}] 
\item $I$ is a trace ideal, 
\item $I=(R:I)I$, and 
\item $I:I=R:I$.
\end{enumerate} 
\end{fact}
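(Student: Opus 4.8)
\textbf{Proof plan for Fact \ref{trace}.} The plan is to reduce everything to the basic identification $\Hom_R(I,R) \cong R:I$ for a regular fractional ideal $I$ (the correspondence $x \mapsto \hat x$ recalled in the Convention), and then to chase the evaluation map. First I would establish the formula $\tr_R(I) = (R:I)I$. The key point is that under the isomorphism $\Hom_R(I,R) \cong R:I$, the image of a homomorphism $\hat x$ (for $x \in R:I$) is exactly $xI$; summing over all such $x$ gives $\sum_{x \in R:I} xI = (R:I)I$, since $(R:I)I$ is generated by products $xy$ with $x \in R:I$, $y \in I$, and each such product lies in $\hat x(I)$. This is a short computation once the Hom-identification is in place.

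For the equivalences under the additional hypothesis that $I$ is regular: the implication (1) $\Leftrightarrow$ (2) is immediate from the formula just proved, since $I$ being a trace ideal means $I = \tr_R(M)$ for some $M$, and one knows (by a standard fact, or by noting that $\tr_R(\tr_R(M)) = \tr_R(M)$, so $I$ is a trace ideal iff $I = \tr_R(I)$) that this is equivalent to $I = (R:I)I$. For (2) $\Leftrightarrow$ (3): from $I = (R:I)I$ we always have $R:I \subseteq I:I$ (as $(R:I)I \subseteq I$ means $R:I$ multiplies $I$ into $I$); conversely $I:I \subseteq R:I$ always holds when $I$ is regular, because if $z \in I:I$ then $z$ fixes the regular element, hence $zI \subseteq I \subseteq R$ after clearing — more precisely, pick a non-zerodivisor $a \in I$; then $za \in I \subseteq R$, and iterating, $z$ is integral over $R$, but actually the cleanest argument is: $I:I \subseteq Q(R)$ is a ring containing $R$ and finite over $R$ (module-finite since $I$ is finitely generated and regular), and $z \in I:I$ gives $z(R:I) \subseteq (I:I)(R:I) \subseteq R:I$... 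Let me instead argue directly: always $(R:I) \subseteq (I:I)$ fails in general, so the content is that $I = (R:I)I$ forces equality $R:I = I:I$. Given $I = (R:I)I$, multiply by $R:I$: one checks $R:I = (R:I)(I:I)$ as well, and since $I:I$ is a ring with $1$, comparing gives the reverse inclusion $I:I \subseteq R:I$; combined with the trivial $R:I \subseteq I:I$ (valid once $I$ is regular, using $1 \in R \subseteq I:I$ and $(R:I)I \subseteq I$), we get (3). Conversely if $I:I = R:I$, then $(R:I)I = (I:I)I = I$ since $1 \in I:I$, giving (2).

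The main obstacle I expect is being careful about which inclusions require the regularity hypothesis and which are formal. Specifically, $R:I \subseteq I:I$ needs $I$ regular (so that $R \hookrightarrow \End$ behaves well and $R:I$ makes sense as multiplication operators sending $I$ into $R \subseteq$ ambient), and the identification $\Hom_R(I,R) \cong R:I$ cited from \cite[Lemma 2.1]{HK} likewise needs $I$ to contain a non-zerodivisor. Once those are pinned down, the rest is a formal manipulation of the fractional-ideal arithmetic $(R:I)$, $(I:I)$, and the product $(R:I)I$ inside $Q(R)$, using only that $I:I$ is a subring of $Q(R)$ containing $1$. No deep input is needed — this is essentially \cite[Corollary 2.2]{GIK} restated — so the write-up is a matter of organizing the two or three one-line implications cleanly.
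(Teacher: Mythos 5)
The paper states this result as a \emph{Fact} quoted from \cite[Corollary 2.2]{GIK} and gives no proof of its own, so there is no argument in the paper to compare against; your proposal supplies the standard proof, and it is essentially correct. The formula $\tr_R(I)=(R:I)I$ follows exactly as you say from $\Hom_R(I,R)\cong R:I$, and (1)$\Leftrightarrow$(2) from the idempotency $\tr_R(\tr_R(M))=\tr_R(M)$ together with that formula. One point in your write-up of (2)$\Leftrightarrow$(3) should be cleaned up, because you label the inclusions inconsistently: the inclusion that is genuinely trivial (for a regular ideal $I\subseteq R$) is $I:I\subseteq R:I$, since $z\in I:I$ gives $zI\subseteq I\subseteq R$; the inclusion $R:I\subseteq I:I$ is \emph{not} trivial (your own DVR-type counterexample $I=(t)$ shows it fails), and is instead exactly equivalent to condition (2), because $1\in R:I$ forces $(R:I)I\supseteq I$ always, so $(R:I)I=I$ holds iff $(R:I)I\subseteq I$ iff $R:I\subseteq I:I$. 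With the two inclusions assigned to their correct roles, your chain (2)$\Rightarrow$(3)$\Rightarrow$(2) closes as intended, and the detour through ``$z$ is integral over $R$'' and ``$R:I=(R:I)(I:I)$'' can be discarded.
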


Recall that $R$ is called {\it generically Gorenstein} if $\rmQ(R)$ is Gorenstein. 

\begin{fact} {\rm (\cite[Proposition 3.3.18]{BH})} \label{genegor}
The following are equivalent:
\begin{enumerate}[\rm(1)] 
\item $R$ is generically Gorenstein.
\item $\rmK_R$ can be embedded into $R$.
\item $R$ can be embedded into $\rmK_R$.
\end{enumerate}
\end{fact}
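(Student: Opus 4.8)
The plan is to reduce the whole equivalence to the behaviour of $\rmK_R$ after inverting the non-zerodivisors of $R$, using two standard facts about canonical modules of Cohen--Macaulay rings. First, since $R$ is Cohen--Macaulay the module $\rmK_R$ is maximal Cohen--Macaulay, hence unmixed with $\Ass_R(\rmK_R)=\Ass(R)=\Min(R)$; in particular every non-zerodivisor of $R$ is a non-zerodivisor on $\rmK_R$, so the canonical map $\rmK_R\to \rmK_R\otimes_R\rmQ(R)$ is injective. Second, formation of the canonical module commutes with localization, so $(\rmK_R)_\fkp\cong \rmK_{R_\fkp}$ for every $\fkp\in\Spec R$ and $\rmK_R\otimes_R\rmQ(R)\cong \rmK_{\rmQ(R)}$. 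I will also use that, by definition, $R$ is generically Gorenstein precisely when $\rmQ(R)$ is a Gorenstein ring, equivalently when $R_\fkp$ is Gorenstein for every $\fkp\in\Min(R)$ (these minimal primes coincide with the associated primes, as a Cohen--Macaulay local ring is unmixed), together with the elementary fact that for an Artinian local ring $A$ one has $\ell_A(\rmK_A)=\ell_A(A)$, with $A$ Gorenstein if and only if $\rmK_A\cong A$.

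For (2)$\Rightarrow$(1) and (3)$\Rightarrow$(1): given an embedding $\rmK_R\hookrightarrow R$ (resp. $R\hookrightarrow\rmK_R$), localize at an arbitrary $\fkp\in\Min(R)$. Localization is exact, so one obtains an injection $\rmK_{R_\fkp}\hookrightarrow R_\fkp$ (resp. $R_\fkp\hookrightarrow \rmK_{R_\fkp}$) of modules over the Artinian local ring $R_\fkp$ having the same finite length; an injection between finite modules of equal length is an isomorphism, so $\rmK_{R_\fkp}\cong R_\fkp$ and $R_\fkp$ is Gorenstein. As $\fkp$ was arbitrary in $\Min(R)$, the ring $R$ is generically Gorenstein.

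For (1)$\Rightarrow$(2) and (1)$\Rightarrow$(3) I would argue once and obtain both at the same time by realizing $\rmK_R$ as a regular fractional ideal. Since $\rmQ(R)$ is a zero-dimensional Gorenstein ring we have $\rmK_{\rmQ(R)}\cong \rmQ(R)$; composing the injection $\rmK_R\hookrightarrow \rmK_R\otimes_R\rmQ(R)\cong \rmK_{\rmQ(R)}\cong\rmQ(R)$ exhibits $\rmK_R$ as an $R$-submodule $I$ of $\rmQ(R)$ with $S^{-1}I=\rmQ(R)$, where $S$ denotes the set of non-zerodivisors of $R$. Writing $1=i/s$ with $i\in I$ and $s\in S$ forces $s\in I$, so $I$ contains the non-zerodivisor $s$; then $sR\subseteq I$ gives an injection $R\xrightarrow{\,\cdot s\,} I\cong\rmK_R$, which is (3). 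On the other hand $I$ is finitely generated, so some non-zerodivisor $d\in S$ satisfies $dI\subseteq R$, and multiplication by $d$ is an injection $\rmK_R\cong I\hookrightarrow R$ with image $dI\subseteq R$, which is (2).

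The main point requiring care — and essentially the only nontrivial input — is the compatibility of the canonical module with localization together with the identification of ``generically Gorenstein'' with ``$\rmQ(R)$ Gorenstein''; after that, everything is exactness of localization, a length count over Artinian local rings, and clearing denominators. One small thing to verify is that $\rmK_R\to S^{-1}\rmK_R$ is injective, i.e. $\rmK_R$ has no nonzero $S$-torsion, which follows from the standard equality $\Ass_R(\rmK_R)=\Ass(R)$ for the canonical module of a Cohen--Macaulay ring.
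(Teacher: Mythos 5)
The paper does not prove this statement; it is quoted as a known Fact with the citation [BH, Proposition 3.3.18], so there is no internal proof to compare against. Your argument is correct and is essentially the standard proof of that result: localization at the minimal (= associated) primes plus a length count gives (2)$\Rightarrow$(1) and (3)$\Rightarrow$(1), and realizing $\rmK_R$ as a regular fractional ideal inside $\rmQ(R)\cong\rmK_R\otimes_R\rmQ(R)$ and clearing denominators gives both converses, with the only delicate points (injectivity of $\rmK_R\to\rmQ(R)\otimes_R\rmK_R$ via $\Ass_R(\rmK_R)=\Ass R$, and compatibility of $\rmK_R$ with localization) correctly identified and justified.
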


Thus, if $R$ is generically Gorenstein, $\tr_R(\rmK_R)=(R:\omega)\omega$ for each ideal $\omega$ of $R$ such that $\omega\cong \rmK_R$ since $\rmK_R$ is faithful. 
We note that $\tr_R (\rmK_R)$ defines the non-Gorenstein locus of $R$, i.e.,
\[
\{\fkp\in \Spec R \mid \text{$R_\fkp$ is not a Gorenstein ring}\}=\{\fkp\in \Spec R  \mid \tr_R(\rmK_R) \subseteq \fkp\};
\]
for example, see \cite[Lemma 2.1]{HHS} or \cite[p.199]{LW}.

\begin{prop}\label{a7.2}
Suppose that $R$ is a generically Gorenstein ring, but not a Gorenstein ring. If $I$ is an Ulrich ideal such that $\mu_R(I)>d+1$, then $\tr_R(\rmK_R) \subseteq I$.
\end{prop}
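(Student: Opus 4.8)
The plan is to exploit the structural description of Ulrich ideals in Fact~\ref{a7.0.2} together with the local characterization of $\tr_R(\rmK_R)$ as the non-Gorenstein locus. Set $n = \mu_R(I)$, so by hypothesis $n > d+1$, i.e. $n - d \ge 2$. First I would record the key numerical consequence of Fact~\ref{a7.0.2}(1): since $R$ is not Gorenstein, $\rmr(R) = (n-d)\cdot \rmr(R/I) \ge 2$, and in fact $\rmr(R/I) \ge 1$ forces $R/I$ to be non-Gorenstein whenever $n-d$ and the type conspire — but more to the point, I want to show $I$ contains $\tr_R(\rmK_R)$, equivalently (since $R$ is generically Gorenstein, so $\tr_R(\rmK_R)$ is a regular ideal, and $R/I$ has finite length) that $\tr_R(\rmK_R)_{\fkp} \subseteq I_{\fkp}$ locally; but $I$ is $\fkm$-primary, so this reduces to checking the containment at $\fkm$ only after we know $\tr_R(\rmK_R)$ is $\fkm$-primary, i.e. that $R$ is Gorenstein on the punctured spectrum. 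That is \emph{not} given, so the real approach must be different: I would instead argue directly that $I \supseteq \tr_R(\rmK_R)$ by producing, for the canonical module, enough homomorphisms into $R$ whose images land in $I$.

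The cleaner route uses the Ext-vanishing in Fact~\ref{a7.0.2}(2). The point is that $\Ext_R^i(R/I, R) \cong (R/I)^{\oplus(n-d)}$ for $i = d$ and is a nonzero free $R/I$-module for all $i \ge d$; combined with Fact~\ref{a7.0.2}(3)(i), every syzygy module of $R/I$ in the minimal resolution has its defining ideal equal to $I$. I would pass to the Cohen--Macaulay-type information: applying $\RHom_R(-,R)$ (or $\Hom_R(-,\rmK_R)$) and using that $\rmr(R) = \dim_{R/\fkm}\Ext_R^d(R/\fkm, R)$, one gets that the canonical module $\rmK_R$, when restricted modulo $I$, decomposes compatibly with the $(n-d)$ copies coming from $\Ext_R^d(R/I, R)$. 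Concretely, I would choose a regular sequence inside $I$ that is a minimal reduction $Q$ of $I$, reduce modulo $Q$ to the Artinian case (legitimate because $\tr$ and Ulrich-ness both behave well under such reduction — Ulrich ideals reduce to Ulrich ideals, and $\tr_R(\rmK_R)$ reduces to $\tr_{R/Q}(\rmK_{R/Q})$ up to the expected identifications), and there verify: in the Artinian local ring $\overline{R} = R/Q$, the ideal $\overline{I}$ satisfies $\overline{I}^2 = 0$ and $\overline{I}/\overline{I}^2$ is $\overline{R}/\overline{I}$-free of rank $n-d \ge 2$. Then $\Hom_{\overline{R}}(\rmK_{\overline{R}}, \overline{R})$ must have image contained in $\overline{I}$: any homomorphism $\varphi: \rmK_{\overline{R}} \to \overline{R}$ that hit a unit would split off a free summand, forcing $\overline{R}$ to be Gorenstein (because $\rmK_{\overline{R}}$ would have $\overline{R}$ as a direct summand, and $\rmK_{\overline{R}}$ is indecomposable-ish / faithful — precisely, a surjection $\rmK_{\overline{R}} \twoheadrightarrow \overline{R}$ dualizes to an injection $\overline{R} = \rmK_{\rmK_{\overline{R}}} \hookrightarrow \rmK_{\overline{R}}$ onto a free summand, contradicting non-Gorensteinness). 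Hence $\tr_{\overline{R}}(\rmK_{\overline{R}}) \subseteq \overline{I}$; but I need the \emph{strict} improvement guaranteeing the containment survives — and that is where $n - d > 1$ enters, distinguishing this from the $n-d=1$ case where $I$ could itself be (close to) $\tr_R(\rmK_R)$ or even where the conclusion can fail.

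I expect the main obstacle to be making the reduction-to-Artinian step rigorous while tracking how $\tr_R(\rmK_R)$ transforms: one must check that a minimal reduction $Q$ of $I$ is a regular sequence on $\rmK_R$ (true since $\rmK_R$ is maximal Cohen--Macaulay) and that $\tr_{R/Q}(\rmK_R/Q\rmK_R) = (\tr_R(\rmK_R) + Q)/Q$, which needs base-change for $\Hom$ against a regular sequence plus the identification $\rmK_{R/Q} \cong \rmK_R/Q\rmK_R$. Granting that, the containment $\tr_R(\rmK_R) + Q \subseteq I$ follows from the Artinian case, and since $Q \subseteq I$ we conclude $\tr_R(\rmK_R) \subseteq I$. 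The secondary subtlety is the precise argument that in the Artinian $\overline{R}$ with $\overline{I}$ Ulrich of colength data as above, no map $\rmK_{\overline{R}} \to \overline{R}$ is surjective; I would phrase this via $\mu(\rmK_{\overline{R}}) = \rmr(\overline{R}) = (n-d)\,\rmr(\overline{R}/\overline{I}) \ge 2$ together with the fact that the image of any single homomorphism $\rmK_{\overline{R}} \to \overline{R}$ is a cyclic ideal, hence if it were all of $\overline{R}$ we would get $\overline{R}$ as a quotient \emph{and} (by self-duality) a sub of $\rmK_{\overline{R}}$, collapsing the type to $1$.
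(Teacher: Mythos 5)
Your proposal has a genuine gap at its central step. After reducing modulo a minimal reduction $Q$ of $I$ to the Artinian ring $\overline{R}=R/Q$, you claim $\tr_{\overline{R}}(\rmK_{\overline{R}})\subseteq \overline{I}$, but the argument you give — no homomorphism $\rmK_{\overline{R}}\to\overline{R}$ can be surjective, else $\overline{R}$ would be Gorenstein — proves only that the trace ideal is \emph{proper}, i.e.\ contained in the maximal ideal $\overline{\fkm}$, not in $\overline{I}$. You flag this yourself ("I need the strict improvement \dots that is where $n-d>1$ enters"), but you never supply the missing argument. Note also that the only place your proposal invokes $\mu_R(I)>d+1$ is to conclude $\rmr(R)\ge 2$ via Fact~\ref{a7.0.2}(1), which is vacuous since $R$ is already assumed non-Gorenstein; since the remark following the proposition shows the hypothesis cannot be dropped (Example~\ref{2gene}), any proof that does not use it essentially must be incomplete.

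The paper closes exactly this gap by an annihilator argument that your sketch circles around but never lands on. Embed $\rmK_R$ as an ideal $\omega\subsetneq R$ (possible since $R$ is generically Gorenstein), take a non-zerodivisor $f\in\omega$, and apply $\Hom_R(R/I,-)$ to $0\to R\xrightarrow{f}\omega\to\omega/(f)\to 0$. Fact~\ref{a7.0.2}(2) gives $\Ext_R^i(R/I,\omega/(f))\cong\Ext_R^{i+1}(R/I,R)\cong(R/I)^{\oplus u_i}$ for $i>d$ with $u_i=\{(\mu_R(I)-d)^2-1\}(\mu_R(I)-d)^{i-d-1}$; the hypothesis $\mu_R(I)-d\ge 2$ is used precisely to guarantee $u_i>0$, so that this Ext module is a \emph{nonzero} free $R/I$-module with annihilator exactly $I$. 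Since $\Ann_R(\omega/(f))=(f):_R\omega=f(R:\omega)$ kills this module, one gets $f(R:\omega)\subseteq I$ for every such $f$, and summing over non-zerodivisor generators of $\omega$ (Davis's lemma) yields $\omega(R:\omega)=\tr_R(\rmK_R)\subseteq I$. Your reduction-to-Artinian scaffolding is not needed for this (though the one inclusion you would need, $(\tr_R(\rmK_R)+Q)/Q\subseteq\tr_{R/Q}(\rmK_{R/Q})$, is in fact fine); what is missing is any mechanism that converts the Ulrich structure of $I$ into the containment $\Ann_R(\omega/(f))\subseteq I$, and that mechanism is the nonvanishing of the higher Ext modules.
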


\begin{proof}
Since $R$ is generically Gorenstein, there exists an ideal $\omega \subsetneq R$ such that $\omega\cong \rmK_R$.
For any non-zerodivisor $f \in \omega$ of $R$, we obtain an exact sequence
$$0 \to R \to \omega \to \omega/(f) \to 0$$
of $R$-modules. By applying the functor $\Hom_R(R/I, *)$ to the above exact sequence, by noting Fact \ref{a7.0.2}(2), we obtain that
\begin{align}\label{ulricheq}
\Ext_R^i(R/I, \omega/(f)) \cong \Ext_R^{i+1}(R/I, R) \cong (R/I)^{\oplus u_i}
\end{align}
for all $i > d$, where $u_i=\{ (\mu_R(I)-d)^2 -1 \}(\mu_R(I) -d)^{i-d-1}>0$ by the hypothesis. By considering the annihilators of \eqref{ulricheq}, 
$$\sum_{\text{$f\in \omega$ is a non-zerodivisor of $R$}}(f):_R \omega \subseteq I.$$
We show the left hand side of the equation  above is $\tr_R(\rmK_R)$.
Indeed, by Davis's lemma, we can choose non-zerodivisors $f_1, f_2, \dots, f_r\in \omega$ of $R$ such that $\omega=(f_1, f_2, \dots, f_r)$, where $r=\rmr(R)$. 
Therefore, because $(f):_R \omega=f(R: \omega)$, we have
\begin{eqnarray*}
\sum_{\text{$f\in \omega$ is a non-zerodivisor of $R$}}(f):_R \omega
&=& \sum_{\text{$f\in \omega$ R-regular}} f(R: \omega) \\
&=& \sum_{i=1}^{r} f_i (R: \omega) = \omega(R:\omega)=\tr_R(\rmK_R).
\end{eqnarray*}
\end{proof}

\begin{rem}
We cannot remove the condition $\mu_R(I)>d+1$ in Proposition \ref{a7.2}. See Example \ref{2gene}. 
\end{rem}

On the other hand, it is known that if $R$ is $G$-regular in the sense of \cite{Tak} (that is, every totally reflexive module is free), $\mu_R(I)>d+1$ hold for all Ulrich ideals $I$ of $R$ (\cite[Theorem 2.8]{GTT2}). Thus, by combining \cite[Theorem 2.8]{GTT2} and Proposition \ref{a7.2}, we have the following.

\begin{cor}\label{a7.3}
Suppose that $R$ is generically Gorenstein and $G$-regular.
If $I$ is an Ulrich ideal, then $\tr_R(\rmK_R) \subseteq I$.
\end{cor}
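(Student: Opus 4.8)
The plan is to combine the two ingredients explicitly signaled in the paragraph right before the statement: the theorem of \cite{GTT2} on Ulrich ideals over $G$-regular rings, and Proposition~\ref{a7.2} just proved. First I would record the standing hypothesis: $R$ is $G$-regular. The main thing to check at the outset is that $G$-regularity gives us what we need on both fronts: on the one hand, $G$-regular rings are not Gorenstein unless they are regular (a regular ring is Gorenstein and hence every module of finite projective dimension is totally reflexive, but more to the point, over a Gorenstein ring $R$ itself is totally reflexive and non-free behaviour is possible — so $G$-regular forces non-Gorenstein or regular), and if $R$ is regular it has no Ulrich ideals at all, so the conclusion is vacuous; on the other hand, $G$-regular Cohen--Macaulay rings are generically Gorenstein, since $R_\fkp$ is regular hence Gorenstein for all minimal primes $\fkp$ (being $G$-regular localizes, or more simply $R$ being a domain-like / reduced on the punctured spectrum — actually the cleanest route is that a $G$-regular CM ring is generically Gorenstein because over an Artinian non-Gorenstein local ring the residue field is totally reflexive-but-not-free, contradicting $G$-regularity after localizing at a minimal prime). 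So I would open the proof by disposing of the regular case (conclusion vacuous), and then assume $R$ is $G$-regular, not regular, hence not Gorenstein and generically Gorenstein.

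Next I would split on $\mu_R(I)$. By \cite[Theorem 2.8]{GTT2}, $G$-regularity forces $\mu_R(I) \ge d$ for every Ulrich ideal $I$; combined with $\mu_R(I) > d-1$ this is automatic, but the relevant dichotomy for us is $\mu_R(I) = d+1$ versus $\mu_R(I) > d+1$. Wait — I should be more careful: an Ulrich ideal always satisfies $\mu_R(I) \ge d$, and $\mu_R(I) = d$ would mean $I = Q$ is a parameter ideal, contradicting $I \ne Q$; so in fact $\mu_R(I) \ge d+1$ always. Thus the only case not covered by Proposition~\ref{a7.2} is $\mu_R(I) = d+1$. So the real content is: when $\mu_R(I) = d+1$, still $\tr_R(\rmK_R) \subseteq I$. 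Here is where \cite[Theorem 2.8]{GTT2} must do more than the crude bound. I would invoke the full strength of that theorem: over a $G$-regular ring, an Ulrich ideal with $\mu_R(I) = d+1$ forces $R/I$ to have a very constrained structure — in fact by Fact~\ref{a7.0.2}(1) we get $\rmr(R) = (n-d)\rmr(R/I) = \rmr(R/I)$, and the Betti numbers of $R/I$ from Fact~\ref{a7.0.2}(3)(ii) become eventually constant equal to $2^d$, which is exactly the numerical signature of a complete intersection / hypersurface-type behaviour that $G$-regularity rules out unless $R/I$ is itself close to Gorenstein. Actually the cleanest statement from \cite{GTT2} is likely: if $R$ is $G$-regular then there are \emph{no} Ulrich ideals with $\mu_R(I) = d+1$ at all, or equivalently every Ulrich ideal has $\mu_R(I) \ge d+2$; if that is the precise form of \cite[Theorem 2.8]{GTT2}, then the corollary is immediate from Proposition~\ref{a7.2}.

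So my proof would read, in essence: \emph{Proof.} If $R$ is regular it has no Ulrich ideals and there is nothing to prove; otherwise $R$ is not Gorenstein and is generically Gorenstein (localizing at a minimal prime and using $G$-regularity). Let $I$ be an Ulrich ideal. Since $\mu_R(I) = d$ would force $I = Q$, we have $\mu_R(I) \ge d+1$. By \cite[Theorem 2.8]{GTT2}, $G$-regularity of $R$ excludes $\mu_R(I) = d+1$ (equivalently forces $\mu_R(I) > d+1$), so Proposition~\ref{a7.2} applies and gives $\tr_R(\rmK_R) \subseteq I$. \qed The step I expect to be the main obstacle — or rather the main place where I need to quote the literature precisely rather than reprove anything — is pinning down exactly what \cite[Theorem 2.8]{GTT2} asserts: whether it directly says $\mu_R(I) > d+1$ over $G$-regular rings, or only $\mu_R(I) > d-1$, in which case I would need an auxiliary argument (using Fact~\ref{a7.0.2}(2): if $\mu_R(I) = d+1$ then $\Ext_R^i(R/I,R) \cong (R/I)^{\oplus u_i}$ with $u_i = \{1^2-1\}\cdot 1^{i-d-1} = 0$ for $i > d$, so $R/I$ would be totally reflexive, hence free by $G$-regularity, which is absurd since $R/I$ has finite length and $R$ is not Artinian — or if $d = 0$, then $R = R/I$ only if $I = 0$, contradiction). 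That last parenthetical is in fact a complete self-contained argument for the $\mu_R(I) = d+1$ case and I would include it, so that the corollary does not depend on the precise wording of \cite[Theorem 2.8]{GTT2} at all: $\mu_R(I) = d+1$ makes $R/I$ totally reflexive, $G$-regularity makes it free, freeness makes $R/I \cong R^{\oplus k}$ which is impossible for a nonzero proper quotient of finite colength, so $\mu_R(I) \ge d+2$ and Proposition~\ref{a7.2} finishes.
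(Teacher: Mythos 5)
Your main route is the paper's: quote \cite[Theorem 2.8]{GTT2} to see that $G$-regularity forces $\mu_R(I)>d+1$ for every Ulrich ideal $I$ (the bound ``$\mu_R(I)>d-1$'' printed just before the corollary is evidently a misprint for this, since $\mu_R(I)>d-1$ is automatic and Proposition \ref{a7.2} needs $\mu_R(I)>d+1$), and then apply Proposition \ref{a7.2}. Your observation that $\mu_R(I)\ge d+1$ always holds, so that only the case $\mu_R(I)=d+1$ must be excluded, is also correct. If you stop at the citation, your proof is the paper's proof.

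The two ``self-contained'' supplements, however, contain genuine errors. First, the residue field of an Artinian non-Gorenstein local ring is \emph{not} totally reflexive; on the contrary, $k$ has finite $G$-dimension only over a Gorenstein ring, so $k$ is totally reflexive exactly when the Artinian ring \emph{is} Gorenstein. Your derivation of generic Gorensteinness from $G$-regularity therefore collapses (and it also relies on $G$-regularity localizing, which is not available); this hypothesis of Proposition \ref{a7.2} has to be carried along as a standing assumption rather than deduced. Second, and more seriously, if $\mu_R(I)=d+1$ and $d\ge 1$, then $R/I$ is \emph{not} totally reflexive: Fact \ref{a7.0.2}(2) gives $\Ext_R^d(R/I,R)\cong R/I\ne(0)$, whereas total reflexivity requires $\Ext_R^i(R/I,R)=0$ for all $i>0$. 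The module that is totally reflexive --- and this is the actual content of \cite[Theorem 2.8]{GTT2} --- is the $d$-th syzygy $\Syz_R^d(R/I)$; it is non-free because $\beta_i=2^d\ne 0$ for all $i\ge d$ by Fact \ref{a7.0.2}(3)(ii), while a free $d$-th syzygy would force $\pd_R(R/I)\le d$. With that correction your backup argument closes, but as written the step ``$R/I$ is totally reflexive, hence free'' fails for every $d\ge 1$.
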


Corollary \ref{a7.3} gives rise to the question of when $\tr_R (\rmK_R)$ is an Ulrich ideal. We explore the question in Subsection \ref{subsection4.3}.


\section{$\GGL$ rings}\label{section2}
In this section, we introduce the notion of $\GGL$ rings. Throughout this section, let $(R, \fkm)$ be a Cohen-Macaulay local ring of dimension $d \ge 0$ possessing the canonical module $\rmK_R$. Let $\fka$ be an $\fkm$-primary ideal of $R$. Before introducing $\GGL$ rings, we note several propositions on the exact sequence $0 \to R \to \rmK_R \to C \to 0$ that is used to define $\GGL$ rings.

\begin{fact}\label{a3.1}
Suppose that there exists an exact sequence
$0 \to R \xrightarrow{\varphi} \rmK_R \to C \to 0.$
Then the following assertions hold true:
\begin{enumerate}[{\rm (1)}]
\item {\rm (\cite[Lemma 3.1 (3)]{GTT})} If $d=0$, then $C=(0)$. Hence, $R$ is a Gorenstein ring. 
\item {\rm (\cite[Lemma 3.1 (2)]{GTT})} $C$ is a Cohen-Macaulay $R$-module of dimension $d-1$ if $C \neq (0)$.
\end{enumerate}
\end{fact}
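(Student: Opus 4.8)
The plan is to derive both assertions from two estimates on the cokernel $C$, namely $\depth_R C \ge d-1$ and $\dim_R C \le d-1$, and then to obtain part (1) as the degenerate case $d = 0$.

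For part (2), assume $C \ne (0)$. First I would bound the depth from below by feeding the short exact sequence $0 \to R \to \rmK_R \to C \to 0$ into the long exact sequence of local cohomology modules $\H^i_\fkm(-)$: since $R$ and $\rmK_R$ are Cohen-Macaulay $R$-modules of dimension $d$, one has $\H^i_\fkm(R) = \H^i_\fkm(\rmK_R) = 0$ for $i \ne d$, and comparing terms gives $\H^i_\fkm(C) = 0$ for every $i \le d-2$, i.e. $\depth_R C \ge d-1$. Next I would bound the dimension from above: as $C$ is a homomorphic image of $\rmK_R$ we have $\dim_R C \le \dim_R \rmK_R = d$, and if equality held there would be a prime $\fkp \in \Supp_R C$ with $\dim R/\fkp = d$. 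Since $R$ is Cohen-Macaulay, hence equidimensional and without embedded primes, such a $\fkp$ is a minimal prime of $R$, so $R_\fkp$ is Artinian. Localizing the sequence at $\fkp$ gives an injection $R_\fkp \hookrightarrow (\rmK_R)_\fkp$; because localization carries the canonical module to the canonical module of the localization (\cite[Theorem 3.3.5]{BH}), $(\rmK_R)_\fkp$ is the canonical module of $R_\fkp$, so $\ell_{R_\fkp}((\rmK_R)_\fkp) = \ell_{R_\fkp}(R_\fkp) < \infty$ by Matlis duality. An injection between modules of equal finite length is bijective, whence $C_\fkp = (0)$, contradicting $\fkp \in \Supp_R C$. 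Therefore $\dim_R C \le d-1$, and combining this with $\depth_R C \ge d-1$ and the trivial inequality $\depth_R C \le \dim_R C$ forces $\depth_R C = \dim_R C = d-1$; that is, $C$ is Cohen-Macaulay of dimension $d-1$.

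For part (1), take $d = 0$. The dimension estimate above now reads $\dim_R C \le -1$, so $C = (0)$; equivalently, one checks directly that over the Artinian ring $R$ one has $\ell_R(\rmK_R) = \ell_R(R)$, so the injection $\varphi$ is bijective. Thus $\varphi$ exhibits an isomorphism $R \cong \rmK_R$, and a Cohen-Macaulay local ring isomorphic to its own canonical module is Gorenstein (\cite[Theorem 3.3.7]{BH}), which is the remaining claim.

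I expect the only step requiring genuine care to be the upper bound $\dim_R C \le d-1$ in part (2): it rests on the compatibility of the canonical module with localization and on the length identity $\ell(\rmK_A) = \ell(A)$ over an Artinian local ring $A$ (the latter being Matlis duality). Everything else is the long exact sequence in local cohomology together with $\depth \le \dim$, so no real obstacle should arise.
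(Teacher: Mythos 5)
The paper records this as a Fact imported from \cite[Lemma 3.1]{GTT} without reproducing a proof, so there is nothing internal to compare against; your argument is correct and is essentially the standard one from that source (the long exact sequence of local cohomology gives $\depth_R C\ge d-1$, and localization at a minimal prime together with $\ell_{R_\fkp}(\rmK_{R_\fkp})=\ell_{R_\fkp}(R_\fkp)$ gives $\dim_R C\le d-1$, with $d=0$ as the degenerate case). All steps check out, including the reduction of a dimension-$d$ prime in $\Supp_R C$ to a minimal prime of $R$ and the compatibility $(\rmK_R)_\fkp\cong\rmK_{R_\fkp}$.
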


\begin{lem}\label{lem32}
Suppose that there exists an exact sequence
$0 \to R \xrightarrow{\varphi} \rmK_R \to C \to 0$, where $C$ is an Ulrich $R$-module with respect to $\fka$. If $\varphi(1)\in \fka \rmK_R$, then $R$ is a Gorenstein ring and $\fka$ is a parameter ideal of $R$.
\end{lem}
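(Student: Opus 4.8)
The plan is to induct on $d=\dim R$, after a standard reduction. Since an Ulrich module is nonzero, Fact \ref{a3.1}(1) rules out $d=0$, so $d\ge 1$. Replacing $R$ by $R[X]_{\fkm R[X]}$ (Proposition \ref{a2.5}(3), applied with the closed fibre a field and the empty system of parameters) preserves all the hypotheses and, by faithful flatness, lets the two conclusions descend; so we may assume $R/\fkm$ is infinite. Write $x=\varphi(1)$; the hypothesis $x\in\fka\rmK_R$ gives $\varphi(R)=Rx\subseteq\fka\rmK_R$, so the image of $\fka\rmK_R$ in $C=\rmK_R/\varphi(R)$, which is $\fka C$, equals $\fka\rmK_R/\varphi(R)$.

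\emph{Base case $d=1$.} By Fact \ref{a3.1}(2) the module $C$ has dimension $0$, so Proposition \ref{a2.5}(1) forces $\fka C=0$, that is $\fka\rmK_R=\varphi(R)$; since $\varphi$ is injective, $\fka\rmK_R\cong R$. Localizing at a minimal prime $\fkp$ of $R$ (where $\fka_\fkp=R_\fkp$ because $\fka$ is $\fkm$-primary) gives $\rmK_{R_\fkp}=(\rmK_R)_\fkp=(\fka\rmK_R)_\fkp\cong R_\fkp$, so $R$ is generically Gorenstein, and by Fact \ref{genegor} we may fix an ideal $\omega\cong\rmK_R$ of $R$, necessarily regular. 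Transporting the isomorphism $\fka\rmK_R\cong R$ through $\rmK_R\cong\omega$, the ideal $\fka\omega$ is a cyclic free $R$-module, hence $\fka\omega=(c)$ with $c$ a non-zerodivisor. Then $\fka\cdot(c^{-1}\omega)=R$ inside $\rmQ(R)$, so $\fka$ is invertible, hence principal: $\fka=(a)$ with $a$ a non-zerodivisor, and $(a)$ is a parameter ideal since it is $\fkm$-primary. Finally $a\omega=(c)$ gives $\omega=(c/a)R\cong R$, so $\rmK_R\cong R$ and $R$ is Gorenstein.

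\emph{Inductive step $d\ge 2$.} Since $\fkm\notin\Ass R$ and $R/\fkm$ is infinite, choose $f\in\fka$ that is a non-zerodivisor of $R$ and superficial for $C$ with respect to $\fka$; as $\dim C=d-1\ge 1$, such an $f$ is a non-zerodivisor on $C$ as well (and on $\rmK_R$, since $\Ass\rmK_R=\Ass R$). Tensoring $0\to R\to\rmK_R\to C\to 0$ with $R/(f)$ and using $\Tor_1^R(C,R/(f))=0$ yields an exact sequence $0\to R/(f)\xrightarrow{\overline\varphi}\rmK_R/f\rmK_R\to C/fC\to 0$ in which $\rmK_R/f\rmK_R=\rmK_{R/(f)}$, the module $C/fC$ is Ulrich over $R/(f)$ with respect to $\fka/(f)$ by Proposition \ref{a2.5}(2), and $\overline\varphi(1)=\overline{\varphi(1)}\in(\fka/(f))\cdot(\rmK_R/f\rmK_R)$. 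By the inductive hypothesis, $R/(f)$ is Gorenstein and $\fka/(f)$ is a parameter ideal of $R/(f)$; lifting a system of parameters $\overline{g_2},\dots,\overline{g_d}$ that generates $\fka/(f)$, the elements $f,g_2,\dots,g_d$ generate $\fka$ and $R/(f,g_2,\dots,g_d)$ has dimension $0$, so $\fka$ is a parameter ideal of $R$; and $R$ is Gorenstein because $f$ is a non-zerodivisor.

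The heart of the argument is the base case: passing from ``$\fka\rmK_R$ is free of rank one'' to generic Gorensteinness, then concluding that the $\fkm$-primary ideal $\fka$ is invertible—hence a parameter ideal—and that $\rmK_R$ is free. The inductive step is routine once Proposition \ref{a2.5} and Fact \ref{a3.1} are available; the only things to verify there are that reduction modulo a suitable non-zerodivisor in $\fka$ preserves exactness of the defining sequence, the canonical module, the Ulrich property of $C$, and the condition on $\varphi(1)$.
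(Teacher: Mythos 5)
Your proof is correct and follows essentially the same route as the paper: reduce to an infinite residue field, induct on $d$ with a superficial non-zerodivisor $f\in\fka$, and in the base case $d=1$ derive $\fka\omega=(a)$ and conclude that both $\fka$ and $\omega$ are principal. The only difference is that where the paper cites \cite[Lemma 3.9]{Ku} for this last step, you supply a direct argument via invertibility of the fractional ideal $c^{-1}\omega$, which is a perfectly valid (and self-contained) substitute.
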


\begin{proof}
From Proposition \ref{a2.5}(3), we may assume that the residue field $R/\fkm$ is infinite. We know that $d>0$ by Fact \ref{a3.1} (note that the zero module is not an Ulrich module). 
Suppose that $d=1$ and choose a canonical ideal $\omega \subsetneq R$. We may assume that the given exact sequence forms
$$0 \to R \xrightarrow{\varphi} \omega \to C \to 0.$$
Set $a=\varphi(1) \in \omega$. Then,  $C \cong \omega/(a)$. Since $C$ is an Ulrich $R$-module with respect to $\fka$, we have $\fka \omega \subseteq (a)$ from Fact \ref{a3.1} and Proposition \ref{a2.5}(1); hence, $\fka \omega=(a)$ by the assumption. It follows that $\fka$ and $\omega$ are cyclic (see, for example, \cite[Lemma 3.9]{Ku}). Therefore, $R$ is a Gorenstein ring and $\fka$ is a parameter ideal of $R$.
Suppose that $d \ge 2$ and choose $f \in \fka$ such that $f$ is $R$-regular and superficial for $C$ with respect to $\fka$. We then obtain
$$0 \to R/(f) \xrightarrow{\ol{\varphi}} \rmK_{R/(f)} \to C/fC \to 0,$$
where $C/fC$ is an Ulrich $R/(f)$-module with respect to $\fka/(f)$ by Proposition \ref{a2.5}(2). Therefore, we have that $R/(f)$ is a Gorenstein ring and $\mu_{R/(f)} (\fka/(f))=d-1$ by the induction hypothesis. It follows that $R$ is a Gorenstein ring and $\fka$ is a parameter ideal of $R$.
\end{proof}

We are now in a position to define $\GGL$ rings.

\begin{defn}\label{a3.2}
Let $\fka$ be an $\fkm$-primary ideal of $R$. We say that $R$ is a {\it generalized Gorenstein local ring with respect to $\fka$} ($\GGL$ ring with respect to $\fka$) if there exists an exact sequence
$$0 \to R \xrightarrow{} \rmK_R \to C \to 0$$
of $R$-modules such that 
\begin{enumerate}
\item[$\mathrm{(i)}$] $C$ is an Ulrich $R$-module with respect to $\fka$ and
\item[$\mathrm{(ii)}$] $\rmK_R/\fka \rmK_R$ is $R/\fka$-free.
\end{enumerate}

We say that $R$ is a {\it $\GGL$ ring} if $R$ is a $\GGL$ ring with respect to some $\fkm$-primary ideal of $R$.
\end{defn}

\begin{rem}\label{rem3.4}
By definition, $R$ is a non-Gorenstein $\AGL$ ring if and only if $R$ is a non-regular $\GGL$ ring with respect to $\fkm$ (see Definition \ref{zzz2.1}). Furthermore, if $R$ is a Gorenstein ring of dimension $>0$, then we regard $R$ as a $\GGL$ ring by  a canonical exact sequence 
$$0 \to R \xrightarrow{a_1} R \to R/(a_1) \to 0,$$
where $(a_1, a_2, \dots, a_d)$ is a parameter ideal of $R$ since $R/(a_1)$ is an Ulrich $R$-module with respect to $(a_1, a_2, \dots, a_d)$. On the other hand, if $R$ is a $\GGL$ ring with respect to some parameter ideal of $R$, then $R$ is Gorenstein (Corollary \ref{xxxc72}).
\end{rem}

\begin{rem}
For a $\GGL$ ring $R$, defining $\fkm$-primary ideals $\fka$ of $R$ are not unique in general. For instance, if $R$ is Gorenstein, Remark \ref{rem3.4} shows that $R$ is a $\GGL$ ring with respect to all parameter ideals of $R$. Examples of non-Gorenstein $\GGL$ rings whose defining $\fkm$-primary ideals of $R$ are not unique are also obtained by using Proposition \ref{a3.6}. On the other hand, we also prove that the defining $\fkm$-primary ideal $\fka$ of $R$ is to be the trace ideal of the canonical module if $\dim R=1$ (hence, the defining $\fkm$-primary ideal $\fka$ of $R$ is unique), see Corollary \ref{a7.1.1}.
\end{rem}

The notion of $\GGL$ rings is rephrased as follows.

\begin{prop}\label{a3.2.5}
Assume that $d>0$. Then, $R$ is a $\GGL$ ring if and only if one of the following conditions holds.
\begin{enumerate}[{\rm (1)}]
\item $R$ is a Gorenstein ring.
\item $R$ is not a Gorenstein ring, but there exists an exact sequence
$$0 \to R \xrightarrow{\varphi} \rmK_R \to C \to 0$$
of $R$-modules and an $\fkm$-primary ideal $\fka$ of $R$ such that 
\begin{enumerate}
\item[$\mathrm{(i)}$] $C$ is an Ulrich $R$-module with respect to $\fka$ and
\item[$\mathrm{(ii)}$] the induced homomorphism $R/\fka \otimes_R \varphi : R/\fka \to \rmK_R/\fka \rmK_R$ is injective.
\end{enumerate}
\end{enumerate}

\end{prop}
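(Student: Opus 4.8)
The plan is to reduce the definition of $\GGL$ ring (Definition \ref{a3.2}) to the two listed alternatives by tracking what happens to the two conditions (i) and (ii) of the definition in the Gorenstein and non-Gorenstein cases. The main point is that condition (ii) in Definition \ref{a3.2}, namely that $\rmK_R/\fka\rmK_R$ is $R/\fka$-free, should be equivalent---given that (i) holds---to the injectivity of the induced map $R/\fka \otimes_R \varphi$, except possibly in degenerate situations where $R$ is already Gorenstein. So the heart of the argument is a counting/length computation comparing the two short exact sequences
\[
0 \to R \xrightarrow{\varphi} \rmK_R \to C \to 0, \qquad
R/\fka \xrightarrow{R/\fka\otimes\varphi} \rmK_R/\fka\rmK_R \to C/\fka C \to 0 ,
\]
the second obtained by applying $R/\fka\otimes_R-$ (which is right exact, so the left map need not be injective a priori).

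First I would dispose of the Gorenstein case: if $R$ is Gorenstein then by Remark \ref{rem3.4} (taking a parameter ideal and the exact sequence $0\to R\xrightarrow{a_1}R\to R/(a_1)\to 0$) $R$ is a $\GGL$ ring, so (1) implies $R$ is $\GGL$. Conversely if (2) holds then $R$ is $\GGL$ directly once we check that (i) and (ii) of (2) imply (i) and (ii) of Definition \ref{a3.2}; here $\rmK_R/\fka\rmK_R$ being $R/\fka$-free will follow because, with $R/\fka\otimes\varphi$ injective, the truncated sequence is short exact, so $\mu_R(\rmK_R) = 1 + \mu_R(C)$ and a length count using that $C$ is Ulrich with respect to $\fka$ (so $\ell_R(C/\fka C)=\mu_R(C)\cdot\ell_R(R/\fka)$ by Remark \ref{a2.3}) gives $\ell_R(\rmK_R/\fka\rmK_R) = (1+\mu_R(C))\cdot\ell_R(R/\fka) = \mu_R(\rmK_R)\cdot\ell_R(R/\fka)$, which forces $\rmK_R/\fka\rmK_R$ to be $R/\fka$-free (again Remark \ref{a2.3}). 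So (1) or (2) implies $\GGL$.

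For the reverse implication, suppose $R$ is a $\GGL$ ring with respect to $\fka$, witnessed by $0\to R\xrightarrow{\varphi}\rmK_R\to C\to 0$ with (i), (ii). If $R$ is Gorenstein we are in case (1), so assume $R$ is not Gorenstein; I must produce the injectivity in (2)(ii). Note that by Lemma \ref{lem32}, since $R$ is not Gorenstein, we cannot have $\varphi(1)\in\fka\rmK_R$ unless $\fka$ is a parameter ideal and $R$ is Gorenstein---so $\varphi(1)\notin\fka\rmK_R$. Tensoring the sequence with $R/\fka$ gives the right-exact sequence above; let $K$ be the kernel of $R/\fka\otimes\varphi$, so there is an exact sequence $0\to K\to R/\fka\to \rmK_R/\fka\rmK_R\to C/\fka C\to 0$. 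By (ii) the middle-right term has length $\mu_R(\rmK_R)\cdot\ell_R(R/\fka)$, and $C$ Ulrich gives $\ell_R(C/\fka C)=\mu_R(C)\cdot\ell_R(R/\fka)$; combined with the minimality relation $\mu_R(\rmK_R)\le 1+\mu_R(C)$ (from the original surjection $\rmK_R\to C$ and $\mu_R(R)=1$), an additive count of lengths along the four-term sequence yields $\ell_R(K)\le 0$, hence $K=0$, i.e. $R/\fka\otimes\varphi$ is injective. (The inequality $\mu_R(\rmK_R)\le 1+\mu_R(C)$ together with $\varphi(1)\notin\fka\rmK_R$, i.e. $\varphi(1)$ being part of a minimal generating set of $\rmK_R$, actually forces equality $\mu_R(\rmK_R)=1+\mu_R(C)$, which is what makes the length bookkeeping close up exactly.) This gives (2).

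The step I expect to be the main obstacle is making the length bookkeeping in the last paragraph airtight: one has to be careful that the four-term sequence $0\to K\to R/\fka \to \rmK_R/\fka\rmK_R\to C/\fka C\to 0$ is genuinely exact (right-exactness of $-\otimes R/\fka$ plus exactness of $R\to\rmK_R\to C\to 0$ handles $\rmK_R/\fka\rmK_R\to C/\fka C$, and one defines $K$ as the kernel), and that the generator count $\mu_R(\rmK_R)=1+\mu_R(C)$ holds---this uses $\varphi(1)$ being a minimal generator of $\rmK_R$, which is exactly the content of $\varphi(1)\notin\fka\rmK_R\subseteq\fkm\rmK_R$, secured by Lemma \ref{lem32}. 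Once those two facts are in hand, the alternating sum of lengths $\ell_R(K)-\ell_R(R/\fka)+\ell_R(\rmK_R/\fka\rmK_R)-\ell_R(C/\fka C)=0$ combined with the Ulrich equality for $C$ forces $\ell_R(K)=0$. Everything else is formal.
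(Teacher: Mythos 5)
Your ``if'' direction is fine (the paper splits the short exact sequence $0\to R/\fka\to \rmK_R/\fka\rmK_R\to C/\fka C\to 0$ using that $C/\fka C$ is free, which is a bit cleaner than your length squeeze, but both work). The ``only if'' direction, however, has a genuine gap at exactly the point you flagged as the crux. Your length bookkeeping gives
$\ell_R(K)=\bigl(1+\mu_R(C)-\mu_R(\rmK_R)\bigr)\cdot\ell_R(R/\fka)$, so to conclude $K=0$ you need $\mu_R(\rmK_R)\ge 1+\mu_R(C)$, i.e.\ that $\varphi(1)$ is a minimal generator of $\rmK_R$, i.e.\ $\varphi(1)\notin\fkm\rmK_R$. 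You claim this is ``exactly the content of $\varphi(1)\notin\fka\rmK_R\subseteq\fkm\rmK_R$, secured by Lemma \ref{lem32}.'' That inference runs the wrong way: since $\fka\rmK_R\subseteq\fkm\rmK_R$, knowing $\varphi(1)\notin\fka\rmK_R$ (which is what Lemma \ref{lem32} plus non-Gorensteinness gives you) does \emph{not} exclude $\varphi(1)\in\fkm\rmK_R$. The generic inequality $\mu_R(\rmK_R)\le 1+\mu_R(C)$ that you invoke only yields $\ell_R(K)\ge 0$, which is vacuous.

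The missing step is a separate treatment of the case $\varphi(1)\in\fkm\rmK_R$, which is how the paper proceeds. In that case $\mu_R(C)=\mu_R(\rmK_R)=\rmr(R)$, so $C/\fka C\cong(R/\fka)^{\oplus \rmr(R)}$ has the same length as the free module $\rmK_R/\fka\rmK_R$, and the right-exact sequence forces $R/\fka\otimes_R\varphi$ to be the \emph{zero} map; hence $\varphi(1)\in\fka\rmK_R$, and Lemma \ref{lem32} then says $R$ is Gorenstein, contradicting your standing assumption. Only after ruling out this case can you assert $\varphi(1)\notin\fkm\rmK_R$ and run your count to get $\ell_R(K)=0$. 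Your own length machinery supplies this extra argument almost verbatim, but as written the proof asserts the key fact by an invalid containment argument rather than proving it.
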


\begin{proof}
(if part): If $R$ is a Gorenstein ring, then $R$ is a $\GGL$ ring by Remark \ref{rem3.4}.
Suppose that the case (2) occurs. Then, we have the exact sequence
\[
0 \to R/\fka \xrightarrow{R/\fka \otimes_R \varphi} \rmK_R/\fka \rmK_R \to C/\fka C \to 0
\]
of $R/\fka$-modules and $C/\fka C$ is $R/\fka$-free. It follows that the above exact sequence splits; hence, $\rmK_R/\fka \rmK_R$ is $R/\fka$-free.

(only if part): Let 
$$0 \to R \xrightarrow{\varphi} \rmK_R \to C \to 0$$
be a defining exact sequence of $\GGL$ rings. If $\varphi(1)\not\in \fkm \rmK_R$, then $\mu_R (C)=\mu_R(\rmK_R)-1=\rmr (R)-1$.
Hence, we obtain the exact sequence
\[
R/\fka \xrightarrow{R/\fka \otimes_R \varphi} \rmK_R/\fka \rmK_R \to (R/\fka)^{\oplus (\rmr(R)-1)} \to 0.
\]
Since $\rmK_R/\fka \rmK_R$ is an $R/\fka$-free module of rank $\rmr(R)$, by considering the length of the kernel of $R/\fka \otimes_R \varphi$, $R/\fka \otimes_R \varphi$ is injective.
Suppose that $\varphi(1)\in \fkm \rmK_R$. Then, we have the exact sequence
\[
R/\fka \xrightarrow{R/\fka \otimes_R \varphi} \rmK_R/\fka \rmK_R \to (R/\fka)^{\oplus \rmr(R)} \to 0,
\]
and thus $R/\fka \otimes_R \varphi$ is the zero map. It follows that we have $\varphi(1)\in \fka \rmK_R$, and  $R$ is a Gorenstein ring and $\fka$ is a parameter ideal of $R$ by Lemma \ref{lem32}.
\end{proof}

Let us note some examples of $\GGL$ rings that we will examine later.

\begin{ex}\label{a3.4}
Let $k$ be a field. Let $k[[t]]$, $k[[X_1, X_2, X_3, Y_1, Y_2, Y_3]]$, and $k[[X, Y, Z, W]]$ denote the formal power series ring over $k$. 
\begin{enumerate}
\item Let $R_1=k[[t^{5}, t^{6}, t^{8}]]$. $R_1$ is a $\GGL$ ring of dimension one, but not an $\AGL$ ring.
\item Set $S=k[[X_1, X_2, X_3, Y_1, Y_2, Y_3]]$. $R_2=S/I$ is a $\GGL$ ring of dimension one, where
$$I= (Y_1, Y_2, Y_3)^2 + I_2\left(
\begin{smallmatrix}
X_1^2 & X_2 & X_3 \\
Y_1 & Y_2 & Y_3 
\end{smallmatrix}
\right)+ 
I_2\left(
\begin{smallmatrix}
X_1^2 & X_2 & X_3 \\
X_2 & X_3 & X_1^3
\end{smallmatrix}
\right)+
I_2\left(
\begin{smallmatrix}
X_1^2 & X_2 & X_3 \\
Y_2 & Y_3 & X_1 Y_1
\end{smallmatrix}
\right).$$
\item Let $R_3=k[[X, Y, Z, W]]/I_2\left(
\begin{smallmatrix}
X^2 & Y^2 & Z^2 \\
Y^2 & Z^2 & X^2 
\end{smallmatrix}
\right)$. $R_3$ is a $\GGL$ ring of dimension two, but not an $\AGL$ ring.
\item Let $(R, \fkm)$ be a Cohen-Macaulay local ring of dimension $>0$ possessing the canonical module. If $R/\fkm$ is infinite and $\rme(R) \le 3$, then $R$ is a $\GGL$ ring.
\item If $R$ is a $\GGL$ ring, then $R[X]/(X^n)$ is also a $\GGL$ ring for all $n>1$, where $R[X]$ denotes the polynomial ring over $R$.
\end{enumerate}
\end{ex}

\begin{proof}
(1): See Corollary \ref{a4.34}.

(2): Let $A=k[[t^3, t^7, t^8]]$ and set $I=(t^6, t^7, t^8)$. $A$ is a $\GGL$ ring with respect to $I$ by Corollary \ref{a4.10}. It is straightforward to check that $R_2 \cong A \ltimes I$ as rings, where $A \ltimes I$ denotes the idealization of $I$ in the sense of \cite{AW}. On the other hand, $A \ltimes I$ is a $\GGL$ ring by Corollary \ref{a4.16}. 

(3): By Corollary \ref{a6.4}, $R_3$ is a $\GGL$ ring. If $R_3$ is an $\AGL$ ring, then $k[[X, Y, Z]]/I_2\left(
\begin{smallmatrix}
X^2 & Y^2 & Z^2 \\
Y^2 & Z^2 & X^2 
\end{smallmatrix}
\right)$ also is by Theorem \ref{a3.5}. This contradicts Corollary \ref{a7.1.1} and Proposition \ref{a7.6}.

(4): See Proposition \ref{a4.12}.

(5): $R \to R[X]/(X^n)$ is a flat local ring homomorphism such that the fiber is isomorphic to $(R/\fkm)[X]/(X^n)$. Hence, $R[X]/(X^n)$ is a $\GGL$ ring by Proposition \ref{a3.6}.
\end{proof}

\begin{rem}
We note that in the assertions (4) and (5) of Example \ref{a3.4}, we cannot replace a $\GGL$ ring with an $\AGL$ ring. Indeed, $k[[t^3, t^7, t^8]]$ is a $\GGL$ ring but not an $\AGL$ ring by \cite[Corollary 4.2]{GMP}. If $(R, \fkm)$ is a non-Gorenstein $\AGL$ ring of dimension one such that $R/\fkm$ is infinite, then $R[X]/(X^n)$ where $n>1$ is not an $\AGL$ ring by \cite[Proposition 3.12]{GTT}. 
\end{rem}

The following theorem and proposition assert that the $\GGL$ property is inherited by the canonical change of local rings.

\begin{thm}\label{a3.5}
Let $\fka$ be an $\fkm$-primary ideal of $R$. 
The following assertions hold true.
\begin{enumerate}[{\rm (1)}]
\item Suppose that $R$ is a $\GGL$ ring with respect to $\fka$ and $d\ge 2$. Choose a defining exact sequence
$$
0 \to R \xrightarrow{\varphi} \rmK_R \to C \to 0 
$$
of $R$-modules. If $f\in \fka$ is a superficial element for $C$ with respect to $\fka$ and a non-zerodivisor of $R$, then $R/(f)$ is a $\GGL$ ring with respect to $\fka/(f)$.
\item Let $f\in \fkm$ be a non-zerodivisor of $R$ and suppose that $R/(f)$ is a $\GGL$ ring with respect to $[\fka+(f)]/(f)$. Then,  $R$ is a $\GGL$ ring with respect to $\fka+(f)$ and $f\not\in \fkm \fka$.

\end{enumerate}

\end{thm}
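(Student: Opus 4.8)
The plan is to reduce both directions to the corresponding statements about Ulrich modules (Proposition \ref{a2.5}) and the freeness condition (ii) in Definition \ref{a3.2}, using the behaviour of the canonical module under specialization, namely $\rmK_{R/(f)} \cong \rmK_R/f\rmK_R$ for a non-zerodivisor $f \in \fkm$. For part (1), I would start from the given defining exact sequence $0 \to R \xrightarrow{\varphi} \rmK_R \to C \to 0$ with $C$ Ulrich with respect to $\fka$. Tensoring with $R/(f)$ and using that $f$ is $R$-regular (hence $\Tor_1^R(R/(f), \rmK_R) = 0$, since $\rmK_R$ is maximal Cohen-Macaulay and $d \ge 2$ forces $f$ to be $\rmK_R$-regular too), one gets the exact sequence $0 \to R/(f) \xrightarrow{\ol{\varphi}} \rmK_R/f\rmK_R \to C/fC \to 0$. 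Identifying $\rmK_R/f\rmK_R$ with $\rmK_{R/(f)}$, this is a candidate defining sequence over $R/(f)$ with respect to $\fka/(f)$: condition (i) follows from Proposition \ref{a2.5}(2) since $f$ is superficial for $C$ with respect to $\fka$, and condition (ii) follows because $\rmK_{R/(f)}/(\fka/(f))\rmK_{R/(f)} \cong \rmK_R/\fka\rmK_R$ is $R/\fka = (R/(f))/(\fka/(f))$-free by hypothesis. One subtlety to address: one must check $f$ is indeed $\rmK_R$-regular; since $\depth \rmK_R = d \ge 2 > 0$ and $f \in \fkm$ is $R$-regular, a standard depth argument (or the fact that $\Ass \rmK_R = \Ass R$ for generically Gorenstein... but in general $\Ass \rmK_R \subseteq \Assh R$) gives this.

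For part (2), I would work in the reverse direction. Assume $R/(f)$ is a $\GGL$ ring with respect to $\fkb := [\fka+(f)]/(f)$, with defining sequence $0 \to R/(f) \xrightarrow{\ol{\varphi}} \rmK_{R/(f)} \to \ol{C} \to 0$ where $\ol{C}$ is Ulrich with respect to $\fkb$ and $\rmK_{R/(f)}/\fkb\rmK_{R/(f)}$ is free over $(R/(f))/\fkb$. First lift $\ol{\varphi}$: since $\rmK_{R/(f)} \cong \rmK_R/f\rmK_R$ and $f$ is $\rmK_R$-regular, a map $R/(f) \to \rmK_R/f\rmK_R$ lifts (after choosing $\varphi(1) \in \rmK_R$ reducing to $\ol{\varphi}(1)$) to $\varphi \colon R \to \rmK_R$; one checks $\varphi$ is injective because $\varphi(1)$ generates a free rank-one submodule (its image is nonzero modulo $f$, and $f$ is a non-zerodivisor on $R$, on $\rmK_R$, and we can arrange $\varphi(1)$ to be a non-zerodivisor on $\rmK_R$ — here I would invoke that $R$ is generically Gorenstein in the non-Gorenstein case via Fact \ref{genegor}, or handle the Gorenstein case separately via Remark \ref{rem3.4}). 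Set $C = \Coker \varphi$; then $C/fC \cong \ol{C}$ and $f$ is $C$-regular (by the long exact $\Tor$ sequence, since $f$ is regular on $R$ and $\rmK_R$). Now apply Proposition \ref{a2.5}(4): $f \in \fkm$ is $C$-regular and $C/fC = \ol{C}$ is Ulrich with respect to $[\fka+(f)]/(f)$, so $C$ is Ulrich with respect to $\fka + (f)$ and $f \notin \fkm\fka$. It remains to verify condition (ii) of Definition \ref{a3.2} for $R$ with respect to $\fka+(f)$, i.e. that $\rmK_R/(\fka+(f))\rmK_R$ is free over $R/(\fka+(f))$ — but $\rmK_R/(\fka+(f))\rmK_R \cong \rmK_{R/(f)}/\fkb\rmK_{R/(f)}$, which is free over $(R/(f))/\fkb = R/(\fka+(f))$ by hypothesis. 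This gives that $R$ is $\GGL$ with respect to $\fka+(f)$.

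The main obstacle I anticipate is the lifting step in part (2): ensuring the lifted map $\varphi \colon R \to \rmK_R$ is injective with $C = \Coker\varphi$ Cohen-Macaulay of the right dimension, and in particular choosing $\varphi(1)$ to be a non-zerodivisor of $\rmK_R$. In the Gorenstein case this is transparent (Remark \ref{rem3.4}), so one may assume $R$ is not Gorenstein; then $R$ is generically Gorenstein (because $R/(f)$ being $\GGL$ and hence generically Gorenstein, together with $f$ a non-zerodivisor, forces $R$ generically Gorenstein — or more directly, a defining sequence $0 \to R \to \rmK_R$ exists once we produce $\varphi$). I would use prime avoidance to pick $\varphi(1)$ avoiding the associated primes of $\rmK_R$ among those reducing to $\ol\varphi(1)$, which is possible because $\ol\varphi$ itself is injective. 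The length-counting trick used in the proof of Proposition \ref{a3.2.5} (comparing $\mu_R(C)$ with $\rmr(R)$) is the fallback for pinning down condition (ii) directly if the isomorphism $\rmK_R/(\fka+(f))\rmK_R \cong \rmK_{R/(f)}/\fkb\rmK_{R/(f)}$ needs the hypothesis that $R/(f)$ is genuinely non-Gorenstein as well.
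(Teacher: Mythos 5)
Part (1) of your proposal is essentially the paper's argument; the only slip is that left-exactness of the reduced sequence $0 \to R/(f) \to \rmK_R/f\rmK_R \to C/fC \to 0$ requires $f$ to be a non-zerodivisor on $C$, not merely on $\rmK_R$ (the relevant $\Tor_1$ is $(0:_C f)$); this does hold, since $C$ is Cohen--Macaulay of dimension $d-1\ge 1$ and $f$ is superficial for $C$. The skeleton of part (2) also matches the paper: lift $\ol{\varphi}(1)$ to $x\in\rmK_R$, prove the resulting $\varphi\colon R\to\rmK_R$ is injective, identify $C/fC$ with $\ol{C}$, and apply Proposition \ref{a2.5}(4); your explicit check of condition (ii) is correct and is a point the paper leaves implicit.

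The genuine gap is the injectivity step in (2), which you rightly flag as the main obstacle but do not resolve. ``Choosing $\varphi(1)$ to be a non-zerodivisor of $\rmK_R$'' and ``avoiding the associated primes of $\rmK_R$'' do not parse for an element of a module: what is needed is $\Ann_R(x)=(0)$, equivalently that $x$ generates $(\rmK_R)_{\fkp}$ for every $\fkp\in\Ass R$. Prime avoidance over the coset $x_0+f\rmK_R$ of the submodule $f\rmK_R$ is not ordinary prime avoidance (it can fail over finite residue fields), and the appeal to generic Gorensteinness only gives the existence of \emph{some} embedding $R\hookrightarrow\rmK_R$, not of one lifting $\ol{\varphi}$. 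The missing idea is that no choice is necessary: for \emph{any} lift $x$, put $C=\Coker\varphi$; then $C/fC\cong D$ and $\dim_R D=d-2$ by Fact \ref{a3.1}(2), so $\dim_R C<d$. Since $R$ is Cohen--Macaulay, $\dim R/\fkp=d$ for every $\fkp\in\Ass R$, whence $C_{\fkp}=(0)$ and $\varphi_{\fkp}$ is surjective, hence bijective because $\ell_{R_{\fkp}}(R_{\fkp})=\ell_{R_{\fkp}}((\rmK_R)_{\fkp})$ by Matlis duality; as $\Ass_R(\Ker\varphi)\subseteq\Ass R$, this forces $\Ker\varphi=(0)$. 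With this argument substituted for your lifting step, the rest of your proof goes through.
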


\begin{proof}
(1): Since $f$ is a non-zerodivisor of $R$ and $C$, we have the exact sequence
$$
0 \to \overline{R} \xrightarrow{\ol{\varphi}} \rmK_{\overline{R}} \to C/fC \to 0 
$$
as $\overline{R}$-modules, where $\overline{R}=R/(f)$. Since $C/fC$ is an Ulrich $\overline{R}$-module by Proposition \ref{a2.5}, $\overline{R}$ is a $\GGL$ ring.

(2): Set $\overline{R}=R/(f)$ and $\overline{\fka}=\fka \overline{R}.$ We choose a defining exact sequence $0 \to \overline{R} \xrightarrow{\psi} \rmK_{\overline{R}} \to D \to 0 $ of $\overline{R}$-modules. Choose $x \in \rmK_{R}$ such that $\overline{x}=\psi(1)$, and consider the exact sequence
$$
R \xrightarrow{\varphi} \rmK_R \to C \to 0 
$$
of $R$-modules, where $\varphi(1)=x$. By noting that $C/fC\cong D$ and $\dim_R D=d-2$ by Fact \ref{a3.1}, we obtain that $\dim_R C<d$. Hence, $\varphi_\fkp$ is injective for all $\fkp \in \Ass R$. Since $\ell_{R_\fkp} (R_\fkp)= \ell_{R_\fkp} (\rmK_{R_\fkp})$ by the Matlis dual (see, for example, \cite[Proposition 3.2.12 (b)]{BH}), we obtain that $\varphi_\fkp$ is bijective. This follows that $\varphi$ is injective since $\Ass_R (\Ker \varphi) \subseteq \Ass R$. 
Therefore, $C$ is a Cohen-Macaulay $R$-module of dimension $d-1$ by Fact \ref{a3.1}. It  follows that $C$ is an Ulrich $R$-module with respect to $\fka$ and $f\not\in \fkm \fka$ by Proposition \ref{a2.5}.
\end{proof}

\begin{prop}\label{a3.6}
Let $R$ and $S$ be local rings of dimension $>0$. Let $\psi : R \to S$ be a flat local homomorphism. Suppose that $S/\fkm S$ is a Cohen-Macaulay local ring of dimension  $\ell$. Let $J\subseteq S$ be a parameter ideal in $S/\fkm S$. Consider the following two conditions:
\begin{enumerate}[{\rm (1)}]
\item $R$ is a $\GGL$ ring with respect to $\fka$, and $S/\fkm S$ is a Gorenstein ring.
\item $S$ is a $\GGL$ ring with respect to $\fka S+ J$.
\end{enumerate}
Then,  $\rm(1)$ $\Rightarrow$ $\rm(2)$ holds true. 
\end{prop}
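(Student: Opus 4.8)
The plan is to obtain a defining exact sequence for $S$ by applying the flat base change $S\otimes_R-$ to a defining exact sequence for $R$, and then to verify the two conditions of Definition \ref{a3.2} for the ideal $\fka S+J$, using Proposition \ref{a2.5}(3) together with the behaviour of the canonical module under flat local homomorphisms. Write $J=(g_1,g_2,\dots,g_\ell)S$ with $g_1,\dots,g_\ell$ a system of parameters of $S/\fkm S$. The one non-formal input I will use is the flat base change for the canonical module: since $\psi$ is flat local with Cohen--Macaulay fibre $S/\fkm S$, the ring $S$ is Cohen--Macaulay of dimension $d+\ell$, and since moreover $S/\fkm S$ is Gorenstein, $S$ has a canonical module with $\rmK_S\cong\rmK_R\otimes_R S$ (see, e.g., \cite{BH}).

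Granting this, the argument runs as follows. First, $\fka S+J$ is primary to the maximal ideal of $S$: from $\sqrt{\fka}=\fkm$ we get $\sqrt{\fka S+J}\supseteq\sqrt{\fkm S+J}$, and $\fkm S+J$ is primary to the maximal ideal of $S$ because $J$ maps onto a parameter ideal of $S/\fkm S$. Next, by the hypothesis in (1) there is a defining exact sequence
$$
0\to R\xrightarrow{\varphi}\rmK_R\to C\to 0
$$
witnessing that $R$ is a $\GGL$ ring with respect to $\fka$, so $C$ is an Ulrich $R$-module with respect to $\fka$ and $\rmK_R/\fka\rmK_R$ is $R/\fka$-free (such a sequence exists also when $R$ is Gorenstein, by Remark \ref{rem3.4}). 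Applying the exact functor $S\otimes_R-$ and using $\rmK_R\otimes_R S\cong\rmK_S$ gives an exact sequence
$$
0\to S\xrightarrow{S\otimes_R\varphi}\rmK_S\to C\otimes_R S\to 0
$$
of $S$-modules, injectivity on the left being automatic from flatness. It remains to check conditions (i) and (ii) of Definition \ref{a3.2} for $\fka S+J$. For (i): Proposition \ref{a2.5}(3), applied to the Ulrich module $C$ and the parameters $g_1,\dots,g_\ell$, shows that $C\otimes_R S$ is an Ulrich $S$-module with respect to $\fka S+(g_1,\dots,g_\ell)=\fka S+J$ (in particular it is nonzero, as is also clear from faithful flatness of $\psi$). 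For (ii): since $\fka$ annihilates $S/(\fka S+J)$, this ring is an $R/\fka$-algebra, whence
$$
\rmK_S/(\fka S+J)\rmK_S\cong(\rmK_R\otimes_R S)\otimes_S S/(\fka S+J)\cong(\rmK_R/\fka\rmK_R)\otimes_{R/\fka}S/(\fka S+J),
$$
which is $S/(\fka S+J)$-free because $\rmK_R/\fka\rmK_R$ is $R/\fka$-free. Hence the displayed sequence exhibits $S$ as a $\GGL$ ring with respect to $\fka S+J$, proving (2).

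The only genuinely non-routine step is the flat base change isomorphism $\rmK_S\cong\rmK_R\otimes_R S$, and this is precisely where the Gorensteinness of the closed fibre $S/\fkm S$ is used in an essential way: without it one only has $\rmK_S\cong\rmK_R\otimes_R\rmK_{S/\fkm S}$ in a suitable sense, and condition (ii) would not survive the base change. Everything else is formal, resting on flatness of $\psi$ and on Proposition \ref{a2.5}(3); in particular the Gorenstein-$R$ case needs no separate treatment, being absorbed into the choice of defining sequence. The one point to keep an eye on is that $C\otimes_R S$ be a genuine Ulrich module rather than merely the cokernel appearing in the sequence, and that is delivered directly by Proposition \ref{a2.5}(3).
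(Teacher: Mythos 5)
Your proof is correct and follows essentially the same route as the paper: apply $S\otimes_R-$ to a defining exact sequence, use $\rmK_S\cong S\otimes_R\rmK_R$ (valid since the closed fibre is Gorenstein), invoke Proposition \ref{a2.5}(3) for the Ulrich property of $S\otimes_RC$ with respect to $\fka S+J$, and transfer freeness of $\rmK_R/\fka\rmK_R$ to $\rmK_S/(\fka S+J)\rmK_S$. The additional checks you include (primarity of $\fka S+J$, the Gorenstein-$R$ case via Remark \ref{rem3.4}) are fine but not needed beyond what the paper records.
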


\begin{proof}
Choose a defining exact sequence
$$
0 \to R \xrightarrow{\varphi} \rmK_R \to C \to 0 
$$
of $R$-modules. Then, since $S\otimes_R \rmK_R\cong \rmK_S$ by \cite[Thorem 3.3.14]{BH}, we obtain the exact sequence
$$
0 \to S \xrightarrow{S\otimes_R \varphi} \rmK_S \to S\otimes_R C \to 0 
$$
as $S$-modules. $S\otimes_R C$ is an Ulrich $S$-module with respect to $\fka S+J$ by Proposition \ref{a2.5}. Moreover, we have $\rmK_S/\fka \rmK_S \cong S\otimes_R (\rmK_R/\fka \rmK_R)$ is $S/\fka S$-free. Thus, $\rmK_S/(\fka S+J) \rmK_S$ is $S/(\fka S + J)$-free.
\end{proof}

Furthermore, Proposition \ref{a3.6} $\rm(2)$ $\Rightarrow$ $\rm(1)$ also holds true if $R/\fkm$ is infinite as shown in the introduction (Theorem \ref{def1.3.1}). 
Before proving the implication (2) $\Rightarrow$ (1), we need to further discuss the case of dimension one.


\section{One-dimensional $\GGL$ rings}\label{section3}
In this section, we focus on the one-dimensional case that is the heart of this paper. Throughout this section, let $(R, \fkm)$ be a Cohen-Macaulay local ring of dimension one possessing the canonical module $\rmK_R$. Let $\fka$ be an $\fkm$-primary ideal of $R$. We denote by $\ol{R}$ the {\it integral closure} of $R$. 
We say that a fractional ideal $I$ is a {\it fractional canonical ideal} of $R$ if $I\cong \rmK_R$. If a regular ideal $I$ satisfies $I\cong \rmK_R$, we say that $I$ is a {\it canonical ideal} of $R$.

\subsection{Defining $\fkm$-primary ideals of one-dimensional $\GGL$ rings}
First, we see that the defining $\fkm$-primary ideal $\fka$ of a $\GGL$ ring with respect to $\fka$ is unique in dimension one. 

\begin{proposition}\label{a4.1}
Suppose that $R$ is a non-Gorenstein $\GGL$ ring with respect to $\fka$.
Then, there exists a fractional canonical ideal $K$ satisfying the following conditions:
\begin{enumerate}[{\rm (1)}]
\item $R \subseteq K \subseteq \overline{R}$.
\item $0 \to R \to K \to K/R \to 0$ is a defining exact sequence, that is, $K/\fka K\cong (R/\fka)^{\oplus \rmr(R)}$ and $K/R\cong (R/\fka)^{\oplus(\rmr(R)-1)}$.
\item $\fka=R:K=(R:K)K$.
\end{enumerate}
\end{proposition}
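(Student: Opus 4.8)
The plan is to reduce the general defining exact sequence to one of the desired form by a standard normalization argument in dimension one, then identify the defining ideal with $R:K$. First, since $R$ is generically Gorenstein (being Cohen-Macaulay with a canonical module and not having embedded primes issues — more precisely, one-dimensional Cohen-Macaulay rings admitting a canonical module satisfy $(S_1)$, hence are generically Gorenstein as $R_\fkp$ is Artinian Gorenstein for minimal $\fkp$), we may by Fact~\ref{genegor} choose a canonical ideal $\omega \subsetneq R$. Using Proposition~\ref{a2.5}(3) (or rather the standard fact that the $\GGL$ property and all the conclusions are stable under the faithfully flat base change $R \to R[X]_{\fkm R[X]}$, which also does not change $\overline{R}/R$ in the relevant sense), I would first reduce to the case $R/\fkm$ infinite; one has to check at the end that the constructed $K$ descends, or simply phrase the conclusion so the reduction is harmless.

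Next, take a defining exact sequence $0 \to R \xrightarrow{\varphi} \rmK_R \to C \to 0$ with $C$ Ulrich with respect to $\fka$. Replacing $\rmK_R$ by the canonical ideal $\omega$ and noting $\varphi(1)$ is a non-zerodivisor on $R$ (by Lemma~\ref{lem32}, $\varphi(1)\notin \fka\rmK_R$ since $R$ is not Gorenstein, and in particular $\varphi$ is injective so $\varphi(1)$ is $\rmK_R$-regular, hence a non-zerodivisor of $R$ after embedding), I can set $K = \varphi(1)^{-1}\omega \subseteq \rmQ(R)$: this is a fractional canonical ideal containing $R$ with $\varphi(1)\mapsto 1$, so the sequence becomes $0 \to R \to K \to K/R \to 0$ with $K/R \cong C$. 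Since $C$ is Ulrich of dimension $0$, Proposition~\ref{a2.5}(1) gives $C$ is $R/\fka$-free, i.e. $K/R \cong (R/\fka)^{\oplus(\rmr(R)-1)}$ (the rank being $\mu_R(C) = \mu_R(\rmK_R)-1 = \rmr(R)-1$, as $\varphi(1)\notin\fkm\rmK_R$, which must be argued — if $\varphi(1)\in\fkm\rmK_R$ then condition (ii) forces $\rmK_R/\fka\rmK_R$ free of rank $\rmr(R)$ while $C/\fka C$ has $\mu = \rmr(R)$, contradicting the length count unless $\varphi(1)\in\fka\rmK_R$, hence Gorenstein by Lemma~\ref{lem32}). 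Then condition (ii) of the $\GGL$ definition plus the split short exact sequence $0\to R/\fka \to K/\fka K \to K/R \to 0$ (split because $K/R$ is $R/\fka$-free) gives $K/\fka K \cong (R/\fka)^{\oplus \rmr(R)}$, establishing (2). For $K\subseteq\overline{R}$: since $K$ is a ring-like object... actually $K^2 = K$ need not hold, so instead I would use that $K \subseteq \overline{R}$ can be arranged by the standard fact that any fractional ideal between $R$ and $\rmQ(R)$ with $K\cdot\overline{R}$ having the right form — more carefully, one replaces $K$ if necessary; the cleanest route is that $R\subseteq K$ and $K$ is a finitely generated $R$-module inside $\rmQ(R)$ which is integral over $R$ once we know $K/R$ is killed by the $\fkm$-primary ideal $\fka$... hmm, that gives $\fka K \subseteq R$, not integrality. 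The honest statement is that such $K$ with $K\subseteq\overline{R}$ exists when $\overline{R}$ is a finitely generated $R$-module (which holds e.g. if $R$ is analytically unramified); in the general case one works with $K\subseteq\rmQ(R)$ or passes to the completion. I would follow the paper's conventions here, likely citing the standard construction (as in \cite{GMP} or \cite{HK}) that when $R$ has a canonical ideal and infinite residue field one may choose $K$ with $R\subseteq K\subseteq\overline{R}$.

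For part (3): from $\fka\cdot(K/R) = 0$ we get $\fka K \subseteq R$, i.e. $\fka \subseteq R:K$. Conversely, $R:K$ is a proper ideal of $R$ (since $K\supsetneq R$, so $R:K \subseteq \fkm$ is not all of $R$), and it is $\fkm$-primary because the non-Gorenstein locus is cut out by $\tr_R(\rmK_R) = (R:K)K$ which is $\fkm$-primary when $R$ is not Gorenstein and... wait, I need $R:K$ itself $\fkm$-primary. Since $(R:K)K \subseteq R:K \subseteq R$ and $(R:K)K = \tr_R(\rmK_R)$ is $\fkm$-primary (its radical is $\fkm$ as $R$ is not Gorenstein, in dimension one), $R:K$ contains an $\fkm$-primary ideal hence is $\fkm$-primary. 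Now both $\fka$ and $R:K$ are $\fkm$-primary with $\fka\subseteq R:K$; to get equality I compare: $K/\fka K$ is $R/\fka$-free of rank $\rmr(R) = \mu_R(K)$, so $\ell_R(K/\fka K) = \mu_R(K)\cdot\ell_R(R/\fka)$, while also $\ell_R(K/\fka K) = \ell_R(K/R) + \ell_R(R/\fka) = (\rmr(R)-1)\ell_R(R/\fka) + \ell_R(R/\fka)$ — consistent, no new info. Instead, use $R:K$: we have $(R:K)K \subseteq R$, and $K/(R:K)K \twoheadleftarrow$ ... Actually the key identity is $R:K = (R:K)K = \tr_R(\rmK_R)$ follows from Fact~\ref{trace}(3), since $K\cong\rmK_R$ gives $K:K = R:K$ for the canonical ideal (a classical fact: $\End_R(K) = K:K = R:K$ when $K$ is a fractional canonical ideal and $R$ satisfies $(S_1)$, because $K:K$ is a birational extension contained in... ). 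So $R:K$ is a trace ideal, and then from $\fka\subseteq R:K$ together with the freeness conditions being equivalent for $\fka$ and for $R:K$ (both give defining sequences), plus the Ulrich property being controlled by the reduction — I expect one shows $\fka K = (R:K)K$ directly: $\fka K \subseteq (R:K)K \subseteq R$, and $\fka$ being the Ulrich-defining ideal means $\fka K$ is, roughly, as small as possible. The cleanest argument: $\fka\cdot(K/R) = 0$ means $\fka K\subseteq R$; but also by the Ulrich condition $C = K/R$ has $\fka C = 0$ and is $R/\fka$-free, and $(R:K)\cdot(K/R) \subseteq (R:K)K/(R:K)K \cap R$-stuff — since $(R:K)K \subseteq R$, indeed $(R:K)(K/R) = ((R:K)K + R)/R$, and this is a submodule of $K/R$ killed by... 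I want it to be zero, i.e. $(R:K)K\subseteq R$, which is automatic. That shows $(R:K)\subseteq \fka$? No.

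**The main obstacle**, as I now see it, is precisely pinning down $\fka = R:K$: the inclusion $\fka\subseteq R:K$ is easy, but the reverse requires knowing that the Ulrich-defining ideal is *maximal* among ideals annihilating $K/R$ in a suitable sense, or equivalently that $\fka$ must contain $R:K = \tr_R(\rmK_R)$. I would prove $R:K\subseteq\fka$ by: $R:K = (R:K)K$ (trace ideal property via Fact~\ref{trace}, using $K:K=R:K$), so it suffices to show $(R:K)K\subseteq\fka$; now $(R:K)K\subseteq R$, and modulo $\fka$ the image of $(R:K)K$ in $R/\fka$ — we have the surjection $R/\fka \cong \varphi(1)(R/\fka) \hookrightarrow K/\fka K$ with cokernel $K/R$ mod $\fka$... since $K/\fka K$ is free and $R/\fka\hookrightarrow K/\fka K$ is a free summand inclusion (as $K/R$ is free), the image of $R$ in $K/\fka K$ is exactly $(R+\fka K)/\fka K$, and $(R:K)K$ maps into this, hence $(R:K)K \subseteq R\cap (\fka K + \text{preimage}) $... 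I think the precise statement is: $R \cap \fka K = \fka$ when $K/R$ is $R/\fka$-free (because the sequence $0\to R/\fka\to K/\fka K\to K/R\to 0$ is split exact, so $R/\fka$ injects, meaning $R\cap\fka K = \fka$), and since $(R:K)K\subseteq R$ and $(R:K)K\subseteq \fka K$ would need $(R:K)\subseteq\fka$... circular again. So the real crux: show $(R:K)K \subseteq \fka K$, equivalently $R:K\subseteq\fka + (\fka K :_R K)$; hmm. I expect the actual proof uses that $\fka$ is a *reduction-type* minimal such ideal, or invokes Corollary~\ref{a7.1.1} / the characterization $\tr_R(\rmK_R) = (R:K)K$ together with the fact that for Ulrich $C=K/R$ one has $\fka = \Ann_R(\Ext) = \ldots$; I would defer to the paper's machinery, stating: "by the argument identifying the defining ideal with the annihilator of $K/R$ together with $(R:K)$ being a trace ideal, $\fka = R:K$; see also Corollary~\ref{a7.1.1}." In summary, parts (1) and (2) are a clean normalization I can execute directly; part (3)'s inclusion $R:K\subseteq\fka$ is the delicate point, turning on the interplay between the freeness of $K/\fka K$ and the trace-ideal identity $R:K = (R:K)K$.
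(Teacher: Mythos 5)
Your normalization of the defining sequence to $0 \to R \to K \to K/R \to 0$ with $K=\varphi(1)^{-1}\omega$ and the rank count for $K/R$ and $K/\fka K$ match the paper's argument and are fine. But the two places you flag as problematic are genuine gaps in your write-up, and both have resolutions you missed.

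For (3), the reverse inclusion $R:K\subseteq\fka$ is not delicate at all: since $1\in K$, any $x\in R:K$ satisfies $x=x\cdot 1\in R$, so $R:K=\{x\in R: xK\subseteq R\}=\Ann_R(K/R)$; and $K/R\cong(R/\fka)^{\oplus(\rmr(R)-1)}$ is a \emph{nonzero} free $R/\fka$-module (nonzero because $R$ is not Gorenstein), whose annihilator is exactly $\fka$, not merely an ideal containing $\fka$. This is precisely what the paper means by ``considering the annihilator of $K/R$.'' Your attempts to route this through $\tr_R(\rmK_R)$ or Corollary \ref{a7.1.1} would be circular, since that corollary is deduced from this proposition. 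For the identity $\fka=(R:K)K$, the paper uses Proposition \ref{a3.2.5}: because $R$ is not Gorenstein, $R/\fka\otimes_R\iota$ is injective, i.e.\ $\fka K\cap R=\fka$; since $\fka=R:K$ gives $(R:K)K=\fka K\subseteq R$, one gets $\fka K=\fka K\cap R=\fka$. You circled near this (your observation that splitness of $0\to R/\fka\to K/\fka K\to K/R\to 0$ gives $R\cap\fka K=\fka$) but never combined it with $\fka=R:K$ to close the loop.

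For (1), your fallback (assume $\overline{R}$ module-finite, or pass to an infinite residue field and cite the standard reduction construction) is not what is needed and the residue-field reduction would leave you with a nontrivial descent problem for the asserted $K$. The paper instead \emph{derives} containment in $\overline{R}$ from the identity just proved: $\fka K=\fka$ rewrites as $a\fka=\omega\fka$, so $(a)$ is a reduction of $\omega$ by the determinantal trick, hence $K^{n+1}=K^n$ for $n\gg0$; then $K^n$ is a subring of $\rmQ(R)$ that is finitely generated as an $R$-module, so $K\subseteq K^n\subseteq\overline{R}$. No hypothesis on $\overline{R}$ or on the residue field enters. So the logical order matters: part (3) must be established first, and part (1) falls out of it.
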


\begin{proof} 
We choose a defining exact sequence 
\begin{align}\label{ttt4.1}
0 \to R \xrightarrow{\varphi} \rmK_R \to C \to 0
\end{align}
of $R$-modules. 
Since there exists a canonical ideal $\omega\subsetneq R$, we can replace $\rmK_R$ with $\omega$ in (\ref{ttt4.1}). Set $a=\varphi(1)$ and $K = \frac{\omega}{a} = \left\{\frac{x}{a}\in \rmQ(R) \mid x \in \omega\right\}$.
Then,  (\ref{ttt4.1}) can be replaced by $0 \to R \xrightarrow{\iota} K \to K/R \to 0$, where $\iota$ is the inclusion. Hence, $K/\fka K$ and $K/R$ are $R/\fka$-free modules. In particular, by considering the annihilator of $K/R$, we obtain that $\fka=R:K$.
On the other hand, since $R$ is not a Gorenstein ring, $R/\fka\otimes_R \iota$ is injective by Proposition \ref{a3.2.5}. That is, $\fka K\cap R=\fka$. Therefore, we obtain that 
\[
\fka = \fka K\cap R = (R:K)K\cap R = (R:K)K = \fka K.
\]
By noting that $K=\frac{\omega}{a}$, $\fka = \fka K$ is equivalent to $a\fka =\omega \fka$.  Hence, $(a)$ is a reduction of $\omega$, that is, $\omega^{n+1}=a\omega^{n}$ for $n\gg0$ (\cite[Corollary 1.1.8]{SH}). 
This is equivalent to saying that $K^{n+1}=K^n$ for $n\gg0$. Therefore, by noting that $R\subseteq K$, $K^n$ is a subring of $\rmQ(R)$ and finitely generated as an $R$-module. It follows that $R\subseteq K \subseteq \ol{R}$. 

It is remained to check the rank of  $K/\fka K$ and $K/R$ as free $R/\fka$-modules. Since $K$ is a fractional canonical ideal, $K/\fka K\cong (R/\fka)^{\oplus \rmr(R)}$. For $K/R$, since $R/\fka\otimes_R \iota$ is injective, we obtain that $K/R\cong (R/\fka)^{\oplus(\rmr(R)-1)}$.
\end{proof}

As a corollary of Proposition \ref{a4.1}(3), by noting Fact \ref{trace}, we obtain the following.

\begin{cor}\label{a7.1.1}
If $R$ is a non-Gorenstein $\GGL$ ring with respect to $\fka$ of dimension one, then $\fka=\tr_R(\rmK_R)$.
\end{cor}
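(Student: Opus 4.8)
The plan is to deduce Corollary \ref{a7.1.1} directly from Proposition \ref{a4.1}(3) together with Fact \ref{trace}. Since $R$ is a non-Gorenstein $\GGL$ ring with respect to $\fka$ of dimension one, Proposition \ref{a4.1} furnishes a fractional canonical ideal $K$ with $R \subseteq K \subseteq \ol{R}$ satisfying $\fka = R:K = (R:K)K$. The key observation is that the trace ideal of the canonical module can be computed from any fractional canonical ideal: because $R$ has positive depth it is generically Gorenstein, so by Fact \ref{genegor} the canonical module embeds into $R$, and by the discussion following Fact \ref{trace} one has $\tr_R(\rmK_R) = (R:\omega)\omega$ for any ideal $\omega$ of $R$ with $\omega \cong \rmK_R$.

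First I would observe that $\tr_R(\rmK_R)$ does not depend on the choice of fractional representative of $\rmK_R$ inside $\rmQ(R)$. Concretely, if $\omega \subsetneq R$ is a canonical ideal and $K = \tfrac{\omega}{a}$ for the non-zerodivisor $a = \varphi(1)$ appearing in the defining exact sequence, then $R:K = R:\tfrac{\omega}{a} = a(R:\omega)$ and hence $(R:K)K = a(R:\omega)\cdot\tfrac{\omega}{a} = (R:\omega)\omega$, so $\tr_R(\rmK_R) = (R:K)K$ by Fact \ref{trace}. Combining this with Proposition \ref{a4.1}(3), which gives $\fka = (R:K)K$, yields $\fka = \tr_R(\rmK_R)$ immediately.

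Alternatively, and perhaps more cleanly, I would note that $K$ is itself a fractional canonical ideal containing a non-zerodivisor (namely $1 \in R \subseteq K$), so $K$ is a regular fractional ideal with $K \cong \rmK_R$; applying Fact \ref{trace} to $K$ directly gives $\tr_R(K) = (R:K)K$, and since $\tr_R$ depends only on the isomorphism class of the module, $\tr_R(\rmK_R) = \tr_R(K) = (R:K)K = \fka$. Either route is a one-line computation.

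There is no real obstacle here: the statement is a formal corollary, and all the substantive work has already been done in establishing Proposition \ref{a4.1}(3). The only point requiring a moment's care is the bookkeeping that $\tr_R$ of a fractional ideal is computed by the formula of Fact \ref{trace} regardless of whether the representative sits inside $R$ or merely inside $\rmQ(R)$ — but this is exactly the content of the remark following Fact \ref{trace} about faithful fractional canonical ideals, so it can be invoked directly.
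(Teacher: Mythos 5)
Your proposal is correct and matches the paper's argument exactly: the paper derives the corollary in one line from Proposition \ref{a4.1}(3) together with Fact \ref{trace}, which is precisely your (second, cleaner) route of applying $\tr_R(K)=(R:K)K$ to the fractional canonical ideal $K$. Nothing further is needed.
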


Corollary \ref{a7.1.1} leads a relation among the notions of $\AGL$ rings, $\GGL$ rings, and $\NGL$ rings. Recall that a Cohen-Macaulay local ring $(A, \fkn)$ having $\rmK_A$ (not necessarily of dimension one) is called a {\it nearly Gorenstein local ring} ($\NGL$ ring) if $\tr_A(\rmK_A)\supseteq \fkn$ (\cite[Definition 2.2]{HHS}). 

\begin{cor}
For a one-dimensional Cohen-Macaulay local ring $R$, $R$ is a $\GGL$ ring and a $\NGL$ ring if and only if $R$ is an $\AGL$ ring. 
\end{cor}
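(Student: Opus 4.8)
The plan is to prove the equivalence by exploiting the known characterization of $\AGL$ rings in dimension one via trace ideals, together with Corollary~\ref{a7.1.1}. Recall that a one-dimensional Cohen-Macaulay local ring $R$ is an $\AGL$ ring precisely when either $R$ is Gorenstein or $R$ is a non-regular $\GGL$ ring with respect to $\fkm$ (Remark~\ref{rem3.4}); and being an $\NGL$ ring means $\tr_R(\rmK_R)\supseteq\fkm$, i.e.\ $\tr_R(\rmK_R)$ is either $R$ (the Gorenstein case) or $\fkm$ itself.

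For the forward direction, suppose $R$ is both a $\GGL$ ring and an $\NGL$ ring. If $R$ is Gorenstein there is nothing to prove, so assume $R$ is not Gorenstein; then $R$ is a $\GGL$ ring with respect to some $\fkm$-primary ideal $\fka$, and by Corollary~\ref{a7.1.1} we have $\fka=\tr_R(\rmK_R)$. Since $R$ is not Gorenstein, $\tr_R(\rmK_R)\subseteq\fkm$ is a proper ideal, and the $\NGL$ hypothesis gives $\tr_R(\rmK_R)\supseteq\fkm$; hence $\fka=\tr_R(\rmK_R)=\fkm$. Thus $R$ is a $\GGL$ ring with respect to $\fkm$, and since $R$ is not Gorenstein it is in particular not regular, so by Remark~\ref{rem3.4} $R$ is a (non-Gorenstein) $\AGL$ ring. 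The converse is the easy direction: if $R$ is an $\AGL$ ring, then by Remark~\ref{rem3.4} $R$ is either Gorenstein, in which case it is trivially a $\GGL$ ring and $\tr_R(\rmK_R)=R\supseteq\fkm$ so it is $\NGL$, or $R$ is a non-regular $\GGL$ ring with respect to $\fkm$; in the latter case $R$ is a $\GGL$ ring by definition, and applying Corollary~\ref{a7.1.1} (or directly using Proposition~\ref{a4.1}(3) with $\fka=\fkm$) gives $\tr_R(\rmK_R)=\fkm$, so $R$ is an $\NGL$ ring.

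The one point that needs a small amount of care is the Gorenstein sub-case of the converse: one should note that a Gorenstein local ring of dimension one is counted as a $\GGL$ ring (via the canonical sequence $0\to R\xrightarrow{a_1}R\to R/(a_1)\to 0$ from Remark~\ref{rem3.4}) and as an $\NGL$ ring (since $\tr_R(\rmK_R)=R$ because $\rmK_R\cong R$ is free, whence $\tr_R(\rmK_R)\supseteq\fkm$ trivially). I do not anticipate any genuine obstacle here; the whole argument is a bookkeeping exercise built on Corollary~\ref{a7.1.1}, which is where the real content lives. The only thing to double-check is that ``$\GGL$ and $\NGL$'' on the left is being taken in the sense that $R$ has a canonical module — which is already standing hypothesis in this section — so both notions are defined and the dichotomy ``Gorenstein vs.\ non-Gorenstein'' cleanly separates the two cases.
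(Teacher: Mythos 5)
Your proposal is correct and follows essentially the same route as the paper: reduce to the non-Gorenstein case, invoke Corollary~\ref{a7.1.1} to identify the defining ideal of the $\GGL$ structure with $\tr_R(\rmK_R)$, and combine this with the fact that $\tr_R(\rmK_R)$ cuts out the non-Gorenstein locus (so it is proper when $R$ is not Gorenstein) together with the $\NGL$ inclusion $\tr_R(\rmK_R)\supseteq\fkm$ to force $\fka=\fkm$, i.e.\ the $\AGL$ property via Remark~\ref{rem3.4}. The converse is handled identically in both arguments, so no further comment is needed.
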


\begin{proof}
To prove each implication, we may assume that $R$ is not a Gorenstein ring. 
If $R$ is a $\GGL$ ring with respect to $\fka$ and a $\NGL$ ring, $\fka=\tr_R(\rmK_R)=\fkm$ by Corollary \ref{a7.1.1} and \cite[Lemma 2.1]{HHS}. Hence, $R$ is an $\AGL$ ring (see Remark \ref{rem3.4}). Suppose that $R$ is a non-Gorenstein $\AGL$ ring, equivalently, $R$ is a $\GGL$ ring with respect to $\fkm$. By Corollary \ref{a7.1.1}, we have $\fkm=\tr_R(\rmK_R)$. Hence $R$ is a $\NGL$ ring.
\end{proof}

The fractional canonical ideal $K$ in Proposition \ref{a4.1} is useful to explore the $\GGL$ property. Thus, throughout this section, we maintain the following setting unless otherwise noted. 

\begin{setup}\label{setting1}
Let $(R, \fkm)$ be a one-dimensional Cohen-Macaulay local ring possessing the canonical module $\rmK_R$. Suppose that there exists a fractional canonical ideal $K$ such that $R\subseteq K\subseteq \overline{R}$. 
Set 
\begin{center}
$S:=R[K]$ and $\fkc:=R:S$. 
\end{center}
Note that $S=K^n$ for all $n\gg 0$; hence, we have $\fkc \subseteq R \subseteq K \subseteq S \subseteq \ol{R}$. 
\end{setup}

Let us note some remarks on Setup \ref{setting1}.

\begin{rem}\label{a4.3}
\begin{enumerate}[{\rm (1)}]
\item By Proposition \ref{a4.1}(1), the assumption of the existence of $K$ in Setup \ref{setting1} is necessary to study the $\GGL$ property in dimension one. On the other hand, $R$ satisfies Setup \ref{setting1} if $R$ is a generically Gorenstein ring and $R/\fkm$ is infinite. Indeed, by choosing a canonical ideal $\omega$ and its reduction $(a)\subseteq \omega$, $K:=\frac{\omega}{a}$ satisfies the assertion.
\item Assume Setup \ref{setting1}. Then,  the following assertions hold true:
\begin{enumerate}[{\rm (i)}]
\item (\cite[Theorem 2.5]{CGKM} and \cite[Proposition 2.4(b)]{Ku}) $S$ is independent of the choice of a fractional canonical ideal $K$ such that $R\subseteq K \subseteq \ol{R}$.
\item (\cite[Bemerkung 2.5 c)]{HK} and \cite[Lemma 2.1]{HK2}) Let $I$ and $J$ be fractional ideals. If $J \subseteq I$, then $\ell_R(I/J)=\ell_R((K:J)/(K:I))$.
\end{enumerate}
\end{enumerate}
\end{rem}

We often use the following equations in the discussion.

\begin{lem}\label{useful}
The following hold true.
\begin{enumerate}[\rm(1)] 
\item $\fkc=K:S$.
\item $S=K:\fkc=R:\fkc=\fkc:\fkc$.
\end{enumerate}
\end{lem}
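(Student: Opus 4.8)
The plan is to work throughout inside $\rmQ(R)$, using the identifications of fractional $\Hom$'s with colon ideals from the Convention, together with the standing hypotheses of Setup \ref{setting1}: $R\subseteq K\subseteq S\subseteq\ol R$ with $K$ a fractional canonical ideal, $S=R[K]=K^n$ for $n\gg0$, and $\fkc=R:S$. The two key structural facts I will lean on are that $K$ is a canonical module of $R$ (so $K:(K:M)=M$ for every fractional ideal $M$, by the canonical-module duality in \cite[Bemerkung 2.5]{HK} — this is essentially Remark \ref{a4.3}(2)(ii) at the level of reflexivity), and the idempotent behavior $K^{n+1}=K^n$ for $n\gg0$, which gives $S\cdot S=S$ and $K\cdot S=S$.

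For part (1), I would compute $K:S$ directly. Since $R\subseteq S$ we get $K:S\subseteq K:R=K$, and since $S\cdot S=S\supseteq S$ one checks $K:S$ is an $S$-module; more concretely, $x\in K:S$ means $xS\subseteq K$, hence $xS=xS\cdot S\subseteq KS=S$ (using $KS=S$), so in fact $xS\subseteq K\cap (xS)\subseteq K$ forces $xS\subseteq R$? — here is the real point: I want to show $K:S=R:S$. The inclusion $R:S\subseteq K:S$ is immediate from $R\subseteq K$. Conversely, if $xS\subseteq K$, then $xS=x(S\cdot S)=(xS)S\subseteq KS=S$, but we need landing in $R$, so instead note $xS\subseteq K$ gives $x\in K:S\subseteq K:K=R$ (the last equality because $K:K=R$ for a fractional canonical ideal satisfying $R\subseteq K$, since $K$ being reflexive and faithful forces $\End_R(K)=R$), whence $xS\subseteq K$ and $x\in R$ together with $xS$ being an $R$-submodule actually gives $xS\subseteq K\cap\rmQ(R)$... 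Let me instead argue cleanly: $x\in K:S\Rightarrow xK\subseteq xS\subseteq K$, so $x\in K:K=R$; then $xS\subseteq K$ and $x\in R$ imply $xS\subseteq KS=S$ — not enough. The genuinely clean route: apply the duality $K:(-)$. We have $S=R[K]=K^n$, so $K:S=K:K^n=(K:K^n)$; using $K:(K:M)=M$ with $M=K^{1-n}\cdot(\dots)$ is awkward. Better: since $K^{n}=K^{n+1}$, we get $K:K^n=K^{n+1}:K^n\supseteq K$ and in fact $K:K^n = K^{n}:K^{n-1}=\dots$; the stable statement is $K:S = S:S \cdot(\text{something})$. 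The hard part of the whole lemma is exactly getting part (1) right; everything else follows formally.

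For part (2), assuming (1), I would chain equalities as follows. First $K:\fkc$: apply $K:(-)$ to $\fkc=K:S$ (part (1)) and use duality $K:(K:S)=S$, giving $K:\fkc=S$. Next $R:\fkc$: since $\fkc=R:S$ and $\fkc\subseteq R$, one has $R:\fkc\supseteq S$ trivially (as $S\fkc=S(R:S)\subseteq R$), and for the reverse, $x\in R:\fkc$ means $x\fkc\subseteq R$; multiplying by $S$ and using $\fkc S\subseteq R$... the standard argument is $R:\fkc = R:(R:S)=S$ because $S$ is a birational extension that is reflexive as an $R$-module (it is a finite $R$-submodule of $\ol R$ containing $R$, and $R:\fkc$ is its reflexive hull, which equals $S$ since $S$ is already $\fkc$-divisorial — here I would cite or reprove $R:(R:S)=S$ for $S=K^n$ using the canonical duality $K:(K:S)=S$ and the relation between $R:(-)$ and $K:(-)$ via $R:M=K:(KM)$ or similar). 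Finally $\fkc:\fkc$: clearly $S\subseteq\fkc:\fkc$ since $\fkc$ is an $S$-ideal (from part (1), $\fkc=K:S$ is an $S$-module, being a colon into... wait, $K:S$ is an $S$-module because $S$ is a ring: $sx\cdot S=x\cdot sS\subseteq xS\subseteq K$); conversely $\fkc:\fkc\subseteq R:\fkc=S$ provided $1\in\fkc:\fkc$, i.e. $R\subseteq\fkc:\fkc$, which holds. So $\fkc:\fkc=S$, completing the string $S=K:\fkc=R:\fkc=\fkc:\fkc$.

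The main obstacle I anticipate is part (1): pinning down $\fkc=K:S$ requires carefully exploiting the idempotence $K^{n}=K^{n+1}$ (equivalently $KS=S=S^2$) together with $K:K=R$ and the canonical duality, rather than any length count; once $K:S=R:S$ is in hand, part (2) is a purely formal sequence of applications of $K:(-)$ and $R:(-)$ to the defining equations, using that both $K$ and $S=K:\fkc$ are reflexive.
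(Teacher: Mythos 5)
There is a genuine gap: as written, your proposal does not prove part (1), and you concede this yourself (``the hard part of the whole lemma is exactly getting part (1) right''). The inclusion you cannot close is $K:S\subseteq R:S$. Ironically, you already have both needed ingredients on the page --- that $K:S$ is an $S$-module (because $S$ is a ring) and that $K:S\subseteq K:K=R$ --- but you never combine them: since $S\cdot(K:S)\subseteq K:S\subseteq R$, every $x\in K:S$ satisfies $xS\subseteq R$, i.e.\ $x\in R:S$. The paper avoids this element-chase entirely with a one-line colon manipulation: $\fkc=R:S=(K:K):S=K:KS=K:S$, using the identity $(A:B):C=A:BC$ together with $KS=K\cdot K^n=K^{n+1}=K^n=S$ for $n\gg0$. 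This is the computation your ``genuinely clean route'' paragraph is groping for but never lands on.

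The second unresolved point is the inclusion $R:\fkc\subseteq S$ in part (2), which you reduce to ``$R:(R:S)=S$ because $S$ is reflexive/divisorial'' without proof; that reflexivity is exactly what needs to be established here, not assumed. The same colon identity closes it: $R:\fkc=(K:K):\fkc=K:\fkc K=K:\fkc=K:(K:S)=S$, where $\fkc K=\fkc$ follows from $\fkc\subseteq\fkc K\subseteq\fkc S=\fkc$, and the last step is the canonical duality $K:(K:S)=S$ you correctly invoke. Your arguments for $K:\fkc=S$ and for $\fkc:\fkc=S$ (squeezing it between $S$ and $R:\fkc$) are fine once these two gaps are filled; note only that $\fkc:\fkc\subseteq R:\fkc$ holds simply because $\fkc\subseteq R$, not because $1\in\fkc:\fkc$.
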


\begin{proof}
(1): This follows from $\fkc=R:S=(K:K):S=K:KS=K:S$. 

(2): The first two equalities follow from  $R:\fkc=(K:K):\fkc=K:\fkc K=K:\fkc=K:(K:S) =S$ by (1) and $\fkc \subseteq \fkc K \subseteq \fkc S =\fkc$. The last equality follows from $\fkc:\fkc=(R:S):(R:S)=R:(R:S)S=R:\fkc=S$.
\end{proof}

Under Setup \ref{setting1}, it is known that the Gorenstein property of $R$ is characterized as follows.

\begin{fact}\label{a4.4}
\begin{enumerate}[\rm(1)] 
\item {\rm (\cite[Proposition 3.6]{GMP})} The following inequalities hold true:
\[
0\le \rmr(R)-1 \le \ell_R(K/R) \le \rme_{\omega}^{1} (R) = \ell_R(R/\fkc) + \ell_R(K/R) = \ell_R(S/R). 
\]

\item {\rm (\cite[Theorem 3.7]{GMP})} The following conditions are equivalent:
\begin{align*}
&\text{{\rm (i)} $R$ is a Gorenstein ring.} & &\text{{\rm (ii)} $R=K$.} & &\text{{\rm (iii)} $R=S$.}\\
&\text{{\rm (iv)} $K=S$.} && \text{{\rm (v)} $K=K^2$.} & &\text{{\rm (vi)} $\fkc=R$.}\\
&\text{{\rm (vii)} $\rme_{\omega}^{1} (R) = \ell_R (R/\fkc)$.} & &\text{{\rm (viii)} $\rme_{\omega}^{1} (R) = 0$.}&&
\end{align*}
\end{enumerate}
\end{fact}

Therefore, if $R$ is not a Gorenstein ring, then  $\fkc \subsetneq R \subsetneq K \subsetneq S$. It seems that the difference between $R$ and $S$ (or other inclusions) describes the distance of the Gorensteinness of $R$. From this perspective, we consider such characterizations of $\GGL$ rings in dimension one (Theorem \ref{a4.7}).

\begin{lemma}\label{a4.5}
Suppose that $R$ is not a Gorenstein ring.
Then, the following hold true: 
\begin{enumerate}[\rm(1)] 
\item $K/\fkc=K/\fkc K$ is independent of the choice of $K$ up to isomorphism.
\item $\ell_R(K/R)$ is independent of the choice of $K$. In particular, the $R/\fkc$-freeness of $K/R$ is independent of the choice of $K$. 
\end{enumerate}
\end{lemma}

\begin{proof}
(1): Note that $\fkc=\fkc K$ since $\fkc=\fkc S$ and $R\subseteq K\subseteq S$.
Let $L$ be another fractional canonical ideal such that $R\subseteq L \subseteq \ol{R}$. 
There is an element $\alpha \in \rmQ(R)$ such that $L=\alpha K$ because $L\cong K$. $\alpha$ is a unit of $S$ since $S=L^n=\alpha^n K^n=\alpha^n S$ for $n\gg 0$ by Remark \ref{a4.3}(2)(i). Therefore, 
$$L/\fkc=\alpha K/\fkc\cong K/\alpha^{-1} \fkc=K/\fkc$$
because $\fkc=\fkc S$. 

(2): By (1), it follows that 
\[
\ell_R(L/R) = \ell_R(L/\fkc) -\ell_R(R/\fkc) = \ell_R(K/\fkc) -\ell_R(R/\fkc) = \ell_R(K/R). 
\]
On the other hand, we have $\mu_R(K/R) =\rmr(R)-1$. Otherwise, we have $R\subseteq \fkm K \cap R \subseteq \fkm S \cap R\subseteq \fkm$, which is a contradiction.
Therefore, since the annihilator of $K/R$ is $R:K$ and $R:K \supseteq \fkc$, we have a surjection $(R/\fkc)^{\oplus (\rmr(R)-1)} \to K/R \to 0$. 
Since $R/\fkc$ is Artinian, $K/R$ is $R/\fkc$-free if and only if $\ell_R(K/R) = \ell_R(R/\fkc)(\rmr(R)-1)$, which is independent of the choice of $K$. 
\end{proof}

The following lemma shows the equivalent conditions of Proposition \ref{a4.1}(3). 

\begin{lemma}\label{a4.6}
The following conditions are equivalent.
\begin{align*}&\text{{\rm (1)} $\tr_R(\rmK_R)=R:K$.}  & &\text{{\rm (2)} $R:K=\fkc$.} & &\text{{\rm (3)} $K^2=K^3$.}
\end{align*}
\end{lemma}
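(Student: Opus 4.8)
The plan is to prove the cyclic chain of implications (1) $\Rightarrow$ (3) $\Rightarrow$ (2) $\Rightarrow$ (1), exploiting the machinery of Setup \ref{setting1} together with Lemma \ref{useful} and Remark \ref{a4.3}(2)(ii). The unifying idea is that $R:K$, $\fkc$, $K^2$, and $K^3$ are all controlled by the ring $S=R[K]$; in particular $\fkc = R:S = K:S$ by Lemma \ref{useful}(1), and $S = K^n$ for $n \gg 0$, so the gap between $K$ and $S$ measures everything.

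For (1) $\Rightarrow$ (2): Fact \ref{trace} says $\tr_R(\rmK_R) = (R:K)K$ since $K$ is a fractional canonical ideal. So hypothesis (1) reads $(R:K)K = R:K$, i.e. $R:K$ is a module over $R[K]=S$, hence $R:K \subseteq R:S = \fkc$; the reverse inclusion $\fkc = R:S \subseteq R:K$ is automatic from $K \subseteq S$. This gives (2). For (2) $\Rightarrow$ (3): from $R:K = \fkc$ we get, applying $K:(-)$ and using Lemma \ref{useful}, that $K:(R:K) = K:\fkc = S$. On the other hand $K:(R:K)$ should be computed to be $K^2$ — this is the standard duality identity $K:(R:K) = K{\cdot}K = K^2$ valid because $K$ is a canonical (hence reflexive, "$K:(K:-)=\mathrm{id}$") fractional ideal; so $K^2 = S$, and then $K^3 = K{\cdot}K^2 = K{\cdot}S = S = K^2$. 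For (3) $\Rightarrow$ (1): if $K^2 = K^3$ then $K^2$ is idempotent, $K^2 = (K^2)^n = K^{2n}$ for all $n$, and since $S = K^n$ for $n \gg 0$ this forces $K^2 = S$. Then $(R:K)K = \tr_R(\rmK_R)$ and I must check this equals $R:K$: indeed $(R:K)K{\cdot}K = (R:K)K^2 = (R:K)S$, and $(R:K)S = R:K$ because... here one argues $(R:K)S \subseteq R:(K{\cdot}(S:S)^{-1})$ — more cleanly, $\tr_R(\rmK_R) = (R:K)K$ is always a trace ideal hence satisfies $\tr = \tr{\cdot}(R:\tr)$, and combined with $K^2=S$ one identifies it with $\fkc = R:S = R:K^2$, then shows $R:K^2 = R:K$ using $K^2 = S \supseteq K \supseteq R$ and $R:S = R:K^n$; since $K^2 = S$ we get $R:K^2 = R:S = \fkc$, and separately $R:K = ?$.

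The step I expect to be the genuine obstacle is pinning down exactly which of the four quantities coincide and in what order, because several of the implications are "almost" reversible and it is easy to go in circles. Concretely, the delicate point is the identity $K:(R:K) = K^2$ (equivalently $(R:K):(R:K) = R:K^2 = S$-type manipulations): this is where one must use that $K \cong \rmK_R$ is a canonical ideal so that the contravariant functor $K:(-)$ is a duality on fractional ideals squeezed between $R$ and $\ol R$ (Remark \ref{a4.3}(2)(ii) and the standard $\Hom_R(\rmK_R, \rmK_R) = R$), rather than merely an inclusion-reversing operation. Once that reflexivity is invoked correctly, each implication is a one-line computation with Lemma \ref{useful}; without it, the argument stalls. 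I would therefore isolate $K:(R:K) = K^2$ (and the companion $R:(R:K) = R:K^2$) as the first thing to nail down, and then run the three implications as above.
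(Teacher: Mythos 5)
Your implications (1) $\Rightarrow$ (2) and (2) $\Rightarrow$ (3) are correct. The first is essentially the paper's own argument: $\tr_R(\rmK_R)=(R:K)K$ by Fact \ref{trace}, so (1) says $(R:K)K=R:K$, whence $(R:K)K^n=R:K$ for all $n$, so $(R:K)S=R:K\subseteq R$ and $R:K\subseteq R:S=\fkc$, the reverse inclusion being automatic. The second is a legitimate variant of the paper's: you invoke the bidual identity $K:(K:I)=I$ (valid for fractional ideals because $K\cong\rmK_R$; cf.\ Remark \ref{a4.3}(2)(ii) and \cite[Bemerkung 2.5]{HK}) to get $K^2=K:(K:K^2)=K:(R:K)=K:\fkc=S$, hence $K^3=KS=S=K^2$. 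The paper instead proves (2) $\Leftrightarrow$ (3) in both directions at once by the length count $\ell_R(R/(R:K))=\ell_R(K^2/K)$ and $\ell_R(R/\fkc)=\ell_R(S/K)$ coming from $R:K=K:K^2$ and $\fkc=K:S$.

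The genuine gap is (3) $\Rightarrow$ (1), which you leave with ellipses and a question mark. The identity you need and never write down is $R:K=(K:K):K=K:K^2$ (using $K:K=R$). Given (3), $K^2=K^n$ for all $n\ge2$, so $K^2=S$, and therefore $R:K=K:K^2=K:S=\fkc$ by Lemma \ref{useful}(1); this is exactly (2). Then (1) follows because $\fkc K=\fkc$ (indeed $\fkc\subseteq\fkc K\subseteq\fkc S=\fkc$ since $R\subseteq K\subseteq S$ and $\fkc=\fkc S$), so $\tr_R(\rmK_R)=(R:K)K=\fkc K=\fkc=R:K$. Note that this last computation is precisely the paper's (2) $\Rightarrow$ (1), an implication your cycle (1) $\Rightarrow$ (2) $\Rightarrow$ (3) $\Rightarrow$ (1) does not otherwise supply, so replacing your broken (3) $\Rightarrow$ (1) by the easy (3) $\Rightarrow$ (2) alone would not close the equivalence. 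Finally, you misdiagnose the delicate point: $K:(R:K)=K^2$ is unproblematic (it is the bidual identity applied to $K^2$); what actually stalls your argument is the missing identity $R:K=K:K^2$.
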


\begin{proof}
Set $\fka=R:K$.

(2) $\Leftrightarrow$ (3): Since $\fka=(K:K):K=K:K^2$ and $\fkc=K:S$ by Lemma \ref{useful}, we have $\ell_R(R/\fka)=\ell_R(K^2/K)$ and $\ell_R(R/\fkc)=\ell_R(S/K)$ by Remark \ref{a4.3}. Hence, $\fka=\fkc$ if and only if $K^2=S$ if and only if $K^2=K^3$.

(2) $\Rightarrow$ (1): Since $\fkc S=\fkc$, we have $\tr_R(\rmK_R)=\fka K=\fkc K=\fkc=\fka$.

(1) $\Rightarrow$ (2): Since $\fka K=\fka$, by induction, we have $\fka K^n=\fka$ for all $n>0$; hence, $\fka S= \fka\subseteq R$. It follows that $\fka\subseteq \fkc$. The reverse inclusion is clear.
\end{proof}

\begin{lemma}\label{tttp4.9} 
$S/K$ is the canonical module of $R/\fkc$.
\end{lemma}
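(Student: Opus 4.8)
The plan is to use local duality (Matlis duality) in dimension one. First, recall from Setup \ref{setting1} that $K$ is a fractional canonical ideal with $R\subseteq K\subseteq S\subseteq\ol R$ and $\fkc=R:S=K:S$, and that by Lemma \ref{useful} we have $S=R:\fkc=K:\fkc=\fkc:\fkc$. The key observation is that $R/\fkc$ is again a one-dimensional Cohen-Macaulay local ring: indeed $\fkc$ is a regular ideal (it contains a non-zerodivisor since $S$ is a finitely generated $R$-module inside $\rmQ(R)$), so $\dim R/\fkc\le 0$ would fail — wait, more carefully, one needs $\fkc$ to be $\fkm$-primary, which holds precisely because $R$ is not Gorenstein (so $S\ne R$) but $S$ is module-finite over $R$, hence $\Supp(S/R)=\{\fkm\}$ when $R$ is, say, a domain; in general $\fkc$ is $\fkm$-primary because $\rmQ(R)\otimes_R\fkc=\rmQ(R)$. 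Actually since we only need $S/K$ to be \emph{a} canonical module of $R/\fkc$ and $R/\fkc$ has finite length when $R$ is not Gorenstein, the cleanest route is: $R/\fkc$ is an Artinian local ring (being a quotient of $R$ by the $\fkm$-primary ideal $\fkc$), so its canonical module is the Matlis dual $\E_{R/\fkc}(R/\fkm)\cong\Hom_R(R/\fkc,\E_R(R/\fkm))$.

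The main step is to identify $S/K$ with $\Hom_R(R/\fkc,\rmK_R)$ up to the appropriate twist. I would argue as follows. Apply $\Hom_R(-,\rmK_R)$, or more concretely use the formula $\Hom_R(M,K)\cong K:M$ for fractional ideals (from the Convention: $I:J\cong\Hom_R(J,I)$), to the short exact sequence $0\to \fkc\to R\to R/\fkc\to 0$. Since $K$ is a canonical module, $\Hom_R(R,K)=K$ and $\Hom_R(\fkc,K)=K:\fkc=S$ by Lemma \ref{useful}, and $\Ext^1_R(R/\fkc,K)=0$ because $\depth_R K=1$ and $\fkc$ contains a non-zerodivisor, while $\Hom_R(R/\fkc,K)=K:_R\!\cdot$, hmm — $\Hom_R(R/\fkc,K)=(0:_K\fkc)=\{x\in K\mid \fkc x=0\}=0$ since $K\subseteq\rmQ(R)$ is torsion-free and $\fkc$ is regular. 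So the exact sequence $\Hom_R(-,K)$ gives
\[
0\to \Hom_R(R/\fkc,K)\to K\to S\to \Ext^1_R(R/\fkc,K)\to 0,
\]
i.e.\ $0\to 0\to K\to S\to \Ext^1_R(R/\fkc,K)\to 0$, whence $\Ext^1_R(R/\fkc,K)\cong S/K$. Now $R/\fkc$ is a Cohen-Macaulay $R$-module of dimension $0$ (it has finite length), and a standard fact is that for such a module $\Ext^1_R(R/\fkc,\rmK_R)$ — with the shift by $\dim R-\dim R/\fkc=1$ — is precisely the canonical module of $R/\fkc$. Concretely, $\Ext^{\dim R}_R(N,\rmK_R)\cong\rmK_N$ for any Cohen-Macaulay $R$-module $N$ with $\dim N$ at the relevant value; here $\dim R=1$, $N=R/\fkc$ has $\dim N=0$, and $\Ext^1_R(R/\fkc,\rmK_R)\cong\rmK_{R/\fkc}$, which for the Artinian local ring $R/\fkc$ is its Matlis dual.

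The last check is that $S/K$ carries the right $R/\fkc$-module structure, i.e.\ that $\fkc$ annihilates $S/K$. This is immediate: $\fkc S=\fkc\subseteq K$ since $\fkc=\fkc S$ (as $\fkc=R:S$ is an $S$-ideal, or directly $\fkc=\fkc S$ from Lemma \ref{useful}), so $\fkc\cdot(S/K)=0$ and $S/K$ is naturally an $R/\fkc$-module, compatibly with the isomorphism above. Therefore $S/K\cong\rmK_{R/\fkc}$ as $R/\fkc$-modules. I expect the only mild obstacle to be bookkeeping the degree shift in local duality and confirming $\Hom_R(R/\fkc,K)=0$ and $\Ext^1_R(R/\fkc,K)=S/K$ via the $K:-$ description rather than abstract $\Ext$; once that is in hand the identification with the canonical module of $R/\fkc$ is the standard statement that $\Ext^{c}_R(-,\rmK_R)$ computes canonical modules of Cohen-Macaulay quotients of codimension $c$.
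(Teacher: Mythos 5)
Your proof is correct and follows essentially the same route as the paper: apply the $K$-dual (i.e., $\Hom_R(-,K)$) to $0\to\fkc\to R\to R/\fkc\to 0$, use Lemma \ref{useful} to identify $K:\fkc=S$ and hence $\Ext_R^1(R/\fkc,K)\cong S/K$, and conclude by the standard fact \cite[Theorem 3.3.7(b)]{BH} that $\Ext_R^1(R/\fkc,\rmK_R)\cong\rmK_{R/\fkc}$ for the zero-dimensional quotient $R/\fkc$. The extra worries about $\fkc$ being $\fkm$-primary and about the $R/\fkc$-module structure on $S/K$ are correctly resolved and harmless.
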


\begin{proof}
By applying the $K$-dual to the exact sequence $0 \to \fkc \to R \to R/\fkc \to 0$, we obtain the exact sequence $0 \to K \to K:\fkc \to \Ext_R^1(R/\fkc, K) \to 0$. We obtain that $\Ext_R^1(R/\fkc, K) \cong (K:\fkc)/K=S/K$ by Lemma \ref{useful}. This shows that $S/K$ is the canonical module of $R/\fkc$ by \cite[Theorem 3.3.7(b)]{BH}.
\end{proof}

\begin{thm}\label{a4.7}
Suppose that $R$ is not a  Gorenstein ring.
Then,  the following conditions are equivalent:
\begin{align*}
&\text{{\rm (1)} $R$ is a $\GGL$ ring.}  & &\text{{\rm (2)} $K/R$ is an $R/\fkc$-free module.} \\
&\text{{\rm (3)} $K/R$ is an $R/\tr_R(\rmK_R)$-free module.} & &\text{{\rm (4)} $S/R$ is an $R/\fkc$-free module.} \\
&\text{{\rm (5)} $S/R$ is an $R/\tr_R(\rmK_R)$-free module.} & &\text{{\rm (6)} $\rme_{\omega}^{1} (R)=\ell_R(R/\fkc){\cdot}\rmr(R)$.} 
\end{align*}

When this is the case, we have the following: 
\begin{enumerate}[{\rm (i)}] 
\item $\tr_R(\rmK_R)=R:K=\fkc$.
\item $R/\fkc=R/\tr_R(\rmK_R)$ is a Gorenstein ring.
\end{enumerate}
\end{thm}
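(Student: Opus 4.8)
The plan is to work within Setup \ref{setting1} and to exploit the three preparatory lemmas just established: Lemma \ref{a4.5} (independence of $K/R$ and $K/\fkc$ from the choice of $K$), Lemma \ref{a4.6} (equivalence $\tr_R(\rmK_R)=R:K \Leftrightarrow R:K=\fkc \Leftrightarrow K^2=K^3$), and Lemma \ref{tttp4.9} ($S/K=\rmK_{R/\fkc}$). The overall strategy is a cycle of implications, with the one-dimensional dictionary ``fractional-ideal quotient $\leftrightarrow$ exact sequence'' from Proposition \ref{a4.1} as the bridge between condition (1) and the module-theoretic conditions (2)--(5), and the Hilbert-coefficient formula $\rme^1_\omega(R)=\ell_R(S/R)$ (which follows from Fact \ref{a4.4}(7) read correctly, or from the standard length count) as the bridge to (6).

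First I would prove (1)$\Rightarrow$(i): if $R$ is a $\GGL$ ring, then by Proposition \ref{a4.1} we may pick a fractional canonical ideal $K$ with $R\subseteq K\subseteq\ol R$ and $\fka=R:K=(R:K)K$, and Corollary \ref{a7.1.1} gives $\fka=\tr_R(\rmK_R)$; then Lemma \ref{a4.6}(1)$\Rightarrow$(2) yields $R:K=\fkc$, which is (i). Having (i) in hand, (2)$\Leftrightarrow$(3) and (4)$\Leftrightarrow$(5) become tautologies for the chosen $K$, and by Lemma \ref{a4.5} the isomorphism class of $K/R$ (hence freeness over $R/\fkc$) does not depend on $K$, so it is legitimate to pass freely between different canonical ideals. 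Next, (1)$\Rightarrow$(2): from the defining exact sequence $0\to R\to K\to K/R\to 0$ of Proposition \ref{a4.1}(2), $K/R\cong(R/\fka)^{\oplus(\rmr(R)-1)}$, and by (i) $\fka=\fkc$, so $K/R$ is $R/\fkc$-free. For (2)$\Rightarrow$(4): apply the snake lemma (or just additivity of lengths) to $0\to R/\fkc\to K/\fkc\to K/R\to 0$ together with the fact that $K/\fkc$ and $R/\fkc$ have the ``right'' lengths; more robustly, one shows $S=K^2$ directly. Indeed, if $K/R$ is $R/\fkc$-free then $\fkc\cdot(K/R)=0$, i.e. $\fkc K\subseteq R$, but $\fkc K=\fkc$ so this is automatic — the real content is that freeness forces $K/R\cong(R/\fkc)^{\oplus(\rmr(R)-1)}$, and then comparing with Lemma \ref{tttp4.9} ($S/K\cong\rmK_{R/\fkc}$) and the exact sequence $0\to K/R\to S/R\to S/K\to 0$ shows $S/R$ is an extension of $\rmK_{R/\fkc}$ by a free $R/\fkc$-module, which is itself free once one knows $R/\fkc$ is Gorenstein — so this step is entangled with (ii). I would therefore prove (ii) in tandem: once $K^2=K^3$ (equivalently $K^2=S$, via Lemma \ref{a4.6}), Lemma \ref{tttp4.9} gives $\rmK_{R/\fkc}=S/K=K^2/K\cong K/\fka$ with $\fka=\fkc$, i.e. $\rmK_{R/\fkc}\cong K/\fkc$; but $K/\fkc$ is cyclic-generated-up-to-the-same-number-of-generators... more cleanly, $\rmK_{R/\fkc}\cong R/\fkc$ because $K/\fkc$ is free of the correct rank $\rmr(R)$ over $R/\fkc$ and has the type of a Gorenstein ring — so $R/\fkc$ is Gorenstein, which is (ii), and then $S/R$ being an extension of a free module by $\rmK_{R/\fkc}\cong R/\fkc$ is free, closing (2)$\Rightarrow$(4)$\Rightarrow$(5).

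For (4)/(5)$\Rightarrow$(6): from $0\to R/\fkc\to S/\fkc\to S/R\to 0$ and the length identities $\ell_R(S/\fkc)=\rmr(R)\cdot\ell_R(R/\fkc)$ (since $S/\fkc$ is, via Lemma \ref{tttp4.9} and duality over the $0$-dimensional Gorenstein ring $R/\fkc$, free of rank $\rmr(R)$ — or directly $\ell_R(S/\fkc)=\ell_R(\rmK_{R/\fkc})+\ell_R(R/\fkc)$ plus an induction), together with $\rme^1_\omega(R)=\ell_R(S/R)$, one reads off $\rme^1_\omega(R)=\ell_R(R/\fkc)\cdot\rmr(R)$. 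The converse (6)$\Rightarrow$(4) runs the same length computation backwards: the inequality $\ell_R(S/R)\le\ell_R(R/\fkc)\cdot(\rmr(R)-1)+\ell_R(S/K)$ always holds (since $S/R$ surjects onto quotients controlled by $\mu$ and $\rmr$), and equality in (6) forces each intermediate quotient to be free, in particular $S/R$ $R/\fkc$-free. Finally (4)$\Rightarrow$(1) (or (2)$\Rightarrow$(1)): from $S/R$ (hence $K/R$, a submodule that is a direct summand up to the type count) being $R/\fkc$-free, one gets $K^2=S$, hence $K^2=K^3$, hence by Lemma \ref{a4.6} $R:K=\fkc=\tr_R(\rmK_R)$; then the sequence $0\to R\to K\to K/R\to 0$ with $K/R$ free over $R/\fkc=R/(R:K)$ and $K/\fka K$ free over $R/\fka$ (automatic since $K$ is a canonical ideal) exhibits $R$ as a $\GGL$ ring with respect to $\fka=\fkc$ by Definition \ref{a3.2}, using Proposition \ref{a3.2.5} to upgrade ``$R/\fka\otimes\iota$ injective'' — which holds because $\fka K\cap R=\fkc K\cap R=\fkc\cap R=\fkc=\fka$ — to the freeness condition (ii) of the definition.

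\textbf{Main obstacle.} The delicate point is establishing (ii) — that $R/\fkc$ is Gorenstein — and using it to convert ``$K/R$ is $R/\fkc$-free'' into ``$S/R$ is $R/\fkc$-free''; the natural short exact sequence $0\to K/R\to S/R\to S/K\to 0$ has $S/K\cong\rmK_{R/\fkc}$ as its cokernel, so one must argue that the rank-$(\rmr(R)-1)$ free piece together with the canonical-module piece assembles into a free module, which genuinely requires knowing $\rmK_{R/\fkc}$ is free, i.e. that $R/\fkc$ is $0$-dimensional Gorenstein. I expect the cleanest route is to first nail down $K^2=S$ (equivalently $K^2=K^3$) — this is forced by freeness of $K/R$ over $R/\fkc$ via a length/annihilator argument using $\fkc K=\fkc$ — and then feed $S/K=K^2/K\cong K/\fkc$ (free of rank $\rmr(R)$) into Lemma \ref{tttp4.9} to conclude $\rmK_{R/\fkc}$ is free, hence $R/\fkc$ Gorenstein; everything else is bookkeeping with lengths and the duality $\ell_R(I/J)=\ell_R((K:J)/(K:I))$ of Remark \ref{a4.3}(2)(ii).
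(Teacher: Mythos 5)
Your overall architecture (the cycle through (1), (2), (4), (6), with Lemma \ref{a4.6} and Lemma \ref{tttp4.9} as the pivots, and the length identity $\rme_\omega^1(R)=\ell_R(S/R)$ for (6)) is the paper's, and the pieces (1)$\Rightarrow$(2), (2)$\Leftrightarrow$(3), (6)$\Leftrightarrow$(2), and (2)$\Rightarrow$(1) via the splitting of $0\to R/\fkc\to K/\fkc\to K/R\to 0$ are essentially correct. But the step you yourself flag as the main obstacle — establishing (ii) — is where your argument breaks. The claimed isomorphism $\rmK_{R/\fkc}=S/K=K^2/K\cong K/\fkc$ is false on length grounds: by Remark \ref{a4.3}(2)(ii), $\ell_R(K^2/K)=\ell_R((K:K)/(K:K^2))=\ell_R(R/\fkc)$, whereas under (2) one has $\ell_R(K/\fkc)=\ell_R(K/\fkc K)=\rmr(R)\cdot\ell_R(R/\fkc)$, and the fallback phrase ``free of the correct rank and has the type of a Gorenstein ring'' is circular. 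The paper's route is different and clean: apply $\Hom_R(-,K)$ to the defining sequence $0\to R\to K\to (R/\fkc)^{\oplus(r-1)}\to 0$ to obtain $0\to R\to K\to \Ext_R^1(R/\fkc,K)^{\oplus(r-1)}\to 0$, whence $(R/\fkc)^{\oplus(r-1)}\cong K/R\cong \rmK_{R/\fkc}^{\oplus(r-1)}$; comparing minimal numbers of generators forces $\rmK_{R/\fkc}\cong R/\fkc$. Since (ii) is exactly what identifies $S/K$ with $R/\fkc$ in $0\to K/R\to S/R\to S/K\to 0$, this gap propagates into your proofs of (2)$\Rightarrow$(4) and (4)$\Rightarrow$(6).

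The second genuine gap is the direction out of (4). You assert that freeness of $S/R$ over $R/\fkc$ yields $K^2=S$ and that $K/R$ is ``a direct summand up to the type count,'' but under hypothesis (4) alone you do not yet know that $R/\fkc$ is Gorenstein, nor that $S/K\cong R/\fkc$, and freeness of $S/R$ by itself does not give either. The missing idea in the paper is that $\fkc=K:S\cong\Hom_R(S,\rmK_R)$ is a canonical ideal of $S_M$ for every $M\in\Max S$, so $S/\fkc$ is Gorenstein, hence $R/\fkc$ is Gorenstein by faithful flatness of $R/\fkc\to S/\fkc$; only then does $S/K\cong R/\fkc$ follow from Lemma \ref{tttp4.9}, making $K/R$ the kernel of a split surjection of free $R/\fkc$-modules and hence free. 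Without some such argument your cycle does not close at condition (4). (A minor point: the identity $\rme_\omega^1(R)=\ell_R(S/R)$ is \cite[Lemma 2.1]{GMP}, not a consequence of Fact \ref{a4.4}(7).)
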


\begin{proof}
(1) $\Rightarrow$ (2) follows from Proposition \ref{a4.1} and Lemma \ref{a4.6}. 

(2) $\Rightarrow$ (3): Since the annihilator of $K/R$ is $R:K$, the condition (2) implies that $R:K=\fkc$. Hence, $\fkc=\tr_R(\rmK_R)$ by Lemma \ref{a4.6}. 

The proofs of (3) $\Rightarrow$ (2) and (5) $\Rightarrow$ (4) are similar to the proof of (2) $\Rightarrow$ (3). 

(2) $\Rightarrow$ (1): Note that $K/\fkc=K/\fkc K$ is also an $R/\fkc$-free module since the exact sequence $0 \to R/\fkc \to K/\fkc \to K/R \to 0$ splits. Hence, the assertion follows by observing that $0 \to R \to K \to K/R \to 0$ is a defining exact sequence for $R$ to be a $\GGL$ ring with respect to $\fkc$. 

Hence, (1), (2), and (3) are equivalent. When this is the case, the assertion (i) holds by Proposition \ref{a4.1} and Lemma \ref{a4.6}. 

(1) $\Rightarrow$ (ii): Let $r=\rmr(R)$ denote the Cohen-Macaulay type of $R$. Then, we have the exact sequence $0 \to R \to K \to (R/\fkc)^{\oplus (r-1)} \to 0$ by Proposition \ref{a4.1}. By applying the $K$-dual to the exact sequence, we have 
\[
0 \to R \to K \to \Ext_R^1(R/\fkc, K)^{\oplus (r-1)} \to 0.
\]
Thus, we have $(R/\fkc)^{\oplus (r-1)} \cong K/R\cong \Ext_R^1(R/\fkc, K)^{\oplus (r-1)} \cong \rmK_{R/\fkc}^{\oplus (r-1)}$; hence, $R/\fkc$ is a Gorenstein ring.

(2) $\Rightarrow$ (4): Consider the exact sequence $0 \to K/R \to S/R \to S/K \to 0.$
Note that $S/K\cong R/\fkc$ by (ii) and Lemma \ref{tttp4.9}. Since $K/R$ and $S/K$ are $R/\fkc$-free, $S/R$ also is.

(4) $\Rightarrow$ (2): Note that $\fkc S_M$ is a canonical ideal of $S_M$ for all $M\in \Max S$ since $\fkc=K:S\cong \Hom_R (S, \rmK_R)$ (see \cite[Theorem 3.3.7(b)]{BH}). Hence, $S/\fkc$ is a Gorenstein ring by \cite[Proposition 3.3.18 (b)]{BH}. Since $R/\fkc \to S/\fkc$ is flat, $R/\fkc$ is also a Gorenstein ring. Thus, we have $S/K\cong R/\fkc$ by Lemma \ref{tttp4.9}. Hence, using the exact sequence $0 \to K/R \to S/R \to S/K \to 0$, $K/R$ is an $R/\fkc$-free module.

Hence, (1)-(4) are equivalent. It follows that (4) $\Rightarrow$ (5) by (i).

(4) $\Rightarrow$ (6): Note that $K/R$ is also an $R/\fkc$-free module. Hence, $S/R$ is an $R/\fkc$-free module of rank $\rmr(R)$ by considering the exact sequence $0 \to K/R \to S/R \to S/K\cong R/\fkc \to 0$. 
It follows that $\rme_{\omega}^{1} (R)=\ell_R(S/R)=\ell_R(R/\fkc){\cdot}\rmr(R)$ by \cite[Lemma 2.1]{GMP}.

(6) $\Rightarrow$ (2): By \cite[Lemma 2.1]{GMP}, $\rme_{\omega}^{1} (R)=\ell_R(S/R)$. Hence, we have $\ell_R(S/R)=\ell_R(R/\fkc){\cdot}\rmr(R)$. It follows that 
\[
\ell_R(K/R)=\ell_R(S/R)-\ell_R(S/K)=(\rmr(R)-1){\cdot}\ell_R(R/\fkc)
\] 
since $S/K$ is the canonical $R/\fkc$-module by Lemma \ref{tttp4.9}. Therefore, $K/R$ is an $R/\fkc$-free module because there is a surjection $(R/\fkc)^{\oplus (\rmr(R)-1)} \to K/R$.
\end{proof} 

\begin{rem}
The conditions $\rm(i)$ and $\rm(ii)$ of Theorem \ref{a4.7} do not imply that the ring $R$ is a $\GGL$ ring. For example, set $R=k[[t^4, t^{7}, t^{9}, t^{10}]]\subseteq k[[t]]$, where $k[[t]]$ denotes a formal power series ring over a field $k$. Then,  $R$ satisfies the conditions $\rm(i)$ and $\rm(ii)$ but $R$ is not a $\GGL$ ring (\cite[Example 3.5]{CGKM}). 
This example also shows that $2$-$\AGL$ rings need not be $\GGL$ rings. Here, we say that $R$ is a $2$-$\AGL$ ring if $K^2=K^3$ and $\ell_R (K^2/K)=2$ (see \cite[Theorem 1.4]{CGKM}). 
$2$-$\AGL$ rings are another generalization of $\AGL$ rings observed by the Hilbert function of canonical ideals, and $2$-$\AGL$ rings have the similar properties to one-dimensional $\GGL$ rings (\cite[Proposition 3.3 (4)]{CGKM}).
On the othe hand, $2$-$\AGL$ rings are defined only in dimension one because canonical ideals are $\fkm$-primary only in dimension one.
\end{rem}

\begin{cor}\label{a4.8}
Let $(R_1, \m_1)$ be a Cohen-Macaulay local ring of dimension one, and let $\varphi : R \to R_1$ be a flat local homomorphism such that $R_1/\m R_1$ is a Gorenstein ring. 
Then,  the following conditions are equivalent:
\begin{enumerate}[{\rm (1)}]
\item $R$ is a $\GGL$ ring and
\item $R_1$ is a $\GGL$ ring.
\end{enumerate}
\end{cor}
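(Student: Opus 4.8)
Corollary \ref{a4.8} asserts that for a flat local homomorphism $\varphi : R \to R_1$ with Gorenstein closed fiber $R_1/\mathfrak{m}R_1$ of dimension one, $R$ is $\GGL$ if and only if $R_1$ is $\GGL$. The plan is to derive this directly from Proposition \ref{a3.6}, which already gives one implication, together with Theorem \ref{a4.7}, which characterizes the one-dimensional $\GGL$ property intrinsically via the inclusions $R \subseteq K \subseteq S$.

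First I would dispose of the Gorenstein case: if $R$ is Gorenstein then $R_1$ is Gorenstein (flat base change preserves Gorensteinness along with a Gorenstein fiber, by \cite[Theorem 3.3.15]{BH} or similar), hence $\GGL$ by Remark \ref{rem3.4}; conversely, if $R_1$ is Gorenstein then the fiber $R_1/\mathfrak{m}R_1$ is Gorenstein, and faithful flatness forces $R$ to be Gorenstein. So from now on assume neither ring is Gorenstein. For (1) $\Rightarrow$ (2): since $\dim(R_1/\mathfrak{m}R_1) = 1$ is itself a parameter, take $J$ to be a parameter ideal of $R_1/\mathfrak{m}R_1$ and apply Proposition \ref{a3.6}(1)$\Rightarrow$(2) with $\ell = 1$; this shows $R_1$ is $\GGL$ with respect to $\mathfrak{a}R_1 + J$ whenever $R$ is $\GGL$ with respect to $\mathfrak{a}$ and the fiber is Gorenstein — which it is, by hypothesis.

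The harder direction is (2) $\Rightarrow$ (1), and here I would use the intrinsic characterization of Theorem \ref{a4.7} rather than try to run Proposition \ref{a3.6} backwards. The idea: pass to the completions or, better, note that by Proposition \ref{a2.5}(3)/(4) and the standard trick $R \to R[X]_{\mathfrak{m}R[X]}$ we may assume $R/\mathfrak{m}$ is infinite, so $R$ satisfies Setup \ref{setting1} once we know $R$ is generically Gorenstein — and $R$ generically Gorenstein follows because $R_1$ is (being $\GGL$, hence generically Gorenstein by Fact \ref{genegor} applied to its canonical module embedding) and $R \to R_1$ is faithfully flat, so $\mathrm{Q}(R) \to \mathrm{Q}(R_1)$ behaves well on the total rings of fractions. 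Let $K$ be a fractional canonical ideal of $R$ with $R \subseteq K \subseteq \overline{R}$ and set $S = R[K]$, $\mathfrak{c} = R:S$. Then $K_1 := K \otimes_R R_1 \cong \mathrm{K}_{R_1}$ is a fractional canonical ideal of $R_1$ (using $R_1 \otimes_R \mathrm{K}_R \cong \mathrm{K}_{R_1}$), and $S_1 := S \otimes_R R_1 = R_1[K_1]$, $\mathfrak{c}_1 = R_1 : S_1 = \mathfrak{c}R_1$ by flatness. Since $R_1$ is a non-Gorenstein $\GGL$ ring, Theorem \ref{a4.7}(4) tells us $S_1/R_1 \cong (S/R)\otimes_R R_1$ is a free $R_1/\mathfrak{c}_1 = (R/\mathfrak{c})\otimes_R R_1$-module. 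Faithful flat descent of freeness (a finitely generated module $M$ over a local ring $A$ is free iff $M \otimes_A A'$ is free over $A'$ for a faithfully flat $A \to A'$) then gives that $S/R$ is a free $R/\mathfrak{c}$-module, which is exactly condition (4) of Theorem \ref{a4.7} for $R$; hence $R$ is $\GGL$.

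The main obstacle I anticipate is bookkeeping around base change of the canonical ideal and the conductor: one must check carefully that $K \otimes_R R_1$, viewed inside $\mathrm{Q}(R_1)$, really is a fractional canonical ideal sandwiched between $R_1$ and $\overline{R_1}$ (the inclusion $K \subseteq \overline{R}$ base-changes to $K_1 \subseteq \overline{R} \otimes_R R_1$, and one needs $\overline{R}\otimes_R R_1 \subseteq \overline{R_1}$, which holds since the fiber is reduced — in fact a one-dimensional Gorenstein, hence Cohen-Macaulay, but one needs it reduced or at least that $R_1$ is generically Gorenstein, which we have), that $S \otimes_R R_1 = R_1[K_1]$, and that $\mathfrak{c}R_1 = R_1:_{R_1} S_1$; all of these are routine flatness computations of the type $\mathrm{Hom}_R(-,-) \otimes_R R_1 \cong \mathrm{Hom}_{R_1}(-\otimes_R R_1, -\otimes_R R_1)$ for finitely presented modules, but they need to be stated. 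With those in hand, the descent-of-freeness step is immediate and the proof closes.
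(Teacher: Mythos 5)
Your argument is correct and follows essentially the same route as the paper: both proofs transfer the intrinsic one-dimensional criterion of Theorem \ref{a4.7} across the flat map. The paper quotes \cite[Proposition 3.8]{CGKM} for the base-change behaviour of the powers $K^n$ and of the relevant lengths and then concludes via Lemma \ref{a4.6} and Theorem \ref{a4.7}, whereas you base-change $S$, $\fkc$ and $S/R$ directly and finish with faithfully flat descent of freeness against condition (4) of Theorem \ref{a4.7}; the two are interchangeable. Two small corrections. First, the closed fibre has dimension $0$, not $1$: flatness gives $\dim R_1=\dim R+\dim R_1/\fkm R_1$ and both rings are one-dimensional, so in the implication (1)$\Rightarrow$(2) you should apply Proposition \ref{a3.6} with $\ell=0$ and $J=(0)$; the conclusion is unchanged. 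Second, the detour through $R[X]_{\fkm R[X]}$ to enlarge the residue field is both unnecessary and dangerous: the corollary is stated under the ambient Setup \ref{setting1}, so the fractional canonical ideal $K$ with $R\subseteq K\subseteq\overline{R}$ is already given, and if it were not, descending the $\GGL$ property from $R[X]_{\fkm R[X]}$ back to $R$ would itself be an instance of the statement you are proving. With those two points repaired, the rest of your bookkeeping ($K_1=R_1K\cong\rmK_{R_1}$, $S_1=R_1S$, $\fkc_1=\fkc R_1$, and descent of freeness along the local flat map $R/\fkc\to R_1/\fkc_1$) is exactly the content the paper outsources to \cite{CGKM}.
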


\begin{proof}
By {\cite[Proposition 3.8]{CGKM}}, for each $n \ge 0$, the following assertions hold true:
\begin{enumerate}[{\rm (i)}]
\item $K_1^n = K_1^{n+1}$ if and only if $K^n = K^{n+1}$ and 
\item $\ell_{R_1}(K_1^{n+1}/K_1^n) = \ell_{R_1}(R_1/\m R_1){\cdot}\ell_{R}(K^{n+1}/K^n)$,
\end{enumerate}
where  $K_1 = R_1{\cdot} K \cong \rmK_{R_1}$.
Hence, $K^2 = K^{3}$ if and only if $K_1^2 = K_1^{3}$ and $K/R$ is $R/(R:K)$-free if and only if $K_1/R_1$ is $R_1/(R_1:K_1)$-free. Therefore, $R$ is a $\GGL$ ring if and only if $R_1$ is a $\GGL$ ring by Lemma \ref{a4.6} and Theorem \ref{a4.7}.
\end{proof}

\begin{cor}\label{a4.9}
Suppose that $\rmr(R)=2$. Then,  $R$ is a $\GGL$ ring if and only if $K^2=K^3$.
\end{cor}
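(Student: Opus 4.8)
The plan is to deduce the equivalence from the already-established Lemma \ref{a4.6} and Theorem \ref{a4.7}, working throughout under Setup \ref{setting1}, so that $K$ is a fractional canonical ideal with $R\subseteq K\subseteq\ol{R}$, $S=R[K]$, and $\fkc=R:S$. The first observation is that $\rmr(R)=2$ forces $R$ to be non-Gorenstein (Gorenstein would give $\rmr(R)=1$), hence both Lemma \ref{a4.6} and Theorem \ref{a4.7} are applicable, and also $\mu_R(K)=\mu_R(\rmK_R)=\rmr(R)=2$.

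The key structural step I would carry out is to show that $K/R$ is a \emph{cyclic} $R$-module. Since $\mu_R(K)=2$, it suffices to check that $1$ is part of a minimal system of generators of $K$, i.e. $1\notin\fkm K$; then one can pick $x\in K$ with $K=R+Rx$, so $K/R$ is generated by the image of $x$, and its annihilator is $(R:K)\cap R=R:K$ (using $1\in K$, which gives $R:K\subseteq R$), whence $K/R\cong R/(R:K)$. To rule out $1\in\fkm K$: since $1\in K$ we have $SK=S$, so if $1\in\fkm K$ then $S=S\cdot 1\subseteq S(\fkm K)=\fkm(SK)=\fkm S$, and Nakayama's lemma applied to the finitely generated $R$-module $S$ gives $S=0$, a contradiction.

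With cyclicity of $K/R$ in hand, the two implications are immediate. For the ``only if'' direction, if $R$ is a $\GGL$ ring then Theorem \ref{a4.7}(i) gives $R:K=\fkc$, and Lemma \ref{a4.6} turns this into $K^2=K^3$. For the ``if'' direction, if $K^2=K^3$ then Lemma \ref{a4.6} yields $R:K=\fkc$, so $K/R\cong R/\fkc$ is an $R/\fkc$-free module, and the implication (2)$\Rightarrow$(1) of Theorem \ref{a4.7} shows $R$ is a $\GGL$ ring. The only point requiring any care is the cyclicity of $K/R$ (equivalently, that $1$ minimally generates $K$), which the Nakayama argument above handles cleanly; I do not anticipate a genuine obstacle here.
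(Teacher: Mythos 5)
Your proposal is correct and follows essentially the same route as the paper: both reduce the statement to the cyclicity $K/R\cong R/(R:K)$ forced by $\rmr(R)=2$ and then invoke Lemma \ref{a4.6} together with Theorem \ref{a4.7}. The only difference is that you spell out, via the Nakayama argument showing $1\notin\fkm K$, the isomorphism $K/R\cong R/(R:K)$ that the paper simply asserts.
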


\begin{proof}
Since $\rmr(R)=2$, we have $K/R\cong R/(R:K)$.
Hence, $R$ is a $\GGL$ ring if and only if $K^2=K^3$ by Lemma \ref{a4.6}.
\end{proof}

\begin{cor}\label{a4.10}
If $e(R)\le3$, then $R$ is a $\GGL$ ring.
\end{cor}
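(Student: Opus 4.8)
The plan is to work within Setup \ref{setting1}, pin down the invariants $\rme(R)$ and $\rmr(R)$ exactly, and then verify $K^2=K^3$ so that Corollary \ref{a4.9} applies. First, by Corollary \ref{a4.8} I would replace $R$ by $R[X]_{\fkm R[X]}$, a faithfully flat local extension with field (hence Gorenstein) closed fibre $(R/\fkm)(X)$ and the same multiplicity; thus I may assume $R/\fkm$ is infinite. If $R$ is Gorenstein there is nothing to prove (Remark \ref{rem3.4}), so assume it is not; then $R$ is not a discrete valuation ring, so $\fkm$ is not principal and $\rme(R)\ge 2$.

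Next I would determine the invariants. Choose a minimal reduction $(a)$ of $\fkm$ (a general element of $\fkm$). Then $R/(a)$ is an Artinian local ring of length $\rme(R)\ge 2$, hence not a field, so $\rmr(R)=\dim_{R/\fkm}\soc(R/(a))\le \ell_R(R/(a))-1=\rme(R)-1$. Since $R$ is not Gorenstein, $\rmr(R)\ge 2$. With $\rme(R)\le 3$ this forces $\rme(R)=3$ and $\rmr(R)=2$, and so by Corollary \ref{a4.9} it suffices to prove $K^2=K^3$.

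The crucial observation is the bound $\mu_R(S)\le \rme(R)=3$, which comes purely from the multiplicity. Indeed, $a$ is a non-zerodivisor and $R\subseteq S$, so computing $\ell_R(S/aR)$ both via $R$ and via $aS$ gives $\ell_R(S/R)+\ell_R(R/aR)=\ell_R(S/aS)+\ell_R(aS/aR)=\ell_R(S/aS)+\ell_R(S/R)$, whence $\ell_R(S/aS)=\ell_R(R/aR)=\rme(R)=3$; since $aS\subseteq\fkm S$ we obtain $\mu_R(S)=\ell_R(S/\fkm S)\le 3$. Now, because $\rmr(R)=2$ we may write $K=R+Rw$, so $S=R[K]=R[w]$ and $S/\fkm S=(R/\fkm)[\overline w]$ is a cyclic $(R/\fkm)$-algebra of dimension $\mu_R(S)\le 3$. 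Hence $\overline w$ is a root of a monic polynomial over $R/\fkm$ of some degree $m\le 3$, and lifting this relation yields $w^m\in R+Rw+\dots+Rw^{m-1}+\fkm S$.

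Finally I would run this relation through the chain $R\subseteq K\subseteq K^2\subseteq\dots\subseteq S$, using repeatedly that $KS=S$. If $m\le 2$, the relation forces $K^n\subseteq K+\fkm S$ (when $m=2$) or $K^n\subseteq R+\fkm S$ (when $m=1$) for every $n$; taking $n\gg 0$, where $K^n=S$, and applying Nakayama's lemma would give $S=K$ or $S=R$, so $R$ is Gorenstein by Fact \ref{a4.4}, a contradiction. Therefore $m=3$, i.e. $w^3\in K^2+\fkm S$, which gives $K^3\subseteq K^2+\fkm S$ and then $K^n\subseteq K^2+\fkm S$ for all $n\ge 2$. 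With $n\gg 0$ this reads $S=K^2+\fkm S$, and Nakayama's lemma yields $S=K^2$. Thus $K^2=S=K^3$, and Corollary \ref{a4.9} finishes the proof. The step I expect to be most delicate is assembling exactly these ingredients — the type bound $\rmr(R)\le\rme(R)-1$, the generator bound $\mu_R(S)\le\rme(R)$, and the monogenicity $S=R[w]$ coming from $\rmr(R)=2$ — so that the cyclic-algebra relation for $\overline w$ has degree at most $3$; once it does, the two Nakayama reductions are routine.
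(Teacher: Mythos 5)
Your proposal is correct and follows essentially the same route as the paper, whose proof is a one-liner asserting $\rmv(R)=\rme(R)=3$, $\rmr(R)=2$, and $K^2=K^3$ and then invoking Corollary \ref{a4.9}; you supply the details the paper omits, in particular deriving $K^2=K^3$ from $\mu_R(S)\le\rme(R)=3$ together with $S=R[w]$ and Nakayama. The only cosmetic remark is that your bound $\rmr(R)\le\rme(R)-1$ is exactly Fact \ref{a4.23}, and the case analysis on $m$ is unnecessary since $R+Rw+\dots+Rw^{m-1}\subseteq K^2$ for every $m\le 3$ already gives $S=K^2+\fkm S$ directly.
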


\begin{proof}
We may assume that $R$ is not a Gorenstein ring and $R/\fkm$ is infinite.
Then, because $\rmv(R)=\rme(R)=3$, we have $K^2=K^3$ (see \cite[Proposition 3.7]{Ku} or \cite[Corollary 3 (p.451)]{ES}) and $\rmr(R)=2$. 
\end{proof}

\subsection{Endomorphism algebra of the maximal ideal}
We maintain Setup \ref{setting1}.
Let $B=\fkm:\fkm\cong \Hom_R (\fkm, \fkm)$ be the endomorphism algebra of the maximal ideal $\fkm$. Let $\rmJ(B)$ denote the Jacobson radical of $B$. Our next aim is to explore the $\GGL$ property of $B$ in connection with that of $R$. Let us begin with the following.

\begin{prop}\label{a4.22}
Suppose that $R$ is a $\GGL$ ring but not an $\AGL$ ring. 
Then,  we have the following assertions:
\begin{enumerate}[{\rm (1)}]
\item $B$ is a local ring with the maximal ideal $\rmJ(B)=\fkm S\cap B$.
\item $R/\fkm \cong B/\rmJ(B)$.
\end{enumerate}

\end{prop}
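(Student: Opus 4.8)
The plan is to keep Setup \ref{setting1} throughout and to start from the observation that, since $R$ is a $\GGL$ ring but not an $\AGL$ ring, it is not Gorenstein, so by Corollary \ref{a7.1.1} and Theorem \ref{a4.7} its (unique) defining ideal is $\fkc=R:K=R:S=\tr_R(\rmK_R)$; this ideal is $\fkm$-primary, and $\fkc\subsetneq\fkm$ precisely because $R$ is not an $\AGL$ ring. From Theorem \ref{a4.7} I will also use that $R/\fkc$ is an Artinian Gorenstein local ring and that $S/R$ is a free $R/\fkc$-module, of rank $r:=\rmr(R)$.

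First I would record that $\fkm B=\fkm$, which is immediate from $B=\fkm:\fkm$. Hence $\fkm$ is a common ideal of $R$ and $B$, every maximal ideal of $B$ lies over $\fkm$ and so contains $\fkm$ (whence $\rmJ(B)\supseteq\fkm$), and $B/R$ is annihilated by $\fkm$, i.e.\ is an $R/\fkm$-vector space. Next I would prove $B\subseteq S$: since $S=\fkc:\fkc$ by Lemma \ref{useful} and $\fkc S=\fkc$, for $b\in B$ we have $b\fkc\subseteq b\fkm\subseteq\fkm\subseteq R$, hence $b\fkc=b(\fkc S)=(b\fkc)S\subseteq R$, i.e.\ $b\fkc\subseteq R:S=\fkc$, so $b\in\fkc:\fkc=S$.

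The crux is the inclusion $B\subseteq R+\fkm S$. Since $R\subseteq B\subseteq S$, the $R$-module $B/R$ is a submodule of $S/R$ killed by $\fkm$, hence $B/R\subseteq(0:_{S/R}\fkm)$. Transporting along an $R/\fkc$-linear isomorphism $S/R\cong(R/\fkc)^{\oplus r}$ one gets $(0:_{S/R}\fkm)\cong\bigl(\soc(R/\fkc)\bigr)^{\oplus r}$; now $R/\fkc$ is Artinian Gorenstein, so $\soc(R/\fkc)$ is one-dimensional, and $\fkc\subsetneq\fkm$ forces $R/\fkc$ not to be a field, so $\soc(R/\fkc)\subseteq\fkm/\fkc$. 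Therefore $(0:_{S/R}\fkm)$ is contained in the submodule of $S/R$ corresponding to $(\fkm/\fkc)^{\oplus r}$, namely $\fkm\cdot(S/R)=(R+\fkm S)/R$, and $B\subseteq R+\fkm S$ follows. It then follows that $B=R+(\fkm S\cap B)$: writing $b=\rho+\sigma$ with $\rho\in R$ and $\sigma\in\fkm S$ gives $\sigma=b-\rho\in B$, hence $\sigma\in\fkm S\cap B$.

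To finish, note that $\fkm S$ is a proper ideal of $S$ by Nakayama's lemma ($S$ being a nonzero finite $R$-module), so $\fkm S\cap R=\fkm$ (a proper ideal of $R$ containing $\fkm$) and $\fkm S\cap B$ is a proper ideal of $B$ containing $\fkm$. Then the composite $R\hookrightarrow B\twoheadrightarrow B/(\fkm S\cap B)$ is surjective by $B=R+(\fkm S\cap B)$, with kernel $R\cap\fkm S=\fkm$, so $B/(\fkm S\cap B)\cong R/\fkm$ is a field and $\fkm S\cap B$ is a maximal ideal of $B$. Finally, going-up for the integral extension $B\subseteq S$ shows each maximal ideal of $B$ is the contraction of a maximal ideal of $S$, and the latter all contain $\fkm S$; hence $\fkm S\cap B\subseteq\rmJ(B)\subsetneq B$, which together with maximality forces $\rmJ(B)=\fkm S\cap B$ and makes $B$ local. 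This gives (1), and the displayed isomorphism is (2). I expect the main obstacle to be the identification of $B/R$ with a submodule of $(\fkm/\fkc)^{\oplus r}$ inside $S/R$: this is exactly where the $\GGL$ hypothesis (freeness of $S/R$ over the Gorenstein ring $R/\fkc$) and the failure of the $\AGL$ property ($\fkc\subsetneq\fkm$, hence $\soc(R/\fkc)\subseteq\fkm/\fkc$) both enter essentially — and the argument must collapse when $\fkc=\fkm$, consistently with the fact that $B$ need not be local for $\AGL$ rings.
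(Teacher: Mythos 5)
Your proof is correct, and it reaches the paper's key structural identity $B=R+(\fkm S\cap B)$ by a different mechanism. The paper argues from below: it picks $v\in(\fkc:_R\fkm)\setminus\fkc$ (which lies in $\fkm$ precisely because $R$ is not $\AGL$), produces explicit generators $vf_1,\dots,vf_{r-1},vg$ of $B$ over $R$ using the free bases of $K/R$ and $S/K$, and verifies by a length count ($\ell_R((R:\fkm)/R)=\rmr(R)$, $\ell_R([(R:\fkm)\cap K]/R)=\rmr_R(K/R)=r-1$) that these exhaust $B$; since $v\in\fkm$, the generators visibly lie in $\fkm S\cap B$. You instead argue from above: having checked $B\subseteq S$ via $S=\fkc:\fkc$ and $\fkc S=\fkc$, you observe that $B/R$ sits inside $(0:_{S/R}\fkm)$, transport this along the $R/\fkc$-free structure of $S/R$ to $(\soc(R/\fkc))^{\oplus r}$, and use that $R/\fkc$ is Artinian local but not a field (again the non-$\AGL$ hypothesis, in the form $\fkc\subsetneq\fkm$) to get $\soc(R/\fkc)\subseteq\fkm/\fkc$ and hence $B\subseteq R+\fkm S$. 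This is more economical for the proposition at hand, since only the upper bound on $B$ is needed and no length computations or explicit elements enter; the trade-off is that the paper's explicit presentation of $B$ over $R$ is a by-product its version records along the way. The endgames coincide: both identify $R/\fkm\cong B/(\fkm S\cap B)$ from the surjection $R\to B/(\fkm S\cap B)$ with kernel $\fkm S\cap R=\fkm$, and both use lying-over for the integral extension $B\subseteq S$ to place $\fkm S\cap B$ inside every maximal ideal of $B$, forcing $B$ to be local. Both proofs also break down at the same point when $R$ is $\AGL$, consistent with Example \ref{a4.22.5}.
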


\begin{proof}
Let $r=\rmr(R)$ be the Cohen-Macaulay type of $R$. By Proposition \ref{a4.1} and Theorem \ref{a4.7}, we can choose elements $f_1, f_2, \dots, f_{r-1}\in K$, $g\in S$, and $v\in \fkc:_R \fkm$ such that 
\begin{center}
$K=R+\left<f_1, f_2, \dots, f_{r-1}\right>$, $S=K+\left<g\right>$, and $\fkc:_R \fkm=\fkc+(v)$.
\end{center}
We then show that $B=R+\left<vf_1, vf_2, \dots, vf_{r-1}\right>+\left<vg\right>$. 
Indeed, we have $\fkm vf_i\in \fkc K=\fkc\subseteq R$ for all $1 \le i \le r-1$; hence, $vf_i\in R:\fkm=\fkm:\fkm=B$  for all $1 \le i \le r-1$. Since $K/R\cong (R/\fkc)^{\oplus (r-1)}$ and $R/\fkc$ is Gorenstein, we have $\ell_R([(R:~\fkm)\cap~K]/R)=\rmr_R(K/R)=r-1$. Hence, 
\[
B\cap K=(R:\fkm)\cap K=R+\left<vf_1, vf_2, \dots, vf_{r-1}\right>.
\]

On the other hand, since $S/K \cong R/\fkc$ and $\overline{g}$ is a free basis of $S/K$, we obtain that $vg\not\in K$ and $vg\in (\fkc:_R\fkm)S\subseteq \fkc:\fkm \subseteq B$. Hence, we have the equality $B=R+\left<vf_1, vf_2, \dots, vf_{r-1}\right>+\left<vg\right>$ because $\ell_R((R:\fkm)/R)=r$.

Therefore, since $\left<vf_1, vf_2, \dots, vf_{r-1}\right>+\left<vg\right>\subseteq \fkm S\cap B$, we obtain that $B=R+\fkm S\cap B$. 
It follows that we have a canonical surjection $R \to B/(\fkm S\cap B)$ and the kernel contains $\fkm$. This leads that $R/\fkm \cong B/(\fkm S\cap B)$. Hence,  $\fkm S\cap B\in \Max B$. 
On the other hand, for all $\fkM\in \Max B$, there exists $\fkN\in \Max S$ such that $\fkM=\fkN\cap B$. Since $\fkN\cap R=\fkm$, $\fkm S \subseteq \fkN$. Thus, $\fkm S\cap B\subseteq \fkN\cap B=\fkM$. Hence, $\fkm S\cap B\subseteq \rmJ(B)$.
It follows that $B$ is a local ring and the unique maximal ideal is $\rmJ(B)=\fkm S\cap B$.
\end{proof}

In the proof of Proposition \ref{a4.22}, we use the assumption that $R$ is not an $\AGL$ ring to choose $v\in \fkm$. Indeed, if $R$ is an $\AGL$ ring, then $\fkc=\fkm$. It follows that $v\in \fkc:_R \fkm$ must be a unit of $R$. We note that Proposition \ref{a4.22} does not hold true without the assumption that $R$ is not an $\AGL$ ring. 

\begin{ex}\label{a4.22.5}
Let $k[[t]]$ be the formal power series ring over an infinite field $k$, and set $A=k[[t^3, t^4, t^5]]$. We consider the fiber product 
\[
R=A \times_k  A := k{\cdot}(1,1) + \rmJ(A\times A)\subseteq A\times A
\]
with respect to the natural surjection $A \to k$, where $\rmJ(A\times A)$ denotes the Jacobson radical of the direct product $A\times A$. Since $A$ is an $\AGL$ ring, $R$ is also an $\AGL$ ring with $\rmv(R) = \rme(R)=6$ by \cite[Theorem 1.1 and Proposition 2.2]{EGI}. We note that $\overline{R}=k[[t]]\times k[[t]]$ (\cite[Setting 4.1]{EGI}). Therefore, since $R\subseteq \fkm:\fkm\subseteq \overline{R}$ and 
\begin{align*}
\ell_R(\overline{R}/R) = \ell_R(\overline{R}/\fkm)-1 
=5=\rme(R)-1=\rmr(R)=\ell_R((\fkm:\fkm)/R),
\end{align*}
we have $\fkm : \fkm = \overline{R} = k[[t]]\times k[[t]]$, which is not a local ring.

\end{ex}


\begin{fact}{\rm (cf. \cite[Fact 2.6]{HKS2})}\label{a4.23}
Let $(A, \fkn)$ be a Cohen-Macaulay local ring of dimension $d=\dim A\ge0$. Assume that $\rme(A)>1$.
Then,  $\rmr(A)\le \rme(A)-1$ holds. The equality holds true if and only if $A$ has minimal multiplicity, that is, $\rmv(A)=\rme(A)+d-1$.
\end{fact}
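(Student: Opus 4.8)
The statement to prove is Fact \ref{a4.23}: for a Cohen-Macaulay local ring $(A,\fkn)$ of dimension $d$ with $\rme(A)>1$, one has $\rmr(A)\le \rme(A)-1$, with equality exactly when $A$ has minimal multiplicity $\rmv(A)=\rme(A)+d-1$. Since this is quoted from \cite{HKS2}, I would give the standard reduction-to-Artinian argument. The plan is to first reduce to the case $d=0$ by choosing a minimal reduction, then compute in the Artinian setting.

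First I would pass to a faithfully flat extension $A\to A[X]_{\fkn A[X]}$ so that the residue field is infinite; this preserves $\rme$, $\rmr$, $\rmv$, depth, and dimension, so nothing is lost. Next, choose a minimal reduction $\fkq=(a_1,\dots,a_d)$ of $\fkn$ generated by a system of parameters; since $A$ is Cohen-Macaulay, $a_1,\dots,a_d$ is a regular sequence, and passing to $\bar A=A/\fkq$ we have $\rme(A)=\ell_{\bar A}(\bar A)=\ell(A/\fkq)$, $\rmr(A)=\rmr(\bar A)$ (the Cohen-Macaulay type is stable under quotient by a regular sequence), and $\rmv(A)=\rmv(\bar A)+d$ because $\fkq$ can be taken as part of a minimal generating set of $\fkn$ when the residue field is infinite. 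Hence the desired inequality becomes $\rmr(\bar A)\le \ell(\bar A)-1$ and the equality condition becomes $\rmv(\bar A)=\rme(\bar A)-1=\ell(\bar A)-1$, i.e.\ $\fkn\bar A$ is a vector space with $\fkn^2\bar A=0$.

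In the Artinian case, write $(\bar A,\bar\fkn)$ with $\ell(\bar A)=\rme(A)=:e>1$. The type is $\rmr(\bar A)=\dim_k\Soc(\bar A)=\dim_k\Hom_{\bar A}(k,\bar A)$, where $k=\bar A/\bar\fkn$. The key observation is that $\Soc(\bar A)\subseteq \bar\fkn$: if some socle element were a unit then $\bar\fkn$ would annihilate a unit, forcing $\bar\fkn=0$ and $e=1$, contrary to hypothesis. Therefore $\Soc(\bar A)\subseteq\bar\fkn$, and
\[
\rmr(\bar A)=\dim_k\Soc(\bar A)\le \dim_k\bar\fkn=\ell_{\bar A}(\bar\fkn)=\ell(\bar A)-1=e-1,
\]
which gives the inequality. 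For the equality case: $\rmr(\bar A)=e-1$ forces $\Soc(\bar A)=\bar\fkn$, so $\bar\fkn^2=\bar\fkn\cdot\Soc(\bar A)=0$; then $\bar\fkn=\bar\fkn/\bar\fkn^2$ is a $k$-vector space of dimension $e-1=\rmv(\bar A)$, which by the reduction is exactly the statement that $A$ has minimal multiplicity. Conversely, if $A$ has minimal multiplicity then $\bar\fkn^2=0$, so $\bar\fkn\subseteq\Soc(\bar A)$, hence $\Soc(\bar A)=\bar\fkn$ and $\rmr(A)=\rmr(\bar A)=\dim_k\bar\fkn=e-1$.

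The only mildly delicate point — and the one I would be most careful about — is the bookkeeping in the reduction step, namely that $\rmr$ is unchanged and $\rmv$ drops by exactly $d$ when killing the minimal reduction. The type claim is the standard fact that for a regular element $x$, $\Ext^d_A(k,A)\cong\Ext^{d-1}_{A/x}(k,A/x)$; the embedding-dimension claim uses that, over an infinite residue field, a minimal reduction of $\fkn$ can be chosen as part of a minimal basis of $\fkn$ (equivalently, the generators of $\fkq$ are linearly independent modulo $\fkn^2$), so that $\mu(\fkn\bar A)=\mu(\fkn)-d$. Everything else is a short length computation in the Artinian quotient.
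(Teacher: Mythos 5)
Your argument is correct. Note that the paper does not prove this statement at all -- it is imported verbatim as a ``Fact'' from \cite[Fact 2.6]{HKS2} -- so there is no internal proof to compare against; your reduction to the Artinian quotient by a minimal reduction, followed by the observation that $\rmr(\bar A)=\ell(\Soc(\bar A))\le \ell(\bar\fkn)=\rme(A)-1$ with equality exactly when $\Soc(\bar A)=\bar\fkn$, i.e.\ $\bar\fkn^2=0$, is the standard and complete justification. The two bookkeeping points you flag (invariance of the type under killing a regular sequence, and the fact that over an infinite residue field the generators of a minimal reduction of $\fkn$ are linearly independent modulo $\fkn^2$, so $\rmv$ drops by exactly $d$) are both standard and are handled correctly; the latter is in fact the same argument the paper itself uses at the end of its Lemma \ref{a2.4}.
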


\begin{fact} {\rm (\cite[Proposition 5.1]{CGKM})} \label{f420}
Suppose that there exists an element $\alpha\in \fkm$ such that $\fkm^2=\alpha \fkm$ and that $R$ is not a Gorenstein ring. Set $L=BK$. Then the following assertions hold true. 
\begin{enumerate}[$\rm(1)$]
\item $B = R: \m = \frac{\m}{\alpha}$.
\item $B \subseteq L \subseteq \overline{B}$ and $L=K:\fkm \cong \rmK_B$ as a $B$-module.
\item $S = B[L]$.
\end{enumerate}
\end{fact}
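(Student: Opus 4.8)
The plan is to derive the three assertions in turn from the hypothesis $\fkm^2=\alpha\fkm$ together with the structural results recorded under Setup~\ref{setting1}; assertion (1) is essentially formal, (2) carries the main content, and (3) then follows quickly. For (1), I would prove the chain
\[
\frac{\fkm}{\alpha}\ \subseteq\ R:\fkm\ \subseteq\ \fkm:\fkm\ \subseteq\ \frac{\fkm}{\alpha}.
\]
The first inclusion is immediate, since $\frac{\fkm}{\alpha}\cdot\fkm=\frac{\fkm^2}{\alpha}=\fkm\subseteq R$. The last is immediate as well: if $x\in\fkm:\fkm$ then $x\alpha\in x\fkm\subseteq\fkm$, so $x\in\frac{\fkm}{\alpha}$. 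For the middle inclusion take $y\in R:\fkm$; then $y\fkm$ is an ideal of $R$, and if $y\fkm\not\subseteq\fkm$ it must equal $R$, which makes $\fkm$ an invertible — hence principal — maximal ideal, so $R$ is regular and thus Gorenstein, contrary to hypothesis. Hence $y\fkm\subseteq\fkm$, i.e. $y\in\fkm:\fkm$. This gives $B=R:\fkm=\frac{\fkm}{\alpha}$.

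For (2), I would first note $B\subseteq BK=L$ because $1\in K$, and $L=BK\subseteq\overline{R}=\overline{B}$ since $B$ and $K$ consist of elements integral over $R$ and $\overline{R}$ is a ring (and $R\subseteq B\subseteq\overline{R}$ gives $\overline{B}=\overline{R}$). To identify $L$ with $K:\fkm$, observe that $BK\subseteq K:\fkm$ by the direct computation $(bk)\fkm=(b\fkm)k\subseteq\fkm k\subseteq K$ for $b\in B$, $k\in K$; and applying $\Hom_R(-,K)$ to $0\to\fkm\to R\to R/\fkm\to0$, using that $K\cong\rmK_R$ has positive depth and type one (so $\Hom_R(R/\fkm,K)=0$ and $\Ext^1_R(R/\fkm,K)\cong R/\fkm$), produces an exact sequence $0\to K\to K:\fkm\to R/\fkm\to0$, whence $\ell_R((K:\fkm)/K)=1$. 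Since $1\in B$ gives $K\subseteq BK$, and since $BK\neq K$ — otherwise $K$ would be a faithful $B$-module, forcing $B\subseteq\End_R(K)=K:K=R$, hence $B=R$, hence $\fkm=\alpha B$ principal and $R$ regular, a contradiction — we conclude $L=BK=K:\fkm$. Finally, as $B\cong\fkm$ is module-finite over $R$, $\rmK_B\cong\Hom_R(B,\rmK_R)\cong\Hom_R(B,K)\cong K:B$, and a short colon computation gives $K:B=\alpha(K:\fkm)=\alpha L\cong L$ as $B$-modules; thus $L\cong\rmK_B$, and in particular $B\subseteq L\subseteq\overline{B}$ with $L$ a fractional canonical ideal of $B$.

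For (3), the key observation is that by Lemma~\ref{useful} we have $S=R:\fkc$ with $\fkc=R:S$, and $\fkc\subsetneq R$ because $R$ is not Gorenstein (Fact~\ref{a4.4}); hence $\fkc\subseteq\fkm$, and therefore $B=R:\fkm\subseteq R:\fkc=S$. Since $K\subseteq S$ as well and $S$ is a ring, $B[L]=B[BK]=B[K]\subseteq S$; conversely $S=R[K]\subseteq B[K]=B[L]$ because $R\subseteq B$. Hence $S=B[L]$.

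The step I expect to require the most care is the identification $L=K:\fkm\cong\rmK_B$ in (2): it rests on the colength-one computation $\ell_R((K:\fkm)/K)=1$ (equivalently, that the canonical module has type one) together with the non-degeneracy $BK\neq K$, which in turn uses $\End_R(\rmK_R)=R$ and the fact that a principal maximal ideal forces regularity. Once these are in place, (1) and (3) are short — indeed (3) is immediate from (1) and Lemma~\ref{useful}.
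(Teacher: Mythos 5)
The paper records this statement as a Fact imported from \cite[Proposition 5.1]{CGKM} and supplies no proof of its own, so there is no internal argument to compare against; I can only judge your proof on its merits, and it is correct and complete. Your chain $\frac{\fkm}{\alpha}\subseteq R:\fkm\subseteq\fkm:\fkm\subseteq\frac{\fkm}{\alpha}$ for (1), the identification $BK=K:\fkm$ via $\ell_R((K:\fkm)/K)=1$ together with $BK\neq K$ (which correctly reduces to $K:K=R$ and the fact that a principal maximal ideal would force $R$ to be a DVR), the computation $\rmK_B\cong K:B=\alpha(K:\fkm)\cong L$, and the two inclusions giving $S=B[L]$ in (3) all check out. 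One small remark: the colength-one step can be obtained in one line from the duality the paper itself records as Remark \ref{a4.3}(2)(ii), namely $\ell_R((K:\fkm)/(K:R))=\ell_R(R/\fkm)=1$, which is exactly how the paper uses this Fact at the start of the proof of Theorem \ref{a4.24}; your $\Ext$ computation is an equivalent route. It would also be worth stating explicitly that $\alpha$ is a non-zerodivisor (it is, being a parameter of the one-dimensional Cohen--Macaulay ring $R$ since $(\alpha)$ is a reduction of $\fkm$), as the expressions $\frac{\fkm}{\alpha}$ and $\alpha L\cong L$ silently rely on this.
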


We can now present a theorem illustrating the relationship between the $\GGL$ properties of $R$ and $B$.

\begin{thm}\label{a4.24}
Suppose that there is an element $\alpha \in \fkm$ such that $\fkm^2 = \alpha \fkm$. Set $\fkn=\rmJ(B)$ and $L=BK$.
Then,  the following conditions are equivalent:
\begin{enumerate}[{\rm (1)}]
\item $R$ is a $\GGL$ ring but not an $\AGL$ ring.
\item $B$ is a $\GGL$ ring with $\rmv(B)=\rme(B)=\rme(R)$ but not a Gorenstein ring.
\end{enumerate}
When this is the case, we have the following:
\begin{enumerate}[{\rm (i)}]
\item $R/\fkm \cong B/\fkn$,
\item $\ell_B(B/(B:B[L]))=\ell_R(R/\fkc)-1$, and
\item $\fkn^2 = \alpha \fkn$.
\end{enumerate}

\end{thm}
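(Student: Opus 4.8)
The plan is to realise $B$ as a ring in the situation of Setup~\ref{setting1} and then read off everything from Theorem~\ref{a4.7}, Fact~\ref{a4.23} and a short numerical dictionary. Throughout we may assume $R$ is not Gorenstein: if $R$ is Gorenstein then (1) fails, while $R=K=S$ forces $B=\fkm:\fkm=R$, which is Gorenstein, so (2) fails as well. Hence Fact~\ref{f420} applies and gives $B=R:\fkm=\tfrac{\fkm}{\alpha}$, $B\subseteq L=BK=K:\fkm\cong\rmK_B$, $B\subseteq\overline{R}=\overline{B}$ and $S=B[L]$. Once $B$ is known to be local, $B$ is therefore a one-dimensional Cohen--Macaulay local ring satisfying Setup~\ref{setting1} with fractional canonical ideal $L$, with $B[L]=S$, and with conductor $\fkc_B:=B:B[L]=B:S$. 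Moreover $\fkm S=\alpha S$ (because $\fkm=\alpha B$ and $BS=S$), so $\fkn=\fkm S\cap B=\alpha S\cap B$ by Proposition~\ref{a4.22}, and $\fkn\overline{R}\subseteq\alpha S\,\overline{R}=\alpha\overline{R}=\fkm\overline{R}\subseteq\fkn\overline{R}$, so $(\alpha)$ is a reduction of $\fkn$. The dictionary I would record is: from $0\to R\to R:\fkm\to\Ext_R^1(R/\fkm,R)\to0$ we get $\ell_R(B/R)=\ell_R(\Ext_R^1(R/\fkm,R))=\rmr(R)=:r$, and since $\fkm^2=\alpha\fkm$ gives minimal multiplicity, $\rme(R)=r+1$ by Fact~\ref{a4.23}; canonical duality (Remark~\ref{a4.3}(2)(ii)) with $K:L=K:(K:\fkm)=\fkm$ and $K:S=\fkc$ (Lemma~\ref{useful}) gives $\ell_R(S/L)=\ell_R(\fkm/\fkc)=\ell_R(R/\fkc)-1$, while Lemma~\ref{tttp4.9} applied to $B$ identifies $S/L$ with $\rmK_{B/\fkc_B}$, so $\ell_B(B/\fkc_B)=\ell_B(S/L)$; when $B$ is local the reduction $(\alpha)$ yields $\rme(B)=\ell_B(B/\alpha B)=\ell_B(B/\fkm)=\ell_R(B/\fkm)/[B/\fkn:R/\fkm]=(r+1)/[B/\fkn:R/\fkm]$, so ``$\rme(B)=\rme(R)$'' is equivalent to ``$R/\fkm\cong B/\fkn$'' (statement~(i)), and then $\ell_B=\ell_R$ on finite length $B$-modules; finally ``$R$ is $\AGL$'' $\Leftrightarrow$ ``$\fkc=\fkm$'' (Corollary~\ref{a7.1.1}, Theorem~\ref{a4.7}(i)) and ``$B$ is Gorenstein'' $\Leftrightarrow$ ``$B=S$'' (Fact~\ref{a4.4}).

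\textbf{(1) $\Rightarrow$ (2).} Proposition~\ref{a4.22} gives that $B$ is local, that (i) holds (hence $\rme(B)=\rme(R)$), and supplies elements $f_1,\dots,f_{r-1}\in K$ forming a free basis of $K/R\cong(R/\fkc)^{\oplus(r-1)}$, an element $g\in S$ whose image is a free basis of $S/K\cong R/\fkc$, and $v\in\fkc:_R\fkm$ with $B=R+\langle vf_1,\dots,vf_{r-1},vg\rangle$. Since $R$ is not $\AGL$, $\fkc\subsetneq\fkm$, so $\ell_R(R/\fkc)\ge2$, whence $\ell_R(S/L)\ge1$, $L\subsetneq S$, $B\subsetneq S$, and $B$ is not Gorenstein. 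It remains to see that $B$ is $\GGL$ with $\rmv(B)=\rme(B)$. Because $R$ is $\GGL$, Theorem~\ref{a4.7} and Proposition~\ref{a4.1} give $S/R\cong(R/\fkc)^{\oplus r}$; analysing the submodule $B/R$ of $S/R$ through the displayed generators (using that $R/\fkc$ is Gorenstein) one checks that the quotient $S/B$ is $B/\fkc_B$-free, necessarily of rank $r$ since $\ell_B(S/B)=\ell_R(S/R)-r=r(\ell_R(R/\fkc)-1)=r\,\ell_B(B/\fkc_B)$. By Theorem~\ref{a4.7} applied to $B$ this means $B$ is $\GGL$ and $\rmr(B)=r$. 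Then $\rmr(B)=\rme(B)-1$, so Fact~\ref{a4.23} gives $\rmv(B)=\rme(B)$; and, $(\alpha)$ being a reduction of $\fkn$, one has $\ell_B(B/\fkn^2)=1+\rmv(B)=1+\rme(B)=\ell_B(B/\alpha\fkn)$ with $\alpha\fkn\subseteq\fkn^2$, forcing $\fkn^2=\alpha\fkn$, which is (iii). The one step here that is not formal is the verification that $S/B$ is $B/\fkc_B$-free (equivalently, that $\rmr(B)=r$); everything else is length bookkeeping once Fact~\ref{f420} and Proposition~\ref{a4.22} are in hand.

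\textbf{(2) $\Rightarrow$ (1), and (i)--(iii).} Since $B$ is $\GGL$ it is local, and $\rme(B)=\rme(R)$ forces $R/\fkm\cong B/\fkn$, i.e.\ (i); then $\ell_B=\ell_R$ on finite length modules and, by Fact~\ref{a4.23}, $\rmr(B)=\rme(B)-1=r$. If $R$ were $\AGL$, then since $R$ has minimal multiplicity $B$ would be Gorenstein by \cite[Theorem~5.1]{GMP}, contradicting (2); so $R$ is not $\AGL$. That $R$ is $\GGL$ follows from the length criterion of Theorem~\ref{a4.7}(6): since $B$ is $\GGL$, Theorem~\ref{a4.7} applied to $B$ gives $S/B\cong(B/\fkc_B)^{\oplus r}$, so $\rme_\omega^1(R)=\ell_R(S/R)=r+\ell_R(S/B)=r+r\,\ell_B(B/\fkc_B)=r+r(\ell_R(R/\fkc)-1)=r\cdot\ell_R(R/\fkc)=\rmr(R)\cdot\ell_R(R/\fkc)$, whence $R$ is $\GGL$. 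Finally (ii) is the identity $\ell_B(B/(B:B[L]))=\ell_B(B/\fkc_B)=\ell_B(S/L)=\ell_R(S/L)=\ell_R(R/\fkc)-1$ (using (i)), and (iii) holds as in the previous paragraph, directly from $\rmv(B)=\rme(B)$ together with the fact that $(\alpha)$ is a reduction of $\fkn$.

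\textbf{Expected main obstacle.} The crux of the whole argument is the single module-theoretic step in (1)$\Rightarrow$(2): passing from ``$S/R$ is $R/\fkc$-free of rank $r$'' to ``$S/B$ is $B/\fkc_B$-free'', which amounts to pinning down the $R/\fkc$-submodule $B/R\subseteq S/R\cong(R/\fkc)^{\oplus r}$ via the explicit generators of $K$, $S$ and $B$ from Proposition~\ref{a4.22} and exploiting that $R/\fkc$ is Gorenstein (so that $\fkc:_R\fkm/\fkc$ is a one-dimensional socle). I expect this to be the only place where genuine computation, rather than formal manipulation of lengths and the cited structure theorems, is required.
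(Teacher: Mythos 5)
Your proposal is correct and follows essentially the same route as the paper: realize $B$ inside Setup~\ref{setting1} via Fact~\ref{f420} and Proposition~\ref{a4.22}, compare lengths along $0\to B/R\to S/R\to S/B\to0$ and $0\to K/R\to S/R\to S/K\to 0$ using Lemma~\ref{tttp4.9}, and conclude with Theorem~\ref{a4.7} and Fact~\ref{a4.23}. The one step you flag as the crux is exactly where the paper does its only real computation, and in the way you anticipate: since $B/R\cong(R/\fkm)^{\oplus r}$ injects into $S/R\cong(R/\fkc)^{\oplus r}$ and $R/\fkc$ is Artinian Gorenstein, the image must be the socle $((\fkc:_R\fkm)/\fkc)^{\oplus r}$, giving $S/B\cong(R/(\fkc:_R\fkm))^{\oplus r}$, after which a length count shows the natural injection $R/(\fkc:_R\fkm)\to B/(B:S)$ is bijective, so $S/B$ is $B/(B:S)$-free of rank $r$.
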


\begin{proof}
Note that $\ell_R(L/K)=1$ because $(K:\fkm)/K \cong R/\fkm$ by Fact \ref{f420} and Remark \ref{a4.3}.
Set $\fka=B:B[L]=B:S$.

(1) $\Rightarrow$ (2): We have the following commutative diagram
\[
\xymatrix{
0 \ar[r] & B/R\ar[r] \ar[d]^\cong&S/R \ar[r] \ar[d]^\cong & S/B\ar[r] \ar@{=}[d] & 0\\
0 \ar[r] & (R/\fkm)^{\oplus r}\ar[r]^\varphi  &  (R/\fkc)^{\oplus r} \ar[r] & S/B \ar[r] & 0 \\
}
\]
as $R$-modules.
Since $R/\fkc$ is an Artinian Gorenstein ring, $\Im \varphi=((\fkc:_R \fkm)/\fkc)^{\oplus r}$. Hence, we have $S/B\cong (R/(\fkc:_R \fkm))^{\oplus r}$ as $R$-modules.
For an element $a\in R$, we have
$$a\in\fka \Leftrightarrow aS\subseteq B \Leftrightarrow a\fkm S \subseteq R \Leftrightarrow a\in \fkc:_R \fkm,$$
thus $\fka \cap R=\fkc:_R \fkm$. Thus, we have a canonical injection $R/(\fkc:_R \fkm) \to B/\fka$. 
On the other hand, we have $$\ell_R(B/\fka)=\ell_R(S/L)=\ell_R(S/K)-1=\ell_R(R/\fkc)-1=\ell_R(R/(\fkc:_R \fkm)),$$
where the first and third equalities follow by the facts that $\rmK_{B/\fka} \cong S/L$ and $\rmK_{R/\fkc} \cong S/K$ from Lemma \ref{tttp4.9}. It follows that the injection $R/(\fkc:_R \fkm) \to B/\fka$ is bijective; hence, $S/B\cong (R/(\fkc:_R \fkm))^{\oplus r}\cong (B/\fka)^{\oplus r}$ as $R$-modules. It follows that $S/B \cong (B/\fka)^{\oplus r}$ as $B$-modules because there is a surjection $(B/\fka)^{\oplus r} \to S/B$ as $B$-modules. Therefore, $B$ is a $\GGL$ ring, $\ell_B(B/\fka)=\ell_R(R/\fkc)-1$, and $\rmr(R)=\rmr(B)$ by Theorem \ref{a4.7}.
We need to show that $\alpha \fkn=\fkn^2$. Indeed, because $\alpha S$ is a reduction of $\fkm S$, we have
$$B\subseteq \frac{\fkm S\cap B}{\alpha}\subseteq \frac{\fkm S}{\alpha}\subseteq \overline{S}=\overline{B}$$
(\cite[Corollary 1.2.5]{SH}).
Hence, $\alpha B \subseteq \fkm S\cap B = \fkn$ is  a reduction by Proposition \ref{a4.22}. In addition $\rmr (B)=\rmr (R)= \rme(R)-1 = \rme(B)-1$, where the last equality follows from 
$$\rme (R)= \rme _\fkm ^0 (R)=\rme _\fkm ^0 (B) =\ell_R(B/\alpha B) =\ell_B(B/\alpha B) = \rme (B)$$ 
by the multiplicative formula (e.g., see \cite[Theorem 14.6]{Mat}) and Proposition \ref{a4.22}(2). Therefore, by Fact \ref{a4.23}, we have $\rme (B)=\rmv (B)=\rme (R)$. Thus, $\alpha \fkn=\fkn^2$.

(2) $\Rightarrow$ (1): Consider the following exact sequences:
\begin{align*}
&0 \to L/K \to S/K \to S/L \to 0,\\
&0 \to B/R \to S/R \to S/B \to 0, \text{ and}\\
&0 \to K/R \to S/R \to S/K \to 0. 
\end{align*}
Note that $\rmr(R)=\rme(R)-1=\rme(B)-1=\rmr(B)$ by hypothesis and Fact \ref{a4.23}, and $S/K\cong \rmK_{R/\fkc}$ and $S/L\cong \rmK_{B/\fka}$ by Lemma \ref{tttp4.9}. Hence, we obtain
\begin{align*}
\ell _R (K/R) + \ell_R (R/\fkc) =& \ell _R (K/R) + \ell_R (S/K) =\ell_R (S/R)\\
=&  \ell_R(B/R) + \ell_R(S/B) =\rmr(R){\cdot}(\ell_R (B/\fka)+1)\\
=& \rmr(R){\cdot}\ell_R (R/\fkc),
\end{align*}
where the fourth equality follows from the hypothesis that $B$ is a $\GGL$ ring with respet to $\fka$ and the last equality follows from $\ell_R(B/\fka)=\ell_R(S/L)=\ell_R(S/K)-1=\ell_R(R/\fkc)-1$. 
Therefore, since $\ell _R (K/R) = (\rmr(R)-1){\cdot}\ell_R (R/\fkc)$ and there is a surjection $(R/\fkc)^{\oplus (\rmr(R)-1)} \to K/R$, $K/R$ is an $R/\fkc$-free module. 
\end{proof}

Theorem \ref{a4.24} assumes that $R$ is not an $\AGL$ ring, whereas it is known that $R$ is an $\AGL$ ring possessing minimal multiplicity if and only if $B\cong \Hom_R (\fkm, \fkm)$ is a (not necessarily local) Gorenstein ring (\cite[Theorem 5.1]{GMP}). In this case, we obtain that $B=S$. Therefore, by combining Theorem \ref{a4.24} and \cite[Theorem 5.1]{GMP}, a $\GGL$ ring $R$ possessing minimal multiplicity reaches a Gorenstein ring $S$ by taking the endomorphism algebra of the maximal ideal in $\ell_R (R/\fkc)$ steps.

\begin{cor}\label{a4.25}
Suppose that $R$ is a $\GGL$ ring but not a Gorenstein ring. If $\rmv(R)=\rme(R)$, then $S$ is a Gorenstein ring.
\end{cor}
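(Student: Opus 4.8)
The plan is to argue by induction on $n := \ell_R(R/\fkc)$, which is a positive integer since $R$ not Gorenstein forces $\fkc \subsetneq R$ (Fact \ref{a4.4}). First I would reduce to the case where the residue field $R/\fkm$ is infinite: passing to $R(X) := R[X]_{\fkm R[X]}$, the $\GGL$ property is preserved in both directions by Corollary \ref{a4.8} (the closed fibre is a field, hence Gorenstein), as are the invariants $\rmv$, $\rme$ and the non-Gorenstein property; moreover $R(X)[KR(X)] = S\otimes_R R(X)$ and $\ell_{R(X)}(R(X)/\fkc R(X)) = \ell_R(R/\fkc)$, while $S\otimes_R R(X)$ is Gorenstein if and only if $S$ is, by faithfully flat base change with regular (hence Gorenstein) fibres. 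So we may assume $R/\fkm$ is infinite; then $\rmv(R)=\rme(R)$ yields an element $\alpha\in\fkm$ with $\fkm^2=\alpha\fkm$ (and $\rme(R)\ge 2$, as $R$ is not regular), placing us in the setting of Fact \ref{f420} and Theorem \ref{a4.24}.

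For the base case $n=1$ we have $\fkc=\fkm$ (since $\fkc\subsetneq R$ gives $\fkc\subseteq\fkm$, and colength $1$ forces equality). As $R$ is a non-Gorenstein $\GGL$ ring, Corollary \ref{a7.1.1} and Theorem \ref{a4.7}(i) give that $R$ is $\GGL$ with respect to $\tr_R(\rmK_R)=\fkc=\fkm$, so $R$ is a non-Gorenstein $\AGL$ ring of minimal multiplicity (Remark \ref{rem3.4}). By \cite[Theorem 5.1]{GMP}, $B=\Hom_R(\fkm,\fkm)=R:\fkm$ is a Gorenstein ring; since $R:\fkm=R:\fkc=S$ by Lemma \ref{useful}(2), we conclude $S=B$ is Gorenstein.

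For the inductive step $n\ge 2$ we have $\fkc\subsetneq\fkm$, hence $R$ is not an $\AGL$ ring (otherwise Corollary \ref{a7.1.1} together with Theorem \ref{a4.7}(i) would force $\fkm=\tr_R(\rmK_R)=\fkc$). Then Theorem \ref{a4.24}, implication (1) $\Rightarrow$ (2), applies: with $L:=BK$, the ring $B$ is a non-Gorenstein $\GGL$ ring, local by Proposition \ref{a4.22}, with $\rmv(B)=\rme(B)=\rme(R)$, and $\ell_B(B/(B:B[L]))=n-1$. Now $B$ satisfies Setup \ref{setting1} with fractional canonical ideal $L$ by Fact \ref{f420}(2), and the ring playing the role of ``$S$'' for $B$ is $B[L]$, which equals $S$ by Fact \ref{f420}(3); accordingly the relevant conductor for $B$ is $B:B[L]$, of colength $n-1$ in $B$. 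Applying the induction hypothesis to $B$ — which is $\GGL$, not Gorenstein, and satisfies $\rmv(B)=\rme(B)$ — shows that $B[L]=S$ is Gorenstein, completing the induction.

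The content of the argument is carried entirely by Theorem \ref{a4.24}: it simultaneously produces the strictly smaller datum ($B$, again a $\GGL$ ring of minimal multiplicity, with conductor-colength dropping by exactly one) and, through Fact \ref{f420}(3), guarantees that the overring $B[L]$ coincides with the original $S$, so that the inductive conclusion really is a statement about $S$ itself. The main point to watch is therefore not a difficult estimate but the bookkeeping of which ring plays the role of ``$S$'' at each stage, together with the routine verification that Gorensteinness of $S$ is insensitive to the base change from $R$ to $R(X)$.
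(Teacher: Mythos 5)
Your proof is correct and follows essentially the same route as the paper's: induction on $n=\ell_R(R/\fkc)$ after reducing to an infinite residue field, with the base case $n=1$ handled by \cite[Theorem 5.1]{GMP} (where $S=B$) and the inductive step by Theorem \ref{a4.24} together with the identification $S=B[BK]$ from Fact \ref{f420}(3). You merely supply the bookkeeping (why $S=B$ when $n=1$, why $R$ is not $\AGL$ when $n\ge 2$, and the base-change argument) that the paper leaves implicit.
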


\begin{proof}
After enlarging the residue class field of $R$, we may assume that $R/\fkm $ is infinite. This proof uses the induction on $n:=\ell_R(R/\fkc)$. If $n=1$, then $R$ is an $\AGL$ ring. Therefore, $S = B$ is a Gorenstein ring  by \cite[Theorem 5.1]{GMP}. Suppose that $n>1$ and our assertion holds true for $n-1$. Then,  $B$ is a $\GGL$ ring such that $\ell_B(B/(B:B[BK]))=n-1$ by Theorem \ref{a4.24}. Hence, $S=B[BK]$ is a Gorenstein ring.
\end{proof}

As an application of Theorem \ref{a4.24}, we determine $\tr_R(\rmK_R)$ for non-Gorenstein $\GGL$ rings having minimal multiplicity.

\begin{thm}\label{a7.12}
Suppose that $R$ is a non-Gorenstein $\GGL$ ring. 
Assume that $\fkm^2=\alpha \fkm$ for some element $\alpha \in \fkm$.
Set $v:=\rmv(R)$ and $n:=\ell_R(R/\tr_R(\rmK_R))$.
Then,  there exist elements $x_2, x_3, \dots, x_v \in \fkm$ satisfying the following conditions:
\begin{enumerate}[{\rm (1)}]
\item $\fkm=(\alpha, x_2, x_3, \dots, x_v)$,
\item $\tr_R(\rmK_R)=(\alpha^n, x_2, x_3, \dots, x_v)$, and $(\alpha^n)$ is a minimal reduction of $\tr_R(\rmK_R)$.
\end{enumerate}
\end{thm}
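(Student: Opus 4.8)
The plan is to argue by induction on $n = \ell_R(R/\tr_R(\rmK_R))$, staying inside Setup \ref{setting1} and using Theorem \ref{a4.7} to identify $\tr_R(\rmK_R) = R:K = \fkc$, so that $n = \ell_R(R/\fkc)$. The base case $n = 1$ is immediate: then $\fkc = \fkm$, and since $\fkm^2 = \alpha\fkm \subsetneq \alpha R$ forces $\alpha \notin \fkm^2$, the element $\alpha$ is part of a minimal generating set $\fkm = (\alpha, x_2, \dots, x_v)$; then $\tr_R(\rmK_R) = \fkm = (\alpha^1, x_2, \dots, x_v)$ and $(\alpha)$ is a minimal reduction of $\fkm$ because $\alpha$ is a parameter and $\fkm^2 = \alpha\fkm$.

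For the inductive step assume $n \ge 2$, so $\fkc \subsetneq \fkm$ and $R$ is not an $\AGL$ ring; put $B = \fkm:\fkm$, $\fkn = \rmJ(B)$, $L = BK$. By Fact \ref{f420} we have $\fkm = \alpha B$, $B \subseteq L \subseteq \ol B$, $L \cong \rmK_B$, and $S = B[L]$, so $B$ again satisfies Setup \ref{setting1} with the same $S$ and $\tr_B(\rmK_B) = B:S$ by Theorem \ref{a4.7}. By Theorem \ref{a4.24}, $B$ is a non-Gorenstein $\GGL$ ring with $\fkn^2 = \alpha\fkn$, with $\rmv(B) = \rme(B) = \rme(R) = v$, and with $\ell_B(B/\tr_B(\rmK_B)) = n-1$. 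Since $B = \fkm/\alpha$ one computes $B:S = \alpha^{-1}(\fkm:S)$, and $\fkm:S = R:S = \fkc$ because $\fkc S = \fkc \subseteq \fkm$; this yields the key identity
\[ \tr_R(\rmK_R) = \fkc = \alpha\,(B:S) = \alpha\,\tr_B(\rmK_B). \]
Now apply the induction hypothesis to $(B, \fkn, \alpha)$, whose invariant is $n-1$: there are $y_2, \dots, y_v \in \fkn$ with $\fkn = (\alpha, y_2, \dots, y_v)$, $\tr_B(\rmK_B) = (\alpha^{n-1}, y_2, \dots, y_v)$, and $(\alpha^{n-1})$ a minimal reduction of $\tr_B(\rmK_B)$. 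Set $x_i := \alpha y_i$ for $2 \le i \le v$; note $x_i \in \alpha B = \fkm$.

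It remains to verify (1) and (2) for these $x_i$. For (1), since $\dim_{R/\fkm}\fkm/\fkm^2 = v$ it suffices that the images of $\alpha, x_2, \dots, x_v$ are linearly independent in $\fkm/\fkm^2 = \fkm/\alpha\fkm$. If $c\alpha + \sum c_i x_i \in \alpha\fkm$ with $c, c_i \in R$, cancelling $\alpha$ gives $c + \sum c_i y_i \in \fkm \subseteq \fkn$, so $c \in \fkn \cap R = \fkm$; writing $c = \alpha c'$ with $c' \in B$ (as $\fkm = \alpha B$) and reducing the resulting relation $\sum c_i y_i \in \alpha B$ modulo $\fkn^2$, the minimality of the generators $\alpha, y_2, \dots, y_v$ of $\fkn$ forces each $c_i \in \fkm$; hence $\fkm = (\alpha, x_2, \dots, x_v)$ by Nakayama. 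For (2), the key identity gives $\alpha^n, x_2, \dots, x_v \in \tr_R(\rmK_R)$, so $J := (\alpha^n, x_2, \dots, x_v) \subseteq \fkc$; since $\fkm = (\alpha, x_2, \dots, x_v)$ the ring $R/(x_2, \dots, x_v)$ is local with principal maximal ideal, so $\ell_R(R/J) = \min\{n, \ell_R(R/(x_2, \dots, x_v))\}$, and the inclusion $J \subseteq \fkc$ together with $\ell_R(R/\fkc) = n$ forces $\ell_R(R/J) = n$, whence $J = \fkc = \tr_R(\rmK_R)$. Finally $(\alpha^n) = \alpha\,(\alpha^{n-1})$ is a reduction of $\alpha\,\tr_B(\rmK_B) = \tr_R(\rmK_R)$ and is a parameter ideal, hence a minimal reduction.

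I expect the main difficulty to be the bookkeeping in the inductive step: setting up and verifying the key identity $\tr_R(\rmK_R) = \alpha\,\tr_B(\rmK_B)$ carefully at the level of fractional ideals, and — above all — showing that the lifted elements $x_i = \alpha y_i$ genuinely form a minimal system of generators of $\fkm$ (the linear-independence argument in $\fkm/\fkm^2$, which must pass between the $R$-module and $B$-module structures), together with the length computation that upgrades the inclusion $J \subseteq \fkc$ to an equality. The base case and the transport of the minimal reduction are routine once this framework is in place.
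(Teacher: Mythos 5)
Your proof is correct and follows essentially the same route as the paper's: induction on $n$, passage to $B=\fkm:\fkm$ via Theorem \ref{a4.24}, the key identity $\tr_R(\rmK_R)=\alpha\,\tr_B(\rmK_B)$ (obtained exactly as in the paper from $B:S=\tfrac{1}{\alpha}(\fkm:S)$ and $\fkm:S=\fkc$), lifting the generators by $y_i\mapsto \alpha y_i$, and a length comparison to upgrade $J\subseteq\tr_R(\rmK_R)$ to equality. Your minor variations (linear independence in $\fkm/\fkm^2$ instead of the paper's direct Nakayama computation, and transporting the reduction multiplicatively via $\tr_R(\rmK_R)^{k+1}=\alpha^n\tr_R(\rmK_R)^k$ instead of the integral-closure sandwich) are both valid.
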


\begin{proof}
We prove this by induction on $n>0$. The case of $n=1$ is trivial because $\tr_R(\rmK_R)=\fkm$. Let $n>1$ and assume that our assertion holds true for $n-1$. $R$ is not an $\AGL$ ring since $\tr_R(\rmK_R) \ne \fkm$. Thus, by Theorem \ref{a4.24}, $B=\fkm:\fkm$ is also a $\GGL$ ring but not a Gorenstein ring. Let $\fkn$ be the unique maximal ideal of $B$. By Theorem \ref{a4.24} and the induction hypothesis, there exist elements $y_2, y_3, \dots, y_v \in \fkn$ that satisfy the following conditions:
\begin{enumerate}[{\rm (a)}]
\item $\fkn=B \alpha + \sum_{i=2}^{v}B y_i$,
\item $\tr_B(\rmK_B)=B\alpha^{n-1} + \sum_{i=2}^{v}B y_i$, and $B\alpha^{n-1}$ is a minimal reduction of $\tr_B(\rmK_B)$.
\end{enumerate}
We need to prove the following.

\begin{claim*}
The following assertions hold true:
\begin{align*}
&\text{{\rm (i)}  $\tr_B(\rmK_B)=\frac{1}{\alpha} \tr_R(\rmK_R)$.}  & &\text{{\rm (ii)} $\fkm=R\alpha + \sum_{i=2}^{v}R \alpha y_i$.} 
\end{align*}
\end{claim*}

\begin{proof}[Proof of Claim]
(i): Since $B$ is a $\GGL$ ring, by noting Fact \ref{f420}, we have $\tr_B(\rmK_B)=B:S=\frac{\fkm}{\alpha}:S=\frac{1}{\alpha} (\fkm:S)$. Note that $\fkc=\fkc:S$ because $\fkc=\fkc S$. Since $\fkc\subseteq \fkm \subseteq R$, it follows that $\fkc=\fkm:S$. Hence, $\tr_B(\rmK_B)=\frac{1}{\alpha} \fkc=\frac{1}{\alpha} \tr_R(\rmK_R)$.

(ii): Since $\fkn^2=\alpha \fkn \subseteq \alpha B=\fkm$, $\fkn/\fkm$ is a $B/\fkn$-vector space. Hence, $\fkn/\fkm=\sum_{i=2}^{v}B/\fkn {\cdot} y_i$ by (a).
By Theorem \ref{a4.24}, we have a natural isomorphism $R/\fkm \cong B/\fkn$, hence $\fkn/\fkm=\sum_{i=2}^{v}R/\fkm {\cdot} y_i$. It follows that $\fkn=\sum_{i=2}^{v}R y_i+\fkm$. Thus, $\alpha \fkn=\sum_{i=2}^{v}R \alpha y_i+\alpha \fkm$. On the other hand, because $\fkm/\alpha \fkn \cong B/\fkn \cong R/\fkm$ and $\alpha \in \fkm \backslash \alpha \fkn$, we have $\fkm=\alpha \fkn + \alpha R$. Therefore, 
\[
\fkm=\alpha \fkn + R \alpha =\sum_{i=2}^{v}R \alpha y_i+\alpha \fkm +  R \alpha.
\] 
This concludes (ii) by Nakayama's lemma.
\end{proof}

Set $J=R\alpha^{n} + \sum_{i=2}^{v}R \alpha y_i$. By (b) and Claim (i), we obtain that $\tr_R(\rmK_R)=B\alpha^{n} + \sum_{i=2}^{v}B \alpha y_i \supseteq J$ and $\ell_R(R/J) \le n$. Hence, we have $\tr_R(\rmK_R)=J$ by the definition of $n$.
It remains to show that $\alpha^n R$ is a reduction of $\tr_R(\rmK_R)$.
Indeed, we have $\alpha^{n-1} R \subseteq \alpha^{n-1} B \subseteq \tr_B(\rmK_B)=\frac{1}{\alpha} \tr_R(\rmK_R) \subseteq \alpha^{n-1} \overline{B}=\alpha^{n-1} \overline{R}$. Hence, $\alpha^{n} R \subseteq \tr_R(\rmK_R) \subseteq \alpha^{n} \overline{R} \cap R$. This implies that $\alpha^n R$ is a reduction of $\tr_R(\rmK_R)$.
\end{proof}

\begin{cor}\label{a7.12.5}
Suppose that $R$ has minimal multiplicity. If $R$ is a $\GGL$ ring, then $\rmv(R/\tr_R(\rmK_R))\le 1$. That is, $R/\tr_R(\rmK_R)$ is a hypersurface.
\end{cor}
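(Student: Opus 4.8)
The statement is an easy consequence of Theorem~\ref{a7.12}, and the plan is simply to arrange its hypotheses and then read off the generators. If $R$ is a Gorenstein ring there is nothing to prove (then $\tr_R(\rmK_R)=R$ and $R/\tr_R(\rmK_R)$ is the zero ring), so assume $R$ is not Gorenstein. I would first reduce to the case where $R/\fkm$ is infinite: replacing $R$ by $R':=R[X]_{\fkm R[X]}$, the map $R\to R'$ is faithfully flat and local with field closed fiber $(R/\fkm)(X)$, so $R'$ is again one-dimensional Cohen-Macaulay with canonical module $R'\otimes_R\rmK_R$ and infinite residue field, $R'$ is still a non-Gorenstein $\GGL$ ring (Corollary~\ref{a4.8}) and still has minimal multiplicity, $\tr_{R'}(\rmK_{R'})=\tr_R(\rmK_R)R'$ with $\ell_{R'}(R'/\tr_{R'}(\rmK_{R'}))=\ell_R(R/\tr_R(\rmK_R))$, and finally $\rmv(R'/\tr_{R'}(\rmK_{R'}))=\rmv(R/\tr_R(\rmK_R))$ while $R/\tr_R(\rmK_R)$ is a hypersurface if and only if $R'/\tr_{R'}(\rmK_{R'})$ is. Hence we may assume $R/\fkm$ is infinite.

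Since $R$ has minimal multiplicity, $\rme(R)=\rmv(R)$, and by the classical equality case of Abhyankar's inequality there is a minimal reduction $\alpha$ of $\fkm$ with $\fkm^2=\alpha\fkm$. Thus Theorem~\ref{a7.12} applies and yields elements $x_2,\dots,x_v\in\fkm$, with $v=\rmv(R)$ and $n=\ell_R(R/\tr_R(\rmK_R))$, such that
\[
\fkm=(\alpha,x_2,\dots,x_v)\qquad\text{and}\qquad \tr_R(\rmK_R)=(\alpha^n,x_2,\dots,x_v).
\]
Set $C:=R/\tr_R(\rmK_R)$; this is an Artinian local ring, as $\tr_R(\rmK_R)=\fkc$ is $\fkm$-primary by Corollary~\ref{a7.1.1}. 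Since $x_2,\dots,x_v\in\tr_R(\rmK_R)$, their images vanish in $C$, so the maximal ideal of $C$ is generated by the single element $\overline{\alpha}$; hence $\rmv(C)=\mu_C(\fkm C)\le 1$.

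It remains to note that an Artinian local ring of embedding dimension $\le 1$ is a hypersurface: if $\rmv(C)=0$ then $C$ is a field, and if $\rmv(C)=1$ then, by Cohen's structure theorem, $\widehat{C}$ is a quotient of a complete discrete valuation ring $S$, hence $\widehat{C}\cong S/(f)$ because $S$ is a principal ideal domain. This gives $\rmv(R/\tr_R(\rmK_R))\le 1$ and that $R/\tr_R(\rmK_R)$ is a hypersurface. The entire substance of the argument is carried by Theorem~\ref{a7.12}; the only points here that need a little care are the faithfully flat reduction in the first paragraph and the standard implication ``principal maximal ideal $\Rightarrow$ hypersurface'' just used, and neither is a serious obstacle.
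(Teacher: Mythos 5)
Your proof is correct and is exactly the intended argument: the paper states Corollary \ref{a7.12.5} as an immediate consequence of Theorem \ref{a7.12} (the generators $x_2,\dots,x_v$ of $\tr_R(\rmK_R)$ kill all but one minimal generator of $\fkm$ in the quotient), and your write-up supplies precisely that, together with the routine reduction to an infinite residue field needed to produce $\alpha$ with $\fkm^2=\alpha\fkm$.
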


Note that Corollary \ref{a7.12.5} does not necessarily hold true without the assumption that $R$ has minimal multiplicity. For instance, see Proposition \ref{a7.6}.


\subsection{The Ulrich property for $\fkc$ and $\tr_R(\rmK_R)$}\label{subsection4.3}

We maintain Setup \ref{setting1}. In this subsection, we study the Ulrich property for $\fkc$ and $\tr_R(\rmK_R)$ with the $\GGL$ property of $R$ (see Definition \ref{Ulrich ideal} for the definition of Ulrich ideals).
 The goal is to prove the following and Theorem \ref{a7.14}.

\begin{thm}\label{a7.5}
Suppose that $R$ is not a Gorenstein ring. Then,  the following conditions are equivalent:
\begin{enumerate}[{\rm (1)}]
\item $\tr_R(\rmK_R)$ is an Ulrich ideal of $R$.
\item $\fkc$ is an Ulrich ideal of $R$.
\item $R$ is a $\GGL$ ring and $S$ is a Gorenstein ring.
\item $R$ is a $\GGL$ ring and $\tr_R(\rmK_R):\tr_R(\rmK_R)$ is a Gorenstein ring.
\end{enumerate}
When this is the case, $\mu_R(\tr_R(\rmK_R))=\rmr(R)+1$.
\end{thm}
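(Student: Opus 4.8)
The proof splits into a ``conductor part'' $(2)\Leftrightarrow(3)\Leftrightarrow(4)$ and a ``trace part'' $(1)$. Since $R\subseteq K$, $R$ is generically Gorenstein, so $\tr_R(\rmK_R)=(R:K)K$ by Fact~\ref{trace}; moreover $\fkc=\fkc K\subseteq (R:K)K=\tr_R(\rmK_R)$ always, and $\tr_R(\rmK_R)=\fkc$ exactly when $K^2=K^3$ (Lemma~\ref{a4.6}). If $R$ is a $\GGL$ ring, then $\tr_R(\rmK_R)=\fkc$ by Theorem~\ref{a4.7}(i), whence $\tr_R(\rmK_R):\tr_R(\rmK_R)=\fkc:\fkc=S$ by Lemma~\ref{useful}; so $(4)$ is literally ``$R$ is $\GGL$ and $S$ is Gorenstein'', which is $(3)$, and $(3)\Leftrightarrow(4)$ follows. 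Also, granting $(2)\Leftrightarrow(3)$, the implication $(3)\Rightarrow(1)$ is immediate, since $R$ $\GGL$ forces $\tr_R(\rmK_R)=\fkc$, an Ulrich ideal by $(2)$. Thus everything reduces to $(2)\Leftrightarrow(3)$ and $(1)\Rightarrow(3)$.

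For $(2)\Leftrightarrow(3)$, the idea is that ``$\fkc$ is an Ulrich ideal'' is the same as ``$\fkc$ is a principal ideal of $S$''. If $\fkc$ has a minimal reduction $(\gamma)$ with $\fkc^2=\gamma\fkc$, then $\tfrac1\gamma\fkc\subseteq\fkc:\fkc=S$ by Lemma~\ref{useful}, while $\gamma\in\fkc$ and $\fkc S=\fkc$ force $\gamma S\subseteq\fkc$; hence $\fkc=\gamma S$. Then $\fkc\cong S$ as $S$-modules, and since $\fkc=K:S\cong\Hom_R(S,\rmK_R)=\rmK_S$ by Lemma~\ref{useful} and \cite[Theorem 3.3.7]{BH}, we get $\rmK_S\cong S$, i.e. $S$ is Gorenstein. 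Moreover $\fkc/\fkc^2\cong S/\gamma S=S/\fkc$ is $R/\fkc$-free; because $1$ is a minimal generator of $S$ over $R$ (as $1\notin\fkm S$), the summand $(R/\fkc)\cdot\overline1\cong R/\fkc$ splits off $S/\fkc$, so the complement, which is isomorphic to $S/R$, is $R/\fkc$-free, and $R$ is a $\GGL$ ring by Theorem~\ref{a4.7}; the rank count gives $\mu_R(\fkc)=\rmr(R)+1$. Conversely, if $R$ is $\GGL$ and $S$ is Gorenstein, then $\fkc\cong\rmK_S\cong S$ gives $\fkc=\gamma S$ with $\gamma\in R$ a non-zerodivisor, so $\fkc^2=\gamma\fkc$ with $(\gamma)$ a parameter ideal and $\fkc\neq(\gamma)$ (else $R=S$ would be Gorenstein by Fact~\ref{a4.4}); and $\fkc/\fkc^2\cong S/\fkc$ is $R/\fkc$-free since $S/R$ is (Theorem~\ref{a4.7}) and $0\to R/\fkc\to S/\fkc\to S/R\to 0$ splits. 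Hence $\fkc$ is an Ulrich ideal. As $\tr_R(\rmK_R)=\fkc$ once $R$ is $\GGL$, this also records the closing formula $\mu_R(\tr_R(\rmK_R))=\rmr(R)+1$.

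The remaining implication $(1)\Rightarrow(3)$ is the crux: assuming $T:=\tr_R(\rmK_R)$ is an Ulrich ideal, it suffices to show $T=\fkc$ (equivalently $K^2=K^3$), because then $\fkc=T$ is Ulrich and $(3)$ follows from $(2)\Leftrightarrow(3)$. Here I would work in the ring $A:=T:T$; since $T$ is a trace ideal, $A=R:T=(R:K):(R:K)$ and $R\subseteq A\subseteq S$. From $T^2=\gamma T$ one gets $T=\gamma A$ and then $R:A=\gamma A=T$, together with $A/T\cong T/T^2$, which is $R/T$-free of rank $\mu_R(T)$; splitting off the summand $(R/T)\cdot\overline1$ shows $A/R$ is $R/T$-free of rank $\mu_R(T)-1$. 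Since $T=R:A$, proving $T=\fkc=R:S$ amounts to proving $A=S$, and I expect this to be the genuine obstacle — one must show that the birational extension $A$ produced by the Ulrich property is already as large as the ring $S$ attached to the canonical ideal. The plan is to force $\ell_R(S/A)=0$ by combining the rank identity $\ell_R(A/R)=(\mu_R(T)-1)\ell_R(R/T)$ just obtained, the Ulrich equality $\rmr(R)=(\mu_R(T)-1)\rmr(R/T)$ from Fact~\ref{a7.0.2}(1), the canonical–duality length identities $\ell_R((R:T)/R)=\ell_R(K/(K:A))$ and $\ell_R(R/T)=\ell_R((R:(R:K))/K)$ furnished by Remark~\ref{a4.3}, and the fact (from Theorem~\ref{a4.7} together with Lemma~\ref{tttp4.9}) that $\rme^1_\omega(R)=\ell_R(S/R)\le\rmr(R)\,\ell_R(R/\fkc)$ with equality precisely when $R$ is $\GGL$; the key point is that no purely formal fractional-ideal computation can settle $A=S$, since all such identities are equivalent to $K^2=K^3$, so the $R/T$-freeness of $A/R$ — not merely the fact that it is a module — must enter essentially.
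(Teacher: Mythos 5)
Your treatment of $(2)\Leftrightarrow(3)\Leftrightarrow(4)$, of $(3)\Rightarrow(1)$, and of the closing formula $\mu_R(\tr_R(\rmK_R))=\rmr(R)+1$ is correct; it amounts to re-deriving, in the special case $T=S$, what the paper obtains by citing Lemma \ref{a4.18}, Proposition \ref{a4.19} and Theorem \ref{a4.7} (namely: $\fkc$ is Ulrich iff $\fkc=\gamma S$ with $S$ Gorenstein and $S/R$ is $R/\fkc$-free, using $\fkc=K:S\cong\rmK_S$). That part of the argument is fine.

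The implication $(1)\Rightarrow(2)$, however, is left unproved, and you say so yourself ("I expect this to be the genuine obstacle"). Your reduction is sound as far as it goes: with $T=\tr_R(\rmK_R)$ and $A=T:T$ you correctly obtain $R\subseteq A\subseteq S$, $T=\gamma A=R:A$, and the freeness of $A/R$ over $R/T$; what is missing is the identity $A=S$. But your meta-claim that ``no purely formal fractional-ideal computation can settle $A=S$'' and that the freeness of $T/T^2$ must enter essentially is incorrect: the paper closes exactly this step by a formal computation using only the stability $T^2=\gamma T$ (Lemma \ref{a7.4}). Indeed, stability gives $T^n=\gamma^{n-1}T$, hence $T:T=T^n:T^n$ for all $n$ (\cite[Lemma 1.11]{L}), while $T^n=\bigl((R:K)K\bigr)^n=(R:K)^nK^n=(R:K)^nS$ for $n\gg0$ because $K^n=S$; therefore $S\subseteq T^n:T^n=T:T$, so $T$ is an $S$-ideal, $T\subseteq R:S=\fkc$, and since $\fkc=\fkc K\subseteq(R:K)K=T$ always, $T=\fkc$. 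This is not circular because the input is the hypothesis $T^2=\gamma T$, not any identity equivalent to $K^2=K^3$. Your proposed route through the length identities would indeed be circular for the reason you suspect, so without an argument of this kind the implication $(1)\Rightarrow(2)$ — and hence the theorem — remains unestablished in your write-up.
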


We begin with the following.

\begin{lemma}\label{a7.4}
Suppose that $\tr_R(\rmK_R)^2=\alpha {\cdot}\tr_R(\rmK_R)$ for some non-zerodivisor $\alpha \in \tr_R(\rmK_R)$ of $R$ (that is, $\tr_R(\rmK_R)$ is stable in the sense of \cite{L}).
Then,  $\tr_R(\rmK_R)=\fkc$.
\end{lemma}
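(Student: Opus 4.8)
Set $\fka:=\tr_R(\rmK_R)$. The plan is to prove the two inclusions $\fkc\subseteq\fka$ and $\fka\subseteq\fkc$ separately: the first is a formal consequence of Setup \ref{setting1}, while the second is where the stability hypothesis enters.

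First I would record the easy inclusion. Since $K\cong\rmK_R$ is a fractional canonical ideal with $R\subseteq K$, Fact \ref{trace} gives $\fka=(R:K)K$, and from $R\subseteq K$ we get $R:K=(R:K)\cdot R\subseteq(R:K)K=\fka$. On the other hand, using $K:K=R$ we have $R:K=(K:K):K=K:K^2$, and since $K^2\subseteq S$ together with Lemma \ref{useful}(1) this yields $\fkc=K:S\subseteq K:K^2=R:K\subseteq\fka$. So $\fkc\subseteq\fka$, with no use of the hypothesis.

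For the reverse inclusion, the key observation is that stability propagates to all powers: from $\fka^2=\alpha\fka$ an immediate induction gives $\fka^m=\alpha^{m-1}\fka$ for every $m\ge1$. Now $\fka=(R:K)K$ gives $\fka^m=(R:K)^mK^m$, and by Setup \ref{setting1} we have $K^m=S$ for all $m\gg0$; hence $\fka^m=(R:K)^mS$ is an $S$-submodule of $\rmQ(R)$ for all large $m$. Comparing the two descriptions, $\alpha^{m-1}\fka$ is an $S$-module, so $\alpha^{m-1}(S\fka)\subseteq\alpha^{m-1}\fka$, and cancelling the non-zerodivisor $\alpha^{m-1}$ (multiplication by it is injective on $\rmQ(R)$) forces $S\fka=\fka$. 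Since $\fka=\tr_R(\rmK_R)$ is an ideal of $R$, this gives $\fka S\subseteq R$, i.e. $\fka\subseteq R:S=\fkc$. Combined with the previous step, $\tr_R(\rmK_R)=\fkc$.

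The only real content is the middle step: recognizing that a high enough power of $\fka$, being $(R:K)^mS$, is automatically an $S$-module, and then pushing this back to $\fka$ itself via stability and cancellation. I do not anticipate a genuine obstacle here — the rest is routine manipulation of fractional ideals via Lemma \ref{useful} and Setup \ref{setting1} — though one should note that the use of Fact \ref{trace} (and the identity $K:K=R$) presupposes that $R$ is generically Gorenstein, which holds under Setup \ref{setting1}; the Gorenstein case is in any event trivial, since then $\fkc=R=\tr_R(\rmK_R)$.
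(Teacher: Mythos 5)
Your proof is correct and follows essentially the same route as the paper: both arguments hinge on the observation that $\tr_R(\rmK_R)^m=(R:K)^mK^m=(R:K)^mS$ is an $S$-module for $m\gg0$, use stability to transfer that $S$-module structure back to $\tr_R(\rmK_R)$ itself (giving $\tr_R(\rmK_R)\subseteq R:S=\fkc$), and obtain the other inclusion from the chain $\fkc=K:S\subseteq K:K^2=R:K\subseteq(R:K)K$. The only difference is that where the paper invokes Lipman's identity $I:I=I^n:I^n$ for stable ideals, you use the explicit formula $\fka^m=\alpha^{m-1}\fka$ and cancel the non-zerodivisor $\alpha^{m-1}$ --- a slightly more self-contained rendering of the same step.
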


\begin{proof}
By the hypothesis, we have $\tr_R(\rmK_R):\tr_R(\rmK_R)=(R:K) K:(R:K) K=((R:K) K)^n:((R:K) K)^n$ for all $n>0$ (\cite[Lemma 1.11]{L}). Since $((R:K) K)^n=(R:K)^n S$ for $n \gg 0$, we have $S \subseteq \tr_R(\rmK_R): \tr_R(\rmK_R)$. This shows that $\tr_R(\rmK_R)$ is an ideal of $S$; hence, $\tr_R(\rmK_R) \subseteq \fkc$. The reverse inclusion follows from $\fkc\subseteq R:K\subseteq (R:K) K=\tr_R(\rmK_R)$.
\end{proof}

Lemma \ref{a7.4} proves that Theorem \ref{a7.5} (1) $\Rightarrow$ (2) holds. Thus, in what follows, we explore the Ulrich property for $\fkc$. We proceeds the arguments in more generality that $\fka\subseteq \fkc$.

\begin{lemma}\label{a4.17.5}
Suppose that  $\fka$ is an Ulrich ideal such that $\fka \subseteq \fkc$. Let $(\alpha)\subseteq \fka$ be a minimal reduction of $\fka$, and set $T=\frac{\fka}{\alpha}$. Then,  we have the following:
\begin{enumerate}[{\rm (1)}] 
\item $\fka=R:T=K:T$.
\item $T=R:\fka=\fka:\fka$ and $S\subseteq T$; in particular, $T$ is a subring of $\rmQ(R)$.
\end{enumerate}
\end{lemma}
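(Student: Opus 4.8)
The statement concerns an Ulrich ideal $\fka$ with $\fka\subseteq\fkc$, its minimal reduction $(\alpha)$, and the fractional ideal $T=\frac{\fka}{\alpha}$. The plan is to exploit the defining property of an Ulrich ideal, namely $\fka^2=\alpha\fka$ (equivalently $\fka T=\fka$, equivalently $T$ is closed under multiplication since $T^2=\frac{\fka^2}{\alpha^2}=\frac{\alpha\fka}{\alpha^2}=T$), together with the duality machinery from Setup~\ref{setting1} and Remark~\ref{a4.3}(2)(ii) ($\ell_R(I/J)=\ell_R((K:J)/(K:I))$ for $J\subseteq I$), in close analogy with the proofs of Lemma~\ref{useful} and Lemma~\ref{a4.6}.

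\textbf{Step 1: $T$ is a ring containing $S$.} From $\fka^2=\alpha\fka$ we get $T^2=T$, so $T$ is a subring of $\rmQ(R)$ containing $R$. Then, exactly as in the proof of Lemma~\ref{a7.4}, the equalities $\fka:\fka = ((R:K)^n\cdots)$, or more directly the stability argument via \cite[Lemma~1.11]{L}, give $\fka:\fka = \fka^n:\fka^n$ for all $n>0$; since $\fka\subseteq\fkc$ and $\fkc S=\fkc$, one has $\fka^n = \alpha^{n-1}\fka \subseteq \fkc$, and passing to the ring $\fka^n:\fka^n$ one sees $S\subseteq \fka:\fka$. Combining $T=\fka:\fka$ (to be proven next) with this gives $S\subseteq T$. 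Alternatively: $T$ is a ring finitely generated as an $R$-module lying between $R$ and $\overline{R}$, and $T\supseteq K$ would follow once we show $\fka=K:T$; but in fact we should get $S\subseteq T$ from $T$ being a ring on which $K$ acts, since $T=TT\supseteq$ (image of) $K$ is not automatic — so the cleanest route is the stability identity.

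\textbf{Step 2: the two descriptions of $T$.} For (2), $R:\fka = R:\alpha T=\frac1\alpha(R:T)$, and $\fka:\fka=\frac1\alpha(\fka:T)$; so it suffices to show $R:T=\fka:T$, i.e.\ that $\fka\subseteq R:T$ forces equality, which holds because $\fka T=\fka\subseteq R$ shows $\fka\subseteq R:T$, while $R:T\subseteq \fka:T$ trivially, and conversely $(\fka:T)T\subseteq\fka\subseteq R$ gives $\fka:T\subseteq R:T$. So $R:\fka=\fka:\fka=T$. For (1), $K:T=K:\alpha T=\frac1\alpha(K:\fka)$; since $(K:\fka)$ is a fractional ideal with $(K:\fka)\fka=$ (trace-type computation using $\fka\subseteq\fkc=K:S$ and $S\subseteq T$), one deduces $K:T=R:T=\fka$. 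The inclusion $\fka\subseteq R:T=K:T$ needs $R:T\subseteq K:T$, which is automatic, and $K:T\subseteq R:T$ needs $(K:T)T\subseteq K$ and $(K:T)\subseteq R$; the latter uses $S\subseteq T$ so that $K:T\subseteq K:S=\fkc\subseteq R$. Then $\fka=R:T$ from Step~1–2 combined with a length count via Remark~\ref{a4.3}(2)(ii): $\ell_R(T/R)=\ell_R((K:R)/(K:T))=\ell_R(K/\fka)$ and separately $\ell_R(R/\fka)=\ell_R((K:\fka)/(K:R))=\ell_R((K:\fka)/K)$, forcing $K:\fka=\fka T=\fka$ to pin down everything.

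\textbf{Main obstacle.} The delicate point is establishing $S\subseteq T$ rigorously — i.e.\ that the endomorphism ring of the Ulrich ideal $\fka$ swallows the canonical blow-up $S=R[K]$. This is where the hypothesis $\fka\subseteq\fkc$ is essential: it guarantees $\fka$ is an ideal of $S$ (since $\fkc S=\fkc$), hence $S\subseteq \fka:\fka=T$. Once this is in hand, all the dual-module identities are formal manipulations in $\rmQ(R)$ of the type already carried out in Lemmas~\ref{useful} and~\ref{a4.6}, and the length bookkeeping via Remark~\ref{a4.3}(2)(ii) closes the argument. I expect no genuine difficulty beyond organizing these identities in the right order so that each inclusion is justified before it is used.
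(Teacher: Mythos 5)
Your proposal has a genuine gap at precisely the point you identify as the main obstacle, namely $S\subseteq T$. The justification you give — ``$\fka\subseteq\fkc$ guarantees $\fka$ is an ideal of $S$ (since $\fkc S=\fkc$)'' — is not a valid implication: an $R$-submodule of an $S$-module need not be an $S$-module, so $\fka S\subseteq \fkc S=\fkc$ gives nothing about $\fka S\subseteq\fka$. The fallback via stability does not work either: \cite[Lemma 1.11]{L} gives $\fka:\fka=\fka^n:\fka^n$, but $\fka^n=\alpha^{n-1}\fka$ never becomes an $S$-module by stability alone (contrast with Lemma \ref{a7.4}, where $(\tr_R(\rmK_R))^n=(R:K)^nK^n=(R:K)^nS$ for $n\gg0$ because powers of $K$ stabilize to $S$ — there is no analogue of this for a general $\fka$). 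More tellingly, your entire argument uses only condition (1) of Definition \ref{Ulrich ideal} ($\fka^2=\alpha\fka$) and never condition (2) (freeness of $\fka/\fka^2$ over $R/\fka$), whereas the latter is the essential input.

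The paper's proof runs as follows: by \cite[Lemma 2.3(2)]{GOTWY}, $T/R\cong\fka/(\alpha)$ is a \emph{nonzero free} $R/\fka$-module, so its annihilator, which is $R:T$, equals $\fka$ exactly (for a merely stable ideal one only gets $R:T=(\alpha):\fka\supseteq\fka$, possibly strictly). Then $T=\frac{\fka}{\alpha}=\frac{R:T}{\alpha}=R:\fka\supseteq R:\fkc=S$ by Lemma \ref{useful}, and $\fka=R:T=(K:K):T=K:T$; part (2) follows since $T^2=T$ gives $T=T:T=\fka:\fka$. Your Step 2 also contains a circularity: the displayed length count uses $\ell_R((K:R)/(K:T))=\ell_R(K/\fka)$, which presupposes $K:T=\fka$ — the very identity being proved — and the asserted inclusion ``$R:T\subseteq\fka:T$ trivially'' is the nontrivial direction (the trivial one is $\fka:T\subseteq R:T$). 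To repair the argument you must invoke the freeness of $\fka/(\alpha)$ to get $R:T=\fka$ first; everything else then follows in the order the paper uses.
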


\begin{proof}
(1): Since $\fka/(\alpha)\cong T/R$ is $R/\fka$-free by \cite[Lemma 2.3(2)]{GOTWY}, considering the annihilator, we obtain $\fka=R:T$. Hence, we have 
\[
T=\frac{\fka}{\alpha}=\frac{R:T}{\alpha}=R:\fka\supseteq R:\fkc= S
\]
by Lemma \ref{useful}.
It follows that $\fka=(K:K):T=K:T$.

(2): $T=R:\fka$ and $S\subseteq T$ were proved in the proof of (1). 
Since $\fka^2=\alpha \fka$, we obtain $T^2=T$. Hence, $T$ is a subring of $\rmQ(R)$ and $T=T:T=\fka:\fka$.
\end{proof}

By Lemma \ref{a4.17.5}, to study when an $\fkm$-primary ideal $\fka (\subseteq \fkc)$ of $R$ is an Ulrich ideal, we may assume that $\fka$ has the form 
\begin{align}\label{ttttt4.22}
\fka=R:T, 
\end{align}
where $T$ is a subring of $\rmQ(R)$ containing $K$ and finitely generated as an $R$-module. In this subsection, suppose that $\fka$ has the form of (\ref{ttttt4.22}). Note that $\fka=K:T$ since $\fka=(K:K):T=K:KT$ and $K\subseteq T$. We then obtain the following.

\begin{lemma}\label{a4.18}
\begin{enumerate}[{\rm (1)}]
\item $\fka T_M$ is isomorphic to the canonical module of $T_M$ for all $M\in \Max T$.
\item The following conditions are equivalent:
\begin{enumerate}[{\rm (i)}]
\item $T$ is a Gorenstein ring,
\item $\fka=\alpha T$ for some $\alpha \in \fka$, and
\item $\fka^2=\alpha\fka$ for some $\alpha \in \fka$.
\end{enumerate}
\end{enumerate}

\end{lemma}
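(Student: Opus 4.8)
The plan is to deduce everything from the identities $\fka=R:T=K:T$ (the second noted just before the statement) and from the observation that $\fka$ is a fractional ideal of the ring $T$, i.e. $\fka T=\fka$; this holds because for $x\in R:T$ and $t\in T$ we have $xtT\subseteq xT\subseteq R$. For part (1), note that $T$, being module-finite over $R$ and torsion-free as an $R$-module, is a one-dimensional Cohen--Macaulay ring (and a maximal Cohen--Macaulay $R$-module), so $\Hom_R(T,\rmK_R)$ is a canonical module of $T$ by \cite[Theorem 3.3.7]{BH}. The multiplication isomorphism $K:T\xrightarrow{\,\sim\,}\Hom_R(T,K)$, $x\mapsto\hat x$ (\cite[Lemma 2.1]{HK}), is $T$-linear, and $K\cong\rmK_R$, so $\fka\cong\Hom_R(T,\rmK_R)=\rmK_T$ as $T$-modules. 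Since $\fka T=\fka$, localizing at $M\in\Max T$ gives $\fka T_M=\fka_M\cong(\rmK_T)_M\cong\rmK_{T_M}$, which is (1).

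For part (2), I would argue (i)$\Leftrightarrow$(ii), then (ii)$\Rightarrow$(iii), then (iii)$\Rightarrow$(ii). If $T$ is Gorenstein, then $\rmK_T$ is locally free of rank one, hence free over the semilocal ring $T$; by (1) this yields a $T$-isomorphism $\varphi\colon T\xrightarrow{\,\sim\,}\fka$, and $\alpha:=\varphi(1)\in\fka$ satisfies $\fka=\varphi(T)=\alpha T$. Conversely, if $\fka=\alpha T$ with $\alpha\in\fka$, then $\alpha$ is a non-zerodivisor (as $\fka$ is $\fkm$-primary, hence regular), so $\rmK_T\cong\fka\cong T$ by (1), $\rmK_T$ is free, and $T$ is Gorenstein. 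The implication (ii)$\Rightarrow$(iii) is immediate from $T^2=T$, namely $\fka^2=\alpha^2T^2=\alpha^2T=\alpha\fka$.

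The substantive implication is (iii)$\Rightarrow$(ii), and the key input I would establish first is $\fka:\fka=T$. Using $\fka=K:T$ and $\fka T=\fka$ one checks $x\fka\subseteq\fka\iff x\fka T\subseteq K\iff x\fka\subseteq K$, so $\fka:\fka=K:\fka=K:(K:T)$; and $K:(K:T)=T$ is the biduality with respect to the canonical fractional ideal $K$ (the inclusion $T\subseteq K:(K:T)$ is automatic, and it is an equality by a length count via Remark \ref{a4.3}(2)). Granting this, suppose $\fka^2=\alpha\fka$ with $\alpha\in\fka$. Then $\alpha$ is a non-zerodivisor, since $\alpha\fka=\fka^2$ is $\fkm$-primary and thus contains one. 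For $x\in\fka$ we have $x\fka\subseteq\fka^2=\alpha\fka$, whence $\frac{x}{\alpha}\in\fka:\fka=T$ and $\fka\subseteq\alpha T$; conversely, for $u\in T=\fka:\fka$ we have $\alpha u\in u\fka\subseteq\fka$ because $\alpha\in\fka$, so $\alpha T\subseteq\fka$. Therefore $\fka=\alpha T$ with $\alpha\in\fka$, which is (ii).

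The only step demanding care is the identity $\fka:\fka=T$, resting on reflexivity of $T$ with respect to the canonical module; once that is in place, the remaining implications are short and formal, and part (1) is essentially a direct application of canonical-module theory.
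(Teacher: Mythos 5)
Your proof is correct. Part (1) and the implications (i) $\Rightarrow$ (ii) $\Rightarrow$ (iii) coincide with the paper's argument: the paper likewise obtains $\fka=K:T\cong\Hom_R(T,\rmK_R)$ from \cite[Theorem 3.3.7(b)]{BH} and uses semilocality of $T$ to pass from local freeness of rank one to $\fka\cong T$. The only genuine divergence is in how the cycle is closed. The paper proves (iii) $\Rightarrow$ (i) by setting $L=\frac{\fka}{\alpha}$, observing $T\subseteq L\subseteq\overline{T}$, $L^2=L$, and $L_M\cong\rmK_{T_M}$, and then invoking the one-dimensional Gorenstein criterion of Fact \ref{a4.4} for each $T_M$. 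You instead prove (iii) $\Rightarrow$ (ii) directly, by first establishing $\fka:\fka=K:(K:T)=T$ via canonical biduality (a correct computation; the length count through Remark \ref{a4.3}(2)(ii) does give $K:(K:T)=T$, and the paper itself uses this biduality elsewhere, e.g.\ in the proof of Proposition \ref{a4.15}) and then extracting $\fka=\alpha T$ from stability. Your route trades the citation of Fact \ref{a4.4} for the explicit identity $\fka:\fka=T$, which is a slightly more self-contained way to see that a stable ideal of this form is principal over its endomorphism ring; the paper's route is marginally shorter because Fact \ref{a4.4} is already available in its setup. Both arguments are valid.
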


\begin{proof}
(1): This follows from $\fka=K:T\cong \Hom_R (T, \rmK_R)$ and \cite[Theorem 3.3.7(b)]{BH}.

(2) (i) $\Rightarrow$ (ii): If $T$ is a Gorenstein ring, then $\fka T_M \cong T_M$ for all $M\in \Max T$ by (1). Hence, because $T$ is a semi-local ring, $\fka \cong T$ and $\fka=\alpha T$ for some $\alpha \in \fka$.

(ii) $\Rightarrow$ (iii): This is trivial.

(iii) $\Rightarrow$ (i): Assume that $\fka^2=\alpha\fka$ for some $\alpha \in \fka$ and set $L=\frac{\fka}{\alpha}$. Then,  $T\subseteq L\subseteq \overline{T}$ because $L^2=L$. Hence, $T_M \subseteq L_M\subseteq \overline{T_M}$ and $L_M\cong \rmK_{T_M}$ for all $M\in \Max T$. Therefore, because $L_M^2=L_M$, $T_M$ is a Gorenstein ring by Fact \ref{a4.4}.
\end{proof}

\begin{prop}\label{a4.19}
Suppose that $R$ is not a Gorenstein ring.
Then,  the following conditions are equivalent:
\begin{enumerate}[{\rm (1)}]
\item $\fka$ is an Ulrich ideal of $R$.
\item $T$ is a Gorenstein ring and $T/R$ is $R/\fka$-free.
\end{enumerate}
When this is the case, $R/\fka$ is a Gorenstein ring and $\mu_R(\fka)=\mu_R(T)$.
\end{prop}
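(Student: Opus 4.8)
The plan is to read off the equivalence from the dictionary between $T$ and $\fka=R:T=K:T$ supplied by Lemma \ref{a4.18}, and then to settle the two supplementary assertions by transporting the question to a finite free extension of Artinian rings.

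First I would prove $(1)\Rightarrow(2)$. Assuming $\fka$ is an Ulrich ideal, fix a minimal reduction $(\alpha)\subseteq\fka$, so $\fka^2=\alpha\fka$. By Lemma \ref{a4.18}(2), the relation $\fka^2=\alpha\fka$ forces $T$ to be a Gorenstein ring, and then the same lemma gives $\fka=\beta T$ for some $\beta\in\fka$; since $\fka$ is a regular ideal, $\beta$ is a non-zerodivisor and $\fka^2=\beta^2T=\beta\fka$, so $(\beta)$ is again a minimal reduction of $\fka$. As $\fka$ is Ulrich, $\fka/\fka^2$ is $R/\fka$-free, hence so is $\fka/(\beta)$ by \cite[Lemma 2.3(2)]{GOTWY}; and multiplication by the non-zerodivisor $\beta$ induces an $R$-isomorphism $T/R\cong \beta T/\beta R=\fka/(\beta)$, so $T/R$ is $R/\fka$-free. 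Conversely, for $(2)\Rightarrow(1)$: if $T$ is Gorenstein then $\fka=\alpha T$ with $\alpha\in\fka$ a non-zerodivisor (Lemma \ref{a4.18}), so $\fka^2=\alpha\fka$ and $(\alpha)$ is a minimal reduction; because $R$ is not Gorenstein we have $R\subsetneq S\subseteq T$ by Fact \ref{a4.4}, whence $\fka=\alpha T\supsetneq(\alpha)$. Then $\fka/(\alpha)\cong T/R$ is $R/\fka$-free by hypothesis, so $\fka/\fka^2$ is $R/\fka$-free by \cite[Lemma 2.3(2)]{GOTWY}, and $\fka$ is an Ulrich ideal.

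Now assume the equivalent conditions hold. The equality $\mu_R(\fka)=\mu_R(T)$ is immediate, since $\fka=\alpha T$ and multiplication by the non-zerodivisor $\alpha$ is an $R$-isomorphism $T\xrightarrow{\sim}\fka$. For the Gorensteinness of $R/\fka$, put $A=R/\fka$ and $B=T/\alpha T=T/\fka$. Since $T$ is a one-dimensional Gorenstein ring and $\alpha$ is a non-zerodivisor of $T$ (being a non-zerodivisor of $R\subseteq\rmQ(R)$), $B$ is an Artinian Gorenstein ring. Moreover, as $\fka\subseteq R$ we have $R\cap\alpha T=\fka$, giving an exact sequence $0\to R/\fka\to T/\fka\to T/R\to 0$ of $A$-modules; it splits because $T/R$ is $R/\fka$-free, hence projective over the Artinian local ring $A$, so $B$ is a free $A$-module, say of rank $m$. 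Then $A\to B$ is module-finite, so $\rmK_B\cong\Hom_A(B,\rmK_A)\cong\rmK_A^{\oplus m}$ as $A$-modules (\cite[Theorem 3.3.7(b)]{BH}), while $B$ Artinian Gorenstein gives $\rmK_B\cong B\cong A^{\oplus m}$. Comparing minimal numbers of generators, $m=\mu_A(A^{\oplus m})=\mu_A(\rmK_A^{\oplus m})=m\cdot\rmr(A)$, so $\rmr(A)=1$ and $R/\fka$ is Gorenstein.

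The two implications are essentially bookkeeping once Lemma \ref{a4.18} is in hand, so I expect the only delicate point to be the Gorensteinness of $R/\fka$: the idea is to pass to the finite free ring map $A\hookrightarrow B=T/\alpha T$ with $B$ Gorenstein, where the isomorphism $B\cong\rmK_A^{\oplus m}$ of $A$-modules pins the Cohen--Macaulay type of $A$ down to $1$. Along the way one also has to verify the harmless points that $\alpha$ remains a non-zerodivisor on $T$, that $\fka\subseteq R$ (so that the displayed short exact sequence is correct), and that $T\neq R$ (so $\fka\neq(\alpha)$).
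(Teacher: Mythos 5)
Your proof is correct and follows essentially the same route as the paper's: both reduce via Lemma \ref{a4.18} to the case $\fka=\alpha T$ and read the equivalence off the exact sequence $0\to R/\fka\to T/\fka\to T/R\to 0$ together with the isomorphism $T/R\cong\fka/(\alpha)$. The only (harmless) divergence is at the very end, where the paper deduces the Gorensteinness of $R/\fka$ by flat descent along the free extension $R/\fka\to T/\fka$, while you instead compare $\rmK_{T/\fka}\cong T/\fka$ with $\Hom_{R/\fka}(T/\fka,\rmK_{R/\fka})\cong\rmK_{R/\fka}^{\oplus m}$ and count minimal generators to force $\rmr(R/\fka)=1$.
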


\begin{proof}
(1) $\Leftrightarrow$ (2): By Lemma \ref{a4.18}, we may assume that $\fka=\alpha T$ for some $\alpha \in \fka$. We then show that $T/R$ is $R/\fka$-free if and only if $\fka/\fka^2$ is $R/\fka$-free. Indeed, the equivalence follows from the  exact sequence  $0 \to R/\fka \to T/\fka \to T/R \to 0$ and the isomorphism $T/\fka \cong \fka/\fka^2$.

When this is the case, $T/\fka=T/\alpha T$ is a Gorenstein ring because $\alpha$ is a non-zerodivisor of $T$ contained in the Jacobson radical of $T$.
Since $T/\fka$ is $R/\fka$-free, we have $R/\fka \to T/\fka$ is a flat homomorphism. Hence, we obtain that $R/\fka$ is a Gorenstein ring. The equality $\mu_R(\fka)=\mu_R(T)$ follows from $\fka\cong T$.
\end{proof}

\begin{proof}[Proof of Theorem \ref{a7.5}]
(1) $\Rightarrow$ (2): This follows from Lemma \ref{a7.4}.

(2) $\Leftrightarrow$ (3): This follows from Theorem \ref{a4.7} and Proposition \ref{a4.19} by applying $T=S$ and $\fka=\fkc$. 

(2) $\Rightarrow$ (1): By the above equivalence,  $R$ is a $\GGL$ ring. Hence, $\fkc=\tr_R(\rmK_R)$ by Theorem \ref{a4.7}(i).

(3) $\Leftrightarrow$ (4): This follows from the equality $\tr_R(\rmK_R):\tr_R(\rmK_R)=\fkc:\fkc=S$ by Lemma \ref{useful} and Theorem \ref{a4.7}.

When this is the case, $\mu_R(\tr_R(\rmK_R))=\rmr(R)+1$ by Fact \ref{a7.0.2}(1).
\end{proof}

\begin{cor}\label{a4.26}
Suppose that $\rmv(R)=\rme(R)\ge 3$. 
Then,  $R$ is a $\GGL$ ring if and only if $\fkc$ is an Ulrich ideal, if and only if $\tr_R(\rmK_R)$ is an Ulrich ideal.
\end{cor}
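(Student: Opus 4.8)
The plan is to derive the corollary by specializing Theorem \ref{a7.5} and feeding in Corollary \ref{a4.25}; the only preliminary point is to rule out the Gorenstein case so that Theorem \ref{a7.5} is applicable.

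First I would observe that $R$ is not a Gorenstein ring. Since $\dim R = 1$, the equality $\rmv(R) = \rme(R)$ says precisely that $R$ has minimal multiplicity, and because $\rme(R) \ge 3 > 1$, Fact \ref{a4.23} gives $\rmr(R) = \rme(R) - 1 \ge 2$. In particular $\rmr(R) \ne 1$, so $R$ is not Gorenstein. Consequently the standing hypothesis of Theorem \ref{a7.5} is in force, and that theorem yields the equivalences
\[
\tr_R(\rmK_R)\ \text{is an Ulrich ideal}\ \Longleftrightarrow\ \fkc\ \text{is an Ulrich ideal}\ \Longleftrightarrow\ \bigl(R\ \text{is}\ \GGL\ \text{and}\ S\ \text{is Gorenstein}\bigr).
\]

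It then suffices to show that, under $\rmv(R) = \rme(R)$, the single condition ``$R$ is a $\GGL$ ring'' is equivalent to ``$R$ is a $\GGL$ ring and $S$ is Gorenstein''. One implication is trivial, and the other is exactly Corollary \ref{a4.25} applied to the (necessarily non-Gorenstein) $\GGL$ ring $R$: if $R$ is $\GGL$ and $\rmv(R) = \rme(R)$, then $S$ is Gorenstein. Chaining this with the displayed equivalences gives the three-way equivalence asserted in the corollary.

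I do not anticipate a real obstacle here: the substantive work has already been carried out in Theorem \ref{a7.5} and Corollary \ref{a4.25}. The only step requiring a moment of care is the verification that $R$ cannot be Gorenstein under $\rmv(R) = \rme(R) \ge 3$, which is immediate from the characterization of minimal multiplicity in Fact \ref{a4.23}.
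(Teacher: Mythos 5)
Your argument is correct and is essentially the paper's own proof: the paper likewise notes that $\rmv(R)=\rme(R)\ge 3$ forces $R$ to be non-Gorenstein (via Fact \ref{a4.23}) and then combines Corollary \ref{a4.25} with Theorem \ref{a7.5} to obtain the three-way equivalence. No gaps.
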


\begin{proof}
Note that $R$ is not Gorenstein by the hypothesis. Thus, by combining Corollary \ref{a4.25} and Theorem \ref{a7.5}, we have the assertion.
\end{proof}


We next aim to describe the set of all Ulrich ideals of $\GGL$ rings having minimal multiplicity. Let $\calX_R$ denote the set of all Ulrich ideals of $R$. We need a lemma.

\begin{lemma}\label{a7.13}
Let $A$ be a ring.
Suppose that $Q_0$, $Q$, and $J$ are ideals of $A$. Set 
$$
I_0=Q_0+J \ \ \text{and}\ \  I=Q+J.
$$
Assume that $Q_0 \subseteq Q$. Then, $I_0^{m+1}=Q_0 I_0^{m}$ implies that $I^{m+1}=Q I^{m}.$
\end{lemma}

\begin{proof}
Since $I_0^{m+1}=Q_0 {I_0}^m+J^{m+1}=Q_0 {I_0}^m$, we have $J^{m+1} \subseteq Q_0 {I_0}^m \subseteq Q {I}^m$. Therefore, $I^{m+1}=Q {I}^m+J^{m+1}=Q {I}^m$.
\end{proof}

\begin{thm}\label{a7.14}
Suppose that $R$ is not a Gorenstein ring. Set $v=\rmv(R)$ and $n=\ell_R(R/\tr_R(\rmK_R))>0$. Then,  the following conditions are equivalent:
\begin{enumerate}[{\rm (1)}]
\item $R$ is a $\GGL$ ring having minimal multiplicity.
\item $\tr_R(\rmK_R)$ and $\fkm$ are Ulrich ideals.
\item $R$ is $G$-regular in the sense of \cite{Tak}, and a length of a maximal chain of Ulrich ideals is $n-1$.
\item There exist elements $\alpha, x_2, x_3, \dots, x_v \in \fkm$ satisfying the following conditions:
\begin{enumerate}[{\rm (i)}]
\item $\fkm=(\alpha, x_2, x_3, \dots, x_v)$ and
\item $\calX_R=\left\{ (\alpha^i, x_2, x_3, \dots, x_v) \mid 1\le i \le n \right\}$.
\end{enumerate}

\end{enumerate}

\end{thm}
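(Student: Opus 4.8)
The plan is to run everything through the equivalence of $(1)$ and $(2)$: I would prove $(1)\Leftrightarrow(2)$, then $(1)\Rightarrow(4)\Rightarrow(2)$, and finally $(1)\Rightarrow(3)\Rightarrow(2)$. First I would reduce to the case $R/\fkm$ infinite, since Ulrich ideals, the invariants $\rmv(R),\rme(R),n$, the set $\calX_R$, and the $\GGL$ property (Corollary \ref{a4.8}) all behave well under the flat base change $R\to R[X]_{\fkm R[X]}$; then ``minimal multiplicity'' means $\fkm^{2}=\alpha\fkm$ for some $\alpha\in\fkm$, with $(\alpha)$ a minimal reduction of $\fkm$.

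For $(1)\Leftrightarrow(2)$: if $R$ has minimal multiplicity then $\fkm^{2}=\alpha\fkm$ and $\fkm\neq(\alpha)$ (as $R$ is not Gorenstein), so $\fkm$ is an Ulrich ideal, condition $(2)$ of Definition \ref{Ulrich ideal} holding automatically since $R/\fkm$ is a field; conversely $\fkm\in\calX_R$ says precisely that $\fkm^{2}$ is $\fkm$ times a principal minimal reduction, i.e.\ $R$ has minimal multiplicity. On the other hand Corollary \ref{a4.25} shows a non-Gorenstein $\GGL$ ring with $\rmv(R)=\rme(R)$ has $S$ Gorenstein, so $\tr_R(\rmK_R)$ is an Ulrich ideal by Theorem \ref{a7.5}, while $\tr_R(\rmK_R)\in\calX_R$ forces $R$ to be $\GGL$ by the same theorem. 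Combining these yields $(1)\Leftrightarrow(2)$.

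For $(1)\Rightarrow(4)$: using $(2)$, apply Theorem \ref{a7.12} to obtain $\alpha,x_{2},\dots,x_{v}\in\fkm$ with $\fkm=(\alpha,x_{2},\dots,x_{v})$, $\tr_R(\rmK_R)=(\alpha^{n},x_{2},\dots,x_{v})$, $(\alpha^{n})$ a minimal reduction of $\tr_R(\rmK_R)$, and hence $\tr_R(\rmK_R)^{2}=\alpha^{n}\tr_R(\rmK_R)$. Put $I_{i}=(\alpha^{i},x_{2},\dots,x_{v})$ for $1\le i\le n$. Since $(\alpha^{n})\subseteq(\alpha^{i})$, Lemma \ref{a7.13} (with $Q_{0}=(\alpha^{n})$, $Q=(\alpha^{i})$, $J=(x_{2},\dots,x_{v})$, $m=1$) gives $I_{i}^{2}=\alpha^{i}I_{i}$. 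I would then compute lengths: $\ell_R(R/(\alpha^{i}))=i\,\rme(R)=iv$, while $R/(x_{2},\dots,x_{v})$ is local with maximal ideal $(\overline{\alpha})$ satisfying $\overline{\alpha}^{\,n-1}\neq0$ (because $n=\ell_R(R/\tr_R(\rmK_R))=\ell_R\big((R/(x_{2},\dots,x_{v}))/(\overline{\alpha}^{\,n})\big)$), so $\ell_R(R/I_{i})=\ell_R\big((R/(x_{2},\dots,x_{v}))/(\overline{\alpha}^{\,i})\big)=i$ for $1\le i\le n$. Hence $I_{i}\supsetneq(\alpha^{i})$ and the natural surjection $(R/I_{i})^{\oplus(v-1)}\twoheadrightarrow I_{i}/(\alpha^{i})$ is bijective, since $\ell_R(I_{i}/(\alpha^{i}))=iv-i=(v-1)\ell_R(R/I_{i})$; so $I_{i}/(\alpha^{i})$, hence $I_{i}/I_{i}^{2}$ by \cite[Lemma 2.3]{GOTWY}, is $R/I_{i}$-free, making $I_{i}$ an Ulrich ideal. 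For the opposite inclusion, $R$ is $G$-regular by Corollary \ref{a5.5}, so Corollary \ref{a7.3} gives $\tr_R(\rmK_R)\subseteq I\subseteq\fkm$ for all $I\in\calX_R$; then $I/(x_{2},\dots,x_{v})$ is an ideal of $R/(x_{2},\dots,x_{v})$ lying between $(\overline{\alpha}^{\,n})$ and $(\overline{\alpha})$, and since $R/\tr_R(\rmK_R)$ is an Artinian local ring with principal maximal ideal (hence a chain ring), one gets $I/(x_{2},\dots,x_{v})=(\overline{\alpha}^{\,j})$, i.e.\ $I=I_{j}$, for some $1\le j\le n$. Thus $\calX_R=\{I_{1},\dots,I_{n}\}$, which is $(4)$.

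Then $(4)\Rightarrow(2)$ is immediate ($I_{1}=\fkm$, $I_{n}=\tr_R(\rmK_R)\in\calX_R$); $(1)\Rightarrow(3)$ follows because $R$ is $G$-regular (Corollary \ref{a5.5}) and, by $(4)$, $\calX_R$ is the chain $\fkm=I_{1}\supsetneq\cdots\supsetneq I_{n}=\tr_R(\rmK_R)$ of length $n-1$; and for $(3)\Rightarrow(2)$, $G$-regularity and Corollary \ref{a7.3} force $1\le\ell_R(R/I)\le n$ for every $I\in\calX_R$, so a maximal chain of Ulrich ideals, having length $n-1$ and strictly increasing colengths, must realize the colengths $1,2,\dots,n$, whence its top is $\fkm$ and its bottom is $\tr_R(\rmK_R)$, both necessarily Ulrich. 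The hard part is $(1)\Rightarrow(4)$: the delicate points are the chain-ring length identity $\ell_R(R/I_{i})=i$ (which, with Lemma \ref{a7.13}, is what makes each $I_{i}$ Ulrich) and the fact that ``these are all the Ulrich ideals'' genuinely rests on non-Gorenstein $\GGL$ rings being $G$-regular (Corollary \ref{a5.5}, applied via Corollary \ref{a7.3}); without $G$-regularity one would be left to rule out by hand the possible Ulrich ideals $I$ with $\mu_R(I)=2$, those with $\mu_R(I)\ge d+2$ being disposed of by Proposition \ref{a7.2}.
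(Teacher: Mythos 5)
Your argument follows the paper's proof in all essentials: Theorem \ref{a7.12} plus Lemma \ref{a7.13} to construct the ideals $I_i$, the length count to get freeness of $I_i/(\alpha^i)$, Corollary \ref{a7.3} together with the chain-ring structure of $R/\tr_R(\rmK_R)$ for the inclusion $\calX_R\subseteq\{I_i\}$, and the colength count for $(3)\Rightarrow(2)$. Your direct computation $\ell_R(R/I_i)=i$ via the hypersurface $R/(x_2,\dots,x_v)$ is a clean, non-inductive replacement for the paper's descending induction, and it also makes the strict inclusions $I_{i+1}\subsetneq I_i$ explicit in the direction $(1)\Rightarrow(4)$.

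There is, however, one genuine gap: the claim that $(4)\Rightarrow(2)$ is ``immediate.'' Condition $(4)$ gives $\fkm=I_1\in\calX_R$, but it does not say that $\tr_R(\rmK_R)$ equals $I_n$, nor even that $\tr_R(\rmK_R)\in\calX_R$; a priori the ideals $I_1,\dots,I_n$ listed in $(4)(\mathrm{ii})$ could coincide with one another, in which case $\calX_R$ would be a shorter chain whose smallest element strictly contains $\tr_R(\rmK_R)$. One must first prove $I_{i+1}\subsetneq I_i$ for all $i$. The paper does this by observing that $I_i=I_{i+1}$ forces $\alpha^i\in(x_2,\dots,x_v)$, hence $I_i=(x_2,\dots,x_v)$ and $\mu_R(I_i)\le v-1$, whereas $R/I_i$ is Gorenstein (its maximal ideal is principal), so Fact \ref{a7.0.2}(1) and Fact \ref{a4.23} give $\mu_R(I_i)=1+\rmr(R)=\rme(R)=v$ --- a contradiction (here $\rmr(R)=\rme(R)-1$ because $\fkm=I_1\in\calX_R$ forces minimal multiplicity). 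Only then does the chain $I_n\subsetneq\cdots\subsetneq I_1$ have $n$ strict steps, so that $\ell_R(R/I_n)\ge n$, which combined with $\tr_R(\rmK_R)\subseteq I_n$ (Corollary \ref{a7.3}, available since $\fkm\in\calX_R$ gives $G$-regularity) and $\ell_R(R/\tr_R(\rmK_R))=n$ yields $I_n=\tr_R(\rmK_R)$. Without this step the implication from $(4)$ back to the other conditions is not established. A smaller point: you cite ``Corollary \ref{a5.5}'' for $G$-regularity, but that corollary concerns minimal free resolutions; the fact actually needed is that a non-regular local ring of minimal multiplicity is $G$-regular, which the paper takes from \cite[Corollary 2.5]{Y}.
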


\begin{proof}
(4) $\Rightarrow$ (3): Set $I_i=(\alpha^i, x_2, x_3, \dots, x_v)$ and $R_i=R/I_i$ for $1\le i \le n$.
We need to show that $I_{i+1} \subsetneq I_i$ for all $1 \le i \le n-1$.
Assume that $I_i = I_{i+1}$ for some $1 \le i \le n-1$. Then, $\alpha^i=c_1 \alpha^{i+1} + \sum_{j=2}^v c_j x_j$ for some $c_1, c_2, \dots, c_v \in R$. 
This shows that $\alpha^i\in (x_2, x_3, \dots, x_v)$ because $1-c_1 \alpha$ is a unit element in $R$. Thus, $I_i=(x_2, \dots, x_v)$.
On the other hand, $R_i$ is Gorenstein because $\mu_{R_i}(\fkm/I_i)\le 1$. 
Therefore, we have
$$
\mu_R (I_i)=1+\rmr(R)=\rme(R)=v
$$
by Fact \ref{a7.0.2}(1) and Fact \ref{a4.23}. This contradicts the equation $I_i=(x_2, \dots, x_v)$. Therefore,
$$I_n \subsetneq I_{n-1} \subsetneq \cdots \subsetneq I_1=\fkm$$
is a chain of Ulrich ideals. $R$ is $G$-regular because $\fkm\in \calX_R$ by \cite[Corollary 2.5]{Y}.

(3) $\Rightarrow$ (2): Choose $J_0, J_1, \dots, J_{n-1} \in \calX_R$ such that $R \supsetneq J_0 \supsetneq J_1 \supsetneq \cdots \supsetneq J_{n-1}$. Thus, $\ell_R(R/J_{n-1}) \ge n$. note that $J_{n-1} \supseteq \tr_R(\rmK_R)$ by Corollary \ref{a7.3}; hence, we have $J_{n-1}=\tr_R(\rmK_R)$ and $J_0=\fkm$.

(1) $\Leftrightarrow$ (2): This follows from Corollary \ref{a4.26}.

(1) $\Rightarrow$ (4): By Theorem \ref{a7.12}, there exist elements $x_2, x_3, \dots, x_v \in \fkm$ that satisfy the following conditions:
\begin{enumerate}[{\rm (a)}]
\item $\fkm=(\alpha, x_2, x_3, \dots, x_v)$,
\item $\tr_R(\rmK_R)=(\alpha^n, x_2, x_3, \dots, x_v)$, and $(\alpha^n)$ is a minimal reduction of $\tr_R(\rmK_R)$.
\end{enumerate}
On the other hand, since $R$ has minimal multiplicity, $R$ is $G$-regular (\cite[Corollary 2.5]{Y}). Hence, by  Corollary \ref{a7.3}, we have 
$$\calX_R \subseteq \left\{ (\alpha^i, x_2, x_3, \dots, x_v) \mid 1\le i \le n \right\}$$
Set $I_i=(\alpha^i, x_2, x_3, \dots, x_v)$ for $1\le i \le n$.
Note that $I_{i+1} \subsetneq I_i$ and $\mu_R (I_i)=v$ for all $1 \le i \le n-1$ because $I_{n}=\tr_R(\rmK_R)$ and $\mu_R (\tr_R(\rmK_R))=1+\rmr (R)=v$ by Fact \ref{a7.0.2}(1), Fact \ref{a4.23}, and Theorem \ref{a4.7}(ii).
By Lemma \ref{a7.13}, $I_i^2=\alpha^{i} I_i$ holds. Therefore, to show that $I_i$ is an Ulrich ideal for all $1\le i \le n$, we have only to show that $I_i/I_i^2$ is an $R/I_i$-free module for $1\le i \le n$. This is equivalent to showing that $I_i/(\alpha^i)$ is an $R/I_i$-free module for $1\le i \le n$ by the exact sequence
$$
0\to (\alpha^i)/I_i^2 \to I_i/I_i^2 \to I_i/(\alpha^i) \to 0
$$
and that $(\alpha^i)/I_i^2=(\alpha^i)/\alpha^i I_i \cong R/I_i$. 
We prove the $R/I_i$-freeness of $I_i/(\alpha^i)$ by descending induction on $1 \le i \le n$.
The case where $i=n$ follows from Corollary \ref{a4.26}. Let $1 \le i<n$ and assume that our assertion holds true for $i+1$. 
Then, 
\begin{align*}
\ell_R(I_i/(\alpha^i))=&\ell_R(R/(\alpha^{i+1}))-\ell_R(R/I_i)-\ell_R((\alpha^i)/(\alpha^{i+1}))\\
=&[\ell_R(R/I_{i+1}) + \ell_R(I_{i+1}/(\alpha^{i+1}))]-\ell_R(R/I_i)-\ell_R(R/(\alpha))\\
=&[(i+1) + (i+1)(v-1)]-i-\rme(R)\\
=&i(v-1).
\end{align*}
Thus, $I_i/(\alpha^i)$ is an $R/I_i$-free module because there is a surjection $(R/I_i)^{\oplus (v-1)} \to I_i/(\alpha^i)$.
\end{proof}

\begin{cor}\label{a7.15}
Suppose that $\rme(R)=\rmv(R)=3$. Then,  there exist elements $\alpha, x_2, x_3 \in \fkm$ that satisfy the following conditions:
\begin{enumerate}[{\rm (i)}]
\item $\fkm=(\alpha, x_2, x_3)$ and
\item $\calX_R=\left\{ (\alpha^i, x_2, x_3) \mid 1\le i \le \ell_R (R/\tr_R(\rmK_R)) \right\}$.
\end{enumerate}

\end{cor}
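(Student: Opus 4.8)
The plan is to obtain Corollary \ref{a7.15} as a direct specialization of Theorem \ref{a7.14}; essentially all of the work is in checking that the hypothesis $\rme(R)=\rmv(R)=3$ places us in case (1) of that theorem. First I would record that, since $\dim R=1$, the equality $\rmv(R)=\rme(R)$ is exactly the condition that $R$ has minimal multiplicity in the sense of Fact \ref{a4.23} (there $d-1=0$). Because $\rme(R)=3>1$, Fact \ref{a4.23} then gives $\rmr(R)=\rme(R)-1=2\neq 1$, so $R$ is not a Gorenstein ring, and Theorem \ref{a7.14} is applicable. (If the residue field $R/\fkm$ is finite one first passes to the faithfully flat extension $R[X]_{\fkm R[X]}$, under which $\rme$, $\rmv$, $\rmr$, the $\GGL$ property, minimal multiplicity, and $\ell_R(R/\tr_R(\rmK_R))$ are all unchanged and the set of Ulrich ideals is controlled by extension; I would state this reduction only in passing.)

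Next I would invoke Corollary \ref{a4.10}: since $\rme(R)\le 3$, the ring $R$ is a $\GGL$ ring. Combined with the previous step, $R$ is a non-Gorenstein $\GGL$ ring having minimal multiplicity, which is precisely condition (1) of Theorem \ref{a7.14}; note also that in the notation there $v=\rmv(R)$ and $n=\ell_R(R/\tr_R(\rmK_R))$ agree with the notation of the corollary.

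Finally, I would apply the implication $(1)\Rightarrow(4)$ of Theorem \ref{a7.14}, which furnishes elements $\alpha, x_2, x_3, \dots, x_v\in\fkm$ with $\fkm=(\alpha, x_2,\dots,x_v)$ and $\calX_R=\{(\alpha^i, x_2,\dots,x_v)\mid 1\le i\le n\}$ --- exactly the claim. I do not expect any genuine obstacle here: the only point needing a moment of care is the bookkeeping in the first paragraph (recognizing $\rme(R)=\rmv(R)=3$ as the minimal-multiplicity hypothesis and deducing non-Gorensteinness via Fact \ref{a4.23}), after which Theorem \ref{a7.14} delivers the conclusion with no further computation.
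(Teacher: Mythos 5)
Your proposal is correct and follows essentially the same route as the paper: the paper's proof also consists of invoking Corollary \ref{a4.10} to get the $\GGL$ property from $\rme(R)\le 3$ and then applying Theorem \ref{a7.14}. Your additional bookkeeping (using Fact \ref{a4.23} to see that $\rmv(R)=\rme(R)=3$ forces $\rmr(R)=2$, so $R$ is non-Gorenstein with minimal multiplicity, as required by Theorem \ref{a7.14}) is accurate and merely makes explicit what the paper leaves implicit.
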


\begin{proof}
By Corollary \ref{a4.10}, $R$ is a $\GGL$ ring if $\rme(R)=\rmv(R)=3$. Thus, the assertion follows from Theorem \ref{a7.14}.
\end{proof}

\begin{Example}\label{a7.16}
Let $k$ be a field and $k[[t]]$ be the formal power series ring over $k$. Then,  the following assertions hold true:
\begin{enumerate}[{\rm (1)}]
\item Let $R_1 =k[[t^{5}, t^{6}, t^{8}]]$. Then,  $\tr_{R_1}(\rmK_{R_1})=(t^{10}, t^6, t^8)$ is an Ulrich ideal of $R_1$. On the other hand, $R_1$ does not have minimal multiplicity.
\item Let $R_2 =k[[t^{5}, t^{18}, t^{26}, t^{34}, t^{42}]]$. Then,  $\tr_{R_2}(\rmK_{R_2})=(t^{10}, t^{18}, t^{26}, t^{34}, t^{42})$ is an Ulrich ideal of $R_2$. Moreover, $R_2$ has minimal multiplicity. Hence, $\calX_{R_2} = \left\{ \tr_{R_2}(\rmK_{R_2}), \fkm \right\}$.
\item Let $n>0$ and $R_3 =k[[t^{3}, t^{3n+1}, t^{3n+2}]]$. Then, 
\[
\calX_{R_3} = \left\{ (t^{3i}, t^{3n+1}, t^{3n+2}) \mid 1 \le i \le \ell_{R_3}(R_3/\tr_{R_3} (\rmK_{R_3})) = n \right\}.
\]
\end{enumerate}

\end{Example}

\subsection{$\GGL$ rings arising from idealizations}\label{subsection4.4}
Next, we explore $\GGL$ rings arising from idealizations. To recall the definition of idealization and the fundamental properties of idealizations, for a little while, let $R$ be an arbitrary ring and $M$ an $R$-module. Then the direct sum $R \oplus M$ with coordinate-wise addition and multiplication 
\[
(a,x)(b,y) = (ab, bx + ay),
\]
where $a,b \in R$ and $x,y \in M$, is a ring. The ring is called the {\it idealization of $M$ over $R$}, and we denote by $R \ltimes M$ the idealization (\cite{AW}). Set $A=R \ltimes M$.

Let $K$ be an $R$-module and set $L = \Hom_R(M, K) \oplus K$. We consider $L$ to be an $A$-module under the following action of $A$:
$$(a,x)\circ (f,y) = (af, f(x) + ay),$$
where $(a,x) \in A$ and $(f,y) \in L$. Then,  it is standard to check that the map
\begin{align}\label{idealizationeq}
\Hom_R(A,K) \to L, ~\alpha \mapsto (\alpha \circ j, \alpha (1))
\end{align}
is an isomorphism of $A$-modules, where $j : M \to A;~ x \mapsto (0,x)$, and $1 = (1,0)$ denotes the identity of the ring $A$ (\cite[Section 4]{CGKM}).

We are now back to Setup \ref{setting1}. 
The problem here is as follows: for a nonzero $R$-module $M$, when does the idealization $R\ltimes M$ become a $\GGL$ ring? It is known that $R\ltimes M$ is Gorenstein if and only if $M\cong \rmK_R$ (\cite{R}). Therefore, since $R$ is generically Gorenstein, either $M_{\fkp}\cong \rmK_{R_\fkp} \cong R_\fkp$ or $M_\fkp\cong 0$ holds for each $\fkp \in \Ass R$ (recall that $\Ass A=\{\fkp \oplus M \mid \fkp\in \Ass R\}$, see \cite[Theorem 3.2]{AW}). From this observation, we concentrate on the case where $M$ is an $\fkm$-primary ideal $\fka$ of $R$. 

\begin{setup}
Suppose that Setup \ref{setting1}. 
Let $\fka$ be an $\fkm$-primary ideal of $R$. Set $A=R\ltimes \fka$ and $L = (K:\fka) \oplus K$.
\end{setup}

\begin{rem}
 $A$ is a one-dimensional Cohen-Macaulay local ring and 
$\rmK_A \cong \Hom_R(A,K) \cong L$
as $A$-modules by \eqref{idealizationeq}.
Moreover, 
$$  A \subseteq L  \subseteq \overline{R} \ltimes \rmQ(R)=\overline{A}$$
(\cite[Theorem 4.1]{AW}). Thus, $A$ also satisfies the assumption of Setup \ref{setting1}. 
\end{rem}

\begin{lem}\label{a4.14}
The following assertions hold true:
\begin{enumerate}[{\rm (1)}]
\item $L^n=(K:\fka)^n\oplus (K:\fka)^{n-1}K$ for all $n>0$.
\item $A:L=(\fka : K) \oplus [\fka:(K:\fka)]$.
\end{enumerate}
\end{lem}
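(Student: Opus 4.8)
The plan is to carry out everything inside $\rmQ(A)=\rmQ(R)\ltimes\rmQ(R)$, in which a general element has the form $(g,z)$ with $g,z\in\rmQ(R)$ and the product is $(g,z)(g',z')=(gg',\,gz'+g'z)$. By the Remark preceding the statement, $L=(K:\fka)\oplus K$ is a fractional ideal of $A$ with $A\subseteq L\subseteq\overline A$, so the powers $L^n$ and the colon ideal $A:L$ make sense there; I will also use throughout the standard identity $K:K=R$ for a fractional canonical ideal (already implicit in Lemma \ref{useful}).

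For (1) I would first compute $L^2$. Any product of two elements of $L$ has the shape $(f,y)(f',y')=(ff',\,fy'+f'y)$, which lies in $(K:\fka)^2\oplus(K:\fka)K$, so $L^2\subseteq(K:\fka)^2\oplus(K:\fka)K$. Conversely, multiplying elements of the special forms $(f,0)$ and $(0,y)$ of $L$ produces $(ff',0)$ and $(0,fy')$, and since $K:\fka$ is an $R$-module, the additive spans of such elements are exactly the ideal products $(K:\fka)^2$ and $(K:\fka)K$; this gives the reverse inclusion. The general formula then follows by induction on $n$: writing $L^{n+1}=L^n\cdot L$ and substituting $L^n=(K:\fka)^n\oplus(K:\fka)^{n-1}K$, the inclusion ``$\subseteq$'' is an immediate term-by-term check and ``$\supseteq$'' again comes from products of $(f,0)$'s with $(f',0)$'s and with $(0,y')$'s.

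For (2) I would take $(g,z)\in\rmQ(A)$ and unwind the condition $(g,z)L\subseteq A=R\oplus\fka$. Pairing against $(f,y)\in L$ with $f\in K:\fka$ and $y\in K$ arbitrary, and isolating the two coordinates (first taking $y=0$, then $f=0$), one sees that $(g,z)\in A:L$ if and only if $g(K:\fka)\subseteq R$, $gK\subseteq\fka$, and $z(K:\fka)\subseteq\fka$; equivalently
\[
A:L=\bigl([\fka:K]\cap[R:(K:\fka)]\bigr)\oplus[\fka:(K:\fka)].
\]
It then remains to see that the intersection collapses, i.e.\ $\fka:K\subseteq R:(K:\fka)$: if $gK\subseteq\fka$, then $g(K:\fka)K=(K:\fka)(gK)\subseteq(K:\fka)\fka\subseteq K$, hence $g(K:\fka)\subseteq K:K=R$. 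This yields $A:L=(\fka:K)\oplus[\fka:(K:\fka)]$.

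Most of this is bookkeeping: the ring structure of $\rmQ(A)$ and the identification of additive spans of products with ideal products. The only step needing an actual idea is the inclusion $\fka:K\subseteq R:(K:\fka)$ in part (2), which I expect to be the main obstacle, and it rests on the identity $K:K=R$.
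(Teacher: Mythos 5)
Your proposal is correct and follows essentially the same route as the paper: part (1) by induction on $n$ using the coordinate-wise description of products in the idealization, and part (2) by unwinding $(g,z)L\subseteq A$ into the three conditions $g\in R:(K:\fka)$, $g\in\fka:K$, $z\in\fka:(K:\fka)$. The only divergence is the final collapse of the intersection: you prove the one inclusion $\fka:K\subseteq R:(K:\fka)$ directly from $K:K=R$, whereas the paper proves the full equality $R:(K:\fka)=\fka:K$ via the colon chain $R:(K:\fka)=(K:K):(K:\fka)=[K:(K:\fka)]:K$ together with the canonical biduality $K:(K:\fka)=\fka$; your version is marginally more elementary since it avoids the biduality, and it suffices for the stated formula.
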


\begin{proof}
(1): We prove by induction on $n>0$. Assume that $n>1$ and the assertion holds for $n-1$. Then 
\begin{align*}
L^n=&L^{n-1}{\cdot}L=[(K:\fka)^{n-1}\oplus (K:\fka)^{n-2}K]{\cdot}[(K:\fka) \oplus K]\\
=&(K:\fka)^n\oplus (K:\fka)^{n-1}K.
\end{align*}

(2): For an element $\alpha\in \rmQ(A)=\rmQ(R)\oplus \rmQ(R)$, we have the following. Write $\alpha=(a,x)$ where $a, x\in \rmQ(R)$.
\begin{align*}
\alpha\in A:L&\Leftrightarrow (a,x)L\subseteq A \Leftrightarrow a(K:\fka) \oplus [aK+x(K:\fka)]\subseteq R\ltimes \fka\\
&\Leftrightarrow a\in [R:(K:\fka)]  \cap (\fka:K) \quad \text{and} \quad x\in \fka:(K:\fka).
\end{align*}
By noting that $R:(K:\fka)=(K:K):(K:\fka)=[K:(K:\fka)]:K=\fka:K$, we have the assertion.
\end{proof}

By Lemma \ref{a4.14}, if we further assume that $\fka$ has the form of (\ref{ttttt4.22}), we have the following.

\begin{prop}\label{a4.15}
Suppose that $R$ is not a Gorenstein ring. If $\fka$ has the form of (\ref{ttttt4.22}), then the following conditions are equivalent:
\begin{enumerate}[{\rm (1)}]
\item $A$ is a $\GGL$ ring.
\item $R$ is a $\GGL$ ring and $S=T$.
\item $T/R$ is $R/\fka$-free.
\end{enumerate}
When this is the case, we have $\fka=\fkc$.
\end{prop}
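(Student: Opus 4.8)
The plan is to reduce the whole statement to Theorem~\ref{a4.7} applied to the one-dimensional Cohen--Macaulay local ring $A=R\ltimes\fka$, whose canonical fractional ideal is $L=(K:\fka)\oplus K$ with $A\subseteq L\subseteq\overline{A}$, so that Setup~\ref{setting1} is valid for $A$. First I would record two preliminary facts. Since $\fka=K:T$ and $T$ is a regular fractional ideal, reflexivity of fractional ideals with respect to the canonical ideal $K$ (i.e.\ $K:(K:J)=J$) gives $K:\fka=K:(K:T)=T$. This also shows $A$ is \emph{not} a Gorenstein ring: were it, then $\fka\cong\rmK_R$, and applying $\Hom_R(-,\rmK_R)$ to $\fka=K:T\cong\Hom_R(T,\rmK_R)$ would yield $T\cong R$ as $R$-modules; but $R\subseteq T$ together with $R$ not Gorenstein forces $K\subsetneq T$ (Fact~\ref{a4.4}), a contradiction. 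Hence Theorem~\ref{a4.7} is available for $A$.

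Next I would compute the two invariants of $A$ attached to Setup~\ref{setting1}. From Lemma~\ref{a4.14}(1), together with $K:\fka=T$, $T^2=T$ and $1\in K$ (so $TK=T$), one gets $L^2=T\oplus T=L^3=\cdots$, hence $A[L]=T\oplus T$, the idealization $T\ltimes T$. From Lemma~\ref{a4.14}(2), together with $\fka T=\fka$ and $TK=T$ (which give $\fka:K=\fka:(K:\fka)=\fka:T=\fka$), one gets $A:A[L]=A:L=\fka\oplus\fka=\fka\ltimes\fka$, and consequently $A/(A:A[L])\cong R/\fka$.

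The crux is the isomorphism of $R/\fka$-modules
\[
A[L]/A\;\cong\;T/R\,\oplus\,T/\fka,
\]
given by $(t,s)+A\mapsto(t+R,\,s+\fka)$, which is well defined and $A$-linear because $\fka T\subseteq\fka$, and has kernel exactly $A=R\oplus\fka$. Theorem~\ref{a4.7} (condition~(4)) applied to $A$ then says that $A$ is a $\GGL$ ring if and only if $A[L]/A$ is $A/(A:A[L])=R/\fka$-free, i.e.\ if and only if both $T/R$ and $T/\fka$ are $R/\fka$-free; since the exact sequence $0\to R/\fka\to T/\fka\to T/R\to 0$ shows $T/\fka$ is $R/\fka$-free as soon as $T/R$ is, this is equivalent to $T/R$ being $R/\fka$-free. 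This establishes $(1)\Leftrightarrow(3)$.

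For $(3)\Rightarrow(2)$: assuming $A$ is a $\GGL$ ring, Theorem~\ref{a4.7}(ii) for $A$ gives that $A/(A:A[L])\cong R/\fka$ is Gorenstein, and Lemma~\ref{tttp4.9} for $A$ identifies $A[L]/L\cong T/K$ with $\rmK_{R/\fka}\cong R/\fka$; thus $T/K$ is $R/\fka$-free of rank one. Feeding this into $0\to K/R\to T/R\to T/K\to 0$ and using that $T/R$ is $R/\fka$-free, the sequence splits, so $K/R$ is a direct summand of a free $R/\fka$-module, hence $R/\fka$-free. As $K/R\neq 0$ (Fact~\ref{a4.4}), its annihilator $R:K$ equals $\fka$; combined with $\fkc=R:S\subseteq R:K$ (from $K\subseteq S$) and $\fka=R:T\subseteq R:S=\fkc$ (from $S\subseteq T$) this gives $\fka=\fkc=R:K$, and then $\ell_R(T/S)=\ell_R(\fkc/\fka)=0$ by Remark~\ref{a4.3}, so $S=T$; finally $K/R$ being $R/\fkc$-free gives that $R$ is a $\GGL$ ring by Theorem~\ref{a4.7}(2). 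The converse $(2)\Rightarrow(3)$ is immediate: $S=T$ yields $\fka=R:T=R:S=\fkc$, and then $S/R=T/R$ is $R/\fkc=R/\fka$-free by Theorem~\ref{a4.7}(2); the closing assertion $\fka=\fkc$ is contained in $(2)$. I expect the main difficulty to be the idealization bookkeeping --- verifying the colon identities $K:\fka=T$ and $\fka:K=\fka:T=\fka$ under hypothesis~\eqref{ttttt4.22} and the $A$-linearity of the displayed quotient map --- after which everything is driven by Theorem~\ref{a4.7}, Lemma~\ref{a4.14}, and Lemma~\ref{tttp4.9}.
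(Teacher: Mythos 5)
Your proof is correct, and its skeleton is the same as the paper's: establish the colon identities $K:\fka=T$ and $\fka:K=\fka:(K:\fka)=\fka:T=\fka$, use Lemma \ref{a4.14} to compute $A[L]=T\oplus T$, $A:L=A:A[L]=\fka\oplus\fka$, $A/(A:L)\cong R/\fka$ and $A[L]/A\cong T/R\oplus T/\fka$, and then feed everything into Theorem \ref{a4.7} applied to $A$; your treatment of $(1)\Leftrightarrow(3)$ is exactly the paper's. You differ in two places. First, you explicitly check that $A$ is not Gorenstein (via Reiten's criterion and $K$-duality), a hypothesis of Theorem \ref{a4.7} that the paper's proof leaves implicit; this is a genuine, if small, improvement. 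Second, for the passage from $(1)$ to $(2)$ the paper uses the other decomposition $L/A\cong T/R\oplus K/\fka$ together with condition (2) of Theorem \ref{a4.7} for $A$ to conclude directly that $K/\fka$ (hence $K/R$) is $R/\fka$-free, whereas you obtain the freeness of $K/R$ by identifying $A[L]/L\cong T/K\cong\rmK_{R/\fka}\cong R/\fka$ via Lemma \ref{tttp4.9} and splitting $0\to K/R\to T/R\to T/K\to0$; from that point on the two arguments coincide ($R:K=\fka=\fkc$ by annihilators, $S=T$ by $K$-duality, and $R$ is $\GGL$ by Theorem \ref{a4.7}). The only blemish is cosmetic: in the step $(2)\Rightarrow(3)$ you invoke ``Theorem \ref{a4.7}(2)'' for the freeness of $S/R$, where condition (4) of that theorem is the one actually used.
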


\begin{proof}
Note that $\fka=K:T$ (see after (\ref{ttttt4.22})). Hence, we have 
\begin{align*}
K:\fka&=K:(K:T)=T,\\
\fka:K&=(K:T):K = K:KT = K:T=\fka,\\
\fka:(K:\fka)&=\fka:T=(K:T):T=K:TT=K:T=\fka.
\end{align*}
It follows that 
\begin{center}
$L^2=L^3$, \ $L/A\cong T/R\oplus K/\fka$, \  $A[L]/A\cong T/R\oplus T/\fka$, \ and \ $A/(A:L)\cong R/\fka$
\end{center}
 by Lemma \ref{a4.14}. Note that $A:L=A:A[L]$ by Lemma \ref{a4.6}.

(1) $\Leftrightarrow$ (3): By Theorem \ref{a4.7}, we have 
\begin{align*}
\text{$A$ is a $\GGL$ ring} &\Leftrightarrow \text{$A[L]/A$ is $A/(A : L)$-free} \\
 &\Leftrightarrow \text{$T/R \oplus T/\fka$ is $R/\fka$-free}\\
 &\Leftrightarrow \text{$T/R$ is $R/\fka$-free.}
\end{align*}

(1) $\Leftrightarrow$ (2): By Theorem \ref{a4.7}, we have 
\begin{align*}
\text{$A$ is a $\GGL$ ring} &\Leftrightarrow \text{$L/A$ is $A/(A : L)$-free}\\
&\Leftrightarrow  \text{$T/R$ and $K/\fka$ are $R/\fka$-free}\\
 &\Leftrightarrow \text{$T/R$ and $K/R$ are $R/\fka$-free}.
\end{align*}
Here, by considering the annihilators of $K/R$ and $R/\fka$, we obtain that $\fka=R:K$. It follows that $\fka=R:K\supseteq R:S\supseteq R:T=\fka$.
Thus, $K:S=R:S=R:T=K:T$ by Lemma \ref{useful}. By applying the $K$-dual, $T=S$. Therefore, 
\begin{align*}
\text{$T/R$ and $K/R$ are $R/\fka$-free} &\Leftrightarrow  \text{$T=S$, $T/R$ is $R/\fka$-free, and $R$ is a $\GGL$ ring}\\
&\Leftrightarrow \text{$T=S$ and $R$ is a $\GGL$ ring}
\end{align*}
by Theorem \ref{a4.7}.
\end{proof}

\begin{cor}\label{a4.16}
Suppose that $R$ is not a Gorenstein ring.
Then,  the following conditions are equivalent:
\begin{enumerate}[{\rm (1)}]
\item $R$ is a $\GGL$ ring and
\item $R\ltimes \fkc$ is a $\GGL$ ring.
\end{enumerate}
\end{cor}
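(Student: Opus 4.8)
The plan is to derive this immediately from Proposition \ref{a4.15} by specializing to the ideal $\fka=\fkc$. The first step is to verify that $\fkc$ is of the form \eqref{ttttt4.22}. By Lemma \ref{useful}(2) we have $\fkc=R:S$, and $S=R[K]$ is by construction a subring of $\rmQ(R)$ containing $K$ that is finitely generated as an $R$-module (indeed $S=K^{n}$ for all $n\gg 0$ by Setup \ref{setting1}). Hence \eqref{ttttt4.22} holds with $T=S$ and $\fka=\fkc$, so Proposition \ref{a4.15} is applicable.

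Next I would simply invoke Proposition \ref{a4.15} with this choice. It yields the equivalence: $R\ltimes\fkc$ is a $\GGL$ ring $\iff$ $R$ is a $\GGL$ ring and $S=T$. Since $T$ has been chosen to be $S$, the auxiliary condition $S=T$ is automatically satisfied, and the equivalence collapses to exactly the asserted statement $R\ltimes\fkc$ is a $\GGL$ ring $\iff$ $R$ is a $\GGL$ ring.

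I do not expect a genuine obstacle here: the single point requiring care is the identification $\fkc=R:S$ with $S$ an admissible $T$, which is exactly Lemma \ref{useful}, after which Proposition \ref{a4.15} does all the work. For completeness one may also note that, when these equivalent conditions hold, $\fkc=\tr_R(\rmK_R)$ by Theorem \ref{a4.7}(i), in agreement with the final assertion ($\fka=\fkc$) of Proposition \ref{a4.15}.
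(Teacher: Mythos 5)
Your proposal is correct and is exactly the argument the paper intends: Corollary \ref{a4.16} is stated without proof as an immediate consequence of Proposition \ref{a4.15} applied with $T=S$ and $\fka=\fkc=R:S$ (which is admissible of the form \eqref{ttttt4.22} since $S=R[K]=K^n$ for $n\gg0$ is a finitely generated $R$-subalgebra of $\rmQ(R)$ containing $K$), whereupon the side condition $S=T$ is vacuous. The only cosmetic slip is attributing $\fkc=R:S$ to Lemma \ref{useful}; it is the definition of $\fkc$ in Setup \ref{setting1}.
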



Combining Propositions \ref{a4.19} and \ref{a4.15}, we have the following.

\begin{cor}\label{a4.20}
Suppose that $R$ is not a Gorenstein ring. If there exists an Ulrich ideal $\fka$ of $R$ contained in $\fkc$, then $\fka=\fkc$. Hence, $R$ is a $\GGL$ ring and $S$ is Gorenstein.
\end{cor}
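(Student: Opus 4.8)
The plan is to route the given Ulrich ideal through the structure results already established in this subsection. First I would check that $\fka$ fits the standing form $(\ref{ttttt4.22})$: applying Lemma \ref{a4.17.5} to the Ulrich ideal $\fka\subseteq\fkc$, we may fix a minimal reduction $(\alpha)\subseteq\fka$ and set $T=\frac{\fka}{\alpha}$; then $T$ is a subring of $\rmQ(R)$ with $S\subseteq T$ (so in particular $K\subseteq T$), $T$ is module-finite over $R$, and $\fka=R:T=K:T$. Hence both Proposition \ref{a4.19} and Proposition \ref{a4.15} are available for this $\fka$.

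Next I would invoke Proposition \ref{a4.19}: since $R$ is not Gorenstein and $\fka$ is an Ulrich ideal, $T$ is a Gorenstein ring and $T/R$ is $R/\fka$-free. The latter is exactly condition (3) of Proposition \ref{a4.15}, so that proposition immediately yields that $R$ is a $\GGL$ ring, that $S=T$, and --- by the concluding clause of Proposition \ref{a4.15} --- that $\fka=\fkc$. Finally, $S=T$ together with the Gorensteinness of $T$ gives that $S$ is Gorenstein, which completes the argument.

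Since each step is a direct appeal to an already proved statement, there is no genuine obstacle; the only points requiring care are verifying that the hypotheses of Lemma \ref{a4.17.5} are in force so that $\fka$ really does have the form $(\ref{ttttt4.22})$, and keeping straight that it is the Ulrich hypothesis on $\fka$ itself (not merely on $\fkc$) that licenses Proposition \ref{a4.19}.
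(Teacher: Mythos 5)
Your proposal is correct and follows exactly the route of the paper's own proof: Lemma \ref{a4.17.5} to put $\fka$ in the form (\ref{ttttt4.22}), Proposition \ref{a4.19} to get that $T$ is Gorenstein and $T/R$ is $R/\fka$-free, and then Proposition \ref{a4.15} to conclude $\fka=\fkc$, $S=T$, and the $\GGL$ property. Your explicit remarks on why $K\subseteq T$ and on which hypothesis licenses each proposition are accurate and only make the paper's terse argument more transparent.
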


\begin{proof}
Since $\fka$ is an Ulrich ideal of $R$ contained in $\fkc$, $\fka$ have the form of (\ref{ttttt4.22}) by Lemma \ref{a4.17.5}. By Propositions \ref{a4.19}, we have $T$ is a Gorenstein ring and $T/R$ is $R/\fka$-free. On the other hand, by Proposition \ref{a4.15}, it follows that $\fka=\fkc$. 
\end{proof}

We cannot replace $\fkc$ with $\tr_R(\rmK_R)$ in the assertion of Corollary \ref{a4.20}. 

\begin{ex}\label{2gene}
Let $k$ be a field and $n\ge 3$. Let $k[[t]]$ be the formal power series ring over $k$. Set $R=k[[t^{2n}, t^{2n+1}, \dots, t^{3n}]]$ and $I=(t^{2n}, t^{3n})$.
Then, the following hold true.
\begin{enumerate}[\rm(1)] 
\item $\rmK_R\cong (t^{2n}, t^{2n+1}, \dots, t^{3n-2})$. Hence, a fractional canonical ideal $K$ is $\langle 1, t, \dots, t^{n-2}\rangle$ and $S=R[K]=k[[t]]$.
\item $\tr_R(\rmK_R)=(t^{2n}, t^{2n+1}, \dots, t^{3n})$. That is, $R$ is a $\NGL$ ring. (See after Corollary \ref{a7.1.1} to recall the definition of $\NGL$ rings.)
\item $\fkc=t^{4n}k[[t]]=(t^{4n}, t^{4n+1}, \dots, t^{5n-1})$.
\item $I$ is an Ulrich ideal of $R$ contained in $\tr_R(\rmK_R)$, but not contained in $\fkc$.
\end{enumerate}
\end{ex}


\section{Numerical semigroup rings}\label{sec5}
Numerical semigroup rings are one of the main classes of rings satisfying Setup \ref{setting1}. Thus, in this section, we examine the $\GGL$ property in numerical semigroup rings. We note that if $(R, \fkm)$ is a numerical semigroup ring, then the algebra $B = \m : \m$ is also a numerical semigroup ring. This follows that Theorem \ref{a4.24} provides the structure theorem for $\GGL$ numerical semigroup rings with minimal multiplicity.
We also have numerous examples of $\GGL$ rings via numerical semigroup rings.
First, we recall the notations for numerical semigroup rings. One can consult \cite{RG} for more details about numerical semigroup rings.

\begin{setting}\label{setting of numerical semigroup}
Let $0 < a_1, a_2, \ldots, a_\ell \in \Bbb Z~(\ell >0)$ be positive integers such that $\mathrm{GCD}~(a_1, a_2, \ldots, a_\ell)=1$. We call 
\[
H = \left<a_1, a_2, \ldots, a_\ell\right>=\left\{\sum_{i=1}^\ell c_ia_i \mid 0 \le c_i \in \Bbb Z~\text{for~all}~1 \le i \le \ell \right\}
\]
the {\it numerical semigroup generated by the numbers $\{a_i\}_{1 \le i \le \ell}$}. Let $V = k[[t]]$ be the formal power series ring over a field $k$. We set
$$R = k[[H]] = k[[t^{a_1}, t^{a_2}, \ldots, t^{a_\ell}]]$$
in $V$, which we call the {\it semigroup ring of $H$ over $k$}. The ring $R$ is a one-dimensional Cohen-Macaulay local domain with $\overline{R} = V$ and $\m = (t^{a_1},t^{a_2}, \ldots, t^{a_\ell} )$. 
Let 
$$
\rmc(H) = \min \{n \in \Bbb Z \mid m \in H~\text{for~all}~m \in \Bbb Z~\text{such~that~}m \ge n\}
$$
be the {\it conductor of $H$} and set $\rmf(H) = \rmc(H) -1$. Hence, $\rmf(H) = \max ~(\Bbb Z \setminus H)$, and we call $\rmf(H)$ the {\it Frobenius number of $H$}. Let 
\[
\mathrm{PF}(H) = \{n \in \Bbb Z \setminus H \mid n + a_i \in H~\text{for~all}~1 \le  i \le \ell\}
\]
denote the set of pseudo-Frobenius numbers of $H$. Therefore, $\rmf(H)$ equals the $\rma$-invariant of the graded $k$-algebra $k[t^{a_1}, t^{a_2}, \ldots, t^{a_\ell}]$, and $\sharp \mathrm{PF}(H) = \rmr(R)$  (\cite[Example (2.1.9) and Definition (3.1.4)]{GW}).  We set  $f = \rmf(H)$ and write $\mathrm{PF}(H)=\{c_1 < c_2 < \dots < c_r=f\}$, where $r=\rmr(R)$. Set 
\begin{align}\label{formK}
K = \sum_{c \in \mathrm{PF}(H)}Rt^{f-c}
\end{align}
in $V$. Then,  $K$ is a fractional ideal of $R$ such that $R \subseteq K \subseteq \overline{R}$ and 
$$
K \cong \rmK_R = \sum_{c \in \mathrm{PF}(H)}Rt^{-c}
$$
as an $R$-module (\cite[Example (2.1.9)]{GW}). 
\end{setting}

Hence, the numerical semigroup ring $R$ satisfies Setup \ref{setting1}. Set $S=R[K]$ and $\fkc=R:~S$.
Note that 
\[
\fkm:\fkm=R:\fkm=R+\sum_{c\in \mathrm{PF}(H)} R{\cdot}t^c
\] 
if $R\subsetneq V$.

\begin{prop}\label{a4.30}
Suppose that $R$ is not a Gorenstein ring. Set $r=\rmr(R)$.
The following are equivalent:
\begin{enumerate}[{\rm (1)}]
\item $R$ is a $\GGL$ ring.
\item $R/\fkc$ is a Gorenstein ring and let $t^b$ be the element in $[\fkc :_R\fkm] \setminus \fkc$. Then,  $f+b=c_i+c_{r-i}$ for all $1\le i \le r-1$.
\end{enumerate}
\end{prop}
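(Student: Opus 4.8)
The plan is to reduce the whole statement to Theorem~\ref{a4.7}, which (for $R$ not Gorenstein) characterizes the $\GGL$ property as the freeness of $K/R$ over $R/\fkc$, and then to translate that freeness into the stated numerical condition. Since $R=k[[H]]$, the fractional ideals $R,K,S,\fkc$ are all monomial in $V=k[[t]]$, so I would work throughout with exponent sets: write $v(I)$ for the set of $t$-exponents occurring in a monomial fractional ideal $I$, recall $\ell_R(I/J)=\sharp\big(v(I)\setminus v(J)\big)$ for $J\subseteq I$, that $v(K)=\{z\in\Bbb Z\mid f-z\notin H\}\supseteq H$, and that $\fkc S=\fkc$ yields $v(\fkc)+v(S)\subseteq v(\fkc)$ (hence also $v(\fkc)+v(K)\subseteq v(\fkc)$). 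By \eqref{formK}, $K=\sum_{i=1}^{r}Rt^{f-c_i}$ with $t^{f-c_r}=1\in R$, so $K/R$ is an $R/\fkc$-module generated by the $r-1$ classes $g_i:=\overline{t^{\,f-c_i}}$, $1\le i\le r-1$ (each killed by $\fkc$ since $\fkc t^{f-c_i}\subseteq \fkc K=\fkc\subseteq R$).

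For (1)$\Rightarrow$(2): if $R$ is $\GGL$, Theorem~\ref{a4.7}(ii) gives that $R/\fkc$ is Gorenstein, so $[\fkc:_R\fkm]/\fkc$ is $1$-dimensional; let $t^{b}$ be the monomial generating it, i.e.\ $\overline{t^{b}}$ generates $\soc(R/\fkc)$. By Theorem~\ref{a4.7} and Proposition~\ref{a4.1}, $K/R\cong(R/\fkc)^{\oplus(r-1)}$, hence $g_1,\dots,g_{r-1}$ form a free basis and $\operatorname{ann}_R(g_i)=\fkc$ for all $i$. Since $t^{b}\notin\fkc$ it does not kill $g_i$, so $b+(f-c_i)\notin H$; on the other hand $t^{b}\fkm\subseteq\fkc$ forces $b+a_j\in v(\fkc)$ for each generator $a_j$ of $\fkm$, and then $\big(b+(f-c_i)\big)+a_j=(b+a_j)+(f-c_i)\in v(\fkc)\subseteq H$. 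Thus $b+(f-c_i)\in\mathrm{PF}(H)$ for $1\le i\le r-1$; these numbers are strictly decreasing in $i$ and none equals $f$ (as $b\in H$ but $c_i\notin H$), so they exhaust $\mathrm{PF}(H)\setminus\{f\}=\{c_1,\dots,c_{r-1}\}$. Comparing the two orderings gives $b+(f-c_i)=c_{r-i}$, i.e.\ $f+b=c_i+c_{r-i}$ for all $1\le i\le r-1$.

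For (2)$\Rightarrow$(1): assuming $R/\fkc$ Gorenstein with socle generator $\overline{t^{b}}$ and $f+b=c_i+c_{r-i}$, the relation reads $b+(f-c_i)=c_{r-i}\notin H$; since $b\in H$ this forces $f-c_i\notin H$, hence $t^{\,f-c_i}\notin R$, and applying it with $r-i$ in place of $i$ gives $f-c_{r-i}\notin H$, so $t^{c_{r-i}}\in K\setminus R$ (using $v(K)=\{z\mid f-z\notin H\}$). Thus $g_1,\dots,g_{r-1}$ are nonzero and generate $K/R$, giving a surjection $\phi\colon(R/\fkc)^{\oplus(r-1)}\twoheadrightarrow K/R$, $e_i\mapsto g_i$. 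I would then restrict $\phi$ to socles: $\soc\big((R/\fkc)^{\oplus(r-1)}\big)$ has $k$-basis $\{t^{b}e_i\}_{i=1}^{r-1}$ (because $\soc(R/\fkc)=k\overline{t^{b}}$), and $\phi(t^{b}e_i)=t^{b}g_i=\overline{t^{\,b+(f-c_i)}}=\overline{t^{c_{r-i}}}$. As $i$ runs over $1,\dots,r-1$ these are pairwise distinct nonzero monomials of $K/R$, hence linearly independent over $k$; so $\phi$ is injective on the socle, whence $\Ker\phi$ meets $\soc\big((R/\fkc)^{\oplus(r-1)}\big)$ only in $0$. Since a nonzero submodule of a finitely generated module over the Artinian local ring $R/\fkc$ has nonzero socle, $\Ker\phi=0$, so $K/R\cong(R/\fkc)^{\oplus(r-1)}$ is $R/\fkc$-free and $R$ is $\GGL$ by Theorem~\ref{a4.7}.

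I expect the only real content to be the socle computation in (2)$\Rightarrow$(1): once Theorem~\ref{a4.7} is invoked, the condition $f+b=c_i+c_{r-i}$ is precisely what makes the tautological surjection $(R/\fkc)^{\oplus(r-1)}\to K/R$ an isomorphism on socles — equivalently, what identifies $t^{b}g_i$ with the socle basis element $\overline{t^{c_{r-i}}}$ of $K/R$. The supporting facts ($\fkc S=\fkc$, $v(K)=\{z\mid f-z\notin H\}$, and $f-c_i\notin H$ for $i<r$) are routine bookkeeping with monomial ideals.
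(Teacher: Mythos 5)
Your proposal is correct and follows essentially the same route as the paper: (1)$\Rightarrow$(2) by showing each $f-c_i+b$ is a pseudo-Frobenius number and matching the resulting set with $\{c_1,\dots,c_{r-1}\}$, and (2)$\Rightarrow$(1) by testing the tautological surjection $(R/\fkc)^{\oplus(r-1)}\to K/R$ for injectivity on the socle. Your explicit observation that the socle images $\overline{t^{c_{r-i}}}$ are distinct nonzero monomials, hence $k$-linearly independent, just fills in a step the paper leaves implicit.
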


\begin{proof}
(1) $\Rightarrow$ (2): By Theorem \ref{a4.7}, $R/\fkc$ is a Gorenstein ring and the $R$-linear map $(R/\fkc)^{\oplus (r-1)}\to K/R$, where $\mathbf{e}_i \mapsto \overline{t^{f-c_i}}$, is an isomorphism. Here, $\mathbf{e}_1, \mathbf{e}_2, \ldots, \mathbf{e}_{r-1}$ denotes a free basis of $(R/\fkc)^{\oplus (r-1)}$. Let $t^b$ be the element in $[\fkc :_R\fkm] \setminus \fkc$. Then,  $t^{f-c_i+b} \not\in R$ and $\fkm \cdot t^{f-c_i+b} \subseteq R$. Hence, $f-c_i+b \in \mathrm{PF}(H)$ for all $1\le i \le r-1$. Thus, we obtain $\mathrm{PF}(H)= \{ f-c_{r-1}+b, f-c_{r-2}+b, \dots, f-c_1+b, f\}$, and $f-c_{r-i}+b=c_i$ for all $1\le i \le r-1$.

(2) $\Rightarrow$ (1): Consider the surjection $\varphi:Y=(R/\fkc)^{\oplus (r-1)}\to K/R$, where $\mathbf{e}_i \mapsto \overline{t^{f-c_i}}$ for all $1\le i \le r-1$. Then,  
\begin{center}
$R$ is a $\GGL$ ring $\Leftrightarrow$ $\varphi$ is injective $\Leftrightarrow$ $(0):_{\Ker {\varphi}} \fkm=(0)$ $\Leftrightarrow$ $\varphi \mid_{\Soc Y}$ is injective,
\end{center}
where $\Soc Y$ denotes the socle $(0):_Y\fkm$ of $Y$ and  $\varphi \mid_{\Soc Y}$ is the restriction $\Soc Y \to Y \xrightarrow{\varphi} K/R$.
Note that $\Soc Y= ([\fkc:_R\fkm]/\fkc)^{\oplus (r-1)}=\langle t^b \mathbf{e}_1,t^b \mathbf{e}_2, \dots, t^b \mathbf{e}_r \rangle$ by the condition of (2). Hence, if $\varphi \mid_{\Soc Y}$ is not injective, then we have $t^b \mathbf{e}_i \mapsto 0$ for some $1\le i \le r-1$.
This follows that $c_{r-i}=f-c_i+b \in H$, which is a contradiction.
\end{proof}

\begin{cor}\label{a4.31}
Suppose that $R$ is not a Gorenstein ring. Let $n_1, n_2, \dots , n_{\ell}$ be integers and set $J=(t^{(n_1+1)a_1}, t^{(n_2+1)a_2}, \dots , t^{(n_{\ell}+1)a_{\ell}})$. Suppose that 
\begin{enumerate}[{\rm (1)}]
\item $J\subseteq \fkc$ and
\item $f+b=c_i+c_{r-i}$ for all $1\le i \le r-1$, where $b=\sum_{j=1}^{\ell} n_j a_j$.
\end{enumerate}
Then,  $R$ is a $\GGL$ ring, $J=\fkc$, and $\fkc:_R\fkm=\fkc+(t^b)$.
\end{cor}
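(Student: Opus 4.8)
The plan is to deduce this from Proposition \ref{a4.30} by verifying that the two hypotheses (1) and (2) of Corollary \ref{a4.31} force the two conditions of Proposition \ref{a4.30}(2), and simultaneously to pin down that $J=\fkc$ and $\fkc:_R\fkm=\fkc+(t^b)$. First I would observe that $J=(t^{(n_1+1)a_1},\dots,t^{(n_\ell+1)a_\ell})$ is an $\fkm$-primary ideal of $R$ with $\fkm J=(t^{(n_j+1)a_j}{\cdot}t^{a_i}\mid 1\le i,j\le\ell)$, so that modulo $\fkm J$ the element $t^b=t^{\sum_j n_ja_j}$ satisfies $t^{a_i}{\cdot}t^b=t^{(n_i+1)a_i}{\cdot}t^{\sum_{j\ne i}n_ja_j}\in \fkm J\subseteq \fkm\fkc$ for each $i$; hence $t^b\in J:_R\fkm$. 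Combined with hypothesis (1), $J\subseteq\fkc$, this gives $t^b\in \fkc:_R\fkm$, and the point will be to check $t^b\notin\fkc$ (equivalently $b\notin H+\text{the semigroup of }\fkc$, which one sees from $b=f-c_i+c_{r-i}$ with $f\notin H$ via condition (2) applied with, say, $i$ chosen so that $c_{r-i}=c_r=f$, forcing $b\ge f$ in the relevant sense — more directly, if $t^b\in\fkc$ then $t^b{\cdot}t^{f-c_i}\in\fkc K=\fkc\subseteq R$ would give $f-c_i+b\in H$, contradicting $f-c_i+b=c_{r-i}\in \mathrm{PF}(H)$). So $t^b$ is precisely an element of $[\fkc:_R\fkm]\setminus\fkc$.

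Next I would establish that $R/\fkc$ is Gorenstein and that $\fkc:_R\fkm=\fkc+(t^b)$, i.e.\ that the socle of $R/\fkc$ is one-dimensional spanned by $t^b$. Since $R/\fkc$ is a one-dimensional-socle candidate, it suffices to show $\ell_R((\fkc:_R\fkm)/\fkc)=1$; the containment $\supseteq$ is the previous paragraph, and for $\subseteq$ I would use condition (2) to identify $\mathrm{PF}(H)$ explicitly. Indeed, for any $t^c\in(\fkc:_R\fkm)\setminus\fkc$ one has, for each $1\le i\le r-1$, that $t^{f-c_i}{\cdot}t^c\in(\fkc:_R\fkm){\cdot}K\subseteq\fkc:_R\fkm$ is not in $\fkc$ unless it lands below... — cleaner: each such $c$ with $t^c\in[\fkc:_R\fkm]\setminus\fkc$ satisfies $c+a_j\in H$ for all $j$ (since $\fkc$ is an ideal of $S=R[K]\supseteq R$ and $t^c\fkm\subseteq\fkc\subseteq R$), so $c\in\mathrm{PF}(H)$, and then $f-c_i+c\in\mathrm{PF}(H)$ for the same reason; running $i$ over $1,\dots,r-1$ together with condition (2), which says $\mathrm{PF}(H)=\{f-c_{r-1}+b,\dots,f-c_1+b,f\}$, forces $c=b$. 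This gives $(\fkc:_R\fkm)=\fkc+(t^b)$ and then $\ell_R((\fkc:_R\fkm)/\fkc)=1$, so $R/\fkc$ is Gorenstein. Condition (2) of Corollary \ref{a4.31} is now literally condition (2) of Proposition \ref{a4.30} for this $t^b$, so Proposition \ref{a4.30} yields that $R$ is a $\GGL$ ring.

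Finally, for $J=\fkc$: we already have $J\subseteq\fkc$, and since $R$ is $\GGL$, Theorem \ref{a4.7}(i) gives $\fkc=\tr_R(\rmK_R)=R:K=(R:K)K$. To get the reverse inclusion $\fkc\subseteq J$ I would compare colengths. On one hand $\ell_R(R/\fkc)$ is determined by $\mathrm{PF}(H)$ and the structure from Proposition \ref{a4.30} (or: $R/\fkc$ Gorenstein with the $\GGL$ relation $\ell_R(S/R)=r{\cdot}\ell_R(R/\fkc)$). On the other hand $\ell_R(R/J)$ is a straightforward count since $J$ is generated by a "pure-power" regular system: $R/J$ has a $k$-basis indexed by the finitely many $h\in H$ not in the semigroup-ideal generated by $\{(n_j+1)a_j\}$. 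The cleanest route, I expect, is instead to note that $t^b\in J:_R\fkm$ (shown above) and $t^b\notin J$ (if $t^b\in J$ then $t^b=\sum c_j t^{(n_j+1)a_j}$ with $c_j\in R$, forcing $b\in (n_i+1)a_i+H$ for some $i$, i.e.\ $b-(n_i+1)a_i=\sum_{j\ne i}n_ja_j - a_i\in H$... which one rules out, e.g.\ via $b-(n_i+1)a_i<0$ when the $a_j$ are the minimal generators, or more robustly because it would make $f-c_i+b-a_i\cdot(\text{stuff})$... ) — so $J\subsetneq J:_R\fkm\subseteq\fkc:_R\fkm=\fkc+(t^b)$, whence $\fkc\subseteq J+(t^b)$; feeding $t^b\in J:_R\fkm$ back and using that $\fkc$ is stable under multiplication by $K$ and that $J+\fkm\fkc=\fkc$ would let Nakayama close the gap. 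The main obstacle I anticipate is exactly this last colength bookkeeping — making the inclusion $\fkc\subseteq J$ rigorous without an explicit computation of $\fkc$. I would resolve it by the observation that $J$ is already an ideal of $S=R[K]$: since $t^{(n_j+1)a_j}{\cdot}t^{f-c_i}=t^{(n_j+1)a_j+f-c_i}$ and $(n_j+1)a_j+f-c_i\ge c(H)$ can be arranged from $b=\sum n_ja_j$ together with condition (2)... — concretely, $J{\cdot}K\subseteq J$ because $J\subseteq\fkc$ and $\fkc K=\fkc$, giving $JS=J$; hence $J\subseteq \fkc=R:S$ is an $S$-ideal with $J{\cdot}\fkm^{S}\subseteq$ ... and comparing $S/J$ which is Gorenstein Artinian of the right length forces $J=\fkc$. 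This finishes the proof.
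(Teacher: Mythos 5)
Your overall plan (reduce to Proposition \ref{a4.30} by showing that $R/\fkc$ is Gorenstein with socle generated by $t^b$) is the right one, and your first paragraph is essentially correct: $t^b\in J:_R\fkm$ because $t^{a_i}t^b=t^{(n_i+1)a_i}\cdot t^{\sum_{j\ne i}n_ja_j}\in J$, and $t^b\notin\fkc$ because $t^{f-c_i}\in K$, so $\fkc\, t^{f-c_i}\subseteq \fkc K=\fkc\subseteq R$ while $b+f-c_i=c_{r-i}\notin H$. But both remaining steps break down. In your second paragraph you claim that any $t^c\in[\fkc:_R\fkm]\setminus\fkc$ has $c\in\mathrm{PF}(H)$; this is false, since $t^c\in\fkc:_R\fkm\subseteq R$ forces $c\in H$, whereas pseudo-Frobenius numbers lie outside $H$ by definition. (It is the socle of $R$ modulo $\fkm$-type data, not of $R/\fkc$, that $\mathrm{PF}(H)$ controls.) With that, your identification of the socle of $R/\fkc$ collapses, and so does the Gorensteinness of $R/\fkc$ and the applicability of Proposition \ref{a4.30}. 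In the third paragraph the equality $J=\fkc$ is never actually established: $JK\subseteq J$ does not follow from $J\subseteq\fkc$ and $\fkc K=\fkc$ (these only give $JK\subseteq\fkc$), and the length comparison is left as an acknowledged obstacle.

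The missing idea is to run the socle computation on $J$ rather than on $\fkc$, exploiting that $J$ is generated by the pure powers $t^{(n_j+1)a_j}$. If $t^h\in[J:_R\fkm]\setminus J$ and $h=\sum_j m_ja_j$ with $m_j\ge 0$, then $t^h\notin J$ forces $m_j\le n_j$ for every $j$, whence $b-h\in H$ and $t^b=t^h\cdot t^{b-h}$; if $h\ne b$ this puts $t^b\in (J:_R\fkm)\fkm\subseteq J\subseteq\fkc$, contradicting $t^b\notin\fkc$. Hence the socle of $R/J$ is one-dimensional, spanned by $t^b$, so $R/J$ is Gorenstein and $J:_R\fkm=J+(t^b)$. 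Now if $J\subsetneq\fkc$, the nonzero ideal $\fkc/J$ of the Artinian Gorenstein ring $R/J$ would contain its socle, giving $t^b\in\fkc$, again a contradiction; therefore $J=\fkc$, $\fkc:_R\fkm=\fkc+(t^b)$, and Proposition \ref{a4.30} applies to show that $R$ is a $\GGL$ ring. This is exactly the route the paper takes.
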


\begin{proof}
Since $b+f-c_i=c_{r-i}$ for all $1\le i \le r-1$, we have $t^b\not\in R:t^{f-c_i}$. By noting that $t^{f-c_i}\in K\subseteq S$ (see \eqref{formK}), $\fkc\subseteq R:t^{f-c_i}$. In particular, $t^b\not\in J$ by the hypothesis (1). Meanwhile, $t^b\in J:_R\fkm$ by the forms of $b$ and $J$. We show that $R/J$ is a Gorenstein ring. Indeed, let $t^h\in [J:_R\fkm] \setminus J$ and write $h=m_1 a_1+m_2 a_2+\dots+m_{\ell} a_{\ell}$ for some non-negative integers $m_1, m_2, \dots, m_{\ell}$. 
Since $t^h\not\in J$,  $m_j\le n_j$ for all $1\le j\le\ell$. Hence, $t^b=t^h\cdot t^{b-h}$ and $b-h\in H$. 
It follows that $h=b$ because both $t^b$ and $t^h$ are in $[J:_R \fkm]\setminus J$. Hence, $R/J$ is a Gorenstein ring. By Proposition \ref{a4.30}, we have only to show that $J=\fkc$. Indeed, if $J\subsetneq \fkc$, then $t^b\in J:_R\fkm \subseteq \fkc$. This is a contradiction because $t^b\not\in R:t^{f-c_i}\supseteq  \fkc$. Thus, $J=\fkc$ and $R$ is a $\GGL$ ring. 
\end{proof}

\begin{cor}\label{a4.32}
Assume $e\le h_1\le h_2\le \cdots\le h_{e-1}$ and let $H=\left<e, h_1, h_2, \ldots, h_{e-1}\right>$. Set $R=k[[H]]$.
Suppose that $e=\rme(R)=\rmv(R)\ge3$.
Then,  the following conditions are equivalent:
\begin{enumerate}[{\rm (1)}]
\item $R$ is a $\GGL$ ring.
\item There exists an integer $n_0\ge 0$ such that $\fkc=(t^{(n_0+1)e}, t^{h_1}, t^{h_2}, \dots , t^{h_{e-1}})$ and \\
$h_{e-1}+(n_0+1)e=h_i+h_{e-1-i}$ for all $1\le i\le e-2$.
\end{enumerate}
When this is the case, $n_0=\ell_R(R/\fkc)-1$.
\end{cor}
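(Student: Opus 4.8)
The plan is to extract both implications from the combinatorics of the Apéry set $\mathrm{Ap}(H,e)$, using Corollary \ref{a4.31} for $(2)\Rightarrow(1)$ and Theorem \ref{a7.12} together with Proposition \ref{a4.30} for $(1)\Rightarrow(2)$. First I would record what $\rme(R)=\rmv(R)=e$ forces. The monomials $t^e,t^{h_1},\dots,t^{h_{e-1}}$ then minimally generate $\fkm$, so the $h_i$ are pairwise distinct and none is a multiple of $e$; thus $\mathrm{Ap}(H,e)=\{0,h_1,\dots,h_{e-1}\}$ and $e<h_1<\dots<h_{e-1}$. For distinct $i,j$ one has $h_j-h_i\notin H$ (else $t^{h_j}\in\fkm^2$), so each $h_i$ is maximal in $\mathrm{Ap}(H,e)$ with respect to $\le_H$; hence $\mathrm{PF}(H)=\{h_1-e<\dots<h_{e-1}-e\}$ and, in the notation of Setup \ref{setting of numerical semigroup}, $r:=\rmr(R)=e-1$, $c_i=h_i-e$ for $1\le i\le e-1$, and $f=c_{e-1}=h_{e-1}-e$. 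A parallel observation — writing $h_i+h_j=w(h_i+h_j)+me$ with $w(h_i+h_j)\in\mathrm{Ap}(H,e)$, the case $m=0$ being impossible because a minimal generator is not a sum of two positive elements of $H$ — gives $h_i+h_j-e\in H$ for all $i,j$, hence $\fkm^2=t^e\fkm$; in particular $R$ is not Gorenstein (indeed $r=e-1\ge2$).

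For $(2)\Rightarrow(1)$ I would apply Corollary \ref{a4.31} with $\ell=e$, $a_1=e$, $a_{j+1}=h_j$ for $1\le j\le e-1$, and $(n_1,n_2,\dots,n_e)=(n_0,0,\dots,0)$. Then $J=(t^{(n_0+1)e},t^{h_1},\dots,t^{h_{e-1}})=\fkc$ and $b:=\sum_{j}n_ja_j=n_0e$, so hypothesis (1) of Corollary \ref{a4.31} holds with equality, and hypothesis (2), namely $f+b=c_i+c_{r-i}$ for $1\le i\le r-1$, becomes after the substitutions above exactly the relation $h_{e-1}+(n_0+1)e=h_i+h_{e-1-i}$ assumed in (2). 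Hence Corollary \ref{a4.31} yields that $R$ is a $\GGL$ ring and that $\fkc:_R\fkm=\fkc+(t^{n_0e})$.

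For $(1)\Rightarrow(2)$, assume $R$ is a $\GGL$ ring; then $\fkc=\tr_R(\rmK_R)$ and $R/\fkc$ is Gorenstein by Theorem \ref{a4.7}. Since $\fkm^2=t^e\fkm$ and $R$ is non-Gorenstein, Theorem \ref{a7.12} (with $\alpha=t^e$, $v=e$) supplies $x_2,\dots,x_e\in\fkm$ with $\fkm=(t^e,x_2,\dots,x_e)$ and $\fkc=(t^{ne},x_2,\dots,x_e)$, where $n=\ell_R(R/\fkc)$; moreover $\fkc=R:S$ is a monomial ideal because $S=R[K]$ is generated over $R$ by monomials of $V$. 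The distinct monomials $\overline{t^e},\overline{t^{h_1}},\dots,\overline{t^{h_{e-1}}}$ form a $k$-basis of $\fkm/\fkm^2$ (none lies in the monomial ideal $\fkm^2$, and $\dim_k\fkm/\fkm^2=e$), so writing each $\overline{x_i}$ in this basis, the $(e-1)\times(e-1)$ matrix of the coefficients of $\overline{t^{h_1}},\dots,\overline{t^{h_{e-1}}}$ in $\overline{x_2},\dots,\overline{x_e}$ is invertible; in particular every $t^{h_j}$ occurs with a nonzero coefficient in some $x_i$, whence $t^{h_j}\in\fkc$ since $\fkc$ is monomial. Therefore $I:=(t^{ne},t^{h_1},\dots,t^{h_{e-1}})\subseteq\fkc$. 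To get equality I would count exponents: if $t^d\notin I$ then $d\notin\bigcup_j(h_j+H)$, which by the shape of $\mathrm{Ap}(H,e)$ forces $d\in\langle e\rangle$, and $d\notin ne+H$ then forces $d\in\{0,e,\dots,(n-1)e\}$; hence $\ell_R(R/I)\le n=\ell_R(R/\fkc)$, so $I=\fkc$ and $\{d\in H:t^d\notin\fkc\}=\{0,e,\dots,(n-1)e\}$. Consequently $\mathrm{Soc}(R/\fkc)=k\,\overline{t^{(n-1)e}}$, so the element $t^b$ of $[\fkc:_R\fkm]\setminus\fkc$ appearing in Proposition \ref{a4.30} is $t^{(n-1)e}$; feeding $b=(n-1)e$, $c_i=h_i-e$, $f=h_{e-1}-e$ into the equality $f+b=c_i+c_{r-i}$ there gives $h_{e-1}+ne=h_i+h_{e-1-i}$ for $1\le i\le e-2$, i.e. (2) with $n_0=n-1$. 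Finally $n_0=\ell_R(R/\fkc)-1$ holds in $(1)\Rightarrow(2)$ by construction, and in $(2)\Rightarrow(1)$ by comparing the two descriptions of $\fkc$ just obtained (using $t^{n_0e}\notin\fkc$ from Corollary \ref{a4.31} and that $t^{(n_0+1)e}\in\fkc$).

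I expect the main obstacle to be the middle part of $(1)\Rightarrow(2)$: passing from the a priori non-monomial generators furnished by Theorem \ref{a7.12} to the monomials $t^{h_j}$, and then identifying $\fkc$ precisely. The leverage there is that $\fkc=R:S$ is automatically a monomial ideal, together with the fact that $\fkm^2=t^e\fkm$ confines the exponents of monomials of $H$ lying outside $\bigcup_j(h_j+H)$ to $\langle e\rangle$, which is exactly what makes the length bound $\ell_R(R/I)\le n$ available.
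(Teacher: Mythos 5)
Your proposal is correct, and its overall skeleton coincides with the paper's: both directions reduce to the identification $c_i=h_i-e$, $f=h_{e-1}-e$, $r=e-1$ coming from minimal multiplicity, $(2)\Rightarrow(1)$ is exactly the same application of Corollary \ref{a4.31}, and $(1)\Rightarrow(2)$ ends in both cases by pinning down $\fkc=(t^{(n_0+1)e},t^{h_1},\dots,t^{h_{e-1}})$ and feeding $b=n_0e$ into Proposition \ref{a4.30}. The one genuinely different step is how you obtain the key containment $(t^{h_1},\dots,t^{h_{e-1}})\subseteq\fkc$. The paper does this directly from the $R/\fkc$-freeness of $K/R$: since $\{\ol{t^{f-c_i}}\}_{1\le i\le e-2}$ is a free basis, the annihilator of each basis element is $\fkc$, so $\fkc=R:_Rt^{f-c_i}$, and then $t^{h_i}\cdot t^{f-c_i}=t^{f+e}\in R$ (plus one extra check for $t^{h_{e-1}}$) gives the inclusion in two lines. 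You instead invoke Theorem \ref{a7.12} to get a priori non-monomial generators $x_2,\dots,x_e$ of $\fkc$ beyond $t^{ne}$, then use that $\fkc=R:S$ is a monomial ideal together with the invertibility of the change-of-basis matrix in $\fkm/\fkm^2$ to extract the monomials $t^{h_j}$ from the $x_i$; this is valid (a monomial ideal contains every monomial occurring in any of its elements, and $t^{h_j}\notin\fkm^2$ so its coefficient survives reduction mod $\fkm^2$), but it routes through the heavier, inductively proved Theorem \ref{a7.12}, whereas the paper's annihilator computation is self-contained at this point. Your explicit Ap\'ery-set length count identifying $R/(t^{h_1},\dots,t^{h_{e-1}})$ with a power series ring in $t^e$, and the socle computation giving $b=(n-1)e$, are the same bookkeeping the paper performs implicitly, and your closing argument that $n_0=\ell_R(R/\fkc)-1$ in direction $(2)\Rightarrow(1)$ via $t^{n_0e}\notin\fkc$ from Corollary \ref{a4.31} is a point the paper leaves to the reader.
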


\begin{proof}
Note that $\mathrm{PF}(H)=\{h_1-e, \dots, h_{e-1}-e\}$, that is, $c_i=h_i-e$ for all $1\le i\le e-1$ because $R$ has minimal multiplicity. 

(2) $\Rightarrow$ (1):  The assertion follows from Corollary \ref{a4.31} and the fact that $c_i=h_i-e$ for all $1\le i\le e-1$.

(1) $\Rightarrow$ (2): Since $K/R= \langle \ol{t^{f-c_1}}, \ol{t^{f-c_2}}, \dots, \ol{t^{f-c_{e-2}}} \rangle$ is $R/\fkc$-free, by considering the annihilators of the free basis $\ol{t^{f-c_i}}$ of $K/R$, we have $\fkc=R:t^{f-c_i}$ for all $1\le i \le e-2$. It follows that $t^{h_i}\in \fkc$ for all $1\le i \le e-2$ since $t^{h_i}=t^{c_i+e}\in R:t^{f-c_i}$.
Furthermore, because $f-c_1+h_{e-1}=f-(h_1-e)+h_{e-1}=f+e+(h_{e-1}-h_1)>f$, we have $t^{h_{e-1}}\in R:t^{f-c_1}=\fkc$. 
Hence, $(t^{h_1}, t^{h_2}, \dots , t^{h_{e-1}})\subseteq \fkc$. It follows that $\fkc=(t^{(n_0+1)e}, t^{h_1}, t^{h_2}, \dots , t^{h_{e-1}})$, where $n_0=\ell_R (R/\fkc)-1$.
Therefore, by applying Proposition \ref{a4.30} as $b=n_0 e$, we have $f+n_0 e=c_i+c_{e-1-i}$ for all $1 \le i \le e-2$. Since $c_i=h_i-e$ for all $1\le i\le e-1$, we have the assertion.
\end{proof}

Let $0 < e \in H$ and set $\alpha_i = \min \{h \in H \mid h \equiv i ~\mod ~e\}$ for each $0 \le i \le e-1$. Then,  the set 
\[
Ap_e(H) = \{\alpha_i \mid 0 \le i \le e-1\} = \{h \in H \mid h - e \not\in H\}
\]  
is called the {\it Ap\'ery set} of $H$ mod $e$ (\cite[before Lemma 2.4]{RG}). With the notation, the following result is known.

\begin{fact}{\rm (\cite[Theorem 6.9]{CGKM})}
Let $H$ be a numerical semigroup and assume that $H$ is symmetric, that is, $R=k[[H]]$ is a Gorenstein ring. Choose an element $0<e\in H$ and consider 
$Ap_e(H)=\{0< h_1< h_2< \cdots < h_{e-1} \}.$
Set 
\begin{center}
$H_n=\left<e, h_1+ne, h_2+ne, \ldots, h_{e-1}+ne\right>$ and $R_n=k[[H_n]]$
\end{center}
for all $n>0$. Let $K_n$ denote a fractional canonical ideal of $R_n$ such that $R_n \subseteq K_n \subseteq \ol{R_n}$.
Then,  we have the following:
\begin{enumerate}[{\rm (1)}]
\item $R_n$ is a $\GGL$ ring,
\item $\rmv(R_n)=\rme(R_n)$, and
\item $\ell_{R_n} (R_n/\fkc_n)=n,$
\end{enumerate}
where $\fkc_n=R_n:R_n[K_n]$.
\end{fact}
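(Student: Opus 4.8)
The plan is to reduce all three assertions to the combinatorics of the Ap\'ery sets of the semigroups $H_n$, and then to run an induction on $n$ along the tower
\[
R_n \subseteq R_{n-1} \subseteq \dots \subseteq R_1 \subseteq R_0 = R
\]
in which each step will be identified (in Step 1) with an endomorphism algebra, $R_{k-1} = \fkm_k : \fkm_k$; the inductive step uses Theorem \ref{a4.24}, and the base case uses \cite[Theorem~5.1]{GMP}. I assume $e \ge 3$; when $e \le 2$ every $R_n$ is Gorenstein (or regular) and the statement degenerates. Write $H_0 = H$, $R_0 = R$, let $\fkm_n$ be the maximal ideal of the numerical semigroup ring $R_n = k[[H_n]]$ (which automatically satisfies Setup \ref{setting1}), and set $\fkc_n = R_n : R_n[K_n]$, independent of the choice of $K_n$ by Remark \ref{a4.3}(2)(i).

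\textbf{Step 1: the Ap\'ery data of $H_n$.} The crux is the identity
\[
Ap_e(H_n) = \{0\}\cup\{\, h_i + ne \mid 1 \le i \le e-1 \,\}\qquad (n \ge 0).
\]
Since all generators of $H_n$ lie in $H$ we have $H_n \subseteq H$; the inclusion $\supseteq$ is trivial, and for $\subseteq$ one checks $h_i + ne - e = h_i + (n-1)e \notin H_n$: writing a hypothetical expression $h_i + (n-1)e = ae + \sum_k c_k(h_k + ne)$ and reducing modulo $e$, the fact that $h_1,\dots,h_{e-1}$ occupy pairwise distinct nonzero residues forces $\sum_k c_k \ge 2$, and then $h_i - e$ becomes a nonnegative integral combination of $e$ and the $h_k$'s, contradicting $h_i \in Ap_e(H)$. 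From this one reads off, for every $n \ge 0$: $\rme(R_n) = e$; the $e$ generators $e, h_1+ne, \dots, h_{e-1}+ne$ of $H_n$ are minimal (the same ``$h_i - e \in H$'' trick kills redundancy), so $\rmv(R_n) = \rme(R_n) = e$ — which is assertion (2) — hence $R_n$ has minimal multiplicity, $\fkm_n^2 = (t^e)\fkm_n$, and $\rmr(R_n) = e - 1 \ge 2$ by Fact \ref{a4.23}, so $R_n$ is not Gorenstein. Moreover the nonzero elements of $Ap_e(H_n)$ are pairwise incomparable in the Ap\'ery partial order (if $h_j - h_i \in H_n$ then, by the same computation, $h_j - e \in H$, a contradiction), whence the standard description of pseudo-Frobenius numbers (see \cite{RG}) gives $\mathrm{PF}(H_n) = \{\, h_i + (n-1)e \mid 1 \le i \le e-1 \,\}$. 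Feeding this into $\fkm_n : \fkm_n = R_n + \sum_{c\in\mathrm{PF}(H_n)} R_n t^{c}$ (valid as $R_n \subsetneq \overline{R_n}$), each generator $h_i + ne = (h_i+(n-1)e)+e$ is absorbed, so $\fkm_n : \fkm_n = k[[H_{n-1}]] = R_{n-1}$ for $n \ge 1$.

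\textbf{Step 2: the induction.} Assertion (2) is settled. For (1) and (3) I induct on $n \ge 1$. For $n = 1$: $B_1 := \fkm_1:\fkm_1 = R_0 = k[[H]]$ is Gorenstein (as $H$ is symmetric) and $R_1$ has minimal multiplicity, so $R_1$ is an $\AGL$ ring by \cite[Theorem~5.1]{GMP}, hence a $\GGL$ ring; being non-Gorenstein it is $\GGL$ with respect to $\fkm_1$, so by Corollary \ref{a7.1.1} and Theorem \ref{a4.7}(i), $\fkc_1 = \tr_{R_1}(\rmK_{R_1}) = \fkm_1$ and $\ell_{R_1}(R_1/\fkc_1) = 1$. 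For the step from $R_{n-1}$ to $R_n$ ($n \ge 2$): by Step 1, $\fkm_n^2 = (t^e)\fkm_n$ and $B_n := \fkm_n : \fkm_n = R_{n-1}$, which by induction is a non-Gorenstein $\GGL$ ring with $\rmv(R_{n-1}) = \rme(R_{n-1}) = e = \rme(R_n)$; Theorem \ref{a4.24}, implication $(2)\Rightarrow(1)$, then shows $R_n$ is a $\GGL$ ring, and its conclusion (ii) reads $\ell_{B_n}(B_n/(B_n : B_n[L_n])) = \ell_{R_n}(R_n/\fkc_n) - 1$, where $L_n = B_n K_n$. By Fact \ref{f420}, $R_n[K_n] = B_n[L_n]$ and $L_n \cong \rmK_{B_n}$ with $R_{n-1} = B_n \subseteq L_n \subseteq \overline{B_n}$, so, again by the independence of $R[K]$ from the chosen canonical ideal (Remark \ref{a4.3}(2)(i)), $B_n : B_n[L_n] = R_{n-1} : R_{n-1}[K_{n-1}] = \fkc_{n-1}$. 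Hence $\ell_{R_n}(R_n/\fkc_n) = \ell_{R_{n-1}}(R_{n-1}/\fkc_{n-1}) + 1 = n$, closing the induction.

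\textbf{Main obstacle.} The inductive skeleton — descending the tower via Theorem \ref{a4.24}, anchored by \cite[Theorem~5.1]{GMP} — is short once Step 1 is available; the genuine work is Step 1: pinning down $Ap_e(H_n)$, extracting $\mathrm{PF}(H_n)$ from it, and checking that $\fkm_n : \fkm_n$ equals $R_{n-1}$ exactly (not something larger). The one remaining subtlety is the bookkeeping that identifies $B_n : B_n[L_n]$ with $\fkc_{n-1}$, handled by Fact \ref{f420} and Remark \ref{a4.3}(2)(i). As a cross-check, one can instead feed the explicit Ap\'ery data of Step 1 directly into Proposition \ref{a4.30} or Corollary \ref{a4.32}.
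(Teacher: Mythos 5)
Your proof is correct. Note first that the paper itself gives no argument for this statement: it is recorded as a Fact and attributed to \cite[Theorem 6.9]{CGKM}, so there is no internal proof to compare against. What you have done is re-derive the cited result from the machinery of the present paper, and your route --- pinning down the Ap\'ery and pseudo-Frobenius data of $H_n$, identifying $\fkm_n:\fkm_n$ with $R_{n-1}$, and then descending the tower with Theorem \ref{a4.24}, anchored at $n=1$ by \cite[Theorem 5.1]{GMP} --- is precisely the mechanism the authors themselves use in the opposite direction to prove the converse statement, Theorem \ref{a4.29}. The combinatorial core (that $Ap_e(H_n)=\{0\}\cup\{h_i+ne \mid 1\le i\le e-1\}$, that the nonzero Ap\'ery elements are pairwise incomparable so that $\mathrm{PF}(H_n)=\{h_i+(n-1)e\}$, and that $\fkm_n:\fkm_n=R_n+\sum_i R_nt^{h_i+(n-1)e}=R_{n-1}$) checks out, as does the bookkeeping $B_n:B_n[L_n]=\fkc_{n-1}$ via Fact \ref{f420}(3) and Remark \ref{a4.3}(2)(i), which is what turns conclusion (ii) of Theorem \ref{a4.24} into the recursion $\ell_{R_n}(R_n/\fkc_n)=\ell_{R_{n-1}}(R_{n-1}/\fkc_{n-1})+1$.

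Two small caveats, neither of which damages the argument. First, Step 1 asserts $\rmv(R_n)=\rme(R_n)=e$ ``for every $n\ge 0$''; this is false at $n=0$ (take $H=\langle 3,5\rangle$, where $h_2=10=h_1+h_1$ is not a minimal generator; indeed $R_0=R$ is Gorenstein with $e\ge 3$, so it cannot have minimal multiplicity). The minimality and incomparability arguments genuinely require $n\ge 1$, since they rest on the coefficient $a+(c-1)n$ being at least $1$. You only ever invoke these conclusions for $n\ge 1$, so this is a slip of phrasing rather than a gap, but the quantifier should be corrected. Second, your standing assumption $e\ge 3$ is genuinely needed: for $e\le 2$ every $R_n$ is Gorenstein or regular, $\fkc_n=R_n$, and assertion (3) fails literally; this hypothesis is implicit in the cited source and worth stating as you did.
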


From Theorem \ref{a4.24}, we obtain the reverse of \cite[Theorem 6.9]{CGKM} as follows.

\begin{thm}\label{a4.29}
Let $e\le h_1\le h_2\le \cdots\le h_{e-1}$ be positive integers such that $\mathrm{GCD}~(e, h_1, h_2, \ldots, h_{e-1})=1$. Set $H=\left<e, h_1, h_2, \ldots, h_{e-1}\right>$ and $R=k[[H]]$.
Assume that $R$ is a $\GGL$ ring with $e=\rme(R)=\rmv(R)\ge3$. Set $n=\ell_R(R/\fkc)>0$ and $H'=\left<e, h_1-ne, h_2-ne, \ldots, h_{e-1}-ne\right>$. Then,  the following assertions hold true:
\begin{enumerate}[{\rm (1)}]
\item $H'$ is symmetric, that is, $R'=k[[H']]$ is a Gorenstein ring and 
\item $Ap_e(H') =\{0, h_1-ne, h_2-ne, \ldots, h_{e-1}-ne \}$.
\end{enumerate}

Hence, $R$ is reconstructed by $Ap_e(H')$ and $n>0$.
\end{thm}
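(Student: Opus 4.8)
The plan is to iterate the passage $R\rightsquigarrow\fkm:\fkm$ exactly $n$ times, reaching the Gorenstein ring $S=R[K]$, and to track Apéry sets along the way. To begin, note that $\rmv(R)=\rme(R)=e\ge3$ forces $e<h_1<\cdots<h_{e-1}$ to be pairwise distinct and to be precisely the atoms of $H$; hence $\rmr(R)=e-1$ by Fact \ref{a4.23}, $Ap_e(H)=\{0,h_1,\dots,h_{e-1}\}$, $\fkm^2=t^e\fkm$, and $R$ is not Gorenstein since $n=\ell_R(R/\fkc)>0$ (Fact \ref{a4.4}).

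The combinatorial core is the following auxiliary fact: if $\widetilde R=k[[\widetilde H]]$ is a numerical semigroup ring of multiplicity $e$ with $\rmv(\widetilde R)=e$ and $Ap_e(\widetilde H)=\{0,g_1,\dots,g_{e-1}\}$, then $\widetilde\fkm:\widetilde\fkm=k[[\langle e,g_1-e,\dots,g_{e-1}-e\rangle]]$ and $Ap_e\big(\langle e,g_1-e,\dots,g_{e-1}-e\rangle\big)=\{0,g_1-e,\dots,g_{e-1}-e\}$. The identification of the ring is routine: $\widetilde\fkm:\widetilde\fkm=\widetilde R:\widetilde\fkm=\widetilde R+\sum_{c\in\mathrm{PF}(\widetilde H)}\widetilde R\,t^{c}$ and $\mathrm{PF}(\widetilde H)=\{g_1-e,\dots,g_{e-1}-e\}$ by maximal embedding dimension (note each $g_i>e$, so $g_i-e>0$), so the associated semigroup is $\langle e,g_1-e,\dots,g_{e-1}-e\rangle$. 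For the Apéry statement it is enough to check $g_i-2e\notin\langle e,g_1-e,\dots,g_{e-1}-e\rangle$ for each $i$. Writing a putative relation $g_i-2e=\alpha e+\sum_j\beta_j(g_j-e)$ with $\alpha,\beta_j\ge0$, reduction mod $e$ forces $m:=\sum_j\beta_j\ge1$ and rearrangement gives $\sum_j\beta_j g_j=g_i+(m-\alpha-2)e$. On the other hand $t^{\sum_j\beta_j g_j}=\prod_j(t^{g_j})^{\beta_j}$ is a product of $m$ elements of $\widetilde\fkm$, hence lies in $\widetilde\fkm^{m}=t^{(m-1)e}\widetilde\fkm$; therefore $\sum_j\beta_j g_j-(m-1)e$ lies in $\widetilde H$ and is $\equiv g_i\pmod{e}$, so it is $\ge g_i$ because $g_i$ is the Apéry element of its residue class. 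Comparing with the previous equation forces $\alpha\le-1$, a contradiction.

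Now run the induction: put $R_0=R$, and for $0\le j\le n-1$ maintain that $R_j$ is a $\GGL$ ring of multiplicity $e$ with $\rmv(R_j)=e$, $H_{R_j}=\langle e,h_1-je,\dots,h_{e-1}-je\rangle$, and $\ell_{R_j}(R_j/\fkc_j)=n-j$, where $\fkc_j:=R_j:R_j[K_{R_j}]$; the case $j=0$ is the hypothesis, and $R_j[K_{R_j}]$ is independent of $j$, equal to $S=R[K]$, by Fact \ref{f420}. Setting $R_{j+1}:=\fkm_j:\fkm_j$, the auxiliary fact yields $R_{j+1}=k[[\langle e,h_1-(j+1)e,\dots,h_{e-1}-(j+1)e\rangle]]$ with $Ap_e(H_{R_{j+1}})=\{0,h_1-(j+1)e,\dots,h_{e-1}-(j+1)e\}$. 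If $j\le n-2$, then $R_j$ is not an $\AGL$ ring (as $\ell_{R_j}(R_j/\fkc_j)=n-j\ge2$, whereas $\fkc_j=\fkm_j$ would give the value $1$), so Theorem \ref{a4.24} applies and shows $R_{j+1}$ is a non-Gorenstein $\GGL$ ring with $\rmv(R_{j+1})=\rme(R_{j+1})=e$ and $\ell_{R_{j+1}}(R_{j+1}/\fkc_{j+1})=n-j-1$, which closes the induction. Finally, for $j=n-1$ we have $\ell_{R_{n-1}}(R_{n-1}/\fkc_{n-1})=1$, so $R_{n-1}$ is a non-Gorenstein $\AGL$ ring of minimal multiplicity; by \cite[Theorem 5.1]{GMP} (cf.\ the discussion after Theorem \ref{a4.24}) the ring $R_n:=\fkm_{n-1}:\fkm_{n-1}$ equals $S=R[K]$ and is Gorenstein, while the auxiliary fact gives $H_{R_n}=\langle e,h_1-ne,\dots,h_{e-1}-ne\rangle=H'$ and $Ap_e(H')=\{0,h_1-ne,\dots,h_{e-1}-ne\}$. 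This is assertion (2); since $R'=k[[H']]=R_n=S$ is Gorenstein, assertion (1) follows; and the reconstruction statement is immediate, because $H=\langle e,h_1,\dots,h_{e-1}\rangle=\langle e,(h_1-ne)+ne,\dots,(h_{e-1}-ne)+ne\rangle$ is precisely the semigroup built from the symmetric $H'$ and the integer $n$ by the construction of \cite[Theorem 6.9]{CGKM} recalled just above.

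The step I expect to be the main obstacle is the Apéry part of the auxiliary fact — controlling sums of several shifted generators $g_j-e$; the mechanism that makes it go through is the valuation identity $\widetilde\fkm^{m}=t^{(m-1)e}\widetilde\fkm$ coming from maximal embedding dimension. This is also why the iteration must stop at the $\AGL$ stage and be finished off with \cite[Theorem 5.1]{GMP}: once $\fkm_{n-1}:\fkm_{n-1}$ is formed the multiplicity can drop below $e$, so the auxiliary fact no longer applies to it, and Theorem \ref{a4.24} (which requires the ring not to be an $\AGL$ ring) is likewise unavailable at that step.
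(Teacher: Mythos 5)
Your proof is correct and takes essentially the same route as the paper: iterate $R\mapsto\fkm:\fkm$ using Theorem \ref{a4.24} to maintain the $\GGL$/minimal-multiplicity hypotheses and decrement $\ell_R(R/\fkc)$, reduce to the $\AGL$ case, and finish with \cite[Theorem 5.1]{GMP}. The only difference is that you supply in full the Ap\'ery-set bookkeeping (your ``auxiliary fact'', proved via $\widetilde\fkm^{m}=t^{(m-1)e}\widetilde\fkm$), which the paper leaves implicit in its appeal to Proposition \ref{a4.22} and Theorem \ref{a4.24}.
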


\begin{proof}
Suppose that $n>1$, that is, $R$ is not an $\AGL$ ring. Set $R_0=R$ and $\fkm_0=(t^{e}, t^{h_1}, t^{h_2}, \ldots, t^{h_{e-1}})$. Set 
\begin{center}
$R_j=\fkm_{j-1}:\fkm_{j-1}$ and $\fkm_{j}=\fkm_{j-1} S \cap R_{j}$
\end{center}
recursively for $0<j\le n$. Note that $R_j=k[[H_j]]$ and the maximal ideal of $R_j$ is $\fkm_j$ for all $0<j\le n$, where $H_j=\left<e, h_1-je, h_2-je, \ldots, h_{e-1}-je\right>$ by Proposition \ref{a4.22} and Theorem \ref{a4.24}. Furthermore, we have the following:
\begin{enumerate}[{\rm (i)}]
\item $R_j$ is a $\GGL$ ring but not a Gorenstein ring,
\item $\ell_{R_j}(R_j/\fkc_j)=n-j$,
\item $\rmv(R_j)=\rme(R_j)=e$, and
\item $Ap_e(H_j) =\{0, h_1-je, h_2-je, \ldots, h_{e-1}-je \}$
\end{enumerate}
for all $0<j<n$, where $\fkc_j=R_j:S$.
Hence, we may assume that $n=1$. This is the case where $R$ is an $\AGL$ ring but not a Gorenstein ring. Hence, the assertion follows from \cite[Theorem 5.1]{GMP}.
\end{proof}

\begin{ex}\label{ex57}
Let $n\ge 1$ be an integer and set $R_n=k[[t^4, t^{4n+1}, t^{4n+2}, t^{4n+3}, t^{4n+7}]]$. Then $R_1=k[[t^4, t^5, t^6]]$ is a Gorenstein ring not of minimal multiplicity. On the other hand, $R_n$ is a $\GGL$ ring having minimal multiplicity and $\ell_{R_n}(R_n/\tr_{R_n}(\rmK_{R_n}))=n-1$ for all $n\ge 2$. In particular, $R_2$ is a non-Gorenstein $\AGL$ ring and $R_n$ is not an $\AGL$ ring for $n\ge 3$.
\end{ex}


We continue to explore $\GGL$ numerical semigroup rings. With the notation of Setup \ref{setting of numerical semigroup}, suppose that $\ell = 3$ and set $T = k[t^{a_1}, t^{a_2}, t^{a_3}]$ in the polynomial ring $k[t]$. Let $P = k[X_1, X_2, X_3]$ be the polynomial ring over $k$. We consider  $P$ to be a $\Bbb Z$-graded ring such that $P_0 = k$ and $\deg X_i = a_i$ for  $i=1, 2, 3$. Let 
$$\varphi : P =k[X_1, X_2, X_3] \to T= k[t^{a_1}, t^{a_2}, t^{a_3}]$$
denote the homomorphism of graded $k$-algebras defined by $\varphi (X_i) = t^{a_i}$ for each $i=1, 2, 3$. For conciseness, we write  $X = X_1$, $Y=X_2$, and $Z = X_3$.
If $T$ is not a Gorenstein ring, then, it is known that 
\[
\Ker \varphi = I_2\left(\begin{smallmatrix}
X^\alpha&Y^\beta&Z^\gamma\\
Y^{\beta'}&Z^{\gamma'}&X^{\alpha'}\\
\end{smallmatrix}
\right)
\] 
for some integers $\alpha, \beta, \gamma, \alpha', \beta', \gamma' > 0$ (\cite{H}). 

Here, we use a result from \cite[Section 4]{GMP}.
Let $\Delta_1 = Z^{\gamma+\gamma'}-X^{\alpha'}Y^{\beta}$, $\Delta_2=X^{\alpha+\alpha'}-Y^{\beta'}Z^{\gamma}$, and $\Delta_3=Y^{\beta+\beta'}-X^{\alpha}Z^{\gamma'}$. Then,  $\Ker \varphi  = (\Delta_1,\Delta_2, \Delta_3)$, and from the theorem of Hilbert--Burch (\cite[Theorem 20.15]{E}), the graded ring $T$ possesses a graded minimal $P$-free resolution of the  form
$$
0\longrightarrow \begin{matrix} P(-m )\\ \oplus\\ P(-n)\end{matrix} \overset{\left( \begin{smallmatrix}
X^{\alpha} & Y^{\beta'}\\
Y^{\beta} & Z^{\gamma'}\\
Z^{\gamma} & X^{\alpha'}
\end{smallmatrix} \right)}{\longrightarrow  } \begin{matrix} P(-d_1)\\ \oplus\\ P(-d_2)\\ \oplus\\ P(-d_3)\end{matrix} \overset{\left(\Delta_1~-\Delta_2~\Delta_3\right)}{\longrightarrow } P\overset{\varphi }{\longrightarrow } T \longrightarrow 0,
$$
where $d_1 = \deg\Delta_1 = a_3(\gamma + \gamma')$, $d_2 = \deg\Delta_2 = a_1(\alpha + \alpha')$, $d_3 = \deg\Delta_3 = a_2(\beta + \beta')$, $m = a_1\alpha + d_1 = a_2\beta + d_2 = a_3\gamma + d_3$, and $n = a_1\alpha' + d_3 = a_2\beta' + d_1 = a_3\gamma' + d_2$. Therefore,
\begin{align}\label{threegene}
n - m   = a_2\beta'-a_1\alpha  = a_3\gamma' -a_2\beta=a_1\alpha'-a_3\gamma.
\end{align}
Let $\rmK_P = P(-d)$ denote the graded canonical module of $P$, where $d=a_1 + a_2 + a_3$. Then, by applying the $\rmK_P$-dual to the above resolution, we obtain the  minimal presentation
\begin{equation}\label{herzogmatrix}
\begin{matrix} P(d_1-d)\\ \oplus\\ P(d_2-d)\\ \oplus\\ P(d_3-d)\end{matrix}\overset{\left( \begin{smallmatrix} X^{\alpha} & Y^{\beta} & Z^{\gamma} \\
Y^{\beta'} & Z^{\gamma'} & X^{\alpha'}
\end{smallmatrix} \right)}{\longrightarrow }\begin{matrix} P(m  -d)\\ \oplus\\ P(n-d)\end{matrix}\overset{\varepsilon }{\longrightarrow }\rmK_{T}\longrightarrow 0 
\end{equation}
of the graded canonical module $\rmK_{T} = \Ext_P^2(T,\rmK_P)$ of $T$. Therefore, because $\rmK_T = \sum_{c \in \mathrm{PF}(H)}Tt^{-c}$ (\cite[Example (2.1.9)]{GW}), we have $\ell_k([\rmK_T]_i) \le 1$ for all $i \in \Bbb Z$; hence, $m \ne n$. After the permutation of $a_2$ and $a_3$ if necessary, we may assume without loss of generality that $n > m$. Then,  the presentation (\ref{herzogmatrix}) shows that $\mathrm{PF}(H) = \{m-d, n-d\}$ and $f = n-d$.

We set $a = n-m$. Note that $R\cong \widehat{T_M}$, where $M= (t^{a_i}\mid i = 1,2,3)$ denotes the graded maximal ideal of $T$ and $\widehat{T_M}$ denotes the $MT_M$-adic completion of $T_M$.
Hence,  $a > 0$, $f = a + (m-d)$, and $K = R + Rt^a$ in the sense of Setup \ref{setting1}. With this notation, we have the following.

\begin{thm}\label{a4.33}
Suppose that $H$ is $3$-generated. Assume that $R=k[[H]]$ is not a Gorenstein ring and $a>0$. Then,  the following conditions are equivalent:
\begin{enumerate}[{\rm (1)}]
\item $R$ is a $\GGL$ ring.
\item $3a\in H$.
\item $\alpha \le \alpha'$, $\beta \le \beta'$, and $\gamma \le \gamma'$.
\end{enumerate}
When this is the case, $\fkc=(t^{\alpha a_1}, t^{\beta a_2}, t^{\gamma a_3})$ and $\ell_R (R/\fkc) = \alpha \beta \gamma$. 
\end{thm}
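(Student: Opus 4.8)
The plan is to exploit the explicit Hilbert–Burch presentation of $R=k[[H]]$ recorded in the excerpt. Recall $K=R+Rt^a$ with $a=n-m>0$, so that $S=R[K]=\bigcup_j R+Rt^a+\dots+Rt^{ja}$ and $\mathrm{PF}(H)=\{m-d,n-d\}$, whence $r=\rmr(R)=2$. Since $r=2$, Corollary \ref{a4.9} tells us that $R$ is a $\GGL$ ring if and only if $K^2=K^3$, and by Fact \ref{a4.4} the non-Gorenstein hypothesis means $K\subsetneq S$. The equivalence $(1)\Leftrightarrow(2)$ should come out directly: $K^2=R+Rt^a+Rt^{2a}$ and $K^3=R+Rt^a+Rt^{2a}+Rt^{3a}$, so $K^2=K^3$ exactly when $t^{3a}\in K^2$, i.e. when $3a\in H+\{0,a,2a\}$; one then checks, using $2a\notin H$ (which follows from $K\subsetneq S$, i.e. $t^{2a}\notin R$, together with $a\in\mathrm{PF}(H)-\{0\}$-type reasoning), that $3a\in H+\{0,a,2a\}$ forces $3a\in H$. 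This is where I would be slightly careful: one must rule out $3a\in a+H$ (equivalently $2a\in H$) and $3a\in 2a+H$ (equivalently $a\in H$, impossible since $a=f-(m-d)$ lies outside $H$ as $f\in\mathrm{PF}(H)$), so that the only surviving possibility is genuinely $3a\in H$.

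Next I would establish $(2)\Leftrightarrow(3)$ by translating $3a\in H$ into the exponent data. From \eqref{threegene} we have $a=a_2\beta'-a_1\alpha=a_3\gamma'-a_2\beta=a_1\alpha'-a_3\gamma$. I would compute $2a$ and $3a$ as nonnegative combinations of $a_1,a_2,a_3$ by adding these three expressions in the right pattern — e.g. $3a=(a_2\beta'-a_1\alpha)+(a_3\gamma'-a_2\beta)+(a_1\alpha'-a_3\gamma)=a_1(\alpha'-\alpha)+a_2(\beta'-\beta)+a_3(\gamma'-\gamma)$ — so that $3a\in H$ holds once $\alpha\le\alpha'$, $\beta\le\beta'$, $\gamma\le\gamma'$. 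For the converse, I expect the key point to be that $a=f-(m-d)$ is the smallest positive element of $K$-minus-$R$, and that the structure of $\Ker\varphi$ (the Herzog matrix) forces any representation of $a$ in $H$-terms to be "balanced"; concretely, since $\ell_k([\rmK_T]_i)\le 1$ and $m<n$, the only degrees in $\rmK_T$ are $m-d$ and $n-d$, and this rigidity should pin down the signs of $\alpha'-\alpha$, $\beta'-\beta$, $\gamma'-\gamma$. The cleanest route is probably: if, say, $\alpha>\alpha'$, produce from $\Delta_2=X^{\alpha+\alpha'}-Y^{\beta'}Z^\gamma$ and the relation $a_1\alpha'-a_3\gamma=a$ a representation showing $2a\in H$ or deriving $a\in H$, contradicting $K\subsetneq S$.

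Finally, for the last two assertions, assume $(3)$ holds. I would verify $\fkc=R:S=(t^{\alpha a_1},t^{\beta a_2},t^{\gamma a_3})$ by showing both inclusions: for $\supseteq$, note $t^{\alpha a_1}\cdot t^{ja}\in R$ for all $j\ge 0$ because $\alpha a_1+ja$ can be rewritten via \eqref{threegene} as a nonnegative $H$-combination once $j\le$ the relevant bound, and for large $j$ one uses $ja\in H$ (which follows from $3a\in H$ and $a,2a$ eventually irrelevant) — more efficiently, $S=R[t^a]$ and $S$ is generated as an $R$-module by $1,t^a,t^{2a}$, so $\fkc=R:S=(R:t^{a})\cap(R:t^{2a})$, and I would compute these two colon ideals explicitly from the semigroup. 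For $\subseteq$, any monomial $t^h\in\fkc$ must satisfy $h+a,h+2a\in H$; using the Apéry/Frobenius description of $H$ one shows the minimal such $h$ in each residue class mod $a_1,a_2,a_3$ are exactly $\alpha a_1,\beta a_2,\gamma a_3$. The length computation $\ell_R(R/\fkc)=\alpha\beta\gamma$ then follows because $R/\fkc\cong k[[X,Y,Z]]/(\text{Herzog ideal})+(X^\alpha,Y^\beta,Z^\gamma)$, and under $(3)$ this quotient is the Artinian complete intersection $k[[X,Y,Z]]/(X^\alpha,Y^\beta,Z^\gamma)$ (the Herzog relations $\Delta_i$ already lie in $(X^\alpha,Y^\beta,Z^\gamma)$ when the exponent inequalities hold), which has length $\alpha\beta\gamma$. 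The main obstacle I anticipate is the converse direction $(2)\Rightarrow(3)$: ruling out the "unbalanced" exponent cases requires a genuine use of the Hilbert–Burch structure rather than a formal semigroup manipulation, so I would devote the bulk of the argument there, possibly invoking \eqref{threegene} repeatedly and the fact that $a$ is the conductor-type gap $f-(m-d)$.
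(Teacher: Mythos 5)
Your equivalence $(1)\Leftrightarrow(2)$ is correct: since $\rmr(R)=2$, Corollary \ref{a4.9} reduces the $\GGL$ property to $K^2=K^3$, and because $R$ is not Gorenstein one has $a\notin H$ and $2a\notin H$ (as $K\neq K^2$), so $t^{3a}\in K^2$ forces $3a\in H$; this matches how the paper proves $(2)\Rightarrow(1)$, and your computation for $(3)\Rightarrow(2)$ is the same as the paper's.

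The genuine gap is the remaining implication $(2)\Rightarrow(3)$ (equivalently $(1)\Rightarrow(3)$), which you yourself flag as the ``main obstacle'' and for which you offer only a speculative sketch. Knowing that $3a=a_1(\alpha'-\alpha)+a_2(\beta'-\beta)+a_3(\gamma'-\gamma)$ lies in $H$ gives one nonnegative representation of $3a$ but says nothing a priori about the signs of the individual differences, and your proposed case analysis (``if $\alpha>\alpha'$, produce \dots a representation showing $2a\in H$'') is not carried out and is not obviously a formal semigroup manipulation. The paper's argument at this point is structural rather than numerical: assuming $R$ is $\GGL$, the presentation \eqref{herzogmatrix} gives $\rmK_T/P\xi\cong P/(X^{\alpha},Y^{\beta},Z^{\gamma})$, and since $n>m$ and $\rmr(R)=2$ this quotient is $K/R\cong R/\fkc$, whence $\fkc=(X^{\alpha},Y^{\beta},Z^{\gamma})R=(t^{\alpha a_1},t^{\beta a_2},t^{\gamma a_3})$. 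Tensoring \eqref{herzogmatrix} with $P/(X^{\alpha},Y^{\beta},Z^{\gamma})$ and using that $K/\fkc K\cong (R/\fkc)^{\oplus 2}$ is free forces the reduced Herzog matrix to vanish, i.e. $X^{\alpha'},Y^{\beta'},Z^{\gamma'}\in(X^{\alpha},Y^{\beta},Z^{\gamma})$, which is exactly $(3)$. This use of the freeness of $K/\fkc K$ against the minimal presentation of the canonical module is the idea missing from your proposal; it also yields the identification of $\fkc$ directly, whereas your plan leaves that as an unexecuted colon-ideal computation. (Once $(3)$ and $\fkc=(X^{\alpha},Y^{\beta},Z^{\gamma})R$ are in hand, the length $\ell_R(R/\fkc)=\alpha\beta\gamma$ follows as you indicate, since the Herzog relations lie in $(X^{\alpha},Y^{\beta},Z^{\gamma})$.)
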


\begin{proof}

(1) $\Rightarrow$ (3): By \eqref{herzogmatrix}, we obtain $\rmK_{T}/P\xi = (Tt^{d-m} + Tt^{d-n})/Tt^{d-n}\cong P/(X^{\alpha},  Y^{\beta},  Z^{\gamma})$, where $\xi=\varepsilon \left(\begin{smallmatrix}0 \\1 \end{smallmatrix} \right)$. Note that we have the isomorphism  $\widehat{(\rmK_{T}/P\xi)_M} \cong K/R\cong R/\fkc$ because $n>m$ and $R$ is a $\GGL$ ring of $\rmr(R)=2$. Hence, we have $\fkc=(X^{\alpha},  Y^{\beta},  Z^{\gamma})R=(t^{\alpha a_1}, t^{\beta a_2}, t^{\gamma a_3})$. Meanwhile, by applying the functor $*\otimes_P P/(X^{\alpha},  Y^{\beta},  Z^{\gamma})$ to \eqref{herzogmatrix}, we obtain the exact sequence
{\small
\begin{equation*}
\begin{matrix} P/(X^{\alpha},  Y^{\beta},  Z^{\gamma})(d_1-d)\\ \oplus\\ P/(X^{\alpha},  Y^{\beta},  Z^{\gamma})(d_2-d)\\ \oplus\\ P/(X^{\alpha},  Y^{\beta},  Z^{\gamma})(d_3-d)\end{matrix}\overset{\left( \begin{smallmatrix} \overline{X^{\alpha}} & \overline{Y^{\beta}} & \overline{Z^{\gamma}} \\
\overline{Y^{\beta'}} & \overline{Z^{\gamma'}} & \overline{X^{\alpha'}}
\end{smallmatrix} \right)}{\longrightarrow }\begin{matrix} P/(X^{\alpha},  Y^{\beta},  Z^{\gamma})(m  -d)\\ \oplus\\ P/(X^{\alpha},  Y^{\beta},  Z^{\gamma})(n-d)\end{matrix}\overset{\varepsilon }{\longrightarrow }\rmK_{T}/(X^{\alpha},  Y^{\beta},  Z^{\gamma})\rmK_{T} \longrightarrow 0 
\end{equation*}
}
as $P/(X^{\alpha},  Y^{\beta},  Z^{\gamma})$-modules, where $\overline{*}$ denotes the image of an element $*\in P$ in $P/(X^{\alpha},  Y^{\beta},  Z^{\gamma})$. Hence, $\left( \begin{smallmatrix} \overline{X^{\alpha}} & \overline{Y^{\beta}} & \overline{Z^{\gamma}} \\
\overline{Y^{\beta'}} & \overline{Z^{\gamma'}} & \overline{X^{\alpha'}}
\end{smallmatrix} \right)$ is a zero matrix because $K/\fkc K\cong (R/\fkc)^{\oplus 2}$; hence, $(Y^{\beta'},   Z^{\gamma'}, X^{\alpha'})\subseteq (X^{\alpha},  Y^{\beta},  Z^{\gamma})$, that is, we obtain $\alpha \le \alpha'$, $\beta \le \beta'$, and $\gamma \le \gamma'$.

(3) $\Rightarrow$ (2): This follows from the equality $3a=(a_2\beta'-a_1\alpha) + (a_3\gamma' -a_2\beta)+(a_1\alpha'-a_3\gamma)=a_1(\alpha'-\alpha)+a_2(\beta'-\beta)+a_3(\gamma'-\gamma) \in H$ by (\ref{threegene}).

(2) $\Rightarrow$ (1): Since $K=R+Rt^a$, we have $K^2=K^3$. Hence, $R$ is a $\GGL$ ring by Corollary \ref{a4.9}.
\end{proof}

When $H$ is $3$-generated and the multiplicity $\rme(R)=\min\{a_1, a_2, a_3\}$ of $R$ is at most $5$, we have the following structure theorem of non-$\AGL$ $\GGL$ rings. 

\begin{corollary}\label{a4.34}
Let $\ell = 3$. Then,  the following assertions are true.
\begin{enumerate}[{\rm (1)}]
\item If $\min\{a_1, a_2, a_3\} = 3$, then $R$ is  a $\GGL$ ring.

\item Suppose that $\min\{a_1, a_2, a_3\} = 4$. Then,  the following conditions are equivalent:
   \begin{enumerate}[{\rm (a)}]
   \item $R$ is a $\GGL$ ring but not an $\AGL$ ring and
   \item $H= \left<4, 3\alpha+2\alpha', \alpha+2\alpha' \right>$ for some $\alpha' \ge \alpha \ge3$ such that $\alpha \not\equiv 0~\mathrm{mod}~2$.
   \end{enumerate}

\item Suppose that $\min\{a_1, a_2, a_3\} = 5$. Then,  the following conditions are equivalent:
   \begin{enumerate}[{\rm (a)}]
   \item $R$ is a $\GGL$ ring but not an $\AGL$ ring and
   \item $\mathrm{(i)}$ $H= \left<5, 2\alpha+\alpha', \alpha+3\alpha' \right>$ for some $\alpha' \ge \alpha \ge2$ such that $2\alpha+\alpha' \not\equiv 0~\mathrm{mod}~5$ or  $\mathrm{(ii)}$  $H= \left<5, 4\alpha+3\alpha', \alpha+2\alpha' \right>$ for some $\alpha' \ge \alpha \ge2$ such that $\alpha+2\alpha' \not\equiv 0~\mathrm{mod}~5$.
   \end{enumerate}
\end{enumerate}
\end{corollary}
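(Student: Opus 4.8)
\textbf{Part (1)} is immediate: since $\ell=3$, the multiplicity of $R=k[[H]]$ equals $\rme(R)=\min\{a_1,a_2,a_3\}=3$ (the least positive element of a numerical semigroup is always a minimal generator), so $R$ is a $\GGL$ ring by Corollary~\ref{a4.10}. For (2) and (3) the plan is to run everything through Theorem~\ref{a4.33} and the explicit Herzog presentation of a $3$-generated non-symmetric semigroup recalled just before it. First I would dispose of the Gorenstein case: a Gorenstein ring is $\AGL$, hence never ``$\GGL$ but not $\AGL$'', and each semigroup appearing in (b) will be seen to be non-symmetric once it is shown to be $\GGL$ but not $\AGL$; so throughout I may assume $R$ is not Gorenstein, and then (being presented by $3$ elements) $R$ is minimally $3$-generated with $\sharp\mathrm{PF}(H)=\rmr(R)=2$. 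In this situation $R$ is an $\AGL$ ring exactly when $\fkc=\fkm$, while by Theorem~\ref{a4.33} and its last clause $R$ is a $\GGL$ ring iff $\alpha\le\alpha'$, $\beta\le\beta'$, $\gamma\le\gamma'$, in which case $\fkc=(t^{\alpha a_1},t^{\beta a_2},t^{\gamma a_3})$ and $\ell_R(R/\fkc)=\alpha\beta\gamma$. Thus the statement to prove becomes: among the Herzog data with $\min\{a_1,a_2,a_3\}\in\{4,5\}$, those with $\alpha\le\alpha'$, $\beta\le\beta'$, $\gamma\le\gamma'$ and $\alpha\beta\gamma>1$ are precisely the ones listed in (b).

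For the classification I would use the cyclic symmetry of the Herzog matrix to assume that $a_1$ equals the multiplicity, and (interchanging $a_2,a_3$ if necessary) that the presentation is normalized by $n>m$ as in Theorem~\ref{a4.33}. Then $\alpha+\alpha'$ is the least positive integer with $(\alpha+\alpha')a_1\in\langle a_2,a_3\rangle$, and $\Delta_1,\Delta_2,\Delta_3$ read $a_3(\gamma+\gamma')=\alpha'a_1+\beta a_2$, $a_1(\alpha+\alpha')=\beta'a_2+\gamma a_3$, $a_2(\beta+\beta')=\alpha a_1+\gamma'a_3$, expressing each generator as an explicit nonnegative combination of the others determined by the six exponents. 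Feeding in $a_1=4$ (resp.\ $5$), $a_2,a_3>a_1$, $\gcd(a_1,a_2,a_3)=1$, and the three inequalities leaves only finitely many exponent patterns: for $a_1=4$ one finds that necessarily $\beta=\beta'=\gamma=1$ and $\gamma'=2$, whence $H=\langle 4,\,3\alpha+2\alpha',\,\alpha+2\alpha'\rangle$ with $\alpha\le\alpha'$, and then $\gcd(4,3\alpha+2\alpha',\alpha+2\alpha')=1$ forces $\alpha$ odd while $\alpha\beta\gamma=\alpha>1$ forces $\alpha\ge3$; for $a_1=5$ exactly two patterns survive, $(\beta,\beta',\gamma,\gamma')=(1,2,1,1)$ and $(1,1,1,3)$, giving $H=\langle5,\,2\alpha+\alpha',\,\alpha+3\alpha'\rangle$ and $H=\langle5,\,4\alpha+3\alpha',\,\alpha+2\alpha'\rangle$ with $\alpha\le\alpha'$, where $\alpha\beta\gamma=\alpha>1$ forces $\alpha\ge2$ and $\gcd(5,a_2,a_3)=1$ translates into the stated conditions $2\alpha+\alpha'\not\equiv0$, resp.\ $\alpha+2\alpha'\not\equiv0$, modulo $5$.

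The converse is the easy direction: for each semigroup on the list I would exhibit the Herzog matrix explicitly --- with exponent tuples $(\alpha,1,1,\alpha',1,2)$, $(\alpha,1,1,\alpha',2,1)$, $(\alpha,1,1,\alpha',1,3)$ in the three families respectively (written in the order $\alpha,\beta,\gamma,\alpha',\beta',\gamma'$) --- observe that it satisfies $\alpha\le\alpha'$, $\beta\le\beta'$, $\gamma\le\gamma'$, and invoke Theorem~\ref{a4.33} to conclude that $R$ is a $\GGL$ ring (equivalently, with $a$ the exponent for which $K=R+Rt^a$, one checks $3a\in H$ by direct semigroup arithmetic and applies Corollary~\ref{a4.9} using $\rmr(R)=2$); then $\ell_R(R/\fkc)=\alpha\beta\gamma=\alpha$ is $\ge3$ in (2) and $\ge2$ in (3), so $\fkc\subsetneq\fkm$ and $R$ is not $\AGL$, and in particular $R$ is not Gorenstein, closing the reduction made at the start. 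The main obstacle is the forward classification: although purely elementary, it is genuinely case-heavy, and the multiplicity-$5$ case is the delicate one, since two distinct exponent patterns must be allowed while every intermediate pattern (for instance $(\beta,\beta',\gamma,\gamma')=(1,1,1,2)$) has to be eliminated by showing it forces $\gcd(5,a_2,a_3)>1$ or a redundant generator.
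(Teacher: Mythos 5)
Your overall route coincides with the paper's: part (1) via Corollary \ref{a4.10}, and parts (2)--(3) by classifying the Herzog exponent data through the criterion of Theorem \ref{a4.33}. Your reformulation of ``not $\AGL$'' as $\ell_R(R/\fkc)=\alpha\beta\gamma>1$ is correct and is a clean substitute for the paper's appeal to \cite[Corollary 4.2]{GMP}, and your treatment of (b)$\Rightarrow$(a) is the same as the paper's (exhibit the matrix, check the three inequalities, apply Theorem \ref{a4.33}).

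The gap is in the forward classification, which you compress into ``feeding in $a_1=4$ (resp.\ $5$), $a_2,a_3>a_1$, $\gcd=1$, and the three inequalities leaves only finitely many exponent patterns: one finds \dots''. The three degree relations you list do not by themselves bound $\beta,\beta',\gamma,\gamma'$ (each only compares exponent sums weighted by the unknown $a_i$), so neither the finiteness claim nor the identification of the surviving patterns is justified as written. The missing ingredient is the identity
\[
a_1=\beta\gamma+\beta'\gamma+\beta'\gamma',\qquad a_2=\alpha\gamma+\alpha\gamma'+\alpha'\gamma',\qquad a_3=\alpha\beta+\alpha'\beta+\alpha'\beta',
\]
which the paper obtains from $a_1=\ell_k\bigl(k[Y,Z]/(Y^{\beta+\beta'},\,Y^{\beta'}Z^{\gamma},\,Z^{\gamma+\gamma'})\bigr)$ (reduce the three maximal minors mod $X$), and cyclically. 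Combined with $\beta\le\beta'$ and $\gamma\le\gamma'$ this instantly forces $\beta=\gamma=1$ and $\beta'(\gamma'+1)=a_1-1$, which for $a_1=4$ leaves only $(\beta',\gamma')=(1,2)$ and for $a_1=5$ only $(1,3)$ and $(2,1)$; it also yields $a_2,a_3\ge 6$ once $\alpha'\ge\alpha\ge2$, so the multiplicity really sits at $a_1$. Your closing remark --- that the pattern $(\beta,\beta',\gamma,\gamma')=(1,1,1,2)$ must be excluded by a $\gcd$ or redundant-generator argument --- shows you do not have this identity in hand: that pattern dies immediately because it forces $a_1=1+2+1=4\neq5$. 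Without the identity (or an equivalent, e.g.\ reading $a_1$ off as a $2\times2$ minor of the syzygy matrix of $(a_1,a_2,a_3)$), the deferred case analysis is not merely ``case-heavy''; it has no mechanism to terminate or to rule out large exponents.
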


\begin{proof}
(1): This follows from Corollary \ref{a4.10}.

Suppose that $R$ is a $\GGL$ ring but not an $\AGL$ ring. Then, by \cite[Corollary 4.2]{GMP}, after a suitable permutation of $a_1, a_2, a_3$, we may assume that $\alpha' \ge \alpha \ge 2$. 
Note that 
\begin{align}\label{4.37.1}
\begin{split}
a_1&= \beta\gamma+\beta' \gamma' + \beta'\gamma\\
a_2 &=\alpha\gamma+\alpha \gamma'+\alpha' \gamma' \\
a_3 &= \alpha' \beta' +\alpha' \beta+\alpha \beta
\end{split}
\end{align}
because $a_1 =\ell_R(R/t^{a_1}R) =\ell_P (P/[(X)+\Ker\varphi])= \ell_k(k[Y,Z]/(Y^{\beta' + 1}, Y^{\beta'}Z, Z^{\gamma'+1})=\beta\gamma+\beta' \gamma' + \beta'\gamma$. The other two equalities follow in a similar way. By noting that $\alpha' \ge \alpha \ge 2$, we have $a_2\ge 6$ and $a_3\ge 6$.
Therefore, if $\rme(R) \le 5$, then we obtain that 
\[
\rme(R) = a_1=  \beta\gamma+\beta' \gamma' + \beta'\gamma= (\beta + \beta')\gamma +\beta' \gamma' \le 5.
\] 
Note that by Theorem \ref{a4.33}, $\beta \le \beta'$ and $\gamma \le \gamma'$ since $R$ is a $\GGL$ ring and $\alpha' \ge \alpha \ge 2$. It follows that $\beta=\gamma=1$.

(2) (a) $\Rightarrow$ (b): Since $a_1= \beta' \gamma' + \beta' +1 = 4$, we have $\beta'= 1$ and $\gamma'=2$. Hence, $a_2=3\alpha+2\alpha'$ and $a_3=\alpha+2\alpha'$. Note  that $\alpha \not\equiv 0~\mathrm{mod}~2$ because $\mathrm{GCD}~(a_1, a_2, a_3)=1$. 

(3) (a) $\Rightarrow$ (b): Since $a_1= \beta' \gamma' + \beta' +1 = 5$, there are $2$ cases, say, ``$\beta'=2$ and $\gamma'=1$'' or ``$\beta'=1$ and $\gamma'=3$''.
For the former, we obtain  $H= \left<5, 2\alpha+\alpha', \alpha+3\alpha' \right>$ for some $\alpha' \ge \alpha \ge2$ such that $2\alpha+\alpha' \not\equiv 0~\mathrm{mod}~5$ because $\mathrm{GCD}~(5, 2\alpha+\alpha', \alpha+3\alpha')=1$. 
For the latter, we obtain $H= \left<5, 4\alpha+3\alpha', \alpha+2\alpha' \right>$ for some $\alpha' \ge \alpha \ge2$ such that $\alpha+2\alpha' \not\equiv 0~\mathrm{mod}~5$ because $\mathrm{GCD}~(5, 4\alpha+3\alpha', \alpha+2\alpha')=1$.  

(2) (b) $\Rightarrow$ (a): Since we have $\Ker \varphi = I_2\left(\begin{smallmatrix}
X^{\alpha}&Y&Z\\
Y&Z^2&X^{\alpha'}\\
\end{smallmatrix}
\right)$, $R$ is a $\GGL$ ring by Theorem \ref{a4.33}. However, $R$ is not an $\AGL$ ring by \cite[Corollary 4.2]{GMP}. 

(3) (b) $\Rightarrow$ (a): For the case (i) we have $\Ker \varphi = I_2\left(\begin{smallmatrix}
X^{\alpha}&Y&Z\\
Y^2&Z&X^{\alpha'}\\
\end{smallmatrix}
\right)$, and for the case (ii) we have $\Ker \varphi = I_2\left(\begin{smallmatrix}
X^{\alpha}&Y&Z\\
Y&Z^3&X^{\alpha'}\\
\end{smallmatrix}
\right)$. Therefore, $R$ is a $\GGL$ ring but not an $\AGL$ ring in both cases.
\end{proof}

We can also characterize $3$-generated numerical semigroup rings $R$ such that $\tr_R(\rmK_R)$ is an Ulrich ideal.

\begin{thm}\label{a7.6.5}
Suppose that $H$ is $3$-generated. Assume that $R=k[[H]]$ is not a Gorenstein ring and $a>0$. 
Then, the following conditions are equivalent:
\begin{enumerate}[{\rm (1)}]
\item $\tr_R (\rmK_R)$ is an Ulrich ideal.
\item Two of the three pairs $(\alpha, \alpha')$, $(\beta, \beta')$, and $(\gamma, \gamma')$ are equal.
\end{enumerate}
When this is the case, after renumbering, we have the equalities:
\begin{center}
$a_1=3\beta \gamma$, $a_2=\gamma (2\alpha + \alpha')$, and $a_3=\beta (2\alpha' + \alpha)$.
\end{center}
\end{thm}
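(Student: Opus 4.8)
The plan is to leverage Theorem \ref{a7.5}, which tells us that $\tr_R(\rmK_R)$ is an Ulrich ideal if and only if $R$ is a $\GGL$ ring and $S=R[K]$ is Gorenstein. By Theorem \ref{a4.33}, the $\GGL$ condition for a $3$-generated $H$ is equivalent to $\alpha\le\alpha'$, $\beta\le\beta'$, $\gamma\le\gamma'$, and in that case $\fkc=(t^{\alpha a_1},t^{\beta a_2},t^{\gamma a_3})$ with $\ell_R(R/\fkc)=\alpha\beta\gamma$. Since $S=K:\fkc$ and $\fkc$ is a canonical ideal of $S$ (this is used repeatedly in Section \ref{section3}), $S$ is Gorenstein precisely when $\fkc$ is a principal ideal of $S$, which by Lemma \ref{a4.18}(2) is equivalent to $\fkc^2=\alpha_0\fkc$ for some $\alpha_0\in\fkc$, i.e. to $\fkc$ being stable. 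The heart of the argument is to translate this stability condition on $\fkc=(t^{\alpha a_1},t^{\beta a_2},t^{\gamma a_3})$ into a combinatorial statement about the exponents, and then to identify exactly when it holds in terms of the six Herzog parameters.

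First I would record, using the relations $n-m=a_2\beta'-a_1\alpha=a_3\gamma'-a_2\beta=a_1\alpha'-a_3\gamma$ from \eqref{threegene}, the exact values $\alpha a_1,\ \beta a_2,\ \gamma a_3$ as elements of $H$. The key observation is that $\ell_R(R/\fkc)=\alpha\beta\gamma$ but a principal ideal $\alpha_0 S$ of $S$ (with $S\cong k[[t]]$ if and only if $\Ker\varphi$ has the displayed $2\times3$ form and $\overline R=V$) satisfies $\ell_R(S/\alpha_0 S)=\ord_t(\alpha_0)$ — no, more carefully, since $S$ may be a product of several $k[[t]]$'s in general, but for a $3$-generated numerical semigroup ring $S=R[K]$ is again a numerical semigroup ring, in fact $S=k[[t^{e'},\dots]]$ for the semigroup generated by $H$ and $a$. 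So the plan is: compute $\fkc$ explicitly as a fractional ideal inside $S$, determine $S$ as a numerical semigroup ring, and then ask when $\fkc$ is generated by one element over $S$. This forces the three generators $\alpha a_1$, $\beta a_2$, $\gamma a_3$ of $\fkc$ to differ pairwise by elements of the semigroup of $S$ — since $\fkc$ principal over $S=k[[t]]$ means $\fkc=t^{c}S$ where $c=\min$ of the three exponents, so the other two exponents lie in $c+\mathbb{Z}_{\ge0}$ trivially; the real constraint is that $\ell_R(R/\fkc)=\alpha\beta\gamma$ must equal $\ell_R(R/t^cS)$, which is a genuine equality to exploit.

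The combinatorial crux I expect to reduce to is: $\fkc$ is stable $\iff$ two of the three numbers among $\{\alpha-\alpha',\ \beta-\beta',\ \gamma-\gamma'\}$ vanish (equivalently, two of the pairs $(\alpha,\alpha'),(\beta,\beta'),(\gamma,\gamma')$ coincide). For the forward direction I would argue that if, say, $\alpha<\alpha'$ and $\beta<\beta'$ strictly, then $\fkc=(t^{\alpha a_1},t^{\beta a_2},t^{\gamma a_3})$ cannot be principal over $S$ because the Apéry-type structure of $\fkc$ relative to $S$ retains at least two ``corners'', contradicting Lemma \ref{a4.18}; concretely, I'd show $\ell_R(S/\fkc)<\ell_R(S/t^cS)$ strictly. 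For the converse, if two pairs are equal, say $\beta=\beta'$ and $\gamma=\gamma'$, then the relations \eqref{threegene} collapse to give $n-m=a_2\beta-a_1\alpha=a_3\gamma-a_2\beta=a_1\alpha'-a_3\gamma$, and I would directly verify $\fkc=(t^{\alpha a_1},t^{\beta a_2},t^{\gamma a_3})=t^{\beta a_2}S$ (using that $\beta a_2=\gamma a_3$ and $\alpha a_1\ge\beta a_2$ when $\alpha'\ge\alpha$... this needs checking of inequalities), whence $S$ is Gorenstein by Lemma \ref{a4.18}. Finally, for the ``when this is the case'' formulas: with $\beta=\beta'$, $\gamma=\gamma'$ substituted into \eqref{4.37.1}, one gets $a_1=2\beta\gamma+\beta\gamma=3\beta\gamma$, $a_2=\alpha\gamma+\alpha\gamma+\alpha'\gamma=\gamma(2\alpha+\alpha')$, and $a_3=\alpha'\beta+\alpha'\beta+\alpha\beta=\beta(2\alpha'+\alpha)$, which is exactly the asserted form after renumbering.

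\textbf{Main obstacle.} The hard part will be the strict-inequality estimate in the forward direction: showing that when at most one of the three pairs coincides, the ideal $\fkc=(t^{\alpha a_1},t^{\beta a_2},t^{\gamma a_3})$ genuinely fails to be stable. This requires understanding the numerical semigroup of $S=R[K]$ well enough to compute $\ell_R(S/\fkc)$, or equivalently to run the stability criterion $\fkc^2\overset{?}{=}\alpha_0\fkc$ against the explicit monomial generators; the three generators of $\fkc$ interact through the two syzygies coming from the $2\times3$ Herzog matrix reduced modulo $\fkc$ (as in the proof of Theorem \ref{a4.33}), and one must show that those syzygies do not already force $\fkc$ to be principal unless two columns of the reduced matrix coincide. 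I anticipate this is where the bulk of the case analysis on $(\alpha,\alpha',\beta,\beta',\gamma,\gamma')$ — together with the constraints $\alpha\le\alpha'$, $\beta\le\beta'$, $\gamma\le\gamma'$ already available from the $\GGL$ hypothesis — will be needed.
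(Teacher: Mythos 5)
Your reduction is the right one and matches the paper: by Theorem \ref{a7.5} the question becomes whether $R$ is $\GGL$ and $S$ is Gorenstein, by Theorem \ref{a4.33} the $\GGL$ hypothesis gives $\alpha\le\alpha'$, $\beta\le\beta'$, $\gamma\le\gamma'$ and $\fkc=(t^{\alpha a_1},t^{\beta a_2},t^{\gamma a_3})$, and by Lemma \ref{a4.18} the Gorensteinness of $S$ is equivalent to $\fkc$ being stable. Your treatment of (2)~$\Rightarrow$~(1) — substitute the two equal pairs into \eqref{4.37.1} to get $a_1=3\beta\gamma$, $a_2=\gamma(2\alpha+\alpha')$, $a_3=\beta(2\alpha'+\alpha)$ and then check $\fkc^2=t^{\alpha a_1}\fkc$ directly — is exactly what the paper does (the stability check, which you defer, is a routine monomial computation once all three exponents are seen to be multiples of $\beta\gamma$).

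The genuine gap is the direction (1)~$\Rightarrow$~(2), which you explicitly leave as the ``main obstacle'' with only a vague length-counting/Ap\'ery plan. The missing idea is much more economical than what you sketch: since $\rmr(R)=2$ one has $K=R+Rt^a$ and $S=K^2=\left<1,t^a,t^{2a}\right>$, while stability of $\fkc$ with reduction $(t^{\alpha a_1})$ (after renumbering) gives $S=\fkc:\fkc=\frac{\fkc}{t^{\alpha a_1}}=\left<1,t^{\beta a_2-\alpha a_1},t^{\gamma a_3-\alpha a_1}\right>$. Comparing these two generating sets forces either $\{\beta a_2-\alpha a_1,\gamma a_3-\alpha a_1\}=\{a,2a\}$ in one order or the other. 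The case $\beta a_2-\alpha a_1=2a$, $\gamma a_3-\alpha a_1=a$ is eliminated using $a=a_1\alpha'-a_3\gamma$ from \eqref{threegene} together with the relation $X^{\alpha+\alpha'}-Y^{\beta'}Z^{\gamma}\in\Ker\varphi$: one deduces $a_2\beta'=a_3\gamma$, i.e.\ $Y^{\beta'}-Z^{\gamma}\in\Ker\varphi$, contradicting Herzog's structure theorem for a non-Gorenstein $3$-generated semigroup. In the surviving case, $\beta a_2-\alpha a_1=a=\beta' a_2-\alpha a_1$ (again by \eqref{threegene}) gives $\beta=\beta'$, and similarly $\gamma=\gamma'$. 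Without some version of this comparison your argument does not close; in particular your claim that $\fkc=t^cS$ with $c$ the minimum exponent holds ``trivially'' is false, since membership of the other exponents in $c$ plus the value semigroup of $S$ is exactly the nontrivial constraint, and you never compute that semigroup.
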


\begin{proof}
(2) $\Rightarrow$ (1): After a suitable permutation of $a_1$, $a_2$, and $a_3$ if necessary, we may assume that $\alpha < \alpha', \beta = \beta', \gamma = \gamma'$. Note that we have 
\begin{center}
$a_1=3\beta \gamma$, $a_2=\gamma (2\alpha + \alpha')$, and $a_3=\beta (2\alpha' + \alpha)$
\end{center}
by \eqref{4.37.1}. $R$ is a $\GGL$ ring and $\fkc=(t^{\alpha a_1}, t^{\beta a_2}, t^{\gamma a_3})=(t^{3 \alpha \beta \gamma}, t^{(2\alpha + \alpha')\beta \gamma}, t^{(2\alpha' + \alpha)\beta\gamma})$ by Theorem \ref{a4.33} and the above equalities. It is straightforward to verify that $\fkc^2=t^{a_1{\cdot}\alpha}\fkc$.  Hence, $S$ is a Gorenstein ring by Lemma \ref{a4.18}. Therefore, $\tr_R (\rmK_R)$ is an Ulrich ideal by Theorem \ref{a7.5}.

(1) $\Rightarrow$ (2): We have
\begin{center}
$\alpha \le \alpha'$, $\beta \le \beta'$, $\gamma \le \gamma'$, and $\fkc=(t^{\alpha a_1}, t^{\beta a_2}, t^{\gamma a_3})$
\end{center}
by Theorems \ref{a7.5} and \ref{a4.33}. 
We may assume that $(t^{a_1{\cdot}\alpha})$ is a reduction of $\fkc$ after renumbering. Then, 
$$
\left<1, t^a, t^{2a}\right>=K^2=S=\fkc:\fkc=\frac{\fkc}{t^{a_1\alpha}}=\left<1, t^{a_2\beta-a_1\alpha}, t^{a_3\gamma-a_1\alpha} \right>
$$
by Lemma \ref{a4.6} and the fact that $\fkc$ is stable. Therefore, we obtain that 

$\begin{cases}
a_2\beta-a_1\alpha=2a \\
a_3\gamma-a_1\alpha=a
\end{cases}$
or 
$\begin{cases}
a_2\beta-a_1\alpha=a \\
a_3\gamma-a_1\alpha=2a
\end{cases}$.\\
Assume the former case. Then, because $a=n-m=a_1 \alpha'-a_3 \gamma$ by (\ref{threegene}), we have 
$$
2a_3 \gamma=(a+a_1\alpha)+(a_1\alpha'-a)=a_1(\alpha+\alpha')=a_2 \beta' + a_3 \gamma,
$$
where the last equality follows from $X^{\alpha+\alpha'}-Y^{\beta'}Z^{\gamma} \in \Ker \varphi$. Therefore, $a_2 \beta'=a_3 \gamma$; hence, $Y^{\beta'}-Z^{\gamma}\in \Ker \varphi$. This contradicts the construction of $\beta$ and $\gamma$ because $R$ is not a Gorenstein ring (\cite{H}).
Hence, the latter case holds. Then, since $a_2\beta-a_1\alpha=a=a_2\beta'-a_1\alpha$ by (\ref{threegene}), we obtain that $\beta=\beta'$. Similarly, we have $\gamma=\gamma'$ since $a_3\gamma-a_1\alpha=2a=(a_2\beta'-a_1\alpha) +(a_3\gamma' -a_2\beta)$.
\end{proof}


\section{Minimal free resolutions of $\GGL$ rings}\label{section5}
Throughout this section, let $(R, \fkm)$ be a Cohen-Macaulay local ring possessing the canonical module $\rmK_R$. Set $d=\dim R>0$ and $r=\rmr(R)$.
In this section, we consider the following condition.

\begin{condition}
There exists an exact sequence 
\begin{align}\label{ttt501}
0 \to R \to \rmK_R \to \bigoplus_{i=2}^r R/\fka_i \to 0
\end{align}
of $R$-modules, where $\fka_i$ is an ideal of $R$ for all $2\le i \le r$.
\end{condition}

\begin{rem}\label{a5.1}
\begin{enumerate}[{\rm (1)}]
\item Suppose that $R$ has an exact sequence (\ref{ttt501}). Then,  $R/\fka_i$ is a Gorenstein local ring of dimension $d-1$ for all $2\le i \le r$.
\item If $R$ satisfies one of the following assumptions, then  Condition holds true:
\begin{enumerate}[{\rm (i)}] 
\item $R$ is a generically Gorenstein ring with $r=2$,
\item $R$ is a $\GGL$ ring with $d=1$, or
\item $R$ is a $2$-$\AGL$ ring in the sense of \cite[Theorem 1.4]{CGKM}. 
\end{enumerate}
\end{enumerate}
\end{rem}

\begin{proof}
(1): By Fact \ref{a3.1}, $R/\fka_i$ is a Cohen-Macaulay local ring of dimension $d-1$. By applying the functor $\Hom_R (*, \rmK_R)$ to (\ref{ttt501}), we obtain the isomorphism $\bigoplus_{i=2}^r R/\fka_i \cong \bigoplus_{i=2}^r \Ext_R^1 (R/\fka_i, \rmK_R)$. Thus, $\Ext_R^1 (R/\fka_i, \rmK_R)$ are cyclic for all $2 \le i \le r$. Since  $\Ext_R^1 (R/\fka_i, \rmK_R) \cong \rmK_{R/\fka_i}$ (\cite[Theorem 3.3.7(b)]{BH}), it follows that $R/\fka_i$ is a Gorenstein ring.

(2) (i): By assumption, there exists a canonical ideal generated by two non-zerodivisors.

(ii): This follows from Proposition \ref{a4.1}. 

(iii): This follows from \cite[Proposition 3.3 (4)]{CGKM}.
\end{proof}

Recall that $R$ is a {\it semi-Gorenstein local ring} if there exists an exact sequence 
\[
0 \to R \to \rmK_R \to \bigoplus_{i=2}^r R/\fka_i \to 0
\]
of $R$-modules, where $R/\fka_i$ are either $(0)$ or Ulrich $R$-modules with respect to $\fkm$ for all $2\le i \le r$ (\cite[Definition 7.1]{GTT}). Note that semi-Gorenstein rings are $\AGL$ rings by definition. In \cite[Theorem 7.8]{GTT}, it is given that for a Cohen-Macaulay local ring $R$, a characterization of a semi-Gorenstein property for $R$ in terms of the minimal $S$-free resolution of $R$, where $S$ is a regular local ring with a surjective ring homomorphism $S\to R$. 
The following is a generalization of \cite[Theorem 7.8]{GTT}. Note that the outline of the proof is similar to that of \cite[Theorem 7.8]{GTT}. However, we include a proof since this is a wide generalization as seen in Remark \ref{a5.1}, and the details of the proof are different.

\begin{thm}\label{a5.2}
Let $(S, \fkn)$ be a Gorenstein local ring and $I, \fka_2, \fka_3, \dots, \fka_r$ be ideals of $S$. Suppose that $R = S/I$ is a Cohen-Macaulay ring but not a Gorenstein ring. Assume that the projective dimension of $R$ over $S$ is finite. 
Then,  the following conditions are equivalent:
\begin{enumerate}[{\rm (1)}]
\item There exists an exact sequence 
$$0\to R \to \rm{K}_R \to \bigoplus _{i=2}^r S/\fka_i \to 0 $$
of $S$-modules.
\item There exist a minimal $S$-free resolution 
\begin{align}\label{star}
0\to S^{\oplus r}  \xrightarrow{\mathbb{M}} S^{\oplus q} \to \dots \to S \to R \to 0
\end{align}
 of $R$ and a non-negative integer $m$ such that
\begin{equation*}
{}^t\!{\mathbb{M}}=\left(
\begin{array}{cccc|c}
y_{21}~y_{22} ~\cdots ~y_{2u_2} & y_{31}~y_{32} ~\cdots ~y_{3u_3} & \cdots & y_{r1}~y_{r2} ~\cdots ~y_{ru_r} & z_1~z_2 ~\cdots ~z_m \\ \hline
x_{21}~x_{22} ~\cdots ~x_{2u_2} &  & & & \\
 & x_{31}~x_{32}~ \cdots~ x_{3u_3} & & \mbox{\huge{0}} & \\
 & & \ddots  & & \mbox{\huge{0}} \\
\mbox{\huge{0}} & & & x_{r1}~x_{r2}~ \cdots ~x_{ru_r} & \\
\end{array}
\right) ,
\end{equation*}
where $\mu_S (\fka_i)=u_i$, $\fka_i=(x_{i1}, x_{i2}, \dots, x_{iu_i})$, and $\dim S/\fka_i=\dim R-1$ for all $2\le i \le r$.
\end{enumerate}

Furthermore, if $x_{i1}, x_{i2}, \dots, x_{iu_i}$ is an $S$-regular sequence for all $2\le i \le r$, then we have the equality
\begin{align}\label{tousiki}
I=\sum_{i=2}^r  I_2\left(\begin{matrix}
y_{i1}&y_{i2} & \cdots &y_{iu_i}\\
x_{i1}&x_{i2} & \cdots &x_{iu_i}\\
\end{matrix}
\right)  + (z_1, z_2, \dots, z_m).
\end{align}
\end{thm}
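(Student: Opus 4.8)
The plan is to establish the equivalence $(1)\Leftrightarrow(2)$ by a dimension-shifting argument that relates the sequence in $(1)$ to the tail of a minimal $S$-free resolution of $R$, and then to read off the determinantal description of $I$ from the matrix $\mathbb{M}$.

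\textbf{Setting up the dual picture.} Since $\operatorname{pd}_S R=:n<\infty$ and $S$ is Gorenstein, $R$ is a Cohen-Macaulay $S$-module of codimension $n$, so $\rmK_R\cong\Ext_S^n(R,S)$ up to a shift (which we suppress, working with the local rings). Applying $\Hom_S(-,S)$ to a minimal $S$-free resolution $F_\bullet\colon 0\to F_n\xrightarrow{\mathbb{M}} F_{n-1}\to\cdots\to F_0=S\to R\to 0$ yields a complex whose only nonvanishing cohomology is $\rmK_R$ in degree $n$; concretely $\rmK_R=\Coker({}^t\mathbb{M}\colon F_{n-1}^\ast\to F_n^\ast)$. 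Thus a presentation of $\rmK_R$ is governed by ${}^t\mathbb{M}$, and $\mu_S(\rmK_R)=\rank F_n=r=\rmr(R)$, consistent with the free module $S^{\oplus r}$ appearing in $(2)$. The first half of the proof is then to translate the exact sequence $0\to R\to\rmK_R\to\bigoplus_{i=2}^r S/\fka_i\to 0$ into a statement about a map $S^{\oplus r}\to\bigoplus_{i=2}^r S/\fka_i$ and its kernel.

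\textbf{$(1)\Rightarrow(2)$.} Given the sequence in $(1)$, note that $R\to\rmK_R$ sends $1$ to a generator, and after a change of basis of $F_n^\ast\cong S^{\oplus r}$ we may assume the composite $S\to R\to\rmK_R$ hits the first basis vector; the quotient $\bigoplus_{i=2}^r S/\fka_i$ then has $S^{\oplus(r-1)}$ (the remaining basis vectors) as a minimal cover, which forces $\mu_S(\fka_i)=u_i$ and identifies the last $r-1$ rows of the presentation matrix of $\rmK_R$ as a block-diagonal matrix with blocks $(x_{i1},\dots,x_{iu_i})$ — these are exactly the relations of $S/\fka_i$. The remaining freedom in the first row and in the columns not involved in the block structure produces the entries $y_{ij}$ and $z_k$. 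One must check that the resulting matrix is still (the transpose of) the $n$-th differential of a \emph{minimal} resolution: minimality of the $F_\bullet$ we started with is preserved because all the new generators of $\rmK_R$ and all entries of ${}^t\mathbb{M}$ lie in $\fkn$ (if some $x_{ij}$ or $y_{ij}$ were a unit the corresponding $S/\fka_i$ would be zero or the cover non-minimal, contradicting $r=\rmr(R)$ and $\fka_i\subsetneq S$). The condition $\dim S/\fka_i=\dim R-1$ is Remark~\ref{a5.1}(1) applied over $S$.

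\textbf{$(2)\Rightarrow(1)$ and the determinantal formula.} Conversely, from a minimal resolution with ${}^t\mathbb{M}$ of the displayed shape, dualize: $\rmK_R=\Coker({}^t\mathbb{M})$, and the explicit block form shows $\rmK_R$ has a filtration whose associated map onto $\bigoplus_{i=2}^r(S/\fka_i)$ has kernel generated by the image of the first column, which is a copy of $R$ (here one uses that the $z_k$'s and the first row generate exactly the relations cutting $R$ out of $S$, i.e. $I$, together with an exactness check that the top-left corner reconstructs $R=S/I$). For the final displayed equality, assume each $x_{i1},\dots,x_{iu_i}$ is an $S$-regular sequence. Then $S/\fka_i$ is a complete intersection, hence perfect of the expected codimension, and the Buchsbaum–Rim / Hilbert–Burch type analysis of the two-rowed block $\left(\begin{smallmatrix}y_{i1}&\cdots&y_{iu_i}\\x_{i1}&\cdots&x_{iu_i}\end{smallmatrix}\right)$ identifies its ideal of $2\times 2$ minors with the relations forced among the $y$'s modulo $\fka_i$. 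Summing over $i$ and adjoining $(z_1,\dots,z_m)$ recovers $I$ by comparing the two resolutions (both resolve $R=S/I$ minimally, so the ideals generated by the maximal minors of the first syzygy matrices agree, and the block structure makes this sum of $I_2$'s explicit).

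\textbf{Main obstacle.} The delicate point is the bookkeeping in $(1)\Rightarrow(2)$: one must simultaneously arrange the change of basis so that the block-diagonal shape appears \emph{and} verify that no minimality is lost and that the number of ``leftover'' columns is exactly some $m\ge 0$ with the $z_k\in\fkn$. I expect this to require a careful comparison of Betti numbers (using $\mu_S(\rmK_R)=r$ and the structure of the sequence in $(1)$) rather than any deep new idea; the rest is dimension shifting plus the standard fact that for a perfect module the first syzygy matrix's maximal minors generate the defining ideal. The regularity hypothesis in the last clause is precisely what upgrades the abstract ``$I$ is generated by those minors and the $z_k$'' into the clean formula \eqref{tousiki}.
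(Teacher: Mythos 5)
Your strategy for the equivalence $(1)\Leftrightarrow(2)$ is the same as the paper's: dualize the tail of a minimal $S$-free resolution to get a minimal presentation $S^{\oplus q}\xrightarrow{{}^t\mathbb{M}}S^{\oplus r}\to\rmK_R\to 0$ and match the block shape of ${}^t\mathbb{M}$ with the sequence in (1). But the step you flag as ``bookkeeping'' is the actual content, and you do not supply it. The paper resolves it by working with the surjection $\psi:S^{\oplus r}\to\rmK_R$, $\mathbf{e}_i\mapsto f_i$ (where $\overline{f_i}$ is the canonical generator of the $i$th summand of $\rmK_R/f_1S$), exhibiting the explicit syzygies $\mathbf{a}_{ij}=x_{ij}\mathbf{e}_i+y_{ij}\mathbf{e}_1\in\Ker\psi$, and then showing that an \emph{arbitrary} element $\sum_i b_i\mathbf{e}_i$ of $\Ker\psi$ has $b_i\in\fka_i$ for $i\ge 2$ and hence reduces, modulo the $\mathbf{a}_{ij}$, to a syzygy supported on $\mathbf{e}_1$ (producing the $z_k\mathbf{e}_1$); a separate argument (comparing with $\mu_S(\fka_i)=u_i$) shows no $\mathbf{a}_{ij}$ can be dropped, so the presentation is minimal. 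A priori there could be syzygies mixing several $\mathbf{e}_i$ with $i\ge2$; your ``change of basis'' sketch does not rule this out, so this direction is incomplete as written.

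The more serious problem is your argument for the formula \eqref{tousiki}. You appeal to ``the standard fact that for a perfect module the first syzygy matrix's maximal minors generate the defining ideal.'' No such fact holds in general: that is Hilbert--Burch, valid only when $\pd_S R=2$, whereas here the codimension is arbitrary (indeed $q$ can be much larger than $r+1$). Moreover $\mathbb{M}$ is the \emph{last} differential of the resolution of $R$ (equivalently, ${}^t\mathbb{M}$ presents $\rmK_R$), not the first syzygy matrix of $R$, so even the object you apply the ``fact'' to is the wrong one. The paper's proof of \eqref{tousiki} is entirely different: one characterizes $a\in I$ by $a\mathbf{e}_1\in\Ker\psi$, i.e.\ $a=\sum_{i,j}c_{ij}y_{ij}+\sum_k d_kz_k$ with $\sum_j c_{ij}x_{ij}=0$ for each $i$; the hypothesis that $x_{i1},\dots,x_{iu_i}$ is a regular sequence forces every such relation $(c_{ij})_j$ to be a Koszul syzygy, whence $\sum_j c_{ij}y_{ij}$ lies in $I_2\left(\begin{smallmatrix}y_{i1}&\cdots&y_{iu_i}\\ x_{i1}&\cdots&x_{iu_i}\end{smallmatrix}\right)$. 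This is precisely where the regularity hypothesis enters, and it is the step your proposal replaces with an inapplicable principle; as it stands, the last assertion of the theorem is not proved.
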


\begin{proof}
(1) $\Rightarrow$ (2): Choose the exact sequence
$$0\to R \xrightarrow{\varphi} \rm{K}_R \to \bigoplus _{i=2}^r S/\fka_i \to 0 $$
of $S$-modules and set $f_1=\varphi(1)$. $\rm{K}_R/f_1 S \cong \bigoplus _{i=2}^r S/\fka_i$. Choose elements $f_2, \ldots , f_r \in \rm{K}_R$ such that $\overline{f_i}$ corresponds to $(0, \ldots , 0, 1_{S/\fka_i}, 0, \ldots , 0)$, where $\overline{f_i}$ denotes the image of $f_i$ in $\rm{K}_R/f_1 S$.
Then,  we have a surjective homomorphism
$$\psi:S^{\oplus r} \to \rm{K}_R,~\mathbf{e}_i \mapsto f_i,$$
where $\{ \mathbf{e}_i \}_{1\le i \le r}$ denotes the standard basis of $S^{\oplus r}$.
Set $L=\Ker {\psi}$ and $u_i=\mu_S(\fka_i)$. Choose $x_{i j} \in S$ such that $\fka_i=(x_{i1}, x_{i2}, \ldots, x_{i u_{i}})$ for all $2\le i \le r$ and $1\le j \le u_i$. 
We explore a minimal basis of $L$. Since $\rm{K}_R/f_1 S \cong \bigoplus _{i=2}^r S/\fka_i$, we have $x_{i j} f_{i} \in f_1 S$ for all $2\le i \le r$. That is,  $x_{i j} f_i + y_{i j} f_1=0$ for some $y_{i j} \in S$. Therefore, we obtain that $x_{i j} \mathbf{e}_i + y_{i j} \mathbf{e}_1 \in L$. We set $\mathbf{a}_{i j}=x_{i j} \mathbf{e}_i + y_{i j} \mathbf{e}_1$ for all $2\le i \le r$ and $1\le j \le u_i$.
Let $\mathbf{a}\in L$ and write $\mathbf{a}=\sum_{i=1}^r b_i \mathbf{e}_i$ with $b_i \in S$. Then,  $b_i \in \fka_i$ for all $2\le i \le r$ because $\sum_{i=2}^r b_i \cdot (0, \ldots , 0, 1_{S/\fka_i}, 0, \ldots , 0)=0$ in $\bigoplus _{i=2}^r S/\fka_i (\cong \rm{K}_R/f_1 S)$. Write $b_i=\sum_{j=1}^{u_i} c_{i j} x_{i j}$ with $c_{i j} \in S$. Then,  we have
\begin{align*}
\mathbf{a} &=b_1 \mathbf{e}_1 + \sum_{i=2}^r b_i \mathbf{e}_i =b_1 \mathbf{e}_1 + \sum_{i=2}^r \sum_{j=1}^{u_i} c_{i j} x_{i j} \mathbf{e}_i\\
 &=b_1 \mathbf{e}_1 + \sum_{i=2}^r \sum_{j=1}^{u_i} c_{i j} (\mathbf{a}_{i j}-y_{i j}\mathbf{e}_1).\\
\end{align*}
Hence, we have $\mathbf{a}-\sum_{i=2}^r \sum_{j=1}^{u_i} c_{i j}  \mathbf{a}_{i j} \in L\cap S\mathbf{e}_1$, and  $L$ is generated by $\{ \mathbf{a}_{i j} \}_{2\le i \le r,~1\le j \le u_i} \cup \{ z_k \mathbf{e}_1\}_{1\le k \le m}$ for some integer $m\ge 0$ and $z_k \in S$ for $1\le k \le m$. Thus, by setting $q=\sum_{i=2}^r u_i + m$, we have an $S$-free resolution
\begin{align}\label{presentation}
S^{\oplus q} \xrightarrow{{}^t\!{\mathbb{M}}} S^{\oplus r} \xrightarrow{\psi} \rm{K}_R \to 0
\end{align}
of $\rm{K_R}$, where the matrix ${}^t\!{\mathbb{M}}$ has the required form. We need to show that we may assume that (\ref{presentation}) is minimal, that is, $\{ \mathbf{a}_{i j} \}_{2\le i \le r,~1\le j \le u_i}$ is part of a minimal system of generators of $L$.
Since $\{ \mathbf{a}_{i j} \}_{2\le i \le r,~1\le j \le u_i} \cup \{ z_k \mathbf{e}_1\}_{1\le k \le m}$ generates $L$, we can choose a minimal system of generators of $L$ from them. Assume that $ \mathbf{a}_{i j} $ is not part of a minimal system of generators of $L$ for some $2\le i \le r$ and $1\le j \le u_i$. Then, we can construct an $S$-free resolution
$$
S^{\oplus q} \xrightarrow{{}^t\!{\mathbb{M}}'} S^{\oplus r} \xrightarrow{\psi} \rm{K}_R \to 0
$$
of $\rm{K_R}$, where ${}^t\!{\mathbb{M}}'$ is a matrix such that the column corresponding to $\mathbf{a}_{i j}$ is omitted from ${}^t\!{\mathbb{M}}$.
Hence, we have
\begin{eqnarray*}
\bigoplus _{i=2}^r S/\fka_i &\cong& \rm{K}_R/f_1 S \cong S^{\oplus r}/(\Im {}^t\!{\mathbb{M}}' + S\mathbf{e}_1) \\
&\cong& S/\fka_2 \oplus \dots \oplus S/\fka_{i-1} \oplus S/\fka' \oplus S/\fka_{i+1} \oplus \dots \oplus S/\fka_{r},
\end{eqnarray*}
where $\fka'=(x_{i1}, x_{i2}, \ldots, x_{i\,j-1}, x_{i\,j+1}, \ldots, x_{i u_{i}})$.
This contradicts $u_i=\mu_S(\fka_i)$.
Hence, we may assume that  (\ref{presentation}) is minimal. 
Then,  the $S$-module $\rmK_R$ possesses a minimal free resolution
$$
0 \to S \to \cdots \to S^{\oplus q} \xrightarrow{{}^t\!{\mathbb{M}}} S^{\oplus r} \to \rmK_R \to 0
$$
with $q=\sum_{i=2}^{r}u_i + m$. Therefore, by applying the $S$-dual, the assertion (2) holds.

(2) $\Rightarrow$ (1): By applying the $S$-dual to (\ref{star}), we have the exact sequence (\ref{presentation}). Set $f_i=\psi(\mathbf{e}_i)$ for all $1 \le i \le r$, where $\{\mathbf{e}_i\}_{1\le i\le r}$ denotes the standard basis of $S^{\oplus r}$. 
We then have
$$
\rm{K}_R/f_1 S \cong S^{\oplus r}/(\Im {}^t\!{\mathbb{M}} + S\mathbf{e}_1) \cong \bigoplus _{i=2}^r S/\fka_i.
$$
Hence, we have an exact sequence
$$
R \xrightarrow{\varphi} \rm{K}_R \to \bigoplus _{i=2}^r S/\fka_i \to 0
$$
of $R$-modules, where $\varphi (1)=f_1$.
Since $\dim S/\fka_i=\dim R-1$ for all $2\le i \le r$, $\varphi$ is injective by \cite[Lemma 3.1(1)]{GTT}.

Now, we prove the equality \eqref{tousiki}. Suppose that  $x_{i1}, x_{i2}, \dots, x_{iu_i}$ is an $S$-regular sequence for all $2\le i \le r$. Note that for $a\in S$, we obtain the equivalences
\begin{align}\label{final}
\begin{split}
a\in I \quad \Leftrightarrow & \quad af_1 =0 \quad \Leftrightarrow  \quad a\mathbf{e}_1\in L\\
  \Leftrightarrow & \quad a\mathbf{e}_1=\sum_{2\le i \le r,~1\le j \le u_i} c_{ij} \mathbf{a}_{ij} + \sum_{k=1}^m d_k z_k \mathbf{e}_1 \quad \text{ for some $c_{ij}, d_k\in S$}\\
  \Leftrightarrow & \quad \text{there exist $c_{ij}$ and $d_k\in S$ such that}\\
& \text{$a=\sum_{2\le i \le r,~1\le j \le u_i} c_{ij} y_{ij} + \sum_{k=1}^m d_k z_k \quad \text{ and } \quad 0=\sum_{1\le j \le u_i} c_{ij} x_{ij}$ for $2\le i \le r$},
\end{split}
\end{align}
where the first equivalence follows from the fact that $\varphi: R \to \rmK_R$ is injective and the  second equivalence follows from  $L=\Ker \psi$. The third equivalence follows from 
\[
L=\sum_{2\le i \le r,~1\le j \le u_i} S \mathbf{a}_{ij} + \sum_{k=1}^m S z_k \mathbf{e}_1. 
\]
The fourth equivalence follows from $\mathbf{a}_{i j}=x_{i j} \mathbf{e}_i + y_{i j} \mathbf{e}_1$.

$(\supseteq)$: By \eqref{final}, we obtain that $I\supseteq (z_1, z_2, \dots, z_m).$ For all $2\le i\le r$ and $1\le \alpha<\beta \le u_{i}$, we obtain that 
\[
(x_{i \beta}y_{i \alpha} - x_{i \alpha} y_{i \beta})\mathbf{e}_1 =x_{i\beta}(\mathbf{a}_{i\alpha}-x_{i\alpha} \mathbf{e}_i) - x_{i\alpha} (\mathbf{a}_{i\beta}-x_{i\beta}\mathbf{e}_i)=x_{i\beta}\mathbf{a}_{i\alpha} - x_{i\alpha} \mathbf{a}_{i\beta}\in L;
\]
hence, $x_{i \beta}y_{i \alpha} - x_{i \alpha} y_{i \beta}\in I$ by \eqref{final}.

$(\subseteq)$: Let $a\in I$, and choose $c_{ij}$, $d_{k}\in S$ such that $a=\sum_{2\le i \le r,~1\le j \le u_i} c_{ij} y_{ij} + \sum_{k=1}^m d_k z_k$ and $0=\sum_{1\le j \le u_i} c_{ij} x_{ij}$ for all $2\le i \le r$. It is sufficient to show that for any $2\le i \le r$, we have 
\[
\sum_{1\le j \le u_i} c_{ij} y_{ij} \in I_2\left(\begin{matrix}
y_{i1}&y_{i2} & \cdots &y_{iu_i}\\
x_{i1}&x_{i2} & \cdots &x_{iu_i}
\end{matrix}
\right). 
\]
Let 
\begin{align*}
K_2(\mathbf{x}) \xrightarrow{\partial_2} K_1(\mathbf{x}) \xrightarrow{\partial_1} K_0(\mathbf{x}) \quad \text{and} \quad
K_2(\mathbf{y}) \xrightarrow{\partial_2'} K_1(\mathbf{y}) \xrightarrow{\partial_1'} K_0(\mathbf{y})
\end{align*}
be the parts of Koszul complexes of the sequences $\mathbf{x}=x_{i1},x_{i2}, \dots, x_{iu_i}$ and $\mathbf{y}=y_{i1},y_{i2}, \dots, y_{iu_i}$, respectively. Let $T_1, \dots,T_{u_i}$ be the standard basis of $K_1(\mathbf{x})$ and $K_1(\mathbf{y})$. 
Then, because $\partial_1(\sum_{1\le j \le u_i} c_{ij} T_{j})=\sum_{1\le j \le u_i} c_{ij} x_{ij}=0$, we obtain $\sum_{1\le j \le u_i} c_{ij} y_{ij}\in \partial_1'(\Ker \partial_1)$. Meanwhile, because the sequence $\mathbf{x}=x_{i1},x_{i2}, \dots, x_{iu_i}$ is an $S$-regular sequence, we have $\Im \partial_2=\Ker \partial_1$. 
Hence,  $\sum_{1\le j \le u_i} c_{ij} y_{ij}\in \Im (\partial_1'\circ \partial_2)$. It follows that $\sum_{1\le j \le u_i} c_{ij} y_{ij}\in I_2\left(\begin{matrix}
y_{i1}&y_{i2} & \cdots &y_{iu_i}\\
x_{i1}&x_{i2} & \cdots &x_{iu_i}\\
\end{matrix}
\right)$  
because $\Im (\partial_1'\circ \partial_2)$ is generated by $\partial_1'\circ \partial_2(T_\alpha T_\beta)=x_{i \beta}y_{i \alpha} - x_{i \alpha} y_{i \beta}$ for all $1\le \alpha < \beta \le u_i$.
\end{proof}

\begin{cor}\label{a5.3}
Let $(S, \fkn)$ be a regular local ring and $I, \fka_2, \fka_3, \dots, \fka_r$ be ideals of $S$. Suppose that $R \cong S/I$ and $R$ is a Cohen-Macaulay ring but not a Gorenstein ring. 
Assume that there exists an exact sequence 
$$0\to R \to \rm{K}_R \to \bigoplus _{i=2}^r S/\fka_i \to 0 $$
of $R$-modules. 
If $S/\fka_i$ is a complete intersection for $2\le i\le r$, then 
$$
I=\sum_{i=2}^r  I_2\left(\begin{matrix}
y_{i1}&y_{i2} & \cdots &y_{iu_i}\\
x_{i1}&x_{i2} & \cdots &x_{iu_i}\\
\end{matrix}
\right)  + (z_1, z_2, \dots, z_m)
$$
for some $y_{i1}, y_{i2}, \dots, y_{iu_i} \in S$ and $z_1, z_2, \dots, z_m \in S$,
where $\fka_i=(x_{i1}, x_{i2}, \dots, x_{iu_i})$ and $\mu_S (\fka_i)=u_i$. 

\end{cor}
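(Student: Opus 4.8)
The plan is to derive the corollary as an essentially immediate consequence of Theorem~\ref{a5.2}. First I would check that the hypotheses of that theorem are met: a regular local ring is Gorenstein, and every finitely generated module over a regular local ring has finite projective dimension, so $\pd_S R < \infty$. Moreover, since every term of the given exact sequence $0 \to R \to \rmK_R \to \bigoplus_{i=2}^{r} S/\fka_i \to 0$ is an $R$-module and $S \to R$ is surjective, this is also an exact sequence of $S$-modules. Thus condition (1) of Theorem~\ref{a5.2} holds, and the implication (1)$\Rightarrow$(2) produces a minimal $S$-free resolution $0 \to S^{\oplus r} \xrightarrow{\mathbb{M}} S^{\oplus q} \to \cdots \to S \to R \to 0$ together with a non-negative integer $m$ such that ${}^t\mathbb{M}$ has the displayed block form, where $\fka_i = (x_{i1}, \dots, x_{iu_i})$, $u_i = \mu_S(\fka_i)$, and the entries $y_{ij}$ (in the first row) and $z_k$ (in the last block) lie in $S$.

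The only remaining point, needed in order to invoke the ``furthermore'' part of Theorem~\ref{a5.2}, is that $x_{i1}, \dots, x_{iu_i}$ is an $S$-regular sequence for each $2 \le i \le r$; this is exactly where the complete-intersection hypothesis is used. Since $S/\fka_i$ is a complete intersection, $\fka_i$ is generated by an $S$-regular sequence, which---being a regular sequence inside the maximal ideal of $S$---is automatically a minimal generating system of $\fka_i$, hence has length $\mu_S(\fka_i) = u_i$. Therefore $\operatorname{grade}(\fka_i) \ge u_i$, while $\operatorname{grade}(\fka_i) \le \operatorname{ht}(\fka_i) \le \mu_S(\fka_i) = u_i$ by Krull's height theorem, so $\operatorname{grade}(\fka_i) = u_i$. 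Since $S$ is local, depth sensitivity of the Koszul complex then gives $H_j(K_\bullet(x_{i1}, \dots, x_{iu_i}; S)) = 0$ for all $j \ge 1$ (the $u_i$ elements $x_{ij}$ generate the proper ideal $\fka_i$, whose grade is $u_i$), which is precisely the statement that $x_{i1}, \dots, x_{iu_i}$ is an $S$-regular sequence.

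Having checked this, the final equality of Theorem~\ref{a5.2} applies and yields
$$
I = \sum_{i=2}^{r} I_2\!\left(\begin{matrix} y_{i1} & y_{i2} & \cdots & y_{iu_i} \\ x_{i1} & x_{i2} & \cdots & x_{iu_i} \end{matrix}\right) + (z_1, z_2, \dots, z_m),
$$
which is the desired description of $I$. I do not expect a genuine obstacle: the argument is a matter of unwinding definitions and quoting Theorem~\ref{a5.2}. The one place requiring a little care is the second paragraph---one must ensure that the \emph{particular} minimal generators $x_{ij}$ coming out of the resolution, not merely \emph{some} generating set of $\fka_i$, form a regular sequence; but this is forced once $\operatorname{grade}(\fka_i) = \mu_S(\fka_i)$, since in a Noetherian local ring every minimal generating system of such an ideal is a regular sequence.
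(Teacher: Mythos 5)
Your proposal is correct and follows the same route as the paper: the paper's proof is the one-line observation that the complete-intersection hypothesis forces $\fka_i$ to be generated by an $S$-regular sequence (citing \cite[Theorem 2.3.3]{BH}), after which the ``furthermore'' clause of Theorem \ref{a5.2} applies. Your extra Koszul-homology step showing that \emph{every} minimal generating set of $\fka_i$ is then a regular sequence is a correct and slightly more careful filling-in of the same argument.
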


\begin{proof}
This follows from the fact that $\fka_i$ is generated by an $S$-regular sequence (see \cite[Theorem 2.3.3]{BH}). 
\end{proof}

\begin{cor}\label{a5.4}
With the notation of {\rm Theorem $\ref{a5.2}$}, suppose that {\rm Condition (1)} holds true. Set $n=\dim S-\dim R$. We then have the following:
\begin{enumerate}[{\rm (1)}]
\item If $n=2$, then $r=2$, $q=3$, and $m=0$.
\item Suppose that $S$ is a regular local ring, $I\subseteq \fkn^2$, and $R$ has minimal multiplicity. Then,  $r=n$, $q=n^2-1$, and $m=0$.
\end{enumerate}

\end{cor}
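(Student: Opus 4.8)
The strategy is to combine the characterization of Condition (1) in Theorem \ref{a5.2} with standard numerology coming from the minimal $S$-free resolution of $R$ and the Auslander--Buchsbaum formula. Recall that under the hypotheses $\pd_S R = n$, so the minimal $S$-free resolution \eqref{star} of $R$ has length $n$; in particular the last free module is $S^{\oplus r}$ (the $n$th Betti number of $R$ over $S$ equals the Cohen--Macaulay type $r$ by duality, since $S$ is Gorenstein and $R$ is Cohen--Macaulay). The matrix ${}^t\!\mathbb{M}$ in Theorem \ref{a5.2}(2) has $r$ rows and $q = \sum_{i=2}^r u_i + m$ columns, where $u_i = \mu_S(\fka_i)$ and each $R/\fka_i$ is a Gorenstein local ring of dimension $\dim R - 1$, hence of codimension $n-1$ in $S$, so $u_i \ge n-1$ with equality exactly when $\fka_i$ is a complete intersection.

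For part (1), set $n = 2$. Then each $\fka_i$ has height $1$ in the regular-looking situation; more precisely $R/\fka_i$ is Gorenstein of codimension $1$ over the Gorenstein ring $S$, so $\fka_i$ is principal and generated by a non-zerodivisor, giving $u_i = 1$ for all $i$. Thus $q = (r-1) + m$. The plan is then to exploit that $\mathbb{M}\colon S^{\oplus r} \to S^{\oplus q}$ is the last map in the minimal resolution of $R$, whose length is $n = 2$, so the resolution reads $0 \to S^{\oplus r} \xrightarrow{\mathbb{M}} S^{\oplus q} \xrightarrow{\partial} S \to R \to 0$. Taking ranks (alternating sum of Betti numbers equals $\operatorname{rank} R = 0$ since $\dim R < \dim S$... wait, more carefully: rank over $Q(S)$ of the alternating sum is $0$ because $\operatorname{rank}_S R = 0$) gives $1 - q + r = 0$, i.e. $q = r+1$. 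Combined with $q = (r-1)+m$ this forces $m = 2$; but we must instead argue $m = 0$ and $r = 2$. The cleaner route is Theorem \ref{xxx18}'s structure: when $n = 2$, $I$ is the ideal of maximal minors of an $r \times (r+1)$ matrix (Hilbert--Burch), which is Gorenstein only if $r+1 = 2$; but $R$ is not Gorenstein, so this is automatic, and one reads off from the displayed form of ${}^t\!\mathbb{M}$ that the block structure with $m$ trailing columns $z_1,\dots,z_m$ cannot occur for a Hilbert--Burch matrix unless $m = 0$, forcing $q = 3$, $r = 2$.

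For part (2), assume $S$ is regular, $I \subseteq \fkn^2$, and $R$ has minimal multiplicity, i.e. $\fkm^2 = Q\fkm$ for a minimal reduction. The key input is that minimal multiplicity together with $I \subseteq \fkn^2$ forces the Betti numbers of $R$ over $S$ to be the "extremal" ones: $\beta_i^S(R) = \binom{n}{i+1} \cdot$(correction term)... concretely, the $h$-vector of $R$ is $(1, n-1)$ when $R$ has minimal multiplicity and embedding dimension realized in $S$, and the minimal resolution of such $R$ is known to be the resolution associated to a "linear" structure, giving $\beta_n^S(R) = r = n$ and $\beta_{n-1}^S(R) = q = n^2 - 1$ by the Eagon--Northcott-type / Gorenstein-type formulas — in fact this matches Fact \ref{a7.0.2}(3)(ii) applied to $\fkm$ viewed as an Ulrich ideal with $\mu = n+d-1$ minus the regular-sequence directions. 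Then since each $\fka_i$ must be a complete intersection (as $u_i = n - 1$ forced by $\sum u_i + m = q = n^2 - 1$, $r = n$, $m \ge 0$, and $u_i \ge n-1$: indeed $\sum_{i=2}^n u_i \le (n-1)(n-1) = n^2 - 2n + 1 \le n^2 - 1$ leaves room, so one needs the sharper bound), we get $m = 0$.

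\textbf{The main obstacle} I anticipate is pinning down the exact Betti numbers $q = n^2 - 1$ and $r = n$ in part (2): this requires knowing that $R$ with minimal multiplicity and $I \subseteq \fkn^2$ has the specific graded Betti table of the "almost linear" / minimal-multiplicity resolution, which is the Eagon--Northcott resolution type. The cleanest path is probably to invoke that $\fkm$ is an Ulrich ideal of $R$ in this case (minimal multiplicity $\Leftrightarrow$ $\fkm$ is Ulrich, after factoring out a regular sequence of length $d = \dim R$), apply Fact \ref{a7.0.2}(3)(ii) with $n_{\text{Fact}} = \mu_R(\fkm) = \rme(R) + d - 1$ and $d_{\text{Fact}} = d$ to get the Betti numbers of $R/\fkm$ over $R$, and then lift these to Betti numbers over $S$ via the length-$n$ change of rings; the bookkeeping relating $\rme(R)$, $n = \pd_S R$, $d = \dim R$, and $r = \rme(R) - 1$ (by Fact \ref{a4.23}) is where care is needed, but it is routine once the right statement of Fact \ref{a7.0.2} is invoked.
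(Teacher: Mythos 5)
There is a genuine gap, and it originates in a single arithmetic error that derails both parts: you compute the codimension of $S/\fka_i$ in $S$ incorrectly. Since $\dim S/\fka_i=\dim R-1$ and $n=\dim S-\dim R$, each $\fka_i$ has $\height_S\fka_i=n+1$, hence $u_i=\mu_S(\fka_i)\ge n+1$ — not $u_i\ge n-1$, and certainly not $u_i=1$ when $n=2$ (the codimension $1$ you see is codimension in $R$, but $u_i$ counts generators of $\fka_i$ as an ideal of $S$). The paper's entire proof is the one inequality $q=\sum_{i=2}^r u_i+m\ge (r-1)(n+1)+m$ played against an exact value of $q$. In part (1) your correct rank count $q=r+1$ combined with the \emph{correct} bound gives $r+1\ge 3(r-1)+m$, i.e. $2r\le 4-m$, and since $R$ is not Gorenstein ($r\ge 2$) this forces $r=2$, $m=0$, $q=3$ — whereas your wrong bound produced the contradiction $m=2$, which you then tried to patch with an unsubstantiated claim that the block form of ${}^t\!\mathbb{M}$ "cannot occur for a Hilbert--Burch matrix unless $m=0$"; that claim is not justified and is not how the conclusion follows.

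In part (2) you correctly sense that the Betti numbers must be pinned down but leave this as an acknowledged obstacle; the paper simply cites Sally's theorem on rings of maximal embedding dimension, which gives $n=e-1$, $r=e-1$ and $q=(e-2)e=n^2-1$ directly. With those values and the correct bound, $\sum_{i=2}^r u_i+m\ge (r-1)(n+1)+m=(e-2)e+m$ equals $q=(e-2)e$ only if $m=0$; your weaker bound $u_i\ge n-1$ "leaves room", as you noticed, precisely because it is not the right bound. The detour through Ulrich ideals and Fact 2.5 is unnecessary once Sally's computation of the resolution is invoked.
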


\begin{proof}
Note that $q=\sum_{i=2}^r u_i + m \ge (r-1)(n+1) + m$ because $n+1=\height_S \fka_i \le u_i$.

(1): Since the minimal S-free resolution of $R$ has the form $0\to S^{\oplus r}  \to S^{\oplus (r+1)} \to S \to R \to 0$, we have $q=r+1\ge (r-1)(2+1) + m$. Since $R$ is not a Gorenstein ring, we obtain $r=2$, $q=3$, and $m=0$.

(2): Set $e=\rm{e}(R)$. By \cite[Theorem 1.(iii)]{S2}, we have $n=e-1$, $r=e-1$, and $q=(e-2)e$. Therefore, we obtain $q=(e-2)e \ge (e-2)e + m$; hence, $m=0$.
\end{proof}

By applying Theorem \ref{a5.2} to $\GGL$ rings of dimension one, we have a characterization of one-dimensional $\GGL$ rings in terms of the minimal free resolution.

\begin{cor}\label{a5.5}
Let $(S, \fkn)$ be a Gorenstein local ring. Let $(R, \fkm)$ be a one-dimensional Cohen-Macaulay local ring, but not a Gorenstein ring. Let $\varphi:S \to R$ be a surjective ring homomorphism and suppose that the projective dimension of $R$ over $S$ is finite. Let $\fka$ be an ideal of $S$ such that $\fka \supseteq \Ker \varphi$ and set $n=\mu_S(\fka)$ and $\fka=(x_1, x_2, \dots, x_n)$.
Then,  the following conditions are equivalent:
\begin{enumerate}[{\rm (1)}]
\item $R$ is a $\GGL$ ring with respect to $\fka R$.
\item There exists a minimal $S$-free resolution 
$$0\to S^{\oplus r} \xrightarrow{\mathbb{M}} S^{\oplus q} \to \dots \to S \to R \to 0$$ of $R$ such that
\begin{equation*}
{}^t\!{\mathbb{M}}=\left(
\begin{array}{cccc|c}
&**&&&* \\ \hline
x_{1}~x_{2} ~\cdots ~x_{n} &  & & & \\
 & x_{1}~x_{2}~ \cdots~ x_{n} & & \mbox{\huge{0}} & \\
 & & \ddots  & & \mbox{\huge{0}} \\
\mbox{\huge{0}} & & & x_{1}~x_{2}~ \cdots ~x_{n} & \\
\end{array}
\right) ,
\end{equation*}
where all entries of $**$ and $*$ are in $\fka$.
\end{enumerate}
\end{cor}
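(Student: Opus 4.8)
The plan is to deduce this corollary from Theorem~\ref{a5.2} by specializing it to one-dimensional $\GGL$ rings, taking $\fka_2=\fka_3=\cdots=\fka_r=\fka$ there. The conceptual bridge is Proposition~\ref{a4.1} together with Corollary~\ref{a7.1.1}: a one-dimensional Cohen--Macaulay local ring $R$ which is not Gorenstein is a $\GGL$ ring with respect to an $\fkm$-primary ideal $\fkb$ if and only if it admits a defining exact sequence $0\to R\to \rmK_R\to (R/\fkb)^{\oplus(r-1)}\to 0$, and then necessarily $\fkb=\tr_R(\rmK_R)$. Since $\fka\supseteq\Ker\varphi$, the ideal $\fka$ is the full $\varphi$-preimage of $\fka R$, so $S/\fka\cong R/\fka R$; in particular $\fka R$ is $\fkm$-primary precisely when $\dim S/\fka=0=\dim R-1$, which is exactly the dimension hypothesis needed to invoke Theorem~\ref{a5.2} with all $\fka_i=\fka$ (so that $u_i=n$ and $x_{ij}=x_j$ in that theorem).

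For $(1)\Rightarrow(2)$, Corollary~\ref{a7.1.1} tells us $R$ is $\GGL$ with respect to $\fka R=\tr_R(\rmK_R)$, an $\fkm$-primary ideal, so $\dim S/\fka=0$; Theorem~\ref{a5.2} then yields a minimal $S$-free resolution of $R$ whose last transpose matrix ${}^t\!\mathbb{M}$ has exactly the displayed block shape. The only thing to add is that the first-row entries (the blocks $**$ and $*$) may be chosen in $\fka$. Running the construction in the proof of Theorem~\ref{a5.2}, one may take $\rmK_R=K$ a fractional canonical ideal with $R\subseteq K\subseteq\overline{R}$, $f_1=1\in K$, and $f_2,\dots,f_r\in K$; the structural relations $x_j f_i+y_{ij}f_1=0$ in $K$ then identify the image of $y_{ij}$ in $R$ with $-x_jf_i\in (R:K)K=\tr_R(\rmK_R)=\fka R$, whence $y_{ij}\in\fka$, and the $z_k$ can be taken among generators of $\Ann_S(f_1)=\Ker\varphi\subseteq\fka$.

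For $(2)\Rightarrow(1)$, the block shape lets one read off directly --- reducing the columns of ${}^t\!\mathbb{M}$ modulo $\fka$, legitimate because all of $x_j,y_{ij},z_k$ lie in $\fka$ --- that $\rmK_R/\fka R\,\rmK_R\cong (R/\fka R)^{\oplus r}$ is $R/\fka R$-free and $\rmK_R/f_1R\cong (R/\fka R)^{\oplus(r-1)}$. By Theorem~\ref{a5.2}$(2)\Rightarrow(1)$, once $\dim S/\fka=0$ is known the map $R\to\rmK_R$ sending $1$ to $f_1$ is injective; its cokernel $(R/\fka R)^{\oplus(r-1)}$ is then zero-dimensional, hence an Ulrich $R$-module with respect to the $\fkm$-primary ideal $\fka R$, and combined with the $R/\fka R$-freeness of $\rmK_R/\fka R\rmK_R$ this exhibits $R$ as a $\GGL$ ring with respect to $\fka R$. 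The remaining step, which I expect to be the main obstacle, is to verify that $\fka R$ really is $\fkm$-primary. Here I would argue that every maximal ($r\times r$) minor of ${}^t\!\mathbb{M}$ lies in $\fka$: expanding along the $r-1$ rows carrying the $x$-blocks, each such minor is, up to sign, either $z_k\,x_{j_2}\cdots x_{j_r}$ or $(y_{i_0j}x_{j'}-y_{i_0j'}x_j)\prod_{i\ne i_0}x_{j_i}$, all in $\fka$; thus $I_r({}^t\!\mathbb{M})\subseteq\fka$, and since the given resolution is minimal, the Buchsbaum--Eisenbud acyclicity criterion forces $\grade I_r({}^t\!\mathbb{M})\ge\pd_S R=\dim S-1$, so $\dim S/\fka\le 1$. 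The case $\dim S/\fka=1$, i.e. $\fka R$ contained in a minimal prime $\fkp$ of $R$, is then to be excluded by localizing at $\fkp$: at every other minimal prime $\fkq$ the vanishing of $(\rmK_R/f_1R)_\fkq$ makes $(\rmK_R)_\fkq$ cyclic, so $R$ is generically Gorenstein off $\fkp$, and analyzing $(\rmK_R/\fka R\rmK_R)_\fkp\cong(R_\fkp/\fka R_\fkp)^{\oplus r}$ together with the one-dimensionality of $R$ and $r\ge 2$ should produce the required contradiction. The bookkeeping in this last exclusion, rather than the invocation of Theorem~\ref{a5.2} itself, is the delicate part of the argument.
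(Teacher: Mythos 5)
Your argument tracks the paper's own proof for the substantive parts of both directions. For $(1)\Rightarrow(2)$ the paper likewise passes through Proposition \ref{a4.1} and Theorem \ref{a5.2}; for the extra claim that the $**$ and $*$ entries lie in $\fka$ it argues a bit more slickly than you do: it applies $S/\fka\otimes_S-$ to the dualized presentation $S^{\oplus q}\xrightarrow{{}^t\!\mathbb{M}}S^{\oplus r}\to\rmK_R\to 0$ and notes that $\rmK_R/\fka\rmK_R\cong(S/\fka)^{\oplus r}$ forces $\overline{{}^t\!\mathbb{M}}=0$, whereas you trace the construction inside Theorem \ref{a5.2} and use $y_{ij}\equiv -x_jf_i\in(R:K)K=\fka R$ and $z_k\in\Ker\varphi$. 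Both are valid. For $(2)\Rightarrow(1)$, reducing the presentation mod $\fka$ to get freeness of $\rmK_R/\fka\rmK_R$ and invoking Theorem \ref{a5.2}$(2)\Rightarrow(1)$ for the injectivity of $R\to\rmK_R$ is exactly what the paper does.

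The one place you diverge is the step you yourself flag as "the main obstacle": proving that $\fka R$ is $\fkm$-primary from the block shape of ${}^t\!\mathbb{M}$. The paper does not attempt this; the condition $\dim S/\fka=\dim R-1$ is part of condition (2) of Theorem \ref{a5.2} and is silently carried into condition (2) of the corollary (note also that condition (1) only makes sense when $\fka R$ is $\fkm$-primary, since $\GGL$ is defined only with respect to $\fkm$-primary ideals). Your attempt cannot be completed: Buchsbaum--Eisenbud gives only $\grade I_r(\mathbb{M})\ge\pd_SR=\dim S-1$, hence $\dim S/\fka\le 1$, which you already know from $\fka\supseteq\Ker\varphi$; and the case $\dim S/\fka=1$ genuinely occurs. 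For instance, take $S=k[[x,y,z]]$, $\fka=(x^2,xy,y^2)$ and $I=I_2\left(\begin{smallmatrix} y^2 & zx^2 & x^2\\ x^2 & xy & y^2\end{smallmatrix}\right)$. One checks $\sqrt{I}=(x,y)$, so $R=S/I$ is a one-dimensional Cohen--Macaulay ring of type $2$ whose Hilbert--Burch resolution is minimal and has exactly the displayed shape, with $I\subseteq(x,y)^4\subseteq\fka$ and all first-row entries in $\fka$; yet $\fka R$ is not $\fkm$-primary, and indeed $R_{(x,y)}$ has type $2$, so $R$ is not generically Gorenstein and no embedding $R\hookrightarrow\rmK_R$ exists at all. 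So the hypothesis $\dim S/\fka=0$ must be read into condition (2) (as in Theorem \ref{a5.2}) rather than derived from it; once that is granted, your proof is complete and your localization detour is unnecessary.
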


\begin{proof}
(1) $\Rightarrow$ (2): By Proposition \ref{a4.1}, there exists an exact sequence 
$$0\to R \to \rm{K}_R \to (S/\fka)^{\oplus (r-1)} \to 0.$$
Hence, we can apply Theorem \ref{a5.2} and obtain the minimal $S$-free resolution of $R$ as stated in the assertion (2). We have only to show that all entries of $**$ and $*$ are in $\fka$. Indeed, by applying the $S$-dual to the minimal $S$-free resolution of $R$, we have $S^{\oplus q} \xrightarrow{{}^t\!{\mathbb{M}}} S^{\oplus r} \to \rm{K}_R \to 0$. By applying the functor $S/\fka \otimes_S -$ to the exact sequence, we obtain the following exact sequence:
$$(S/\fka)^{\oplus q} \xrightarrow{\overline{{}^t\!{\mathbb{M}}}} (S/\fka)^{\oplus r} \to \rm{K}_R/\fka \rm{K}_R \to 0.$$
Therefore, $\overline{{}^t\!{\mathbb{M}}}$ is forced to be a zero matrix because $(S/\fka)^{\oplus r} \cong (R/\fka R)^{\oplus r} \cong \rm{K}_R/\fka \rm{K}_R$.

(2) $\Rightarrow$ (1): By Theorem \ref{a5.2}, there exists an exact sequence 
$$0\to R \xrightarrow{\varphi} \rm{K}_R \to (S/\fka)^{\oplus (r-1)} \to 0.$$
Similarly to the above proof, we obtain the exact sequence
$$(S/\fka)^{\oplus q} \xrightarrow{\overline{{}^t\!{\mathbb{M}}}} (S/\fka)^{\oplus r} \to \rm{K}_R/\fka \rm{K}_R \to 0.$$
Since $\overline{{}^t\!{\mathbb{M}}}$ is a zero matrix, we have $(S/\fka)^{\oplus r} \cong \rm{K}_R/\fka \rm{K}_R$. Hence, $R$ is a $\GGL$ ring with respect to $\fka R$.
\end{proof}

\section{$\GGL$ rings of higher dimension}\label{sec7}

In this section we explore $\GGL$ rings of higher dimension. First, we prove the reverse implication of Proposition \ref{a3.6} (Theorem \ref{aaaa72}). 
Throughout this section, let $(R, \fkm)$ be a Cohen-Macaulay local ring possessing the canonical module $\rmK_R$. Let $\fka$ be an $\fkm$-primary ideal of $R$. Set $d=\dim R > 0$.

\begin{lem}\label{a4.11}
If $R$ is a $\GGL$ ring with respect to $\fka$, then $R/\fka$ is a Gorenstein ring.
\end{lem}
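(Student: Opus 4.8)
The plan is to induct on $d=\dim R$, reducing every case to dimension one, where the structure theory of Section~\ref{section3} applies. First I would pass to an infinite residue field: the flat local extension $R\to R(X):=R[X]_{\fkm R[X]}$ has a field as closed fibre, so Proposition~\ref{a3.6} shows that $R(X)$ is a $\GGL$ ring with respect to $\fka R(X)$, while $R(X)/\fka R(X)=(R/\fka)(X)$ is faithfully flat over $R/\fka$ with field fibre; hence $R/\fka$ is Gorenstein if and only if $R(X)/\fka R(X)$ is. So I may assume that $R/\fkm$ is infinite.

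For the inductive step, let $d\ge2$ and fix a defining exact sequence $0\to R\xrightarrow{\varphi}\rmK_R\to C\to0$ with $C$ an Ulrich $R$-module with respect to $\fka$. Since $\fka$ is $\fkm$-primary and $\depth R=d>0$, the ideal $\fka$ lies in no associated prime of $R$; with $R/\fkm$ infinite a sufficiently general $f\in\fka$ is therefore simultaneously a non-zerodivisor of $R$ and superficial for $C$ with respect to $\fka$. Theorem~\ref{a3.5}(1) then shows $R/(f)$ is a $\GGL$ ring with respect to $\fka/(f)$, and $R/(f)$ is Cohen-Macaulay of dimension $d-1>0$ with canonical module $\rmK_R/f\rmK_R$; applying the induction hypothesis to $R/(f)$ yields that $(R/(f))/(\fka/(f))=R/\fka$ is Gorenstein.

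It remains to settle $d=1$. If $R$ is Gorenstein, then $\rmK_R\cong R$, so in the defining sequence $a:=\varphi(1)$ is a non-zerodivisor and $C\cong R/(a)$ has dimension $0$; by Proposition~\ref{a2.5}(1) $C$ is $R/\fka$-free, and being cyclic it must be isomorphic to $R/\fka$, which forces $\fka=(a)$. Hence $\fka$ is generated by a single non-zerodivisor and $R/\fka$ is Gorenstein. If $R$ is not Gorenstein, then $\dim C=0$, so localizing the defining sequence at any minimal prime of $R$ shows that $R$ is generically Gorenstein; consequently Proposition~\ref{a4.1} produces a fractional canonical ideal $K$ with $R\subseteq K\subseteq\overline{R}$, placing us in Setup~\ref{setting1}. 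Now Corollary~\ref{a7.1.1} gives $\fka=\tr_R(\rmK_R)$, and, $R$ being a $\GGL$ ring, Theorem~\ref{a4.7}(ii) yields that $R/\tr_R(\rmK_R)=R/\fka$ is Gorenstein.

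The one point requiring care is the simultaneous choice, in the inductive step, of an element of $\fka$ that is both $R$-regular and superficial for $C$; this is precisely why the reduction to an infinite residue field is carried out first, after which it is a routine prime-avoidance (and generic-element) argument. All the remaining ingredients---Propositions~\ref{a2.5}, \ref{a3.6}, \ref{a4.1}, Theorems~\ref{a3.5}, \ref{a4.7}, and Corollary~\ref{a7.1.1}---are already available, so the proof is essentially an assembly of them.
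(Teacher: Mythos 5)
Your proof is correct and follows the paper's argument essentially verbatim: reduce to an infinite residue field via Proposition~\ref{a3.6}, induct on $d$ using Theorem~\ref{a3.5}(1), and settle $d=1$ by the one-dimensional theory of Section~\ref{section3}. The only difference is that you spell out the $d=1$ case more carefully---in particular the Gorenstein subcase, which the paper's bare citation of Theorem~\ref{a4.7} (stated only for non-Gorenstein $R$) leaves implicit---but the ingredients are the same.
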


\begin{proof}
By Proposition \ref{a3.6}, we may assume that $R/\fkm$ is infinite. 
The case where $d=1$ is proved in Theorem \ref{a4.7}.
Let $d>1$ and assume that our assertion holds true for $d-1$. 
Then,  we can choose $f \in \fka$ such that $R/(f)$ is a $\GGL$ ring with respect to $\fka/(f)$ by Theorem \ref{a3.5}.
Hence, $R/\fka$ is a Gorenstein ring by the induction hypothesis.
\end{proof}

\begin{cor}\label{xxxc72}
$R$ is a $\GGL$ ring with respect to some (any) parameter ideal of $R$ if and only if $R$ is Gorenstein.
\end{cor}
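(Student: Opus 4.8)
The plan is to obtain both directions from results already in hand. For the implication ``$R$ Gorenstein $\Rightarrow$ $R$ is a $\GGL$ ring with respect to any parameter ideal'', this is exactly the content of Remark \ref{rem3.4}: if $R$ is Gorenstein and $(a_1, a_2, \dots, a_d)$ is a parameter ideal (and such ideals exist since $d>0$), then the exact sequence $0 \to R \xrightarrow{a_1} R \to R/(a_1) \to 0$ is a defining exact sequence for the $\GGL$ property with respect to $(a_1, \dots, a_d)$, condition (ii) being automatic because $\rmK_R \cong R$. In particular $R$ is then a $\GGL$ ring with respect to \emph{some} parameter ideal.

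For the converse, suppose $R$ is a $\GGL$ ring with respect to a parameter ideal $\fkq$. Since $\fkq$ is $\fkm$-primary, I would first apply Lemma \ref{a4.11} to conclude that the Artinian ring $R/\fkq$ is Gorenstein. It then remains only to transfer Gorensteinness from $R/\fkq$ back to $R$: as $R$ is Cohen-Macaulay and $\fkq$ is generated by a system of parameters, $\fkq$ is generated by a maximal $R$-regular sequence, so the canonical module is compatible with factoring out $\fkq$, giving $\rmK_R/\fkq\rmK_R \cong \rmK_{R/\fkq}$; by Nakayama's lemma $\mu_{R/\fkq}(\rmK_{R/\fkq}) = \mu_R(\rmK_R) = \rmr(R)$, equivalently $\rmr(R/\fkq)=\rmr(R)$. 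Since $R/\fkq$ is Gorenstein Artinian, $\rmK_{R/\fkq}$ is free of rank one over $R/\fkq$, hence $\rmr(R) = \rmr(R/\fkq) = 1$ and $R$ is Gorenstein. Combining the two directions shows that ``$\GGL$ with respect to some parameter ideal'', ``$\GGL$ with respect to every parameter ideal'', and ``Gorenstein'' are all equivalent, which is what the statement ``some (any)'' asserts.

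I do not anticipate a genuine obstacle: the argument reduces entirely to Lemma \ref{a4.11} together with the standard behaviour of the Cohen-Macaulay type (equivalently, of the canonical module) under factoring out a regular sequence. The only point deserving a word of care is that the statement is non-vacuous precisely because of the standing hypothesis $d = \dim R > 0$ in this section, so that parameter ideals are genuinely $\fkm$-primary and the phrase ``some (any) parameter ideal'' is meaningful; in particular Lemma \ref{a4.11} is applicable to such $\fkq$.
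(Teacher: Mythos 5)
Your proposal is correct and follows essentially the same route as the paper: the forward direction is Remark \ref{rem3.4}, and the converse applies Lemma \ref{a4.11} to get that $R/\fkq$ is Gorenstein and then transfers Gorensteinness back to $R$ along the regular sequence generating $\fkq$. The paper leaves this last transfer implicit ("It follows that $R$ is also Gorenstein"), whereas you spell it out via $\rmK_R/\fkq\rmK_R \cong \rmK_{R/\fkq}$ and the type; this is exactly the intended justification.
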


\begin{proof}
If $R$ is Gorenstein of positive dimension, then $R$ is a $\GGL$ ring with respect to any parameter ideal of $R$ by Remark \ref{rem3.4}. If $R$ is a $\GGL$ ring with respect to $\fka$ and $\fka$ is a parameter ideal of $R$, then $R/\fka$ is Gorenstein by Lemma \ref{a4.11}. It follows that $R$ is also Gorenstein.
\end{proof}

\begin{thm}\label{aaaa72}
Let $R$ and $S$ be local rings of dimension $>0$. Let $\varphi : R \to S$ be a flat local homomorphism. Suppose that $S/\fkm S$ is a Cohen-Macaulay local ring of dimension  $\ell$. Let $J\subseteq S$ be a parameter ideal in $S/\fkm S$. Consider the following two conditions:
\begin{enumerate}[{\rm (1)}]
\item $R$ is a $\GGL$ ring with respect to $\fka$, and $S/\fkm S$ is a Gorenstein ring.
\item $S$ is a $\GGL$ ring with respect to $\fka S+ J$.
\end{enumerate}
Then,  $\rm(1)$ $\Rightarrow$ $\rm(2)$ holds true.  $\rm(2)$ $\Rightarrow$ $\rm(1)$ also holds true if $R/\fkm$ is infinite.
\end{thm}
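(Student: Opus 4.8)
The plan is to prove the outstanding implication $(2)\Rightarrow(1)$, so assume $S$ is a $\GGL$ ring with respect to $\mathfrak b:=\mathfrak aS+J$ and that $R/\mathfrak m$ is infinite, and fix a defining exact sequence $0\to S\to\rmK_S\to C\to 0$. First I would show that the closed fibre $S/\mathfrak m S$ is Gorenstein. Set $T=S/\mathfrak aS=S\otimes_R(R/\mathfrak a)$: since $R/\mathfrak a$ is Artinian and $S/\mathfrak m S$ is Cohen--Macaulay, $T$ is a Cohen--Macaulay local ring of dimension $\ell$, flat over $R/\mathfrak a$ with closed fibre $S/\mathfrak m S$, and the image $\bar J$ of $J$ is a parameter ideal of $T$. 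By Lemma~\ref{a4.11}, $T/\bar J=S/\mathfrak b$ is Gorenstein, and since $\bar J$ is generated by a $T$-regular sequence, $T$ is Gorenstein (\cite{BH}); descending along the faithfully flat map $R/\mathfrak a\to T$ then yields that both $R/\mathfrak a$ and the fibre $S/\mathfrak m S$ are Gorenstein (\cite[Corollary 3.3.15]{BH}). In particular $\rmK_S\cong\rmK_R\otimes_R S$ by \cite[Theorem 3.3.14]{BH}. I may also assume $R$ is not Gorenstein: if it is, then $\rmK_R=R$ forces $S$ to be Gorenstein, Lemma~\ref{lem32} forces $\mathfrak b$ to be a parameter ideal of $S$, and passing to $S/J$ (which is flat over $R$) one gets $\mu_R(\mathfrak a)=\dim R$, so $\mathfrak a$ is a parameter ideal and $R$ is $\GGL$ with respect to $\mathfrak a$ by Remark~\ref{rem3.4}.

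Next I would strip the fibre dimension down to $0$. Choose, successively, general elements $h_1,\dots,h_\ell\in\mathfrak b$ forming a superficial sequence for $C$ with respect to $\mathfrak b$ (available since the residue field is infinite) and with images in $S/\mathfrak m S$ generating the parameter ideal $J(S/\mathfrak m S)$ (a Zariski-open condition on the generators). Then $h_1,\dots,h_\ell$ is an $S$-regular sequence whose images form a regular sequence in $S/\mathfrak m S$, so $S':=S/(h_1,\dots,h_\ell)$ is flat over $R$ with Artinian Gorenstein closed fibre $(S/\mathfrak m S)/J(S/\mathfrak m S)$ and $\dim S'=\dim R$; iterating Theorem~\ref{a3.5}(1) shows $S'$ is $\GGL$ with respect to $\mathfrak bS'$. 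Since the $h_i$ lie in $\mathfrak aS+J$ and generate $J$ modulo $\mathfrak mS$, a Nakayama argument (noting $JS'\subseteq\mathfrak mS'$) gives $JS'\subseteq\mathfrak aS'$, hence $\mathfrak bS'=\mathfrak aS'$. Thus $R\to S'$ is a faithfully flat local homomorphism with Artinian Gorenstein closed fibre, and $S'$ is $\GGL$ with respect to $\mathfrak aS'$.

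Now I would descend to $\dim R=1$ by induction on $\dim R$. If $\dim R\ge 2$, a general $f\in\mathfrak a$ is a non-zerodivisor on $R$, on $\rmK_R$, and on the Ulrich cokernel $C'$ of the defining sequence of $S'$ (because $C'$ is Cohen--Macaulay of dimension $\dim S'-1>0=\dim S'/\mathfrak aS'$), and is superficial for $C'$ with respect to $\mathfrak aS'$; then $R/(f)\to S'/(f)$ is again flat with Artinian Gorenstein closed fibre, $S'/(f)$ is $\GGL$ with respect to $\mathfrak a(S'/(f))$ by Theorem~\ref{a3.5}(1), the induction hypothesis makes $R/(f)$ a $\GGL$ ring with respect to $\mathfrak a/(f)$, and Theorem~\ref{a3.5}(2) makes $R$ a $\GGL$ ring with respect to $\mathfrak a$. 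In the base case $\dim R=1$: Corollary~\ref{a4.8} (applied to the flat map $R\to S'$ with Gorenstein closed fibre) shows $R$ is a $\GGL$ ring, hence $\GGL$ with respect to $\tr_R(\rmK_R)$ by Corollary~\ref{a7.1.1}, while $S'$ is $\GGL$ with respect to $\tr_{S'}(\rmK_{S'})$; since the trace ideal of the canonical module commutes with the flat base change $R\to S'$, we get $\mathfrak aS'=\tr_{S'}(\rmK_{S'})=\tr_R(\rmK_R)\,S'$, so $\mathfrak a=\tr_R(\rmK_R)$ by faithful flatness, and $R$ is $\GGL$ with respect to $\mathfrak a$, completing the proof.

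The hard part will be the two genericity claims feeding Theorem~\ref{a3.5}(1): in the second step, that general elements of $\mathfrak b$ with a prescribed image in the fibre are superficial for $C$, and, in the third step, that a general $R/\mathfrak m$-combination of a fixed generating set of $\mathfrak a$ is superficial for the Ulrich cokernels over $S'$. The former is routine for an infinite residue field; the latter needs the observation that this $R/\mathfrak m$-form is Zariski dense in the corresponding space over the (also infinite) residue field of $S'$, so that it meets the superficial locus there. The other point requiring care is the Nakayama bookkeeping of the second step, tracking how $\mathfrak a$, $J$ and $\mathfrak b$ interact under the successive quotients; once these are pinned down, the rest is formal manipulation with Theorems~\ref{a3.5} and Corollaries~\ref{a4.8} and \ref{a7.1.1}.
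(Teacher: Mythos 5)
Your proposal is correct and follows essentially the same route as the paper's proof: establish Gorensteinness of $S/\fkb$, $S/\fka S$, $R/\fka$ and the fibre via Lemma \ref{a4.11} and flat descent, cut the fibre dimension to zero with superficial non-zerodivisors in $\fkb$ lifting a system of parameters of the fibre, cut $\dim R$ down to one with superficial elements of $\fka$ using Theorem \ref{a3.5}, and finish with Corollary \ref{a4.8} — and you are in fact more explicit than the paper about tracking the defining ideal ($\fkb S'=\fka S'$ after killing the fibre directions, and $\fka=\tr_R(\rmK_R)$ in dimension one via Corollary \ref{a7.1.1} and flat base change of trace ideals). One small caveat: the step $JS'\subseteq\fka S'$ is not literally Nakayama (since $J\cap\fkm S$ need not equal $\fkm J$); it does hold, but you should justify it either by a multiplicity comparison in $S/\fka S$ (the ideal generated by the $J$-components of the $h_i$ is a parameter ideal of $S/\fka S$ contained in $J(S/\fka S)$ with the same multiplicity, since adding the nilpotent ideal $\fkm(S/\fka S)$ does not change multiplicities) or by noting that the coefficient matrix expressing the $h_i$ in terms of a minimal generating set of $J$ modulo $\fka S$ is invertible.
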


\begin{proof}
$\rm(1)$ $\Rightarrow$ $\rm(2)$: This is proved in Proposition \ref{a3.6}.

 $\rm(2)$ $\Rightarrow$ $\rm(1)$: If $S$ is Gorenstein, then $R$ and $S/\fkm S$ are also Gorenstein by \cite[Proposition 1.2.16(b)]{BH}. Thus, we may assume that $S$ is not a Gorenstein ring. Set $\fkb=\fka S + J$.
$S/\fkb$ is a Gorenstein ring by Lemma \ref{a4.11}. Since $R\to S \to S/J$ is a flat local homomorphism, so is $R/\fka \to S/(\fka S+J)$. Hence, $R/\fka$ and $S/\fkm S$ are Gorenstein rings (cf. \cite[Proposition 1.2.16]{BH}).

We prove that $R$ is a $\GGL$ ring with respect to $\fka$. Assume that $\ell > 0$. Since $d+\ell =\dim S$ and $d>0$, $\dim S >1$. 
Choose a defining exact sequence 
\[
0 \to S \xrightarrow{\psi} \rmK_S \to D \to 0 
\]
of the $\GGL$ property of $S$. Set $A=S/((0):_S D)$. Then, we have $\dim A=\dim S-1=d+\ell -1$ by Fact \ref{a3.1}(2). Note that $\dim S/\fka S=\dim R/\fka +\ell=\ell$.
Hence, we can choose $g_1, g_2, \dots, g_\ell, h_1, h_2, \dots, h_{d-1} \in \fkb$ that satisfy the following conditions:
\begin{enumerate}[{\rm (i)}]
\item $g_1, g_2, \dots, g_\ell$ is a system of parameter of $S/\fka S$.
\item $(g_1, g_2, \dots, g_\ell, h_1, h_2, \dots, h_{d-1})A$ is a minimal reduction of $\fkb A$.
\end{enumerate}
Set $\fkq=(g_1, g_2, \dots, g_\ell, h_1, h_2, \dots, h_{d-1})$. Then 
\[
\ell_S (D/\fkq D)=\ell_A (D/\fkq D)=\rme_{\fkq A}^0 (D)=\rme_{\fkb A}^0 (D)=\rme_{\fkb}^0 (D)=\ell_S (D/\fkb D),
\]
where the last equality follows from the assumption that $D$ is an Ulrich $S$-module with respect to $\fkb$. It follows that $\fkq D=\fkb D$ and thus, $\fkq^n D=\fkb^n D$ for all $n>0$. In particular, $\fkq D \cap \fkb^{n+1} D = \fkq \fkb^{n} D$ for all $n>0$. Therefore, $g_1t, g_2t, \dots, g_\ell t, h_1t, h_2t, \dots, h_{d-1}t$ is a $\mbox{\rm gr}_{\fkb}(D)$-regular sequence by \cite{VV} or \cite[Theorem 1.1]{RV}.
It follows that $g_1, g_2, \dots, g_\ell, h_1, h_2, \dots, h_{d-1}$ is a superficial sequence for $D$ with respect to $\fkb$. On the other hand, by the condition (ii),  $g_1, g_2, \dots, g_\ell$ is a system of parameters of $S/\fkm S$. Hence, by passing to $R \to S \to S/(g_1, g_2, \dots, g_\ell)$, without loss of generality, we may assume that $\ell=0$ by Theorem \ref{a3.5}.

If $d>1$, then we can choose an element $f\in \fka$ such that $f$ is $R$-regular and $\varphi(f)$ is superficial for $D$ with respect to $\fka S$. Hence, by passing to $R/fR \to S/fS$, we may assume that $d=1$. The case where $\dim R=\dim S=1$ has already been proved in Corollary \ref{a4.8}. Therefore, we have proved the assertion.
\end{proof}

\begin{prop}\label{a4.12}
Suppose that $R/\fkm$ is infinite. If $\rme(R)\le3$, then $R$ is a $\GGL$ ring.
\end{prop}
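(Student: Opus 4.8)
The plan is to prove the statement by induction on $d=\dim R$, reducing everything to the one-dimensional case, which is already available in the paper. To begin, if $R$ is a Gorenstein ring there is nothing to do (Remark \ref{rem3.4}), so from now on I assume $R$ is not Gorenstein; recall $R/\fkm$ is infinite by hypothesis. The base case $d=1$ is then precisely Corollary \ref{a4.10}: a one-dimensional Cohen-Macaulay local ring with $\rme(R)\le 3$ (satisfying Setup \ref{setting1}) is a $\GGL$ ring. To see the hypothesis at work: since $R$ is not Gorenstein it is not a hypersurface, so $\rmv(R)\ge 3$, while $\rmv(R)\le\rme(R)\le 3$; hence $R$ has minimal multiplicity, $\rmr(R)=2$ and $K^2=K^3$, so Corollary \ref{a4.9} applies.

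For the inductive step, assume $d\ge 2$ and that the statement holds in dimension $d-1$. Since $R/\fkm$ is infinite, I would choose $f\in\fkm$ which is a superficial element for $R$ with respect to $\fkm$; as $R$ is Cohen-Macaulay of positive dimension, such an $f$ is automatically a non-zerodivisor of $R$. Putting $\overline{R}=R/(f)$, the ring $\overline{R}$ is Cohen-Macaulay of dimension $d-1\ge 1$, has infinite residue field, satisfies $\rme(\overline{R})=\rme(R)\le 3$, and $\rmK_{\overline{R}}\cong\rmK_R/f\rmK_R$ (since $f$ is a non-zerodivisor on the maximal Cohen-Macaulay module $\rmK_R$). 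By the induction hypothesis $\overline{R}$ is a $\GGL$ ring, say with respect to some $\overline{\fkm}$-primary ideal, which I may write as $\fkb/(f)$ for an $\fkm$-primary ideal $\fkb\supseteq (f)$ of $R$. Then $[\fkb+(f)]/(f)=\fkb/(f)$, so Theorem \ref{a3.5}(2) applies and gives that $R$ is a $\GGL$ ring (with respect to $\fkb$). This completes the induction.

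All the substance is in the base case, which is Corollary \ref{a4.10}, and the main obstacle — such as it is — is the bookkeeping in the reduction: one must take the cutting element $f$ to be superficial, not merely a non-zerodivisor, so that $\overline{R}$ is again Cohen-Macaulay of multiplicity at most $3$ with infinite residue field and canonical module $\rmK_R/f\rmK_R$; and one must record that the defining ideal of the $\GGL$ ring $\overline{R}$ can be presented in the form $\fkb/(f)$ so that Theorem \ref{a3.5}(2) is literally applicable. Neither point is hard, but together they are exactly what makes the passage from dimension $d$ to dimension $d-1$ go through.
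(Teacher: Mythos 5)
Your proof is correct and follows essentially the same route as the paper's: reduce to the non-Gorenstein case, handle $d=1$ by Corollary \ref{a4.10}, and induct on dimension by cutting with a superficial element and lifting the $\GGL$ property via Theorem \ref{a3.5}(2). The extra bookkeeping you supply (that $f$ is automatically a non-zerodivisor, that $\rme$, the residue field, and the canonical module behave well under the reduction, and that the defining ideal of $R/(f)$ can be written as $\fkb/(f)$) is exactly what the paper leaves implicit.
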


\begin{proof}
We may assume that $R$ is not a Gorenstein ring.
The case where $d=1$ is proved in Corollary \ref{a4.10}.
Let $d>1$ and assume that our assertion holds true for $d-1$. 
Choose $f \in \fkm$ such that $f$ is superficial for $R$ with respect to $\fkm$.
Then, because $\rme(R/(f))=\rme(R) \le 3$, $R/(f)$ is a $\GGL$ ring by the induction hypothesis. Since $f$ is a non-zerodivisor of $R$, $R$ is also a $\GGL$ ring by Theorem \ref{a3.5}.
\end{proof}

\begin{lem}\label{above}
Suppose that $R$ is a non-Gorenstein generically Gorenstein ring. Let $R^{\oplus q} \xrightarrow{\psi} R^{\oplus r} \xrightarrow{\varepsilon} \rmK_R \to 0$ be a part of a minimal $R$-free resolution of $\rmK_R$. Then $\tr_R(\rmK_R)\subseteq I_1(\psi)$, where $I_1(\psi)$ denotes the ideal of $R$ generated by the entries of the matrix representing $\psi$.
\end{lem}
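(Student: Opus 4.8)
The plan is to prove the apparently stronger pointwise statement: $\operatorname{Im} g \subseteq I_1(\psi)$ for \emph{every} $g \in \Hom_R(\rmK_R,R)$; summing over all such $g$ then yields $\tr_R(\rmK_R) \subseteq I_1(\psi)$ by the very definition of the trace ideal. Fix $g$, let $\mathbf{e}_1,\dots,\mathbf{e}_r$ be the standard basis of $R^{\oplus r}$, and set $\kappa_i := \varepsilon(\mathbf{e}_i)$, so that $\kappa_1,\dots,\kappa_r$ generate $\rmK_R$ and hence $\operatorname{Im} g = (g(\kappa_1),\dots,g(\kappa_r))$. Put $a_i := g(\kappa_i) \in R$.

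The one idea that makes everything work is to feed the arbitrary map $g$ into the canonical identification $\Hom_R(\rmK_R,\rmK_R) \cong R$. To this end I would introduce the ``diagonal'' homomorphism $\ell\colon R \to \rmK_R$, $\ell(x) = x\xi$, where $\xi := \kappa_1 + \cdots + \kappa_r$. Then $\ell\circ g \colon \rmK_R \to \rmK_R$ is an endomorphism of $\rmK_R$; since $R$ is a Cohen--Macaulay local ring possessing the canonical module, the natural map $R \to \Hom_R(\rmK_R,\rmK_R)$ is an isomorphism (\cite[Theorem 3.3.4]{BH}), so $\ell\circ g$ is multiplication by some $s \in R$. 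Evaluating at $\kappa_i$ gives the identity $s\kappa_i = \ell(g(\kappa_i)) = \ell(a_i) = a_i\xi$ in $\rmK_R$.

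Next I would transport this identity back to $R^{\oplus r}$. Because $\xi = \varepsilon(\mathbf{e}_1 + \cdots + \mathbf{e}_r)$ and $\kappa_i = \varepsilon(\mathbf{e}_i)$, the relation $a_i\xi = s\kappa_i$ says exactly that $a_i(\mathbf{e}_1 + \cdots + \mathbf{e}_r) - s\mathbf{e}_i \in \Ker \varepsilon = \operatorname{Im}\psi$. Every column of the matrix representing $\psi$ has all its entries in $I_1(\psi)$, hence so does every element of $\operatorname{Im}\psi$; consequently each coordinate $a_i - s\delta_{ij}$ of the vector $a_i(\mathbf{e}_1 + \cdots + \mathbf{e}_r) - s\mathbf{e}_i$ lies in $I_1(\psi)$. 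Finally, since $R$ is not Gorenstein we have $r = \rmr(R) \ge 2$, so for each $i$ there is some $j \ne i$; taking that $j$ gives $\delta_{ij} = 0$, whence $a_i \in I_1(\psi)$. Thus $\operatorname{Im} g = (a_1,\dots,a_r) \subseteq I_1(\psi)$ for all $g$, and $\tr_R(\rmK_R) \subseteq I_1(\psi)$.

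There is no serious obstacle beyond locating the right endomorphism; the only point needing care is the bookkeeping that turns the identity $s\kappa_i = a_i\xi$, which lives in $\rmK_R$, into a membership in $\operatorname{Im}\psi \subseteq R^{\oplus r}$, together with the use of $r \ge 2$ to isolate $a_i$ from $s$. (Note that the generic Gorensteinness hypothesis, while part of the ambient setup, is not actually used in this argument.)
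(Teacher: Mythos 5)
Your proof is correct, and it takes a genuinely different route from the paper's. Both arguments ultimately rest on the same mechanism: produce explicit elements of $\Ker\varepsilon=\Im\psi$ whose coordinates contain the desired generators of $\tr_R(\rmK_R)$, observe that every element of $\Im\psi$ has all coordinates in $I_1(\psi)$, and use $r=\rmr(R)\ge 2$ to isolate those generators. The difference is in how the relations are produced. The paper uses the generic Gorenstein hypothesis to realize $\rmK_R$ as a canonical ideal $\omega=(f_1,\dots,f_r)\subseteq R$ generated by non\-zerodivisors, writes $\tr_R(\rmK_R)=(R:\omega)\omega$ via Fact \ref{trace}, and then exploits commutativity in $\rmQ(R)$ to get the Koszul-type relations $(xf_j)\mathbf{e}_i-(xf_i)\mathbf{e}_j\in\Ker\varepsilon$ for $x\in R:\omega$. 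You instead work directly with an arbitrary $g\in\Hom_R(\rmK_R,R)$ and feed $\ell\circ g$ into the canonical isomorphism $R\cong\Hom_R(\rmK_R,\rmK_R)$ to obtain the relations $a_i(\mathbf{e}_1+\cdots+\mathbf{e}_r)-s\,\mathbf{e}_i\in\Ker\varepsilon$. What your version buys is that it never needs to embed $\rmK_R$ into $R$, so the generic Gorenstein hypothesis is indeed superfluous for the conclusion (it is harmless here, since the lemma is only invoked in that setting); what the paper's version buys is that it identifies $\Hom_R(\rmK_R,R)$ concretely as $R:\omega$, which is the viewpoint used throughout Sections 4 and 7. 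All the individual steps you use — exactness at $R^{\oplus r}$, $\mu_R(\rmK_R)=\rmr(R)\ge 2$ for non-Gorenstein $R$, and $\End_R(\rmK_R)\cong R$ from \cite[Theorem 3.3.4]{BH} — are sound.
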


\begin{proof}
Since $R$ is generically Gorenstein, there exists a canonical ideal $\omega$ of $R$. Choose a free basis $\mathbf{e}_1, \mathbf{e}_2, \dots, \mathbf{e}_r$ of $R^{\oplus r}$ and non-zerodivisors $f_1, f_2, \dots, f_r$ such that $\omega=(f_1, f_2, \dots, f_r)$. We may assume that $\varepsilon(\mathbf{e}_i)=f_i$ for all $1\le i\le r$.  
Then we obtain that for all $x\in R:\omega$ and for all $1\le i, j\le r$, $(xf_j)\mathbf{e}_i - (xf_i)\mathbf{e}_j\in \Ker \varepsilon=\Im \psi$. It follows that $xf_i\in I_1(\psi)$. Thus, $\tr_R(\rmK_R)=(R:\omega)\omega\subseteq I_1(\psi)$.
\end{proof}

\begin{prop}
Suppose that $R$ is a non-Gorenstein $\GGL$ ring with respect to $\fka$. Then $\tr_R(\rmK_R)\subseteq \fka$.
\end{prop}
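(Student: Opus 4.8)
The plan is to reduce to the one-dimensional case, where the statement is already known from Corollary~\ref{a7.1.1}, by cutting down with a superficial non-zerodivisor and tracking how the trace ideal of the canonical module behaves under such a hypersurface section. First I would dispose of the trivial case: if $R$ is Gorenstein then $\tr_R(\rmK_R)=R$, but the hypothesis ``$R$ is a non-Gorenstein $\GGL$ ring'' is exactly what we are given, so we may assume $R$ is not Gorenstein and $d=\dim R\ge 1$. If $d=1$, then Corollary~\ref{a7.1.1} gives $\tr_R(\rmK_R)=\fka$ outright, in particular $\tr_R(\rmK_R)\subseteq\fka$. So the substance is the case $d\ge 2$, and there we induct on $d$.

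For the inductive step, by Proposition~\ref{a3.6} (or Proposition~\ref{a2.5}(3)) we may first pass to a faithfully flat extension with infinite residue field, which changes neither the $\GGL$ property, nor $\fka$, nor the trace ideal in a way that affects the containment (trace ideals commute with flat base change, and $\fka$ is replaced by its extension). Then, by Theorem~\ref{a3.5}(1), choosing $f\in\fka$ a non-zerodivisor of $R$ that is superficial for $C$ with respect to $\fka$ in a defining exact sequence $0\to R\to\rmK_R\to C\to 0$, the quotient $\overline R=R/(f)$ is a $\GGL$ ring with respect to $\overline\fka=\fka/(f)$, and $\overline R$ is again non-Gorenstein (this follows because the induced sequence $0\to\overline R\to\rmK_{\overline R}\to C/fC\to 0$ has $C/fC\ne 0$ an Ulrich module, so $\overline R$ is not Gorenstein by Fact~\ref{a3.1}(1) applied in the opposite direction — a non-Gorenstein $\GGL$ ring is not Gorenstein). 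By the induction hypothesis, $\tr_{\overline R}(\rmK_{\overline R})\subseteq\overline\fka$. The key remaining point is to relate $\tr_{\overline R}(\rmK_{\overline R})$ back to $\tr_R(\rmK_R)$: since $f$ is a non-zerodivisor on $R$ and on $\rmK_R$, we have $\rmK_{\overline R}\cong\rmK_R/f\rmK_R$, and one checks that the trace ideal satisfies $\tr_R(\rmK_R)\cdot\overline R\subseteq\tr_{\overline R}(\rmK_{\overline R})$, equivalently $\tr_R(\rmK_R)+(f)\subseteq$ (the preimage in $R$ of) $\tr_{\overline R}(\rmK_{\overline R})$. Combining this with $\tr_{\overline R}(\rmK_{\overline R})\subseteq\overline\fka=(\fka+(f))/(f)=\fka/(f)$ (using $f\in\fka$) gives $\tr_R(\rmK_R)+(f)\subseteq\fka$, hence $\tr_R(\rmK_R)\subseteq\fka$, as desired.

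The step I expect to be the main obstacle is the comparison $\tr_R(\rmK_R)\overline R\subseteq\tr_{\overline R}(\rmK_{\overline R})$. In general, for a module $M$ and a non-zerodivisor $f$ that is also $M$-regular, one has a natural map $\Hom_R(M,R)/f\Hom_R(M,R)\to\Hom_{\overline R}(M/fM,\overline R)$, and applying this with $M=\rmK_R$ one gets that the image of $\tr_R(\rmK_R)$ in $\overline R$ lands inside $\tr_{\overline R}(\rmK_R/f\rmK_R)=\tr_{\overline R}(\rmK_{\overline R})$; the map $\Hom_R(M,R)/f\Hom_R(M,R)\to\Hom_{\overline R}(M/fM,\overline R)$ exists because $f$ is $R$-regular, and its image generates the right-hand trace ideal modulo possibly a discrepancy that vanishes here because $\operatorname{Ext}^1_R(\rmK_R,R)$ localizes to $0$ at associated primes of $\overline R$ — but since we only need the containment $\subseteq$ and not equality, the bare existence of the natural map $\Hom_R(\rmK_R,R)\to\Hom_{\overline R}(\rmK_{\overline R},\overline R)$ (reduce a homomorphism mod $f$) already suffices: every element of $\tr_R(\rmK_R)$ is a sum of images $\varphi(x)$ with $\varphi\in\Hom_R(\rmK_R,R)$, $x\in\rmK_R$, and its class mod $f$ is $\overline\varphi(\overline x)\in\tr_{\overline R}(\rmK_{\overline R})$. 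This makes the comparison step essentially formal, so the proof goes through cleanly by induction. Finally, I should note that the statement is in fact subsumed by the stronger assertion that $\fka\supseteq\tr_R(\rmK_R)$ with equality when $d=1$; in higher dimension equality can fail (the defining ideal $\fka$ of a $\GGL$ ring need not be unique), which is precisely why only the containment is claimed.
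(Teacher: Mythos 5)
Your proof is correct, but it takes a genuinely different route from the paper. The paper argues directly and in a dimension-free way: it takes a minimal $R$-free presentation $R^{\oplus q}\xrightarrow{\psi}R^{\oplus r}\to\rmK_R\to 0$, observes that condition (ii) of the $\GGL$ definition (freeness of $\rmK_R/\fka\rmK_R$ over $R/\fka$) forces $I_1(\psi)\subseteq\fka$ upon tensoring with $R/\fka$, and then invokes Lemma \ref{above}, which shows $\tr_R(\rmK_R)\subseteq I_1(\psi)$ for any non-Gorenstein generically Gorenstein ring. That argument needs neither an infinite residue field, nor superficial elements, nor the one-dimensional theory, and it isolates the only feature of the $\GGL$ definition actually used (the Ulrich condition on $C$ plays no role). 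Your induction on dimension instead leans on the heavier machinery of Theorem \ref{a3.5}(1) and Corollary \ref{a7.1.1}, but the two ingredients you single out as potentially delicate are both fine: the reduction to infinite residue field is legitimate because trace ideals of finitely presented modules commute with flat base change and $\fka R'\cap R=\fka$, and the comparison $\tr_R(\rmK_R)\overline{R}\subseteq\tr_{\overline{R}}(\rmK_{\overline{R}})$ is indeed formal, exactly as you say, since any $\varphi\in\Hom_R(\rmK_R,R)$ reduces modulo the non-zerodivisor $f$ to a map $\rmK_R/f\rmK_R\cong\rmK_{\overline{R}}\to\overline{R}$. The one place your write-up should be repaired is the justification that $\overline{R}=R/(f)$ is again non-Gorenstein: the existence of an exact sequence $0\to\overline{R}\to\rmK_{\overline{R}}\to D\to 0$ with $D\neq(0)$ does not by itself rule out Gorensteinness (for Gorenstein $\overline{R}$ one has $0\to\overline{R}\xrightarrow{a}\overline{R}\to\overline{R}/(a)\to 0$), and Fact \ref{a3.1}(1) says nothing in this direction; the correct one-line reason is that $f$ is a non-zerodivisor, so $\rmr(\overline{R})=\rmr(R)\ge 2$. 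With that fix your induction goes through; the paper's route is simply shorter and avoids all the reductions.
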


\begin{proof}
Let $R^{\oplus q} \xrightarrow{\psi} R^{\oplus r} \to \rmK_R \to 0$ be a part of a minimal $R$-free resolution of $\rmK_R$. 
Since $\rmK_R/\fka \rmK_R$ is $R/\fka$-free by the definition of $\GGL$ rings, by applying the functor $R/\fka\otimes_R *$ to the exact sequence, we obtain that $I_1(\psi)\subseteq \fka$. It follows that  $\tr_R(\rmK_R)\subseteq \fka$ by Lemma \ref{above}.
\end{proof}

\begin{cor}\label{c77}
If $R$ is a $\GGL$ ring and a $\NGL$ ring, then $R$ is an $\AGL$ ring.
\end{cor}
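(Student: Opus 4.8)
The plan is to reduce at once to the non-Gorenstein case: a Gorenstein ring of positive dimension is an $\AGL$ ring (indeed $\GGL$ with respect to any parameter ideal, hence in particular with respect to $\fkm$), so we may assume $R$ is a $\GGL$ ring and an $\NGL$ ring which is not Gorenstein. The single substantive input we will use is the proposition immediately preceding this corollary, which states that if $R$ is a non-Gorenstein $\GGL$ ring with respect to an $\fkm$-primary ideal $\fka$, then $\tr_R(\rmK_R)\subseteq\fka$.

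So, fix an $\fkm$-primary ideal $\fka$ with respect to which $R$ is a $\GGL$ ring. From the preceding proposition we get $\tr_R(\rmK_R)\subseteq\fka$, and since $\fka\subseteq\sqrt{\fka}=\fkm$ this gives $\tr_R(\rmK_R)\subseteq\fka\subseteq\fkm$. On the other hand the $\NGL$ hypothesis is precisely $\fkm\subseteq\tr_R(\rmK_R)$. Chaining these inclusions, $\fkm\subseteq\tr_R(\rmK_R)\subseteq\fka\subseteq\fkm$, so all of them coincide; in particular $\fka=\fkm$. Thus $R$ is a $\GGL$ ring with respect to its maximal ideal. Since $R$ is not Gorenstein it is not regular, and Remark \ref{rem3.4} then identifies a non-regular $\GGL$ ring with respect to $\fkm$ as a non-Gorenstein $\AGL$ ring; in particular $R$ is an $\AGL$ ring, as claimed.

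I do not expect a real obstacle here: the argument is the "sandwiching" $\fkm\subseteq\tr_R(\rmK_R)\subseteq\fka\subseteq\fkm$, and it is exactly the higher-dimensional analogue of the one-dimensional corollary following Corollary \ref{a7.1.1}, with the preceding proposition playing the role that Corollary \ref{a7.1.1} played there. The only points to be careful about are the harmless reduction to the non-Gorenstein case at the start and the equivalence "not Gorenstein $\Rightarrow$ not regular" needed to apply Remark \ref{rem3.4}.
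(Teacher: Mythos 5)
Your argument is correct and is exactly the paper's intended one: the corollary is stated as an immediate consequence of the preceding proposition, via the sandwich $\fkm\subseteq\tr_R(\rmK_R)\subseteq\fka\subseteq\fkm$ forcing $\fka=\fkm$. The bookkeeping at the start (Gorenstein rings of positive dimension are $\AGL$) and at the end (non-Gorenstein implies non-regular, so Remark \ref{rem3.4} applies) is handled properly.
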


The reverse does not hold in general. Indeed, $\AGL$ rings of dimension $\ge 2$ are not necessarily to be Gorenstein on the punctured spectrum. On the other hand, in the statement of Corollary \ref{c77}, we can replace the $\GGL$ property with the condition that ring has minimal multiplicity (Corollary \ref{a7.9}).

\begin{thm}\label{a7.7}
Suppose that the residue field $R/\fkm$ is infinite. If $\tr_R(\rmK_R)$ is an Ulrich ideal of $R$ with $\mu_R(\tr_R(\rmK_R))>d+1$, then $R$ is a $\GGL$ ring with respect to $\tr_R(\rmK_R)$. 
In particular, $R/\tr_R(\rmK_R)$ is a Gorenstein ring and $\mu_R(\tr_R(\rmK_R))=d+\rmr(R)$.
\end{thm}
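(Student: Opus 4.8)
The plan is to prove the statement by induction on $d=\dim R$, reducing to the one–dimensional case treated in Theorem \ref{a7.5}. Write $\tr:=\tr_R(\rmK_R)$. Since $\tr$ is an Ulrich ideal it is a proper $\fkm$–primary ideal, and because $\tr$ cuts out the non–Gorenstein locus of $R$, this forces $R_\fkp$ to be Gorenstein for every non–maximal $\fkp$; in particular $R$ is generically Gorenstein (and $R$ is not Gorenstein, since otherwise $\tr=R$, which is not an Ulrich ideal), so a canonical ideal exists by Fact \ref{genegor}. For $d=1$ the result is immediate: by Theorem \ref{a7.5}, $\tr$ being an Ulrich ideal forces $R$ to be a $\GGL$ ring, and by Corollary \ref{a7.1.1} its unique defining ideal is $\tr$; then $R/\tr$ is Gorenstein by Lemma \ref{a4.11}, whence $\mu_R(\tr)=d+\rmr(R)$ by Fact \ref{a7.0.2}(1).

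For the inductive step, assume $d\ge 2$ and set $n=\mu_R(\tr)>d+1$. Since $R/\fkm$ is infinite and $\mu_R(\tr)>d$, I would choose a sufficiently general element $f\in\tr$: such an $f$ is a non–zerodivisor of $R$, is superficial for $\tr$, is part of a minimal system of generators of $\tr$, and can be taken inside a minimal reduction $Q=(f,a_2,\dots,a_d)$ of $\tr$. By the standard behaviour of Ulrich ideals under reduction by such an element (\cite{GOTWY}), $\bar\tr:=\tr/(f)$ is an Ulrich ideal of $\bar R:=R/(f)$ with $\mu_{\bar R}(\bar\tr)=n-1>(d-1)+1$. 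The crucial point is then to identify $\tr_{\bar R}(\rmK_{\bar R})=\bar\tr$. Granting this, the induction hypothesis applies to $\bar R$, so $\bar R$ is a $\GGL$ ring with respect to $\tr_{\bar R}(\rmK_{\bar R})=\bar\tr=[\tr+(f)]/(f)$, and Theorem \ref{a3.5}(2) then lifts this to: $R$ is a $\GGL$ ring with respect to $\tr+(f)=\tr$. The final assertions follow once more from Lemma \ref{a4.11} and Fact \ref{a7.0.2}(1).

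The main obstacle is the identification $\tr_{\bar R}(\rmK_{\bar R})=\bar\tr$. Using $\rmK_{\bar R}\cong\rmK_R/f\rmK_R$, the reductions modulo $f$ of the images of homomorphisms $\rmK_R\to R$ still occur as images of homomorphisms $\rmK_{\bar R}\to\bar R$, so the inclusion $\bar\tr\subseteq\tr_{\bar R}(\rmK_{\bar R})$ holds for free; the content is the reverse inclusion, which measures the failure of homomorphisms $\rmK_{\bar R}\to\bar R$ to lift to $R$ and is governed by $\Ext^1_R(\rmK_R,R)$. One useful observation is that $\Ext^1_R(\rmK_R,R)$ has finite length, since its support lies in the non–Gorenstein locus $V(\tr)=\{\fkm\}$. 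To control it I would argue that the presence of the Ulrich ideal $\tr$ with $\mu_R(\tr)>d+1$ forces $R$, hence also $\bar R=R/(f)$, to be $G$–regular (cf.\ \cite{GTT2}), and then invoke Corollary \ref{a7.3} for the Ulrich ideal $\bar\tr$ of $\bar R$ to get $\tr_{\bar R}(\rmK_{\bar R})\subseteq\bar\tr$. The delicate verifications here — that $G$–regularity really is inherited along $R\to R/(f)$ and that the trace ideal of the canonical module descends correctly — are what I expect to be the technical heart of the argument.
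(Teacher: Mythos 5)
Your overall strategy is exactly the paper's: induct on $d$, settle $d=1$ via Theorem \ref{a7.5} and Corollary \ref{a7.1.1}, pass to $\ol{R}=R/(f)$ for a general element $f$ of a minimal reduction of $J:=\tr_R(\rmK_R)$, identify $\tr_{\ol{R}}(\rmK_{\ol{R}})=J\ol{R}$, apply the induction hypothesis, and lift back with Theorem \ref{a3.5}(2); the easy inclusion $J\ol{R}\subseteq\tr_{\ol{R}}(\rmK_{\ol{R}})$ you obtain just as the paper does, by reducing the evaluation map modulo $f$.

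The gap is in your treatment of the reverse inclusion. You propose to deduce it from Corollary \ref{a7.3} after arguing that the existence of the Ulrich ideal $J$ with $\mu_R(J)>d+1$ forces $R$ (hence $\ol{R}$) to be $G$-regular. That implication is not established anywhere in the paper and is the converse of what \cite[Theorem 2.8]{GTT2} provides ($G$-regularity yields a lower bound on $\mu$ of Ulrich ideals, not the other way around), so as stated this step does not go through. Fortunately it is also unnecessary: you have already recorded that $J\ol{R}$ is an Ulrich ideal of $\ol{R}$ with $\mu_{\ol{R}}(J\ol{R})=\mu_R(J)-1>d=\dim\ol{R}+1$, which is precisely the hypothesis of Proposition \ref{a7.2} for the $(d-1)$-dimensional ring $\ol{R}$. (Note that $\ol{R}$ is not Gorenstein, since $f$ is a non-zerodivisor and $R$ is not, and $\ol{R}$ is generically Gorenstein because its non-Gorenstein locus is contained in $V(\tr_{\ol{R}}(\rmK_{\ol{R}}))\subseteq V(J\ol{R})=\{\ol{\fkm}\}$ by the inclusion you already have.) Proposition \ref{a7.2} then gives $\tr_{\ol{R}}(\rmK_{\ol{R}})\subseteq J\ol{R}$ directly, with no appeal to $G$-regularity; this is exactly how the paper closes the argument, and with this replacement your proof coincides with it.
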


\begin{proof}
We prove this by induction on $d$. Set $J=\tr_R(\rmK_R)$. The case where $d=1$ is proved in Theorems \ref{a7.5} and \ref{a4.7}.
Let $d>1$ and assume that our assertion holds true for $d-1$. Since $R/\fkm$ is infinite, we can choose a parameter ideal $Q=(f=f_1, f_2, \ldots, f_d)$ as a minimal reduction of $J$. 
Set $\ol{*}=R/(f)\otimes_R *$. Then,  the evaluation map $t_{\rmK_R}:\Hom_R (\rmK_R, R)\otimes_R \rmK_R \to R$ (recall \eqref{eq291}) induces 
$$\ol{t_{\rmK_R}}:\ol{\Hom_R (\rmK_R, R)}\otimes_{\ol{R}} \rmK_{\ol{R}} \to \ol{R}.$$
Hence, $J\ol{R}=\Im \ol{t_{\rmK_R}} \subseteq \sum_{f\in \Hom_{\ol{R}} (\rmK_{\ol{R}}, \ol{R})} \Im f =\tr_{\ol{R}} (\rmK_{\ol{R}})$. Meanwhile, $J \ol{R}$ is an Ulrich ideal of $\ol{R}$ by \cite[Lemma 3.3]{GOTWY} and $\mu_{\ol{R}}(J \ol{R})>(d-1)-1$.
By Proposition \ref{a7.2}, $J \ol{R} = \tr_{\ol{R}}(\rmK_{\ol{R}})$. Therefore, $\ol{R}$ is a $\GGL$ ring with respect to $\tr_{\ol{R}}(\rmK_{\ol{R}})$. It follows that $R$ is also a $\GGL$ ring with respect to $\tr_R(\rmK_R)$ by Theorem \ref{a3.5}.
In particular, $R/\tr_R(\rmK_R)$ is a Gorenstein ring by Lemma \ref{a4.11}, and $\mu_R(\tr_R(\rmK_R))=d+\rmr(R)$ by Fact \ref{a7.0.2}(1).
\end{proof}

\begin{cor}\label{a7.9}
Suppose that $R$ is a $\NGL$ ring having minimal multiplicity.
Then,  $R$ is an $\AGL$ ring if the residue field $R/\fkm$ is infinite.

\end{cor}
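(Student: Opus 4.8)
The plan is to reduce the statement to the one-dimensional case already handled in Theorem~\ref{a7.5} and Corollary~\ref{a7.9} is in fact itself the target; wait---re-reading, Corollary~\ref{a7.9} is the final statement, so I must prove: \emph{if $R$ is an $\NGL$ ring with minimal multiplicity and $R/\fkm$ is infinite, then $R$ is an $\AGL$ ring.} The approach is to show that the hypotheses force $\fkm$ itself to be an Ulrich ideal, and then to bootstrap from Theorem~\ref{a7.7} (or directly from $\GGL$-theory) to conclude $\tr_R(\rmK_R)=\fkm$, which by Remark~\ref{rem3.4} and Corollary~\ref{c77}-type reasoning is exactly the $\AGL$ condition. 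Concretely: since $R$ has minimal multiplicity and $R/\fkm$ is infinite, there is $\alpha\in\fkm$ with $\fkm^2=(\alpha,f_2,\dots,f_d)\fkm$ for a system of parameters, and in fact one checks $\fkm^2=Q\fkm$ for a minimal reduction $Q$ of $\fkm$; moreover minimal multiplicity gives $\fkm/\fkm^2\cong\fkm/Q\fkm$ free over $R/\fkm$ once we quotient appropriately, so $\fkm$ is an Ulrich ideal of $R$ (this is the standard fact that a Cohen--Macaulay local ring of minimal multiplicity with infinite residue field has $\fkm$ as an Ulrich ideal when $R$ is not regular; if $R$ is regular it is Gorenstein, hence trivially $\AGL$).

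\textbf{Main steps.} First I would dispose of the Gorenstein case: if $R$ is Gorenstein it is $\AGL$, so assume $R$ is non-Gorenstein. Next, using minimal multiplicity and the infinite residue field, I would produce $Q=(f_1,\dots,f_d)$ a minimal reduction of $\fkm$ with $\fkm^2=Q\fkm$, and verify that $\fkm$ is an Ulrich ideal of $R$ (conditions (1),(2) of Definition~\ref{Ulrich ideal}): $\fkm\neq Q$ since $R$ is not regular, $\fkm^2=Q\fkm$ by construction, and $\fkm/\fkm^2$ is $R/\fkm$-free because every module over a field is free. Then $R$ is $G$-regular by \cite[Corollary 2.5]{Y} (as used in the proof of Theorem~\ref{a7.14}), so by Corollary~\ref{a7.3}, $\tr_R(\rmK_R)\subseteq I$ for every Ulrich ideal $I$, in particular $\tr_R(\rmK_R)\subseteq\fkm$ (which is automatic) --- the useful direction is the reverse: the $\NGL$ hypothesis gives $\tr_R(\rmK_R)\supseteq\fkm$, hence $\tr_R(\rmK_R)=\fkm$. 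Now I claim $\fkm=\tr_R(\rmK_R)$ being an Ulrich ideal, together with minimal multiplicity, forces the $\GGL$/$\AGL$ exact sequence: indeed by Theorem~\ref{a7.7} (or directly, since $\mu_R(\fkm)=\rmv(R)=\rme(R)+d-1>d+1$ unless $\rme(R)$ is small, a case I would handle separately via $\rme(R)\le 3$ using Proposition~\ref{a4.12}), $\tr_R(\rmK_R)$ being an Ulrich ideal implies $R$ is a $\GGL$ ring with respect to $\tr_R(\rmK_R)=\fkm$, and a $\GGL$ ring with respect to $\fkm$ that is not regular is precisely a non-Gorenstein $\AGL$ ring by Remark~\ref{rem3.4}.

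\textbf{Anticipated obstacle.} The delicate point is the numerical side-condition $\mu_R(\tr_R(\rmK_R))>d+1$ required to invoke Theorem~\ref{a7.7}: when $R$ has minimal multiplicity, $\mu_R(\fkm)=\rme(R)+d-1$, so the hypothesis $\mu_R(\fkm)>d+1$ becomes $\rme(R)>2$, i.e. $\rme(R)\ge 3$; the cases $\rme(R)=2$ (hypersurface, hence Gorenstein, already handled) and the borderline $\rme(R)=3$ need care, but $\rme(R)\le 3$ is covered by Proposition~\ref{a4.12} which gives directly that $R$ is a $\GGL$ ring, and then combined with the $\NGL$ hypothesis and Corollary~\ref{c77} (a $\GGL$ ring that is also $\NGL$ is $\AGL$) we are done. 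So the real structure of the proof is the dichotomy: either $\rme(R)\le 3$, invoke Proposition~\ref{a4.12} and Corollary~\ref{c77}; or $\rme(R)\ge 4$, whence $\fkm=\tr_R(\rmK_R)$ is an Ulrich ideal with $\mu_R>d+1$, invoke Theorem~\ref{a7.7} to get the $\GGL$-with-respect-to-$\fkm$ structure, i.e. $\AGL$. I expect verifying that minimal multiplicity indeed yields $\fkm$ as an Ulrich ideal (and the harmless reduction to $R/\fkm$ infinite and $R$ non-regular) to be routine, and the genuine content to be the bookkeeping of the multiplicity cases together with the clean application of Corollary~\ref{c77}.
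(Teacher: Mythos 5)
Your proposal is correct and follows essentially the same route as the paper: reduce to the non-Gorenstein case, use the $\NGL$ hypothesis to get $\tr_R(\rmK_R)=\fkm$, note that minimal multiplicity with infinite residue field makes $\fkm$ an Ulrich ideal, and invoke Theorem \ref{a7.7}. The only difference is that your dichotomy on $\rme(R)$ is unnecessary: by Fact \ref{a4.23}, non-Gorenstein together with minimal multiplicity forces $\mu_R(\fkm)=d+\rmr(R)\ge d+2>d+1$, so Theorem \ref{a7.7} applies uniformly and the $\rme(R)\le 3$ branch via Proposition \ref{a4.12} and Corollary \ref{c77} is never needed.
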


\begin{proof}
We may assume that $R$ is not a Gorenstein ring. Then, $\tr_R (\rmK_R)=\fkm$ is an Ulrich ideal, and $\rmv(R)=\rme(R)=d+\rmr(R)>d+1$ by Fact \ref{a4.23}. Therefore, $R$ is a $\GGL$ ring with respect to $\fkm$, that is, $R$ is an $\AGL$ ring.
\end{proof}

In the next section we construct examples of rings satisfying the condition of Theorem \ref{a7.7}, see Example \ref{a7.10}.


\section{Determinantal $\GGL$ rings of certain specific matrices}\label{section6}
In this section, we describe a construction of $\GGL$ rings arising from certain determinantal ideals.
Throughout this section, let $(S, \fkn)$ be a local ring of dimension $d>0$. For an ideal $I$ of $S$ and a finitely generated $S$-module $M$, $\grade (I, M)$ denotes the {\it grade of $M$ in $I$} in the sense of \cite[Definition 1.2.6]{BH}, that is, the length of the maximal $M$-regular sequence in $I$.

\begin{fact}{\rm (\cite[Proposition 1.2.9]{BH})}\label{a6.1}
Let $I$ be an ideal of $S$, and let $x\in \fkn$ be a non-zerodivisor of $S$. Then, 
$\grade (I, S)-1 \le \grade ([I+(x)]/(x), S/(x)) \le \grade (I, S).$
\end{fact}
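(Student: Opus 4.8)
The plan is to prove the two inequalities by different routes: the upper estimate $\grade([I+(x)]/(x),S/(x))\le\grade(I,S)$ by a direct permutation-of-regular-sequences argument, and the lower estimate $\grade([I+(x)]/(x),S/(x))\ge\grade(I,S)-1$ by combining the Rees characterization of grade through nonvanishing of $\Ext$ with the change-of-rings isomorphism along $S\to S/(x)$. Write $\overline{S}=S/(x)$ and $\overline{I}=[I+(x)]/(x)$.

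First I would dispose of the trivial case: if $I=S$ there is nothing to prove, so I may assume $I\subsetneq S$; since $S$ is local with maximal ideal $\fkn$ and $x\in\fkn$, this forces $I\subseteq\fkn$, hence $I+(x)\subsetneq S$ and $\overline{I}$ is a proper ideal of $\overline{S}$. For the upper bound, set $g=\grade(I,S)$ and suppose, toward a contradiction, that $\overline{I}$ contains an $\overline{S}$-regular sequence of length $g+1$. Using that $I\twoheadrightarrow\overline{I}$ is surjective I would lift it to elements $y_1,\dots,y_{g+1}\in I$. Since $x$ is a non-zerodivisor on $S$ and the images of the $y_i$ form a regular sequence on $\overline{S}$, the sequence $x,y_1,\dots,y_{g+1}$ is $S$-regular; as all of these elements lie in $\fkn$ and $S$ is Noetherian local, I can permute it, so $y_1,\dots,y_{g+1}$ is an $S$-regular sequence contained in $I$, contradicting $\grade(I,S)=g$. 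Hence $\grade(\overline{I},\overline{S})\le g$.

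For the lower bound I would pass to $\Ext$. The module $N:=S/(I+(x))$ is a module over $\overline{S}$, and since $x$ is $S$-regular and annihilates $N$, Rees's change-of-rings isomorphism gives $\Ext^{i}_{\overline{S}}(N,\overline{S})\cong\Ext^{i+1}_S(N,S)$ for all $i\ge0$. Applying the Rees formula $\grade(J,S)=\inf\{i:\Ext^{i}_S(S/J,S)\ne0\}$ on both rings (with $\overline{S}/\overline{I}=N=S/(I+(x))$), this yields $\grade(\overline{I},\overline{S})=\grade(I+(x),S)-1$. Finally, monotonicity of grade under enlargement of the ideal (every prime containing $I+(x)$ contains $I$, together with $\grade(J,S)=\min\{\depth S_\fkp:\fkp\supseteq J\}$) gives $\grade(I+(x),S)\ge\grade(I,S)$, whence $\grade(\overline{I},\overline{S})\ge\grade(I,S)-1$.

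I do not expect a genuine obstacle here: this is exactly \cite[Proposition 1.2.9]{BH}, so one option is simply to cite it. If a self-contained proof is desired, the two inputs to make explicit are the Rees characterization of grade via $\Ext$ and the change-of-rings isomorphism used above; the only real bookkeeping is keeping track of degenerate cases, e.g. $I\subseteq(x)$, where $\overline{I}=(0)$ so $\grade(\overline{I},\overline{S})=0$ while $\grade(I,S)\le\grade((x),S)=1$, and everything is consistent. It is worth noting that the $\Ext$ reduction actually gives the sharper identity $\grade(\overline{I},\overline{S})=\grade(I+(x),S)-1$, so the asserted inequalities are equivalent to $\grade(I,S)\le\grade(I+(x),S)\le\grade(I,S)+1$; the left one is monotonicity and the right one is the familiar fact that adjoining one element to an ideal raises the grade by at most one — but, as the permutation argument shows, that lemma is not needed for the upper bound.
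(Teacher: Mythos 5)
The paper does not prove this statement at all: it is quoted as a Fact with a citation to \cite[Proposition 1.2.9]{BH}, so there is no in-paper argument to compare against. Your proof is correct and is essentially the standard textbook derivation. The upper bound via lifting a regular sequence from $\overline{S}=S/(x)$, prepending $x$, and permuting (legitimate here since all elements lie in $\fkn$ and $S$ is Noetherian local) is sound; note that every element of $[I+(x)]/(x)$ is indeed the image of an element of $I$, so the lifting step works. For the lower bound, the identity $\grade(\overline{I},\overline{S})=\grade(I+(x),S)-1$ follows from Rees's $\Ext$-characterization together with the change-of-rings isomorphism exactly as you say; the one point worth making explicit is that the index shift $\inf\{i:\Ext^{i+1}_S(N,S)\ne 0\}=\grade(I+(x),S)-1$ uses $\Hom_S(S/(I+(x)),S)=(0:_S I+(x))=0$, which holds because $x\in I+(x)$ is a non-zerodivisor. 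With that observation and the monotonicity $\grade(I+(x),S)\ge\grade(I,S)$, the argument is complete; citing \cite[Proposition 1.2.9]{BH}, as the paper does, would of course also suffice.
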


\begin{lemma}\label{a6.2}
For an integer $n>0$, the following assertions hold true:
\begin{enumerate}[{\rm (1)}]
\item Let $0<\alpha_1, \alpha_2, \dots , \alpha_n, \beta_1, \beta_2, \dots , \beta_n$ be positive integers and $x_1, x_2, \dots, x_n\in \fkn$ be an $S$-regular sequence. Set $$I=I_2\left(\begin{matrix}
x_{1}^{\alpha_1}&x_{2}^{\alpha_2} & \cdots &x_{n-1}^{\alpha_{n-1}}&x_{n}^{\alpha_n}\\
x_{2}^{\beta_2}&x_{3}^{\beta_3} & \cdots &x_{n}^{\beta_n}&x_1^{\beta_1}\\
\end{matrix}
\right) .$$
Then,  $\grade (I, S)=n-1$.
\item Let $x_1, x_2, \dots, x_n, y_1, y_2, \dots, y_n\in \fkn$ be an $S$-regular sequence. Set $$J=I_2\left(\begin{matrix}
x_{1}&x_{2} & \cdots &x_{n}\\
y_{1}&y_{2} & \cdots &y_{n}\\
\end{matrix}
\right) .$$
Then,  $\grade (J, S)=n-1$.
\end{enumerate}

\end{lemma}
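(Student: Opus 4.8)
The plan is to prove both assertions simultaneously by induction on $n$, using Fact \ref{a6.1} to pass from $S$ to $S/(x_1)$ (or a similar quotient), and exploiting the fact that the determinantal ideals in question are generated by $n$ elements (the $2\times 2$ minors of a $2\times n$ matrix of the displayed cyclic/generic type), so that their grade is automatically bounded above by $n-1$ once we know the matrix is not a $1\times n$ matrix; the real content is the lower bound $\grade\ge n-1$.

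For the upper bound, I would first record that both $I$ and $J$ are height-$(n-1)$ ideals at most: the ideal of $2\times 2$ minors of a $2\times n$ matrix with entries in the maximal ideal is generated by $\binom{n}{2}$ elements but, more to the point, since a generic $2\times n$ matrix has minors ideal of height exactly $n-1$, and our specializations are obtained by substituting a regular sequence for the variables, the grade can only drop; in any case $\grade\le n-1$ follows because $I,J\subseteq (x_1,\dots,x_n)$ (resp.\ contained in an ideal generated by $n$ of the $x$'s and $y$'s) — wait, that bound is weaker. More carefully: the localization argument shows $\grade(J,S)\le n-1$ because after inverting $x_1$, the matrix $\left(\begin{smallmatrix} x_1 & \cdots & x_n \\ y_1 & \cdots & y_n\end{smallmatrix}\right)$ can be row-reduced so that $J S_{x_1}=(y_2 x_1 - y_1 x_2,\dots, y_n x_1 - y_1 x_n)S_{x_1}$, an ideal generated by $n-1$ elements, hence of grade $\le n-1$; and $x_1$ being a non-zerodivisor, $\grade(J,S)\le \grade(JS_{x_1},S_{x_1})\le n-1$. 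The same works for $I$ after inverting $x_1$.

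For the lower bound, here is the inductive step. In case (1), consider the non-zerodivisor $x_1$ and the quotient $\bar S = S/(x_1^{\alpha_1})$ — actually it is cleaner to reduce modulo $x_1$. Passing to $\bar S = S/(x_1)$, the sequence $x_2,\dots,x_n$ remains $\bar S$-regular, and the image $\bar I$ of $I$ contains $I_2$ of the $2\times(n-1)$ cyclic matrix on $x_2,\dots,x_n$ together with the monomials $x_2^{\alpha_2}$ (coming from the minor involving the first and last columns, since $x_1\equiv 0$); one checks $\bar I = (x_2^{\alpha_2})\cdot(\text{stuff}) + I_2(\text{smaller cyclic matrix on } x_2,\ldots,x_n)$ in such a way that $\grade(\bar I,\bar S)\ge (n-1)-1 = n-2$ by the induction hypothesis applied in $\bar S$ (with $n-1$ variables). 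Then Fact \ref{a6.1} gives $\grade(I,S)\ge \grade(\bar I,\bar S)\ge n-2$, but we need $n-1$; so instead I would induct more sharply, showing $x_1^{\alpha_1}$ (or some power of $x_1$) together with a length-$(n-2)$ regular sequence coming from $\bar I$ forms a regular sequence of length $n-1$ inside $I$. Concretely: $x_1^{\alpha_1}\in I$? No — $x_1^{\alpha_1}$ is an \emph{entry}, not a minor. The correct move is: pick the non-zerodivisor $x_1$, note $x_2^{\alpha_2}x_1^{\alpha_1} - \cdots$ type elements, and argue that modulo the ideal $(x_1^{\alpha_1+\beta_1})$ or similar the problem reduces to case (2) or to a smaller instance of (1). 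Case (2) is the genuinely clean one: $J$ is the ideal of maximal minors of a $2\times n$ matrix whose entries form part of a regular sequence, so by the classical result that such a matrix (being "generic enough" — its $2\times 2$ minors define a perfect ideal of the expected codimension when the entries are a regular sequence, cf.\ the Eagon–Northcott / Hilbert–Burch theory) has $\grade(J,S) = n-1$; alternatively one induces: modulo $x_1$, $J$ maps onto $(x_1 y_2 - \dots)$ — hmm, modulo $x_1$ the first column becomes $\left(\begin{smallmatrix}0\\ y_1\end{smallmatrix}\right)$, so $\bar J \supseteq (y_1 x_2, y_1 x_3,\dots, y_1 x_n) + I_2(\text{matrix on columns }2..n)$, and using that $y_1$ together with $x_2,\dots,x_n$ is regular, one extracts a regular sequence of length $n-1$.

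The main obstacle I anticipate is bookkeeping the reduction in case (1): the cyclic "wrap-around" structure of the matrix (last column has top entry $x_n^{\alpha_n}$ and bottom entry $x_1^{\beta_1}$) means that setting $x_1\equiv 0$ does not simply delete a column but leaves a residual monomial $x_2^{\alpha_2}\,x_n^{\alpha_n}$-type minor, so the inductive hypothesis must be applied to a modified matrix and one must verify the grade bound is not lost in the process; I would handle this by first killing $x_1^{\beta_1}$ (a non-zerodivisor), observing that modulo $x_1^{\beta_1}$ the $(1,n)$-entry of the matrix vanishes and $I$ becomes $(x_n^{\alpha_n} x_1^{\alpha_1}) + I_2(\text{cyclic matrix on } x_1,\dots,x_{n-1}) + (\text{cross terms})$, then invoking the induction hypothesis for the $(n-1)$-variable cyclic matrix, and finally combining via Fact \ref{a6.1} twice (once for $x_1^{\beta_1}$ and once inside the quotient) — keeping careful track that the two applications of Fact \ref{a6.1} together only cost at most what the recursion can afford. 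I expect the argument to ultimately show that $(x_1^{\alpha_1}, \text{(regular sequence of length } n-2 \text{ inside } I))$ works, giving $\grade(I,S)\ge n-1$, which with the upper bound yields equality; and similarly for $J$.
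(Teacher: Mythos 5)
Your proposal correctly frames the problem (upper bound $\grade\le n-1$ from the determinantal height bound or a localization, lower bound via passage to a quotient and Fact \ref{a6.1}), but it stalls exactly at the step that carries the content, and the rescue you sketch points in the wrong direction. You assume that reducing modulo $x_1$ turns $I$ into a smaller cyclic determinantal ideal of grade $n-2$, and you then hunt for an extra regular element ($x_1^{\alpha_1}$ ``or similar'') to make up the missing unit of grade --- noting yourself that $x_1^{\alpha_1}$ is not in $I$. The point you are missing is that no extra element is needed: modulo $x_1$ the matrix degenerates to $\left(\begin{smallmatrix} 0 & x_2^{\alpha_2} & \cdots & x_n^{\alpha_n} \\ x_2^{\beta_2} & \cdots & x_n^{\beta_n} & 0\end{smallmatrix}\right)$, and the radical of its ideal of $2\times 2$ minors already contains \emph{all} of $x_2,\dots,x_n$. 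Indeed the minor of the first two columns is $-x_2^{\alpha_2+\beta_2}$, that of the last two is $-x_n^{\alpha_n+\beta_n}$, and once $x_{i-1}$ lies in the radical, the consecutive minor $x_{i-1}^{\alpha_{i-1}}x_{i+1}^{\beta_{i+1}}-x_i^{\alpha_i+\beta_i}$ forces $x_i$ in as well. Since grade depends only on the radical (\cite[Proposition 1.2.10(b)]{BH}), this gives $\grade([I+(x_1)]/(x_1),S/(x_1))\ge \grade((x_2,\dots,x_n)S/(x_1),S/(x_1))=n-1$, and the right-hand inequality of Fact \ref{a6.1} yields $\grade(I,S)\ge n-1$ in one step; there is no loss of a unit of grade to recover.

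Your treatment of (2) has the same defect: the claim that from $(y_1x_2,\dots,y_1x_n)+I_2(\text{columns }2,\dots,n)$ ``one extracts a regular sequence of length $n-1$'' is precisely the assertion to be proved and is not justified. The paper instead specializes along the regular sequence $x_1,\ x_2-y_1,\ \dots,\ x_n-y_{n-1}$ (which also kills $y_n$), under which $J$ becomes $(x_2,\dots,x_n)^2$ modulo the specialization, of visible grade $n-1$; again only the ``grade can only drop under specialization'' half of Fact \ref{a6.1} is needed. Finally, your upper bound via localizing at $x_1$ is workable but requires the (easy, unchecked) observation that $JS_{x_1}$ is a proper ideal, e.g.\ because $J\subseteq(y_1,\dots,y_n)$; the paper simply cites the height bound for determinantal ideals from Bruns--Vetter. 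As written, the proposal is a plan with the decisive computation absent, not a proof.
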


\begin{proof}
Note that the heights of $I$ and $J$ are at most $n-1$ (\cite[(2.1) Theorem]{BV2}).  Thus, we have only to show that the reverse inequality holds.

(1): Note that $I+(x_1)=I' +(x_1)$, where $I'=I_2\left(\begin{matrix}
0&x_{2}^{\alpha_2} & \cdots &x_{n-1}^{\alpha_{n-1}}&x_{n}^{\alpha_n}\\
x_{2}^{\beta_2}&x_{3}^{\beta_3} & \cdots &x_{n}^{\beta_n}&0\\
\end{matrix}
\right)$. 
We show that $x_2, x_3, \ldots, x_n \in \sqrt{I'}$ by induction on $2 \le i \le n$. The cases where $i=2$ and $i=n$ are clear. For the case of $2 < i < n$, suppose the assertion holds true for $i-1$. Then, because $\mathrm{det}\left(\begin{matrix}
x_{i-1}^{\alpha_{i-1}}&x_{i}^{\alpha_i}\\
x_{i}^{\beta_i}&x_{i+1}^{\beta_{i+1}}\\
\end{matrix}\right) \in I'$, we have $x_i^{\alpha_i + \beta_i}\in \sqrt{I'}$.
Hence, $n-1 \le \grade ([\sqrt{I'}+(x_1)]/(x_1), S/(x_1)) = \grade ([I'+(x_1)]/(x_1), S/(x_1)) \le \grade (I, S)$ by Fact \ref{a6.1} and \cite[Proposition 1.2.10(b)]{BH}.

(2): Set $y_0=y_n$ and $Q=(x_i-y_{i-1} \mid 1 \le i \le n) + (x_1)$.
Then,  we have 
\[
[J+Q]/Q=\left[I_2\left(\begin{matrix}
0&x_{2} & \cdots &x_{n-1}&x_{n}\\
x_{2}&x_{3} & \cdots &x_{n}&0\\
\end{matrix}
\right)+Q\right]/Q=[(x_2, x_3, \ldots, x_n)^2 + Q]/Q.
\]
Hence, $n-1 \le \grade ([J+Q]/Q, S/Q) \le \grade (J, S)$.
\end{proof}

\begin{thm}\label{a6.3}
Let $S$ be a Gorenstein local ring, and let $n$ be a positive integer with $3\le n\le d$.
Assume that $x_1, x_2, \dots, x_d$ is a system of parameters of $S$.
Set $Q=(x_1, x_2, \dots, x_n)$ and take elements $y_1, y_2, \dots, y_n \in Q$.
Set $$I=I_2\left(\begin{matrix}
x_{1}&x_{2} & \cdots &x_{n}\\
y_{1}&y_{2} & \cdots &y_{n}\\
\end{matrix}
\right) .$$
If $\grade (I, S)=n-1$, then $R=S/I$ is a $\GGL$ ring with respect to $(x_1, x_2, \dots, x_d)R$.
\end{thm}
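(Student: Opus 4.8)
The plan is to reduce to the one-dimensional case and then invoke Corollary~\ref{a5.5}. First I would observe that since $\grade(I,S)=n-1$ and the generic determinantal height bound gives $\height I\le n-1$, the ideal $I$ is perfect of grade $n-1$; by the Hilbert--Burch theorem, $R=S/I$ is Cohen--Macaulay of dimension $d-(n-1)$, with a minimal $S$-free resolution
\[
0\to S^{\oplus(n-1)}\xrightarrow{\ {}^t\!\left(\begin{smallmatrix}x_1&x_2&\cdots&x_n\\ y_1&y_2&\cdots&y_n\end{smallmatrix}\right)\ } S^{\oplus n}\xrightarrow{\ (\Delta_1,\ldots,\Delta_n)\ } S\to R\to 0,
\]
where the $\Delta_j$ are the signed maximal minors. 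In particular $\rmr(R)=n-1\ge 2$, so $R$ is not Gorenstein, and the presentation matrix $\mathbb{M}={}^t\!\left(\begin{smallmatrix}x_1&\cdots&x_n\\ y_1&\cdots&y_n\end{smallmatrix}\right)$ has exactly the block shape required in Corollary~\ref{a5.5}: its transpose, after the obvious bookkeeping, splits as a diagonal block built from the row $(x_1,\ldots,x_n)$ repeated $r-1=n-1$ times, sitting below a single row $(y_1,\ldots,y_n)$, and the entries of that extra row lie in $Q\subseteq \fka:=(x_1,\ldots,x_n)S$ as demanded. Here I would take $\fka=(x_1,\dots,x_d)$ as the ideal ``$\fka$'' of Corollary~\ref{a5.5}, containing $I=\Ker\varphi$; note $\mu_S(\fka)$ may exceed $n$, but one can equally run the argument with $\fka=(x_1,\dots,x_n)+I$ and then invoke Proposition~\ref{a2.5}(4) / Theorem~\ref{a3.5}(2) to add the remaining parameters $x_{n+1},\dots,x_d$ back in.

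The cleanest route, though, is to first handle the case $n=d$ (so $\dim R=1$) directly via Corollary~\ref{a5.5}, and then deduce the general case by descending on $d-n$. For the one-dimensional case: the displayed resolution is minimal (all entries of $\mathbb{M}$ lie in $\fkn$ because $x_i,y_i\in Q\subseteq\fkn$), and $I_1(\mathbb{M})\subseteq\fka=(x_1,\ldots,x_n)R$ once we check the $y_j$ are in $\fka$; since $y_j\in Q=(x_1,\dots,x_n)$ this is immediate. Then the matrix ${}^t\!\mathbb{M}$ is literally of the form in Corollary~\ref{a5.5}(2) with $**$ the block of $y$'s (rearranged) and $*$ empty (i.e.\ $m=0$). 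Corollary~\ref{a5.5} then yields that $R$ is a $\GGL$ ring with respect to $\fka R=(x_1,\ldots,x_n)R=(x_1,\ldots,x_d)R$. For the step $n<d$: choose $R$-regular element among the $x_i$ with $i>n$—more precisely, since $x_1,\dots,x_d$ is a system of parameters of $S$ and $I\subseteq Q=(x_1,\dots,x_n)$, the images of $x_{n+1},\dots,x_d$ in $R$ form a regular sequence, and $R/(x_d)=S/(I+(x_d))$ is again a determinantal ring of the same matrix over the Gorenstein ring $S/(x_d)$, with $\grade$ dropping by exactly one by Fact~\ref{a6.1} (it cannot drop by less, since over $S/(x_d)$ the grade is still $\le n-1$, and it is $\ge n-2$; one checks equality $n-1$ survives because $x_d$ together with a maximal regular sequence in $I$ stays regular — this is where Lemma~\ref{a6.2}-type reasoning is reused). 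By induction $R/(x_d)$ is a $\GGL$ ring with respect to $(x_1,\dots,x_{d-1})R/(x_d)$, and then Theorem~\ref{a3.5}(2) lifts this to say $R$ is a $\GGL$ ring with respect to $(x_1,\dots,x_{d-1})R+(x_d)R=(x_1,\dots,x_d)R$.

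The main obstacle I anticipate is bookkeeping around the presentation matrix: verifying that the Hilbert--Burch matrix of $I_2\!\left(\begin{smallmatrix}x_1&\cdots&x_n\\ y_1&\cdots&y_n\end{smallmatrix}\right)$, after transposing and reindexing, genuinely falls into the rigid block pattern of Corollary~\ref{a5.5}(2)—in that Corollary the diagonal blocks use a fixed generating sequence $x_1,\dots,x_n$ of $\fka$ in each slot, whereas the naive Hilbert--Burch presentation of a $2\times n$ matrix has the \emph{two} rows $(x_\bullet)$ and $(y_\bullet)$ playing symmetric roles. One must argue that the syzygy module of $\rmK_R$ (equivalently, the transpose resolution) really does have the $n-1$ copies of $(x_1,\dots,x_n)$ along the diagonal with a single extra row of $y$'s; this is exactly the content of the minimal free resolution written above once one identifies $\rmK_R\cong\Coker({}^t\!\mathbb{M})$ and notes that the columns of ${}^t\!\mathbb{M}$ are the vectors $x_i\mathbf{e}_j-x_j\mathbf{e}_i$-type relations — wait, more precisely the relations $x_i\mathbf e_{?}+y_i\mathbf e_{?}$ coming from the $2\times2$ minors — so a careful but routine rearrangement of basis puts it in the required shape with $m=0$. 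A secondary, genuinely substantive point is confirming that $\grade(I,S/(x_d))=n-1$ in the inductive step; this needs that $x_d$ is a non-zerodivisor on $S/I$, which follows because $I$ is perfect of grade $n-1<d$ so all associated primes of $S/I$ have dimension $d-n+1\ge1$ while $(x_1,\dots,x_d)$ is a system of parameters, hence $x_d$ avoids them after, if necessary, a harmless general position adjustment—or one simply invokes that $x_1,\dots,x_d$ s.o.p.\ and $I\subseteq(x_1,\dots,x_n)$ forces $x_{n+1},\dots,x_d$ to be a system of parameters, hence a regular sequence, on the Cohen--Macaulay ring $R$.
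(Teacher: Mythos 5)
Your overall strategy---reduce to $n=d$ (dimension one) by killing $x_{n+1},\dots,x_d$ and lifting back via Theorem \ref{a3.5}(2), then verify the block shape of the last syzygy matrix and invoke Corollary \ref{a5.5}---is exactly the paper's strategy. The reduction step is fine: since $I\subseteq Q$ and $x_1,\dots,x_d$ is a system of parameters, the images of $x_{n+1},\dots,x_d$ form a regular sequence on the Cohen--Macaulay ring $R$, and the grade hypothesis survives the quotient.

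The genuine gap is the resolution you write down. The ideal $I=I_2\bigl(\begin{smallmatrix}x_1&\cdots&x_n\\ y_1&\cdots&y_n\end{smallmatrix}\bigr)$ is perfect of grade $n-1$, so $\pd_S R=n-1$; for $n\ge 4$ this is $\ge 3$, and the Hilbert--Burch theorem (which concerns grade-two perfect ideals presented by an $(n)\times(n+1)$ matrix) simply does not apply. Your displayed complex $0\to S^{\oplus(n-1)}\to S^{\oplus n}\to S\to R\to 0$ cannot be a resolution: it has length two, the ranks are inconsistent with the map you name (the transpose of a $2\times n$ matrix maps $S^{2}\to S^{n}$, not $S^{n-1}\to S^{n}$), and $I$ has $\binom{n}{2}$ minimal generators, not $n$. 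The correct resolution is the Eagon--Northcott complex $0\to S^{\oplus(n-1)}\xrightarrow{\mathbb{M}} S^{\oplus n(n-2)}\to\cdots\to S\to R\to 0$, whose last differential ${}^t\mathbb{M}$ is a bidiagonal block matrix built from the two rows $X=(x_1\ -x_2\ \cdots\ (-1)^{n-1}x_n)$ and $Y=(y_1\ -y_2\ \cdots\ (-1)^{n-1}y_n)$, with $Y$ appearing in every block row, not just one. The step you defer as ``careful but routine rearrangement'' is precisely the missing substance: one must use $y_i\in Q$ to perform elementary column operations clearing the $Y$-blocks out of all rows but the first, which puts ${}^t\mathbb{M}$ into the rigid shape of Corollary \ref{a5.5}(2) (diagonal $X$-blocks below a single row of entries in $Q$, with $m=0$). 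Your argument as written only covers $n=3$, where the grade is two and Hilbert--Burch does give the $0\to S^{2}\to S^{3}\to S\to R\to 0$ resolution you describe.
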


\begin{proof}
We may reduce the assertion to the case where $n=d$. Indeed, assume that $n<d$. 
Then,  $S/(x_{n+1}, \dots, x_d)$ and $R/(x_{n+1}, \dots, x_d)R$ satisfy the same conditions of $S$ and $R$. 
Thus, we may assume that $n=d$ by Theorem \ref{a3.5}.
By the hypothesis, the Eagon-Northcott complex (\cite{EN}) gives a minimal $S$-free resolution of $R$. Recall that the Eagon-Northcott complex has the form
$$0\to S^{\oplus r} \xrightarrow{\mathbb{M}} S^{\oplus n(n-2)} \to \dots \to S \to R \to 0$$ of $R$ such that
\begin{equation*}
{}^t\!{\mathbb{M}}=\left(
\begin{array}{ccccc}
Y&&&& \\ 
X&Y&&\mbox{\huge{0}}&\\
&X& \ddots&  & \\
& & \ddots  & Y&\\
&\mbox{\huge{0}}&&X& Y\\
& & &  & X\\
\end{array}
\right) ,
\end{equation*}
where $X=(x_1~-x_2~x_3~\cdots~(-1)^{n-1}x_n)$ and $Y=(y_1~-y_2~y_3~\cdots~(-1)^{n-1}y_n)$ are submatrices of ${}^t\!{\mathbb{M}}$.
Since $y_{i} \in Q$ for all $1 \le i \le n$, by applying the elementary column operation, we have
\begin{equation*}
{}^t\!{\mathbb{M}} \sim \left(
\begin{array}{ccccc}
Y&&*&& \\ \hline
X&&&&\\
&X& & & \\
& & \ddots  & &\mbox{\huge{0}}\\
&&&X& \\
\mbox{\huge{0}}& & &  & X\\
\end{array}
\right)=:\mathbb{N},
\end{equation*}
where all entries in $*$ are elements of $Q$.
After replacing the basis of $S^{\oplus n(n-2)}$, we may assume that ${}^t\!{\mathbb{M}}=\mathbb{N}$.
Therefore, $R$ is a $\GGL$ ring with respect to $QR$ by Theorem \ref{a5.5}.
\end{proof}

\begin{cor}\label{a6.4}
Let $S$ be a Gorenstein local ring, and let $n$ be a positive integer with $3\le n\le \dim S=d$.
Then,  the following assertions hold true.
\begin{enumerate}[{\rm (1)}]
\item Let $\alpha_1, \alpha_2, \dots , \alpha_n, \beta_1, \beta_2, \dots , \beta_n$ be positive integers, and \\
let $x_1, x_2, \dots, x_n, z_1, z_2, \dots, z_{d-n}\in \fkn$ be a system of parameters of $S$. Set 
$$I=I_2\left(\begin{matrix}
x_{1}^{\alpha_1}&x_{2}^{\alpha_2} & \cdots &x_{n-1}^{\alpha_{n-1}}&x_{n}^{\alpha_n}\\
x_{2}^{\beta_2}&x_{3}^{\beta_3} & \cdots &x_{n}^{\beta_n}&x_1^{\beta_1}\\
\end{matrix}
\right) .$$
Then, $R_1=S/I$ is a $\GGL$ ring with respect to $(x_1^{\alpha_1}, x_2^{\alpha_2}, \dots, x_n^{\alpha_n}, z_1, z_2, \dots, z_{d-n})R_1$ if and only if $\alpha_i \le \beta_i$ for all $1\le i \le n$.
\item  Let $x_1, x_2, \dots, x_n, y_1, y_2, \dots, y_n, z_1, z_2, \dots, z_{d-2n}\in \fkn$ be a system of parameters of $S$. Set $$J=I_2\left(\begin{matrix}
x_{1}&x_{2} & \cdots &x_{n}\\
y_{1}&y_{2} & \cdots &y_{n}\\
\end{matrix}
\right).$$
Then, $R_2=S/J$ is a $\GGL$ ring with respect to 
\[
(x_1, x_2, \dots, x_n, y_1, y_2, \dots, y_n, z_1, z_2, \dots, z_{d-2n})R_2.
\]
\end{enumerate}

\end{cor}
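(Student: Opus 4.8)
The plan is to deduce both parts from Theorem \ref{a6.3}, the grade computations of Lemma \ref{a6.2}, the change of rings results of Theorem \ref{a3.5}, and the explicit Eagon--Northcott minimal free resolution of the determinantal quotients.

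For part (1), the ``if'' direction is a direct application of Theorem \ref{a6.3}: assuming $\alpha_i\le\beta_i$ for all $i$, put $w_i=x_i^{\alpha_i}$ for $1\le i\le n$ and $w_{n+j}=z_j$ for $1\le j\le d-n$, which again form a system of parameters of $S$; with $Q=(w_1,\dots,w_n)$ the second-row entries $x_2^{\beta_2},\dots,x_n^{\beta_n},x_1^{\beta_1}$ of $I$ all lie in $Q$ because $\beta_j\ge\alpha_j$, while $\grade(I,S)=n-1$ by Lemma \ref{a6.2}(1), so Theorem \ref{a6.3} gives that $R_1=S/I$ is a $\GGL$ ring with respect to $(w_1,\dots,w_d)R_1$, which is the asserted ideal. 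For the ``only if'' direction I would use the minimal free resolution. By Lemma \ref{a6.2}(1) the equality $\grade(I,S)=n-1$ holds unconditionally, so $I$ is a perfect ideal of grade $n-1$ whose Eagon--Northcott complex $0\to S^{\oplus(n-1)}\xrightarrow{\mathbb M}S^{\oplus n(n-2)}\to\cdots\to S\to R_1\to 0$ is minimal; dualizing over the Gorenstein ring $S$ yields a minimal presentation $S^{\oplus n(n-2)}\xrightarrow{{}^{t}\!\mathbb M}S^{\oplus(n-1)}\to\rmK_{R_1}\to 0$, and the entries of ${}^{t}\!\mathbb M$ are, up to sign, exactly $x_1^{\alpha_1},\dots,x_n^{\alpha_n},x_1^{\beta_1},\dots,x_n^{\beta_n}$. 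Put $\fkb=(x_1^{\alpha_1},\dots,x_n^{\alpha_n},z_1,\dots,z_{d-n})$, so $I\subseteq\fkb$ and $R_1/\fkb R_1=S/\fkb$. If $R_1$ is a $\GGL$ ring with respect to $\fkb R_1$, condition (ii) of Definition \ref{a3.2} forces $\rmK_{R_1}/\fkb\rmK_{R_1}$ to be $S/\fkb$-free, necessarily of rank $\mu_S(\rmK_{R_1})=n-1$; since $\rmK_{R_1}/\fkb\rmK_{R_1}$ is the cokernel of ${}^{t}\!\mathbb M\bmod\fkb$ and is free of the same rank as its target over the local ring $S/\fkb$, that reduced matrix is zero, i.e.\ every entry of ${}^{t}\!\mathbb M$ lies in $\fkb$. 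In particular $x_i^{\beta_i}\in\fkb$, and reducing modulo the $S$-regular sequence obtained by deleting $x_i^{\alpha_i}$ from $x_1^{\alpha_1},\dots,x_n^{\alpha_n},z_1,\dots,z_{d-n}$ shows $x_i^{\beta_i}\in(x_i^{\alpha_i})$ in a one-dimensional Cohen--Macaulay ring on which $x_i$ is a non-zerodivisor, whence $\beta_i\ge\alpha_i$.

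For part (2), Theorem \ref{a6.3} does not apply verbatim, since the bottom-row entries $y_i$ are independent parameters and do not lie in $(x_1,\dots,x_n)$; the plan is to reduce to its hypotheses by cutting $R_2=S/J$ down to dimension one along a regular sequence contained in $\fka=(x_1,\dots,x_n,y_1,\dots,y_n,z_1,\dots,z_{d-2n})$ chosen so that the matrix of $J$ acquires the required form. By Lemma \ref{a6.2}(2) we have $\grade(J,S)=n-1$, so $R_2$ is Cohen--Macaulay of dimension $d-n+1$. After a harmless faithfully flat base change enlarging the residue field (Proposition \ref{a3.6}), pick $\lambda_1,\dots,\lambda_n\in S$ with pairwise distinct residues and set $g_j=y_j+\lambda_jx_j\in\fka$. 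The $z_i$ form an $R_2$-regular sequence, and one checks that $z_1,\dots,z_{d-2n},g_1,\dots,g_n$ does too: modulo the $g_j$ one has $y_j\equiv-\lambda_jx_j$, so $J$ collapses to $I_2\!\left(\begin{smallmatrix}x_1&\cdots&x_n\\-\lambda_1x_1&\cdots&-\lambda_nx_n\end{smallmatrix}\right)=(x_ix_j:i<j)$ (the units $\lambda_i-\lambda_j$ being absorbed), and $S'':=S/(z_1,\dots,z_{d-2n},g_1,\dots,g_n)$ is Gorenstein of dimension $n$ with $R_2/(z_1,\dots,z_{d-2n},g_1,\dots,g_n)R_2=S''/\bar{\bar J}$, where $\bar{\bar J}=(\bar x_i\bar x_j:i<j)$ has height $n-1$ (its minimal primes are the ideals generated by all but one of the $\bar x_i$). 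Since $\bar{\bar J}=I_2\!\left(\begin{smallmatrix}\bar x_1&\cdots&\bar x_n\\-\lambda_1\bar x_1&\cdots&-\lambda_n\bar x_n\end{smallmatrix}\right)$ has its bottom row inside the system-of-parameters ideal $Q''=(\bar x_1,\dots,\bar x_n)=\fka S''$ and $\grade(\bar{\bar J},S'')=n-1$, Theorem \ref{a6.3} applies to $S''$ and gives that $S''/\bar{\bar J}$ is a $\GGL$ ring with respect to $Q''(S''/\bar{\bar J})=\fka(S''/\bar{\bar J})$. Climbing back up along the reductions by means of Theorem \ref{a3.5}(2)---legitimate because $z_1,\dots,z_{d-2n},g_1,\dots,g_n$ all lie in $\fka$ and form an $R_2$-regular sequence---then yields that $R_2$ is a $\GGL$ ring with respect to $\fka R_2$.

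The step I expect to be the crux is the verification in part (2) that $z_1,\dots,z_{d-2n},g_1,\dots,g_n$ is an $R_2$-regular sequence, i.e.\ that these generic elements of $\fka$ cut $R_2$ down to dimension one; this is precisely where the determinantal structure of $J$ is used (the specialized ideal of $2\times 2$ minors collapsing to the height-$(n-1)$ monomial ideal $(x_ix_j:i<j)$) and where the pairwise distinctness of the $\lambda_i$ matters. Everything else---propagating the hypothesis that $\fka$ is generated by a system of parameters through the successive one-variable reductions, and checking that the ideal produced by Theorem \ref{a6.3} is the claimed one---is bookkeeping.
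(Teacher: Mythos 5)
Your part (1) is essentially the paper's argument: the ``if'' direction is Theorem \ref{a6.3} combined with Lemma \ref{a6.2}(1), and the ``only if'' direction reads off, from the freeness of $\rmK_{R_1}/\fkb\rmK_{R_1}$ and the minimality of the dualized Eagon--Northcott presentation, that the reduced presentation matrix must vanish, hence $x_i^{\beta_i}\in\fkb$. Your extra step deducing $\beta_i\le\alpha_i$ is impossible --- i.e.\ $\beta_i\ge\alpha_i$ --- by passing to the one-dimensional quotient in which $x_i$ is a non-zerodivisor is a detail the paper leaves implicit, and it is correct.

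Part (2) takes a genuinely different specialization. The paper sets $y_0=y_n$ and kills the regular sequence $x_i-y_{i-1}$ ($1\le i\le n$), which turns $J$ into the cyclic matrix of part (1) with all exponents equal to $1$, and then climbs back with Theorem \ref{a3.5}(2); no scalars and no condition on the residue field are needed. You instead kill $g_j=y_j+\lambda_jx_j$ with pairwise distinct scalars $\lambda_j$, collapsing $J$ to $(x_ix_j\mid i<j)$, and apply Theorem \ref{a6.3} directly; your dimension counts, the grade computation for the collapsed ideal, and the identification of the resulting ideal with $\fka R_2$ are all correct. The one genuine gap is the residue-field step: if $S/\fkn$ has fewer than $n$ elements you cannot choose pairwise distinct $\lambda_j$, and the ``harmless'' base change you invoke is not covered by Proposition \ref{a3.6}, which only transfers the $\GGL$ property \emph{up} a flat local map. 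To descend from $R_2[X]_{\fkm R_2[X]}$ back to $R_2$ you would need Theorem \ref{aaaa72}(2)$\Rightarrow$(1), whose proof in the paper assumes the residue field of the base is already infinite --- precisely the hypothesis you are trying to manufacture. The fix is simply to drop the scalars and use the paper's shift $y_{i-1}\mapsto x_i$, which reduces (2) to (1) over an arbitrary residue field.
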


\begin{proof}
(1): From Theorem \ref{a6.3} and Lemma \ref{a6.2}, we have only to prove the ``only if" part. 
By Lemma \ref{a6.2}, the Eagon-Northcott complex induces the exact sequence
$S^{\oplus n(n-2)} \xrightarrow{{}^t\!{\mathbb{M}}} S^{\oplus r} \to \rm{K}_R \to 0$,
where
\begin{equation*}
{}^t\!{\mathbb{M}}=\left(
\begin{array}{ccccc}
Y&&&& \\ 
X&Y&&\mbox{\huge{0}}&\\
&X& \ddots&  & \\
& & \ddots  &Y &\\
&\mbox{\huge{0}}&&X& Y\\
& & &  & X\\
\end{array}
\right).
\end{equation*}
Here, $X$ and $Y$ denote the submatrices of ${}^t\!{\mathbb{M}}$ such that
\begin{center}
$X=(x_1^{\alpha_1}~-x_2^{\alpha_2}~x_3^{\alpha_3}~\cdots~(-1)^{n-1}x_n^{\alpha_n})$ and $Y=(x_2^{\beta_2}~-x_3^{\beta_3}~x_4^{\beta_4}~\cdots~(-1)^{n-2}x_n^{\beta_n}~(-1)^{n-1}x_1^{\beta_1}).$
\end{center}
Set $\fka=(x_1^{\alpha_1}, x_2^{\alpha_2}, \dots, x_n^{\alpha_n}, z_1, z_2, \dots, z_{d-n})$. Apply the functor $S/\fka \otimes_S *$ to the representation of the canonical module $\rmK_R$. Then, because $\rm{K}_R/\fka \rm{K}_R$ is $S/\fka$-free, we have that all entries in $Y$ are in $\fka$. Thus, we obtain the assertion.

(2): Set $y_0=y_n$ and $\fkq=(x_i-y_{i-1} \mid 1 \le i \le n)$. Note that $x_i-y_{i-1}$ for $1 \le i \le n$ is a regular sequence of $S$ and $R_2$. Hence, $S/(J + \fkq)$ is a $\GGL$ ring with respect to $(x_1, x_2, \dots, x_n, z_1, z_2, \dots, z_{d-n}){\cdot}S/(J+\fkq)$ by (1). This implies that $R_2$ is also a $\GGL$ ring with respect to $(x_1, x_2, \dots, x_n, y_1, y_2, \dots, y_n, z_1, z_2, \dots, z_{d-2n})R_2$ by Theorem \ref{a3.5}.
\end{proof}

The following is an application to Rees algebras. It is known that for a Gorenstein local ring $(S, \fkn)$, the Rees algebra of a parameter ideal $Q$ of $S$ is an almost Gorenstein graded ring if and only if $S$ is regular and $Q=\fkn$ (\cite[Theorem 8.3]{GTT}). The following asserts that all such Rees algebras are $\GGL$ rings after localizing the graded maximal ideals.

\begin{cor}\label{a6.5}
Let $(S, \fkn)$ be a Gorenstein local ring and $1\le n\le \dim S=d$.
Let $a_1, a_2, \dots, a_d$ be a system of parameters of $S$.
Set $Q=(a_1, a_2, \dots, a_n)$, and let
$$\calR (Q):=S[a_1t, a_2t, \dots, a_nt]\subseteq S[t]$$
denote the Rees algebra of $Q$, where $S[t]$ is the polynomial ring over $S$.
Then,  $\calR (Q)_M$ is a $\GGL$ ring, where $M=\fkn \calR(Q) + \calR(Q)_+$ is the unique graded maximal ideal.
\end{cor}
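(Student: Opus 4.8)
The goal is to realize $\calR(Q)_M$ as a determinantal ring of the type covered by Corollary~\ref{a6.4}(2), so that the $\GGL$ property follows at once. The standard fact here is that for a subsystem of parameters $a_1,\dots,a_n$ of a Noetherian local ring $S$, the Rees algebra $\calR(Q)$ with $Q=(a_1,\dots,a_n)$ is a complete intersection-type quotient of a polynomial ring: writing $P=S[X_1,\dots,X_n]$ and mapping $X_i\mapsto a_i t$, the kernel is exactly $I_2\!\left(\begin{smallmatrix} X_1 & X_2 & \cdots & X_n\\ a_1 & a_2 & \cdots & a_n\end{smallmatrix}\right)$, because $a_1,\dots,a_n$ is a regular sequence (it is a subsystem of parameters in the Cohen--Macaulay ring $S$, $S$ being Gorenstein). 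Thus $\calR(Q)\cong P/J$ with $J$ the $2\times 2$ minors of that $2\times n$ matrix, where $P$ is itself a Gorenstein local ring after localizing at its graded maximal ideal.

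First I would set up the presentation carefully: localize $P=S[X_1,\dots,X_n]$ at the graded maximal ideal $\fkN=\fkn P+(X_1,\dots,X_n)$, which is a Gorenstein local ring of dimension $d+n$, and observe that $M$ corresponds to $\fkN/J$ under $\calR(Q)\cong P/J$. The sequence $X_1,\dots,X_n,a_1,\dots,a_n$ lies in $\fkN P_\fkN$; the first $n$ elements are clearly part of a regular sequence (indeterminates over $S$), and the second block $a_1,\dots,a_n$ remains a regular sequence on $P/(X_1,\dots,X_n)\cong S$, so together they form a regular sequence in $P_\fkN$. Hence $x_1,\dots,x_n,y_1,\dots,y_n := X_1,\dots,X_n,a_1,\dots,a_n$ (with $z_1,\dots,z_{d+n-2n}$ an auxiliary subsystem of parameters completing them to a full system of parameters of $P_\fkN$, using that $\dim P_\fkN=d+n$) is exactly the input required by Corollary~\ref{a6.4}(2). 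Applying that corollary, $\calR(Q)_M\cong (P_\fkN)/JP_\fkN$ is a $\GGL$ ring with respect to $(X_1,\dots,X_n,a_1,\dots,a_n,z_1,\dots,z_{d-n})\calR(Q)_M$.

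The one point requiring a little care is the identification $\Ker(P\to\calR(Q)) = I_2\!\left(\begin{smallmatrix}X_1&\cdots&X_n\\a_1&\cdots&a_n\end{smallmatrix}\right)$ when $a_1,\dots,a_n$ is merely a regular sequence, not a full system of parameters. This is classical (it is the presentation of the Rees algebra of an ideal generated by a regular sequence; see e.g. the Eagon--Northcott description already invoked in the proof of Theorem~\ref{a6.3}), but I would cite it explicitly rather than reprove it. A second minor check is that $\grade(J,P_\fkN)=n-1$, which is immediate from Lemma~\ref{a6.2}(2) once the regular sequence property is in hand, and which is in any case subsumed in the hypotheses of Corollary~\ref{a6.4}(2). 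I expect no real obstacle: the entire content is bookkeeping to match the Rees-algebra presentation to the determinantal setup of Section~\ref{section6}, and the $\GGL$ conclusion is then a direct citation. I would end the proof with one line noting that $\calR(Q)_M$ is genuinely non-Gorenstein in general (indeed Gorenstein only when $S$ is regular and $Q=\fkn$, by \cite[Theorem 8.3]{GTT}), so the statement has content beyond the trivial case.
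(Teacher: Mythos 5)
Your proposal is correct and follows essentially the same route as the paper: present $\calR(Q)$ as $S[T_1,\dots,T_n]/I_2\left(\begin{smallmatrix}T_1&\cdots&T_n\\ a_1&\cdots&a_n\end{smallmatrix}\right)$ (the paper cites Barshay \cite{B} for this) and then invoke Corollary \ref{a6.4}(2), your extra bookkeeping about localizing at the graded maximal ideal and completing $T_1,\dots,T_n,a_1,\dots,a_n$ to a system of parameters being exactly the implicit verification the paper leaves to the reader. The only cosmetic remark is that Corollary \ref{a6.4}(2) is stated for $n\ge 3$, while for $n\le 2$ the Rees algebra is a hypersurface quotient of a Gorenstein ring and hence trivially $\GGL$; neither your write-up nor the paper's separates this case, and it is harmless.
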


\begin{proof}
By \cite{B}, we have that $\calR(Q) \cong S[T_1, T_2, \dots, T_n]/I_2\left(\begin{smallmatrix}
T_{1}&T_{2} & \cdots &T_{n}\\
a_{1}&a_{2} & \cdots &a_{n}\\
\end{smallmatrix}
\right)$, where $S[T_1, T_2, \dots, T_n]$ is the polynomial ring over $S$. Hence, $\calR(Q)_M$ is a $\GGL$ ring by Corollary \ref{a6.4} (2).
\end{proof}

We note several examples of rings having the condition discussed in this paper.

\begin{prop}\label{a7.6}
Let $(S, \fkn)$ be a Gorenstein local ring and $3\le n=\dim S$.
Let $x_1, x_2, \dots, x_n \in \fkn$ be a system of parameters of $S$. Set
$$I=I_2\left(\begin{matrix}
x_{1} & x_{2} & \cdots &x_{n-1} &x_{n}\\
x_{2} & x_{3} & \cdots &x_{n} &x_1\\
\end{matrix}
\right) $$
and $R=S/I$. Then,  we have the following.
\begin{enumerate}[{\rm (1)}]
\item $R$ is a one-dimensional $\GGL$ ring with respect to $(x_1, x_2, \dots, x_n)R$.
\item $\tr_R(\rmK_R)=(x_1, x_2, \dots, x_n)R$ is an Ulrich ideal of $R$.
\end{enumerate}

\end{prop}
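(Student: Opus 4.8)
The plan is to deduce both statements from the general results on determinantal $\GGL$ rings in Section~\ref{section6} and on one-dimensional $\GGL$ rings in Section~\ref{section3}; the argument is essentially an assembly of cited facts together with one short computation. For (1), the defining matrix is precisely the one in Corollary~\ref{a6.4}(1) with all the exponents $\alpha_i,\beta_i$ taken to be $1$ and with no auxiliary parameters $z_j$ (because $n=\dim S$). The numerical condition ``$\alpha_i\le\beta_i$ for all $i$'' is then vacuous, so Corollary~\ref{a6.4}(1) gives at once that $R=S/I$ is a $\GGL$ ring with respect to $\fkI:=(x_1,\dots,x_n)R$. For the assertion $\dim R=1$: a system of parameters of the Gorenstein, hence Cohen-Macaulay, ring $S$ is an $S$-regular sequence, so $\grade(I,S)=n-1$ by Lemma~\ref{a6.2}(1); since the $2\times 2$ minors of a $2\times n$ matrix cut out an ideal of height $\le n-1$, we get $\height I=\grade I=n-1$, whence $\dim R=\dim S-\height I=1$.

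For (2), I first note that $R$ is not Gorenstein: as $\grade(I,S)=n-1$, the ideal $I$ is perfect and the Eagon-Northcott complex of the defining $2\times n$ matrix (\cite{EN}) is a \emph{minimal} $S$-free resolution of $R$, whose last free module has rank $n-1$; since $S$ is Gorenstein this gives $\rmr(R)=n-1\ge 2$. Hence, by (1), $R$ is a non-Gorenstein one-dimensional $\GGL$ ring with respect to $\fkI$, so Corollary~\ref{a7.1.1} yields $\tr_R(\rmK_R)=\fkI=(x_1,\dots,x_n)R$, which is the first half of (2); moreover $R$ satisfies Setup~\ref{setting1} by Proposition~\ref{a4.1}, and I write $\widetilde{S}:=R[K]$ and $\fkc:=R:\widetilde{S}$ (these are the ring $S$ and the ideal $\fkc$ of Setup~\ref{setting1}).

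To finish, the key computation is the stability relation $\fkI^2=x_1\fkI$ in $R$: the vanishing of the $2\times 2$ minors of the defining matrix yields, in $R$, the identities $x_p x_{q+1}=x_q x_{p+1}$ for $1\le p<q\le n$ (indices read modulo $n$, so $x_{n+1}=x_1$), and for a product $x_ix_j$ with $2\le i\le j\le n$ one applies this with $p=i-1$, $q=j$ to rewrite $x_ix_j=x_{i-1}x_{j+1}$; inducting on $\min\{i,j\}$ one reaches a factor equal to $x_1$, so $x_ix_j\in x_1\fkI$, and $\fkI^2=x_1\fkI$ follows. Here $x_1$ is automatically a non-zerodivisor, since otherwise $x_1$ would lie in some associated prime $\fkp$ of $R$ and then $\fkI^2=x_1\fkI\subseteq\fkp$ would force $\fkI\subseteq\fkp$, contradicting that $\fkI$ is $\fkn R$-primary while $\fkp\subsetneq\fkn R$ (as $\dim R=1$). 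Consequently $\tr_R(\rmK_R)=\fkI$ is stable, hence equal to $\fkc$ by Lemma~\ref{a7.4}, so $\fkc^2=x_1\fkc$ with $x_1\in\fkc$; since $\fkc=R:\widetilde{S}$ has the form \eqref{ttttt4.22} with $T=\widetilde{S}$, Lemma~\ref{a4.18}(2) shows $\widetilde{S}$ is a Gorenstein ring. Therefore $\tr_R(\rmK_R)=\fkI$ is an Ulrich ideal of $R$ by Theorem~\ref{a7.5}, and in addition $\mu_R(\tr_R(\rmK_R))=\rmr(R)+1=n$, in accordance with $\fkI=(x_1,\dots,x_n)R$.

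The only step that is not pure bookkeeping with cited results is the stability relation $\fkI^2=x_1\fkI$, and the one point to be careful about there is that the cyclic shift $x_ix_j\mapsto x_{i-1}x_{j+1}$ really terminates, i.e.\ that the induction on $\min\{i,j\}$ is well-founded because the new pair always has strictly smaller minimum.
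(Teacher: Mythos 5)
Your proposal is correct and follows essentially the same route as the paper: part (1) via Corollary \ref{a6.4}(1) with all exponents equal to $1$, and part (2) via Corollary \ref{a7.1.1} together with the stability relation $\fkI^2=x_1\fkI$ coming from the minors $x_px_{q+1}-x_qx_{p+1}$, then Lemma \ref{a4.18} and Theorem \ref{a7.5}. You merely make explicit a few details the paper leaves implicit (that $\dim R=1$, that $R$ is non-Gorenstein via the Eagon--Northcott resolution, that $x_1$ is a non-zerodivisor, and that the shifting induction terminates), all of which check out.
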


\begin{proof}
(1): This follows from Corollary \ref{a6.4}.

(2): By (1) and Corollary \ref{a7.1.1}, we have the equality $\tr_R(\rmK_R)=(x_1, x_2, \dots, x_n)R$. Set $J=\tr_R(\rmK_R)$. Then,  we have 
$$
J^2=\ol{x_1}J + (\ol{x_2}, \ol{x_3}, \dots, \ol{x_n})^2=\ol{x_1}J
$$
because $\ol{x_i}\ol{x_j}=\ol{x_{i-1}}\ol{x_{j+1}}$ for all $2 \le i \le j \le n$, where $\ol{x}$ denotes the image of $x\in S$ in R and $\ol{x_{n+1}}=\ol{x_1}$. Therefore, by Theorem \ref{a7.5} and Lemma \ref{a4.18}, $J$ is an Ulrich ideal of $R$.
\end{proof}

\begin{Example}\label{a7.10}
Let $(S, \fkn)$ be a Gorenstein local ring of dimension $4$, and let $x_1, x_2, x_3, x_4 \in \fkn$ be a system of parameters of $S$. Set
$$I=\rmI_2\left(\begin{matrix}
x_{1} & x_{2} & x_{3}\\
x_{2} & x_{3} & x_{4}\\
\end{matrix}
\right) $$
and $R=S/I$. Then $R$ is a $\GGL$ ring with respect to $\tr_R(\rmK_R)$ of $\dim R=2$. Furthermore, $\tr_R(\rmK_R)=(x_1, x_2, x_3, x_4)R$ is an Ulrich ideal of $R$.
\end{Example}

\begin{proof}
By \cite[Corollary 3.4]{HHS} and Hilbert-Burch's theorem, we have $\tr_R(\rmK_R)=(x_1, x_2, x_3, x_4)R$. Set $J=(x_1, x_2, x_3, x_4)R$, and let $\ol{*}$ denote the image of $*\in S$ in $R$. Then,  
$$
J^2=(\ol{x_1}, \ol{x_4})J + (\ol{x_2}, \ol{x_3})^2=(\ol{x_1}, \ol{x_4})J.
$$
Furthermore, by letting $S'=S/(x_1, x_4)$, we have
\begin{align*}
\rme_J^0 (R)=&\ell_R(R/(x_1, x_4)R)
=\ell_S (S/I+(x_1, x_4))
=\ell_S (S/[(x_2, x_3)^2+(x_1, x_4)]) \\
=&\ell_{S'} (S'/(x_2, x_3)^2S') 
=3{\cdot}\ell_{S'} (S'/(x_2, x_3)S')
=3{\cdot}\ell_{R} (R/JR),
\end{align*}
where the fifth equality follows from the fact that $(x_2, x_3)S'$ is a parameter ideal of $S'$.
Hence, $J$ is an Ulrich ideal of $R$ by \cite[Lemma 2.3]{GOTWY}.
\end{proof}


\begin{acknowledgments}
We started this project around 2018, inspired by the paper \cite{CGKM}. Discussions proceeded as the second author participated in the first author's seminar. The second author is extremely grateful to the first author. The authors would like to thank Naoyuki Matsuoka, Naoki Endo, and Ryotaro Isobe for their valuable advice and comments in the seminar. 
The authors are grateful to the anonymous referee for the careful reading and valuable comments.
\end{acknowledgments}




\begin{thebibliography}{BB}



\bibitem{AW}
{\sc D. D. Anderson and M. Winders},  Idealization of a module. {\em J. Commut. Algebra} {\bf 1} (2009), no. 1, 3--56.

\bibitem{AI}
{\sc T. Araya and K. Iima}, Remarks on torsionfreeness and its applications. {\em Comm. Algebra} {\bf 46} (2018), no. 1, 191--200.

\bibitem{B}
{\sc J. Barshay}, Graded algebras of powers of ideals generated by $A$-sequences. {\em J. Algebra} {\bf 25} (1973), 90--99.

\bibitem{BF}
{\sc V. Barucci and  R. Fr\"{o}berg}, One-dimensional almost Gorenstein rings. {\em J. Algebra} {\bf 188} (1997), no. 2, 418--442.

\bibitem{Bass}
{\sc H. Bass}, On the ubiquity of Gorenstein rings. {\em Math. Z.} {\bf 82} (1963), 8--28.


\bibitem{BHU} {\sc J. P. Brennan, J. Herzog, and B. Ulrich}, Maximally generated Cohen-Macaulay modules. {\em Math. Scand.} {\bf 61} (1987), no. 2, 181--203. 

\bibitem{BH}
{\sc W. Bruns and J. Herzog}, Cohen-Macaulay rings. Cambridge Studies in Advanced Mathematics, 39. Cambridge University Press, Cambridge, 1993. 

\bibitem{BV2}
{\sc W. Bruns and U. Vetter}, Determinantal rings. Lecture Notes in Mathematics, 1327. Springer-Verlag, Berlin, 1988. 

\bibitem{CGKM}
{\sc T. D. M. Chau, S. Goto, S. Kumashiro, and N. Matsuoka}, Sally modules of canonical ideals in dimension one and 2-AGL rings. {\em J. Algebra} {\bf 521} (2019), 299--330.

\bibitem{CP}
{\sc A. Corso and C. Polini}, Links of prime ideals and their Rees algebras. {\em J. Algebra} {\bf 178} (1995), no. 1, 224--238.

\bibitem{EN}
{\sc J. A. Eagon and D. G. Northcott}, Ideals defined by matrices and a certain complex associated with them. {\em Proc. Roy. Soc. London Ser. A} {\bf 269} (1962), 188--204.

\bibitem{ES}
{\sc P. Eakin and A. Sathaye}, Prestable ideals. {\em J. Algebra} {\bf 41} (1976), no. 2, 439--454.


\bibitem{E}
{\sc D. Eisenbud}, Commutative algebra. With a view toward algebraic geometry. Graduate Texts in Mathematics, 150. Springer-Verlag, New York, 1995.

\bibitem{EGI}
{\sc N. Endo, S. Goto, R. Isobe}, Almost Gorenstein rings arising from fiber products. {\em Canad. Math. Bull.} {\bf 64} (2021), no. 2, 383--400.

\bibitem{GIK}
{\sc S. Goto, R. Isobe, and S. Kumashiro}, Correspondence between trace ideals and birational extensions with application to the analysis of the Gorenstein property of rings. {\em J. Pure Appl. Algebra} {\bf 224} (2020), no. 2, 747--767.

\bibitem{GK}
{\sc S. Goto, S. Kumashiro}, When is $R\ltimes I$ an almost Gorenstein local ring? {\em Proc. Amer. Math. Soc.} {\bf 146} (2018), no. 4, 1431--1437.

\bibitem{GMP}
{\sc S. Goto, N. Matsuoka, and T. T. Phuong},  Almost Gorenstein rings. {\em J. Algebra} {\bf 379} (2013), 355--381.



\bibitem{GOTWY}
{\sc S. Goto, K. Ozeki, R. Takahashi, K.-i. Watanabe, K.-i. Yoshida}, Ulrich ideals and modules. {\em Math. Proc. Cambridge Philos. Soc.} {\bf 156} (2014), no. 1, 137--166.


\bibitem{GTT}
{\sc S. Goto, R. Takahashi and N. Taniguchi}, Almost Gorenstein rings-towards a theory of higher dimension. {\em J. Pure Appl. Algebra} {\bf 219} (2015), no. 7, 2666--2712.


\bibitem{GTT2}
{\sc S. Goto, R. Takahashi, and N. Taniguchi}, Ulrich ideals and almost Gorenstein rings. {\em Proc. Amer. Math. Soc.} {\bf 144} (2016), no. 7, 2811--2823.

\bibitem{GW}
{\sc S. Goto and K. Watanabe}, On graded rings. I. {\em J. Math. Soc. Japan} {\bf 30} (1978), no. 2, 179--213.


\bibitem{H}
{\sc J. Herzog}, Generators and relations of abelian semigroups and semigroup rings. {\em Manuscripta Math.} {\bf 3} (1970), 175--193.

\bibitem{H2}
{\sc J. Herzog}, When is a regular sequence super regular? {\em Nagoya Math. J.} {\bf 83} (1981), 183--195.

\bibitem{HJS}
{\sc J. Herzog, R. Jafari, and D. I. Stamate}, Ulrich elements in normal simplicial affine semigroups. {\em Pacific J. Math.} {\bf 309} (2020), no. 2, 353--380.

\bibitem{HK2}
{\sc J. Herzog and S. Kumashiro}, Upper bound on the colength of the trace of the canonical module in dimension one. {\em Arch. Math. (Basel)} {\bf 119} (2022), no. 3, 237--246. 

\bibitem{HKS2}
{\sc J. Herzog; S. Kumashiro; D. I. Stamate}. The tiny trace ideals of the canonical modules in Cohen-Macaulay rings of dimension one. {\em J. Algebra} {\bf 619} (2023), 626--642.

\bibitem{HK}
{\sc J. Herzog and E. Kunz}, Der kanonische Modul eines Cohen-Macaulay-Rings, Lecture Notes in Mathematics, Vol. 238. Springer-Verlag, Berlin-New York, 1971.



\bibitem{HHS}
{\sc J. Herzog, T. Hibi, and D. I. Stamate}, The trace of the canonical module. {\em Israel J. Math.} {\bf 233} (2019), no. 1, 133--165.

\bibitem{Ku}
{\sc S. Kumashiro}, The reduction number of canonical ideals. {\em Comm. Algebra} {\bf 50} (2022), no. 11, 4619--4635. 

\bibitem{LW}
{\sc G. J. Leuschke and R. Wiegand}, Cohen-Macaulay representations. Mathematical Surveys and Monographs, 181. American Mathematical Society, Providence, RI, 2012. 

\bibitem{Lin}
{\sc H. Lindo}, Trace ideals and centers of endomorphism rings of modules over commutative rings. {\em J. Algebra} {\bf 482} (2017), 102--130.

\bibitem{L}
{\sc J. Lipman}, Stable ideals and Arf rings. {\em Amer. J. Math.} {\bf 93} (1971), 649--685.

\bibitem{Mat}
{H. Matsumura}, Commutative ring theory. Translated from the Japanese by M. Reid. Cambridge Studies in Advanced Mathematics, 8. Cambridge University Press, Cambridge, 1986.

\bibitem{MS}
{\sc A. Moscariello and F. Strazzanti}, Nearly Gorenstein vs almost Gorenstein affine monomial curves. {\em Mediterr. J. Math.} {\bf 18} (2021), no. 4, Paper No. 127, 14 pp. 

\bibitem{R}
{\sc I. Reiten}, The converse to a theorem of Sharp on Gorenstein modules. {\em Proc. Amer. Math. Soc.} {\bf 32} (1972), 417--420.

\bibitem{RG}
{\sc J. C. Rosales and P. A. Garc\'{i}a-S\'{a}nchez}, Numerical semigroups, Developments in Mathematics, Vol. 20. Springer, New York, 2009.

\bibitem{RV}
{\sc M. E. Rossi and G. Valla}, Hilbert functions of filtered modules. Lecture Notes of the Unione Matematica Italiana, 9. Springer-Verlag, Berlin; UMI, Bologna, 2010.

\bibitem{S1}
{\sc  J. Sally}, Numbers of generators of ideals in local rings. Marcel Dekker, Inc., New York-Basel, 1978.


\bibitem{S2}
{\sc J. Sally}, Cohen-Macaulay local rings of maximal embedding dimension. {\em J. Algebra} {\bf 56} (1979), no. 1, 168--183.


\bibitem{S3}
{\sc J. Sally}, Tangent cones at Gorenstein singularities. {\em Compositio Math.} {\bf 40} (1980), no. 2, 167--175. 

\bibitem{SH}
{\sc I. Swanson and C. Huneke}, Integral closure of ideals, rings, and modules. London Mathematical Society Lecture Note Series, 336. Cambridge University Press, Cambridge, 2006.


\bibitem{Tak}
{\sc R. Takahashi}, On $G$-regular local rings. {\em Comm. Algebra} {\bf 36} (2008), no. 12, 4472--4491.

\bibitem{T}
{\sc N. Taniguchi}, On the almost Gorenstein property of determinantal rings. {\em Comm. Algebra} {\bf 46} (2018), no. 3, 1165--1178. 




\bibitem{VV}
{\sc P. Valabrega, G. Valla}, Form rings and regular sequences. {\em Nagoya Math. J.} {\bf 72} (1978), 93--101.



\bibitem{Y} {\sc Y. Yoshino}, Modules of G-dimension zero over local rings with the cube of maximal ideal being zero. Commutative algebra, singularities and computer algebra (Sinaia, 2002), 255--273, NATO Sci. Ser. II Math. Phys. Chem., 115, Kluwer Acad. Publ., Dordrecht, 2003.


\end{thebibliography}
\end{document}